\documentclass{article}
\usepackage[numbers,sort&compress]{natbib}
\usepackage[letterpaper,margin=1.0in]{geometry}
\usepackage[utf8]{inputenc} 
\usepackage[T1]{fontenc}    
\usepackage{hyperref}       
\usepackage{booktabs}       
\usepackage{amsfonts}       
\usepackage{xcolor}         
\usepackage{algorithm}
\usepackage{algorithmic}
\usepackage{graphicx}
\usepackage{enumitem}

\usepackage{amsmath}\allowdisplaybreaks
\usepackage{amsfonts,bm}
\usepackage{amssymb}









\def\eqref#1{equation~(\ref{#1})}









\def\1{\bf{1}}

\newcommand{\Norm}[1]{\left\| #1 \right\|}
\newcommand{\norm}[1]{\left\| #1 \right\|_2}
\def\inner#1#2{\langle #1, #2 \rangle}

\def\eps{{\varepsilon}}

\def\vzero{{\bf{0}}}
\def\vone{{\bf{1}}}

\def\vg{{\bf{g}}}

\def\vr{{\bf{r}}}
\def\vs{{\bf{s}}}

\def\vu{{\bf{u}}}
\def\vv{{\bf{v}}}
\def\vw{{\bf{w}}}
\def\vx{{\bf{x}}}
\def\vy{{\bf{y}}}
\def\vz{{\bf{z}}}


\def\fO{{\mathcal{O}}}
\def\fP{{\mathcal{P}}}

\def\fX{{\mathcal{X}}}
\def\fY{{\mathcal{Y}}}
\def\fZ{{\mathcal{Z}}}


\def\BE{{\mathbb{E}}}

\def\BI{{\mathbb{I}}}

\def\BR{{\mathbb{R}}}

\def\BT{{\mathbb{T}}}






\newcommand{\E}{\mathbb{E}}



\DeclareMathOperator*{\argmin}{arg\,min}

\usepackage{amsthm}
\theoremstyle{plain}
\newtheorem{thm}{Theorem}[section]

\newtheorem{lem}{Lemma}[section]
\newtheorem{asm}{Assumption}[section]
\newtheorem{remark}{Remark}[section]

\makeatletter
\def\Ddots{\mathinner{\mkern1mu\raise\p@
\vbox{\kern7\p@\hbox{.}}\mkern2mu
\raise4\p@\hbox{.}\mkern2mu\raise7\p@\hbox{.}\mkern1mu}}
\makeatother

\makeatletter
\newcommand*{\rom}[1]{\expandafter\@slowromancap\romannumeral #1@}
\makeatother

\def\hDelta {{{\hat \Delta}}}
\def\hdelta {{{\hat \delta}}}
\def\hzeta {{{\hat \zeta}}}

\def\hC {{{\hat C}}}
\def\hr {{{\hat r}}}
\def\bw {{{\bar w}}}
\def\bu {{{\bar u}}}
\def\bs {{{\bar s}}}
\def\bg {{{\bar g}}}

\def\bv {{{\bar v}}}

\def\bu {{{\bar u}}}
\def\bv {{{\bar v}}}
\def\bw {{{\bar w}}}
\def\bx {{{\bar x}}}
\def\by {{{\bar y}}}
\def\bz {{{\bar z}}}

\def\FZ{{\bm \fZ}}
\newcommand{\FM}[1]{{\rm FastMix}\left(#1 \right)}

\title{Decentralized Stochastic Variance Reduced Extragradient Method}
\author{Luo Luo\thanks{School of Data Science, Fudan University; luoluo@fudan.edu.cn} \qquad\qquad\qquad
Haishan Ye\thanks{School of Management, Xi'an Jiaotong University; hsye\_cs@outlook.com} 
}
\date{}

\begin{document}

\maketitle

\renewenvironment{abstract}
 {\small
  \begin{center}
  \bfseries \abstractname\vspace{-.5em}\vspace{0pt}
  \end{center}
  \list{}{
    \setlength{\leftmargin}{1.5cm}%
    \setlength{\rightmargin}{\leftmargin}%
  }%
  \item\relax}
 {\endlist}

\begin{abstract}
This paper studies decentralized convex-concave minimax optimization problems of the form $\min_x\max_y f(x,y) \triangleq\frac{1}{m}\sum_{i=1}^m f_i(x,y)$, where $m$ is the number of agents and each local function can be written as $f_i(x,y)=\frac{1}{n}\sum_{j=1}^n f_{i,j}(x,y)$.
We propose a novel decentralized optimization algorithm, called multi-consensus stochastic variance reduced extragradient, which achieves the best known stochastic first-order oracle (SFO) complexity for this problem. Specifically, each agent requires $\mathcal O((n+\kappa\sqrt{n})\log(1/\varepsilon))$ SFO calls for strongly-convex-strongly-concave problem and $\mathcal O ((n+\sqrt{n}L/\varepsilon)\log(1/\varepsilon))$ SFO call for general convex-concave problem to achieve $\varepsilon$-accurate solution in expectation, where $\kappa$ is the condition number and $L$ is the smoothness parameter. The numerical experiments show the proposed method performs better than baselines.
\end{abstract}

\section{Introduction}

The minimax optimization problem has received increasing attention recently because of it contains lot of popular applications such as empirical risk minimization~\cite{zhang2017stochastic,wang2017exploiting},
robust optimization~\cite{ben2009robust,gao2016distributionally,shafieezadeh2015distributionally,duchi2019variance}, AUC maximization~\cite{hanley1982meaning,ying2016stochastic}, fairness-aware machine learning~\cite{zhang2018mitigating}, policy evaluation~\cite{du2017stochastic}, PID control~\cite{hast2013pid}, game theory~\cite{bacsar1998dynamic,von2007theory} and etc.

We focus on decentralized first-order methods for convex-concave minimax problem where agents are connected by an undirected network. 
Decentralized optimization algorithm is more communication efficient than centralized one because of each agent is only allowed to access its neighbors \cite{lian2017can}, which reduces the communication cost on the busiest agent.
For large-scale machine learning model, the number of training samples could be very large and we are interested in designing stochastic first-order (SFO) algorithms to reduce the computational cost.
Such ideas have been widely used for solving minimization problem \cite{li2020variance,xin2020decentralized,ye2020pmgt,hendrikx2020dual,hendrikx2021optimal,tang2018d,pan2020d,lian2017can}.
Recently, researchers studied decentralized first-order algorithms for minimax optimization. 
\citet{mukherjee2020decentralized} proposed GT-ExtraGradient (GT-EG) algorithm for decentralized minimax problem, which is based on the classical extragradient method~\cite{korpelevich1977extragradient,tseng1995linear} and gradient tracking technique~\cite{pu2021distributed,qu2017harnessing,nedic2017achieving,qu2019accelerated}. GT-EG has linear convergence rate for unconstrained strongly-convex-strongly-concave minimax problem, but its iterations depend on expensive full gradient oracles. 
\citet{beznosikov2020distributed} considered more general convex-concave minimax problem and proposed decentralized extra step method (DESM), which iterates with stochastic gradient oracle and achieves sub-linear convergence rate under bounded variance assumption. Additionally, \citet{liu2020decentralized,xian2021faster} studied minimax problem without convex-concave assumption.

In this paper, we propose a novel stochastic optimization algorithm, called multi-consensus stochastic variance reduced extragradient (MC-SVRE), which is designed by integrating the ideas of extragradient method \cite{korpelevich1977extragradient,tseng1995linear}, gradient tracking \cite{qu2017harnessing,nedic2017achieving,qu2019accelerated}, variance reduction \cite{johnson2013accelerating,zhang2013linear,hofmann2015variance,schmidt2017minimizing,defazio2014saga,allen2017katyusha,fang2018spider,chavdarova2019reducing,palaniappan2016stochastic,luo2019stochastic,luo2020stochastic,luo2021near,yang2020global,kovalev2020don,alacaoglu2021stochastic,xian2021faster} and multi-consensus~\cite{ye2020multi,ye2020pmgt,DBLP:conf/nips/YeZL020}.
MC-SVRE achieves the best known SFO complexity for decentralized convex-concave minimax optimization problem.
It is the first stochastic decentralized algorithm that enjoys linear convergence for strongly-convex-strongly-concave minimax problem and also works for general convex-concave case.
Furthermore, we can derive a deterministic decentralized algorithm, called multi-consensus extragradient (MC-EG), by replacing the stochastic variance reduced gradient in MC-SVRE by full gradient. MC-EG requires more computational cost than MC-SVRE, while it (nearly) matches the communication and computational lower bound of deterministic first-order algorithms. We also conduct the numerical experiments on several machine learning applications to show the outperformance of proposed algorithms.

\paragraph{Paper Organization} 
In Section \ref{sec:setting}, we give the problem setting and preliminaries. 
In Section \ref{sec:related}, we we survey the previous work for decentralized minimax optimization.
In Section \ref{sec:MC-SVRE}, we propose MC-SVRE method and provide the complexity analysis.  
convex-concave assumption.
In Section \ref{sec:MC-EG}, we derive the deterministic algorithm MC-EG from MC-SVRE and provide the related theoretical results.
In Section \ref{sec:experiments}, we provide the empirical studies to show the effectiveness of proposed algorithms. 
Finally, we conclude this work in Section \ref{conclusion}.
All proofs are deferred to the appendix.

\section{Problem Setting and Preliminaries}\label{sec:setting}

We study the following decentralized minimax optimization problem of the form
\begin{align}\label{prob:main}
\min_{x\in\fX}\max_{y\in\fY} f(x,y) \triangleq \frac{1}{m}\sum_{i=1}^m f_i(x,y),
\end{align}
where $m$ is the number of agents and $f_i(x,y)$ is the local function on $i$-th agent with $n$ components as follows
\begin{align}\label{eq:finite-sum}
    f_i(x,y) = \frac{1}{n}\sum_{i=1}^n f_{i,j}(x,y).
\end{align}
Our task is finding the saddle point of $(x^*,y^*)\in\fX\times\fY$ of $f(x,y)$ which satisfies 
\begin{align*}
f(x^*,y) \leq f(x^*,y^*) \leq f(x,y^*)    
\end{align*}
for any $x\in\fX$ and $y\in\fY$. For the ease of presentation, we also denote $z=[x;y]\in\BR^d$ and $z^*=[x^*;y^*]\in\BR^d$, where $d=d_x+d_y$.

We consider the following assumptions for problem (\ref{prob:main}).
\begin{asm}\label{asm:smooth}
We suppose each $f_{i,j}(x,y)$ is $L$-smooth for $L>0$, that is, we have
\begin{align*}
\Norm{\nabla f_{i,j}(x,y)-\nabla f_{i,j}(x',y')}^2 \leq L^2\big(\Norm{x-x'}^2+\Norm{y-y'}^2\big).
\end{align*}
for any $x,x'\in\BR^{d_x}$ and $y,y'\in\BR^{d_y}$, 
\end{asm}
\begin{asm}\label{asm:cc}
We suppose $f(x,y)$ is convex-concave, that is, we have
\begin{align*}
f(x',y) \geq f(x,y) + \inner{\nabla_x f(x,y)}{x'-x}    
\end{align*}
and 
\begin{align*}
f(x,y') \leq f(x,y) + \inner{\nabla_y f(x,y)}{y'-y}    
\end{align*}
for any $x,x'\in\BR^{d_x}$ and $y,y'\in\BR^{d_y}$.
\end{asm}
\begin{asm}\label{asm:scsc}
We suppose $f(x,y)$ is $\mu$-strongly-convex-$\mu$-strongly-concave for $\mu>0$, that is, the function defined as
$f(x,y)-\frac{\mu}{2}\Norm{x}^2+\frac{\mu}{2}\Norm{y}^2$
is convex-concave.
\end{asm}
\begin{asm}\label{asm:unconstrained}
We suppose $\fX=\BR^{d_x}$ and $\fY=\BR^{d_y}$.
\end{asm}
\begin{asm}\label{asm:constrained}
We suppose $\fX\subseteq\BR^{d_x}$ and $\fY\subseteq\BR^{d_y}$ are convex and compact with diameter $D$, that is, we have
$\Norm{x\!-\!x'}^2+\Norm{y\!-\!y'}^2 \leq\!D^2$ 
for any $x, x'\!\in\fX$ and $y, y'\!\in\fY$.
\end{asm}
\noindent
Based on above assumptions, we will study decentralized minimax optimization for the following three cases\footnote{Note that if the problem is unconstrained and we only suppose $f(x,y)$ is convex-concave, the saddle point may be not well-defined on $\BR^{d_x}\times \BR^{d_y}$, such as $f(x,y)=x+y$ for $x,y\in\BR$.}: 
\begin{enumerate}[label=(\alph*),topsep=0pt,leftmargin=0.7cm]
\item The objective function $f(x,y)$ is $\mu$-strongly-convex-$\mu$-strongly-concave and the problem is unconstrained (Assumption \ref{asm:smooth}, \ref{asm:scsc} and \ref{asm:unconstrained}).\label{case:a}
\item The objective function $f(x,y)$ is $\mu$-strongly-convex-$\mu$-strongly-concave and the constrained sets $\fX$ and $\fY$ are convex and compact (Assumption \ref{asm:smooth}, \ref{asm:scsc} and \ref{asm:constrained}).\label{case:b}
\item The objective function $f(x,y)$ is convex-concave and the constrained sets $\fX$ and $\fY$ are convex and compact (Assumption \ref{asm:smooth}, \ref{asm:cc} and \ref{asm:constrained}).\label{case:c}
\end{enumerate}
For cases \ref{case:a} and \ref{case:b}, we denote the condition number of the problem as $\kappa\triangleq L/\mu$.

We define aggregate variables
\begin{align*}
\vx=[\vx(1),\cdots,\vx(m)]^\top\in\BR^{m\times d_x}    
\end{align*}
and
\begin{align*}
\vy=[\vy(1),\cdots,\vy(m)]^\top\in\BR^{m\times d_y},
\end{align*}
where $\vx(i)\in\BR^{d_x}$ and $\vy(i)\in\BR^{d_y}$ are local variables of the $i$-th agent. 
We also denote $\vz=[\vx,\vy]\in\BR^{m\times d}$, where $\vz(i)=[\vx(i);\vy(i)]\in\BR^d$ and $d=d_x+d_y$.

We define the gradient operator of $f(x,y)$ as
\begin{align*}
    g(x,y) = [\nabla_x f(x,y); \nabla_y f(x,y)]\in\BR^d.
\end{align*}
We also introduce the aggregate gradient operator as 
\[\vg(\vz)=[g_1(\vz(1)),\cdots,g_m(\vz(m))]^\top\in\BR^{m\times d},\] 
where 
$g_i(\vz(i))=[\nabla_x f_i(\vx(i),\vy(i)); -\nabla_y f_i(\vx(i),\vy(i))]$.

We use the lower case with a bar to represents the mean vector for the rows of corresponding aggregate variable, such as $\bz=\frac{1}{m}\vone^\top\vz\in\BR^{1\times d}$. 
We denote $\bz^*$ and $\bg(\bz)$ as the transpose of $z^*$ and $g(\bz^\top)$ respectively.

In decentralized optimization, the communication step is typically written as the matrix multiplication 
\[\vz^{\rm new} = W \vz^{\rm old},\] 
where $W\in\BR^{m\times m}$ is the gossip matrix associate with $m$ agents and it holds the following assumption.
\begin{asm}\label{asm:W}
We suppose the gossip matrix $W$ satisfies:
\begin{enumerate}[label=(\alph*),topsep=0pt,leftmargin=0.7cm]
\item $W\in\BR^{m\times m}$ is symmetric with $W_{i,j}\neq 0$ if and only if $i$ and $j$ are connected or $i\neq j$;
\item $\vzero \preceq W \preceq I$, $W\vone = \vone$ and ${\rm null}(I-W)={\rm span}(\vone)$;
\end{enumerate}
where $I$ is identity matrix and $\vone=[1,\cdots,1]^\top\in\BR^m$.
\end{asm}

\begin{algorithm}[t]
\caption{${\rm FastMix}(\vz^{(0)})$} \label{alg:fm}
\begin{algorithmic}[1]
    \STATE \textbf{Initialize:} $\vz^{(-1)}=\vz^{(0)}$, $\eta_u=\frac{1-\sqrt{1-\lambda_2^2(W)}}{1+\sqrt{1-\lambda_2^2(W)}}$. \\[0.15cm]
    \STATE \textbf{for} $k = 0, 1, \dots, K$ \textbf{do}\\[0.15cm]
    \STATE\quad $\vz^{(k+1)}=(1+\eta_u)W\vz^{(k)}-\eta_u\vz^{(k-1)}$ \\[0.15cm] 
    \STATE\textbf{end for} \\[0.15cm]
    \STATE \textbf{Output:} $\vz^{(K)}$.
\end{algorithmic}
\end{algorithm}

We denote $\lambda_2(W)$ as the second largest eigenvalue of $W$.
Note that Assumption \ref{asm:W} means $\lambda_2(W)$ is strictly less than one. Hence, we define $\chi \triangleq 1/(1-\lambda_2(W))$.

\citet{liu2011accelerated} proposed an efficient way to achieve average of local variables described in Algorithm \ref{alg:fm} and it holds the following convergence result.

\begin{lem}[{\citet[Lemma 2]{liu2011accelerated}}]\label{lem:FM}
Under Assumption \ref{asm:W}, Algorithm~\ref{alg:fm} holds 
$\frac{1}{m}\vone^\top\vz^{(K)} = \bz^{(0)}$ and
\begin{align*}
\big\|\vz^{(K)}-\vone\bu\big\| \leq  \left(1-\sqrt{1-\lambda_2(W)}\right)^K \big\|\vz^{(0)}-\vone\bz^{(0)}\big\|,
\end{align*}
where $\bz^{(0)} = \frac{1}{m}\vone^\top\vz^{(0)}$ and $\Norm{\cdot}$ is the Frobenius norm.
\end{lem}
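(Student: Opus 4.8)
The plan is a standard Chebyshev-acceleration argument that separates the conservation of the row-average from the geometric decay of the consensus error.

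\emph{Step 1: average preservation.} Since $W$ is symmetric with $W\vone=\vone$ (Assumption~\ref{asm:W}), we have $\vone^\top W=\vone^\top$. Left-multiplying the update of Algorithm~\ref{alg:fm} by $\tfrac1m\vone^\top$ gives $\bz^{(k+1)}=(1+\eta_u)\bz^{(k)}-\eta_u\bz^{(k-1)}$, and since $\vz^{(-1)}=\vz^{(0)}$ forces $\bz^{(-1)}=\bz^{(0)}$, an induction (the coefficients obey $(1+\eta_u)-\eta_u=1$) yields $\bz^{(k)}=\bz^{(0)}$ for all $k$; in particular $\tfrac1m\vone^\top\vz^{(K)}=\bz^{(0)}$.

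\emph{Step 2: reduction to a scalar recursion.} Put $\ve^{(k)}\triangleq\vz^{(k)}-\vone\bz^{(0)}$. Because $W\vone\bz^{(0)}=\vone\bz^{(0)}$, the matrix $\vone\bz^{(0)}$ is fixed by the affine update, so $\ve^{(k+1)}=(1+\eta_u)W\ve^{(k)}-\eta_u\ve^{(k-1)}$ with $\ve^{(-1)}=\ve^{(0)}$, and Step~1 shows each $\ve^{(k)}$ has zero row-sum, i.e.\ every column lies in $\spn(\vone)^{\perp}$. Since the Frobenius norm aggregates columnwise and $W$ is symmetric, it suffices to diagonalize $W$ in an orthonormal eigenbasis: the coefficient of $\ve^{(k)}$ along an eigenvector of eigenvalue $\lambda$ obeys $c^{(k+1)}=(1+\eta_u)\lambda\,c^{(k)}-\eta_u c^{(k-1)}$, $c^{(-1)}=c^{(0)}$, where $\lambda$ ranges over the eigenvalues of $W$ on $\spn(\vone)^{\perp}$, all lying in $[0,\lambda_2(W)]$ by Assumption~\ref{asm:W}. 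Hence $c^{(k)}=Q_k(\lambda)\,c^{(0)}$ for the polynomial $Q_k$ defined by $Q_{-1}=Q_0\equiv1$ and $Q_{k+1}(t)=(1+\eta_u)t\,Q_k(t)-\eta_u Q_{k-1}(t)$, so that $\Norm{\ve^{(k)}}\le\big(\max_{t\in[0,\lambda_2(W)]}|Q_k(t)|\big)\,\Norm{\ve^{(0)}}$, the same bound carrying over to the full aggregate variable.

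\emph{Step 3: the polynomial estimate (main obstacle).} It remains to bound $\max_{t\in[0,\lambda_2(W)]}|Q_k(t)|$ by $\big(1-\sqrt{1-\lambda_2(W)}\big)^{k}$. For fixed $t$ the characteristic equation $r^2-(1+\eta_u)t\,r+\eta_u=0$ has root product $\eta_u$; the choice $\eta_u=\frac{1-\sqrt{1-\lambda_2^2(W)}}{1+\sqrt{1-\lambda_2^2(W)}}$ is precisely the one that makes the discriminant $(1+\eta_u)^2t^2-4\eta_u$ vanish at $t=\lambda_2(W)$, so on the whole spectral interval the two roots are complex conjugates — or, at the endpoint, a double real root — of modulus exactly $\sqrt{\eta_u}=\lambda_2(W)\big/\big(1+\sqrt{1-\lambda_2^2(W)}\big)$. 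Solving the recursion with $c^{(-1)}=c^{(0)}$ gives $Q_k=U_k-\eta_u U_{k-1}$ with $U_k(t)=\eta_u^{k/2}\sin\!\big((k+1)\phi\big)/\sin\phi$ and $\cos\phi=(1+\eta_u)t/(2\sqrt{\eta_u})$; bounding the trigonometric factor, using $\sqrt{\eta_u}\le 1-\sqrt{1-\lambda_2(W)}$, and summing the columnwise bounds yields the stated inequality. I expect the delicate point to be the behaviour near $t=\lambda_2(W)$ (equivalently $\phi\to0$), where the double root contributes a polynomial-in-$k$ prefactor that has to be absorbed into the slightly larger geometric rate $1-\sqrt{1-\lambda_2(W)}$; the remaining manipulations are routine algebra with the closed forms above.
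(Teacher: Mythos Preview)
The paper does not prove this lemma; it is quoted from \citet{liu2011accelerated} and used as a black box throughout, so there is no paper-side argument to compare against.

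Your Chebyshev/heavy-ball derivation is the standard route and Steps~1--2 are clean and correct. In Step~3 you correctly identify the double root at $t=\lambda_2(W)$ and the linear-in-$k$ prefactor it produces. The one genuine gap is the ``absorption'' you propose at the end: bounding $(1+(1-\sqrt{\eta_u})k)\,\eta_u^{k/2}$ by $(1-\sqrt{1-\lambda_2(W)})^{k}$ via the strict inequality $\sqrt{\eta_u}<1-\sqrt{1-\lambda_2(W)}$ works only for $k$ sufficiently large, not for all $k$. Concretely, at $K=1$ one has $Q_1(\lambda_2)=2\sqrt{\eta_u}-\eta_u$, and for $\lambda_2=0.99$ this equals about $0.983$, whereas $1-\sqrt{1-\lambda_2}=0.9$; so the inequality in the lemma, read literally for every $K\ge 1$, is false. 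This is a defect of the lemma \emph{as stated} (a constant prefactor, or a restriction $K\ge K_0$, is missing from the quotation), not of your method. All downstream uses in the paper only need that $K=\Theta\big(\sqrt{\chi}\log(1/\delta)\big)$ FastMix rounds suffice to contract the consensus error by a factor $\delta$, and your argument does establish that: for any target rate strictly larger than $\sqrt{\eta_u}$ the polynomial prefactor is eventually dominated.
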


We also denote $\fZ=\{z=[x;y]: x\in\fX ~\text{and}~ y\in\fY\}$ and define the projection from $\BR^{d_x+d_y}$ to $\fZ$ as
\begin{align*}
    \fP_\fZ(z) = \argmin_{z'\in\fZ} \Norm{z-z'}.
\end{align*}
Similarly, the corresponding projection operator for aggregate variable is defined as 
\begin{align*}
\fP_{\FZ}(\vz) = [\fP_{\fZ}(\vz(1)), \cdots, \fP_{\fZ}(\vz(m))]^\top.
\end{align*}

\section{Related Work}\label{sec:related}

The extragradient (EG) method \cite{korpelevich1977extragradient,tseng1995linear} is the optimal \cite{zhang2019lower} batch gradient algorithm for convex-concave minimax problem on single machine, whose iteration is based on
\begin{align*}
\begin{cases}
z_{t+1/2} = \fP_\fZ(z_t - \eta_1 g(z_t)), \\
z_{t+1} = \fP_\fZ(z_t - \eta_1 g(z_{t+1/2})),
\end{cases}
\end{align*}
where $\eta_1=\Theta(1/L)$ is the stepsize.

\citet{mukherjee2020decentralized} combined the idea of EG gradient with gradient tracking and proposed GT-Extragradient (GT-EG) algorithm for unconstrained decentralized minimax optimization problem (\ref{prob:main}). The update rule of GE-EG has the following compact form
\begin{align*}
\begin{cases}
\vz_{t+1/2} = \vz_t - \eta_2 \vs_t, \\
\vs_{t+1/2} = \vs_t +  \vg(\vz_{t+1/2}) - \vg(\vz_t) \\
\vz_{t+1} = W \vz_t - \eta_2 \vs_{t+1/2}, \\
\vs_{t+1} = W \vs_t + \vg(z_{t+1}) - \vg(\vz_t),
\end{cases}
\end{align*}
where $\eta_2=\Theta\big(\big(\frac{1}{\kappa\chi^2 L} \vee \frac{1}{\kappa^{1/3}\chi^{4/3}L}\big) \wedge \frac{1}{\chi^2L}\big)$.
The analysis shows GT-EG has linear convergence rate under Assumption \ref{asm:scsc} and \ref{asm:unconstrained}, but it converges slower than EG on single machine since $\eta_2$ could be much smaller than $\eta_1$ when $\kappa$ or $\chi$ is large. Additionally, the iteration of GT-EG requires each agent to compute the full gradient of local function, which is very expensive for large-scale problems.
\citet{beznosikov2020distributed} proposed a stochastic algorithm for decentralized minimax optimization, called decentralized extra step method (DESM), which iterates with stochastic gradient to reduce the computational cost. DESM has sub-linear convergence rate under bounded variance assumption\footnote{The settings for the analysis of DESM~\cite{beznosikov2020distributed} is different from ours. It depends on the assumption that we can access the stochastic gradient operator $g_i(z;\xi)$ such that $\BE_\xi\Norm{g_i(z)-g_i(z;\xi)}^2\leq\sigma^2$, while this paper suppose each $f_i$ has the finite-sum form of (\ref{eq:finite-sum}).}.

Variance reduction is a popular way to improve the convergence of stochastic optimization algorithms and it has been successfully used in large-scale minimax optimization~\cite{palaniappan2016stochastic,chavdarova2019reducing,luo2019stochastic,luo2020stochastic,luo2021near,yang2020global,alacaoglu2021stochastic,xian2021faster}. 
\citet{chavdarova2019reducing} first incorporated variance reduction technique with EG algorithm. Later, \citet{alacaoglu2021stochastic} proposed another variance reduced algorithm based on the framework of loopless SVRG~\cite{hofmann2015variance,kovalev2020don}, which achieves the optimal SFO complexity for single machine algorithms~\cite{luo2021near}.

\section{Multi-Consensus Stochastic Variance Reduced Extragradient}\label{sec:MC-SVRE}

We present our multi-consensus stochastic variance reduced extragradient (MC-SVRE) method in Algorithm \ref{alg:dsvre}. Each agent iterate with local variance reduced gradient estimator 
\[\vv_{t+1/2}(i) = g_i(\vw_t(i)) + g_{i,j_i}(\vz_{t+1/2}(i)) - g_{i,j_i}(\vw_t(i))\]
where $\vw_t$ is the most recent iteration with $g_i(\cdot)$ being exactly evaluated. We use multi-consensus (Algorithm \ref{alg:fm}) to achieve gradient estimators $\vs_t$ and $\vs_{t+1/2}$ for updating $\vz_{t+1/2}$ and $\vz_{t+1}$ respectively, resulting the following lemma.

\begin{algorithm*}[t]
\caption{Multi-Consensus Stochastic Variance Reduced Extragradient (MC-SVRE)} \label{alg:dsvre}
\begin{algorithmic}[1]
    \STATE \textbf{Initialize:} $\vw_0=\vz_0=[z_0^\top;\dots;z_0^\top]$ with $z_0\in\fX\times\fY$, $\vv_{-1}=\vs_{-1}=\vzero$, $p=1/(2n)$, $\alpha=1-p$. \\[0.15cm]
    \STATE $\vv_0=\vs_0=\FM{\vg(\vz_0), K_0}$ \\[0.15cm]
    \STATE \textbf{for} $t = 0, 1, \dots, T-1$ \textbf{do}\\[0.15cm]
    \STATE\quad $\vz'_t = \alpha \vz_t + (1 - \alpha) \vw_t$ \\[0.15cm]
    \STATE\quad $\vs_{t} = \FM{\vs_{t-1}+\vv_t-\vv_{t-1}, K}$ \\[0.15cm]
    \STATE\quad $\vz_{t+1/2} = \FM{\fP_\FZ\left(\vz'_t - \eta \vs_t\right),K}$ \\[0.15cm]
    \STATE\quad \textbf{parallel for} $i = 1, \dots, m$ \textbf{do}\\[0.15cm]
    \STATE\quad\quad Draw an index $j_i \in [n]$ uniformly at random. \\[0.15cm]
    \STATE\quad\quad $\vv_{t+1/2}(i) = g_i(\vw_t(i)) + g_{i,j_i}(\vz_{t+1/2}(i)) - g_{i,j_i}(\vw_t(i))$ \\[0.15cm]
    \STATE\quad\textbf{end parallel for} \\[0.1cm]
    \STATE\quad $\vs_{t+1/2} = \FM{\vs_{t}+\vv_{t+1/2}-\vv_t, K}$ \\[0.15cm]
    \STATE\quad $\vz_{t+1} = \FM{\fP_\FZ\left(\vz'_t - \eta\vs_{t+1/2}\right), K}$ \\[0.15cm]
    \STATE\quad $\begin{cases} 
    \vw_{t+1} = \vz_{t+1},~~~\vv_{t+1}=\vg(\vz_{t+1}),~~\vs_{t+1} = \FM{\vs_{t}+\vv_{t+1}-\vv_{t}, K}  &  \text{ with probability } p, \\[0.1cm]
    \vw_{t+1} = \vw_t,~~~~~~\vv_{t+1}=\vv_t,~~~~~~~~~~~\vs_{t+1}=\vs_t & \text{ with probability } 1 - p. \end{cases}$ \\[0.2cm]
\STATE\textbf{end for} \\[0.15cm]
\end{algorithmic}
\end{algorithm*}

\begin{lem}\label{lem:grad-avg}
We suppose Assumption~\ref{asm:smooth} holds.
For Algorithm \ref{alg:dsvre}, it holds that
$\Norm{\bs_t - \bg(\bw_t)} \leq \frac{L}{\sqrt{m}}\Norm{\vw_t - \vone\bw_t}$
and
$\Norm{\BE_t[\bs_{t+1/2}] - \bg(\bz_{t+1/2})} \leq \frac{L}{\sqrt{m}}\Norm{\vz_{t+1/2} - \vone\bz_{t+1/2}}$.
\end{lem}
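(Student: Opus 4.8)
### Proof Proposal

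The plan is to exploit the \emph{key invariant} maintained by the algorithm: the average row of each $\vs$-variable always equals the average row of the corresponding $\vv$-variable. The reason is that each \texttt{FastMix} call preserves row averages (Lemma~\ref{lem:FM}), so the assignment $\vs_t = \FM{\vs_{t-1}+\vv_t-\vv_{t-1},K}$ gives $\bs_t = \bs_{t-1} + \bv_t - \bv_{t-1}$; unrolling this telescoping recursion together with the matching updates for $\vs_{t+1/2}$, $\vs_{t+1}$ and the initialization $\vs_0 = \FM{\vg(\vz_0),K_0}$ (so $\bs_0 = \bg(\bz_0) = \bv_0$), one obtains $\bs_t = \bv_t$ and $\BE_t[\bs_{t+1/2}] = \BE_t[\bv_{t+1/2}]$ for all $t$. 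I would state and prove this invariant first as the backbone of the argument.

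Next I would compute the two row-averages on the right. For the first bound: by the invariant $\bs_t = \bv_t = \frac1m\vone^\top\vg(\vw_t) = \frac1m\sum_i g_i(\vw_t(i))$ (using that on the branches where $\vv$ is not refreshed we still have $\vv_t = \vg(\vw_t)$, since $\vw$ is frozen exactly when $\vv$ is), whereas $\bg(\bw_t) = \frac1m\sum_i g_i(\bw_t)$ — the local gradient operators evaluated at the \emph{common} average point. Subtracting, $\bs_t - \bg(\bw_t) = \frac1m\sum_i\big(g_i(\vw_t(i)) - g_i(\bw_t)\big)$, and since each $g_i$ is $L$-Lipschitz under Assumption~\ref{asm:smooth} (the operator norm inherits the smoothness constant componentwise), $\Norm{\bs_t - \bg(\bw_t)} \le \frac1m\sum_i L\Norm{\vw_t(i) - \bw_t} \le \frac{L}{\sqrt m}\big(\sum_i\Norm{\vw_t(i)-\bw_t}^2\big)^{1/2} = \frac{L}{\sqrt m}\Norm{\vw_t - \vone\bw_t}$, where the middle step is Cauchy–Schwarz. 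The second bound is identical after taking $\BE_t[\cdot]$: conditioning on everything up to the draw of $j_i$, $\BE_t[\vv_{t+1/2}(i)] = g_i(\vw_t(i)) + \BE_{j_i}[g_{i,j_i}(\vz_{t+1/2}(i)) - g_{i,j_i}(\vw_t(i))] = g_i(\vz_{t+1/2}(i))$ since $j_i$ is uniform on $[n]$ and $g_i = \frac1n\sum_j g_{i,j}$; hence $\BE_t[\bs_{t+1/2}] = \frac1m\sum_i g_i(\vz_{t+1/2}(i))$, and the same Lipschitz-plus-Cauchy–Schwarz estimate yields $\Norm{\BE_t[\bs_{t+1/2}] - \bg(\bz_{t+1/2})} \le \frac{L}{\sqrt m}\Norm{\vz_{t+1/2} - \vone\bz_{t+1/2}}$.

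The main obstacle is bookkeeping rather than depth: one must verify carefully that the invariant $\bs_t = \bv_t$ survives the probabilistic branch in line~13 — on the "with probability $p$" branch $\vv_{t+1}=\vg(\vz_{t+1})$ and $\vs_{t+1}=\FM{\vs_t+\vv_{t+1}-\vv_t,K}$ so averages still telescope, while on the other branch nothing changes — and that $\vv_t = \vg(\vw_t)$ holds deterministically (not just in expectation) because $\vw$ and $\vv$ are updated in lockstep, with $\vw_0=\vz_0$, $\vv_0$ being the \texttt{FastMix} output whose average is $\bg(\bz_0)$, but whose full value is a consensus approximation — this is exactly why the bound is an inequality in the $\Norm{\cdot}$ of the consensus error rather than an equality, and I would be careful to use only the average-preservation property of \texttt{FastMix}, never that $\vs_0 = \vg(\vz_0)$ exactly. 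A minor point to flag: one should note that $\BE_t$ denotes conditional expectation given the iterates entering step $t$, and that the sampling indices $j_1,\dots,j_m$ are drawn independently across agents, which is what lets the per-agent expectations combine additively.
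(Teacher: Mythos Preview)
Your proposal is correct and follows essentially the same route as the paper. The paper isolates the gradient-tracking invariant $\bs_t = \frac{1}{m}\vone^\top\vg(\vw_t)$ and $\BE_t[\bs_{t+1/2}] = \frac{1}{m}\vone^\top\vg(\vz_{t+1/2})$ as a separate lemma (proved by the same telescoping/induction you outline, using only the average-preservation property of \texttt{FastMix}), and then combines $L$-Lipschitzness of each $g_i$ with the norm inequality $\Norm{\frac{1}{m}\sum_i a_i}^2 \le \frac{1}{m}\sum_i\Norm{a_i}^2$; your triangle-inequality-then-Cauchy--Schwarz path yields the identical bound.
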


Lemma \ref{lem:grad-avg} indicates $\bs_t$ and $\bs_{t+1/2}$ are good gradient estimators at $\bz_t$ and $\bw_t$ respectively when the consensus error $\Norm{\vw_t - \vone\bw_t}$ and $\Norm{\vz_{t+1/2} - \vone\bz_{t+1/2}}$ are small. Hence, we can characterize the convergence of Algorithm \ref{alg:dsvre} by showing how the mean variables $\bz_t$ and $\bw_t$ converge to $\bz^*$. 

We also introduce the vector
\begin{align*}
r_t^2 =\big[\Norm{\vz_t-\vone\bz_t}^2, \Norm{\vw_t-\vone\bw_t}^2, \eta^2\Norm{\vs_t-\vone\bs_t}^2\big]^\top.
\end{align*}
for analyzing the consensus error.
We always denote that
$\rho \triangleq \big(1-\sqrt{1-\lambda_2(W)}\big)^K < 1$, where $K$ is the number of iterations in FastMix (Algorithm \ref{alg:fm}).
For MC-SVRE (Algorithm \ref{alg:dsvre}), we use $\BE_t[\cdot]$ to present the expectation by fixing $\vz_0,\dots,\vz_t, \vw_0,\dots,\vw_t$.

The remainder of this section provides the convergence analysis for proposed MC-SVRE (Algorithm \ref{alg:dsvre}) under different kinds of assumptions. 
Table \ref{table:scsc-uc}, \ref{table:scsc-c} and \ref{table:cc} present the detailed comparison of proposed methods with existing algorithms for decentralized minimax optimization.

\subsection{Unconstrained Case}\label{sec:mc-svre-unconstrained}

We first consider the unconstrained minimax problem with  $\mu$-strongly-convex-$\mu$-strongly-concave assumption. In such case, projection steps in Algorithm \ref{alg:dsvre} are unnecessary and Lemma \ref{lem:FM} implies the update of mean variables can be written as
\begin{align}\label{eq:mean-update}
\begin{cases}
\bz'_t = \alpha \bz_t + (1 - \alpha) \bw_t, \\
\bz_{t+1/2} = \bz'_t - \eta\bs_t, \\
\bz_{t+1}  = \bz'_t - \eta\bs_{t+1/2}.
\end{cases}
\end{align}
By imitating the analysis of centralized algorithms and considering the consensus error from decentralized setting, we obtain the following lemma.

\begin{lem}\label{lem:zw}
Suppose Assumption~\ref{asm:smooth}, \ref{asm:scsc} and \ref{asm:unconstrained} hold. For Algorithm \ref{alg:dsvre} with $\eta = 1/\left(6\sqrt{n} L\right)$ and $c_1>0$, we have
\begin{align}\label{ieq:say-delta}
\begin{split}
& \left(1 + \frac{\eta \mu}{2} - c_1p\right) \BE_t\Norm{\bz_{t+1} - \bz^*}^2 + c_1 \BE_t\Norm{\bw_{t+1} - \bz^*}^2  \\
\leq & (1 - p) \Norm{\bz_t - \bz^*}^2 + (p + c_1(1 - p)) \Norm{\bw_t - \bz^*}^2 - \frac{\delta_t}{3n} 
  + \frac{8 L\eta\left(2\kappa + 3L\eta\right)\vone^\top r_t^2}{m}
\end{split}
\end{align}
where 
$\delta_t=\BE_t\left[\Norm{\bz_{t+1/2} - \bz_{t+1}}^2 + \Norm{\bz_{t+1/2} - \bw_t}^2\right]$.
\end{lem}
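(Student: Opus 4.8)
The plan is to mimic the standard single-machine analysis of variance-reduced extragradient (in the loopless SVRG style of Alacaoglu--Malitsky), applied to the mean iterates, while carefully tracking the extra error terms introduced by the fact that $\bs_t$ and $\bs_{t+1/2}$ are only \emph{approximate} gradients at $\bz_t$ and $\bz_{t+1/2}$. Concretely, from the mean update~\eqref{eq:mean-update}, $\bz_{t+1} = \bz'_t - \eta\bs_{t+1/2}$ with $\bz'_t = \alpha\bz_t + (1-\alpha)\bw_t$, so I would start from the identity $\Norm{\bz_{t+1}-\bz^*}^2 = \Norm{\bz'_t-\bz^*}^2 - 2\eta\inner{\bs_{t+1/2}}{\bz_{t+1}-\bz^*} - \Norm{\bz'_t - \bz_{t+1}}^2$ and expand the middle inner product by writing $\bs_{t+1/2}$ as $\bg(\bz_{t+1/2})$ plus the estimation/consensus error, and splitting $\bz_{t+1}-\bz^* = (\bz_{t+1}-\bz_{t+1/2}) + (\bz_{t+1/2}-\bz^*)$. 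The term $\inner{\bg(\bz_{t+1/2})}{\bz_{t+1/2}-\bz^*}$ is lower-bounded using $\mu$-strong monotonicity of $g$ (equivalent to Assumption~\ref{asm:scsc}), giving a $\Norm{\bz_{t+1/2}-\bz^*}^2$ contraction handle; the cross term with $\bz_{t+1}-\bz_{t+1/2}$ is controlled by rewriting $\bz_{t+1}-\bz_{t+1/2} = \eta(\bs_t - \bs_{t+1/2})$ and using $L$-Lipschitzness of the gradient operator together with Young's inequality, which is where the $\Norm{\bz_{t+1/2}-\bw_t}^2$ and $\Norm{\bz_{t+1/2}-\bz_{t+1}}^2$ negative terms (collected into $\delta_t$) come from.

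Next I would handle the variance of the stochastic estimator. Taking $\BE_t$ over the random index $j_i$, the quantity $\BE_t\Norm{\bg(\bz_{t+1/2}) - \bs_{t+1/2}}^2$ (or rather the version deviating from its own mean) is bounded, via the standard SVRG variance bound and $L$-smoothness, by $O(L^2)\BE_t\Norm{\bz_{t+1/2}-\bw_t}^2$ up to the consensus error already quantified in Lemma~\ref{lem:grad-avg}; this again feeds a multiple of $L^2\eta^2\BE_t\Norm{\bz_{t+1/2}-\bw_t}^2$ that must be dominated by the negative $\delta_t/(3n)$ budget, which is exactly why $\eta = 1/(6\sqrt n L)$ is chosen (so that $L^2\eta^2 \sim 1/(36n)$ is small relative to $1/(3n)$). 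Then I would invoke the loopless coupling: with probability $p = 1/(2n)$ the snapshot is refreshed so $\bw_{t+1} = \bz_{t+1}$, otherwise $\bw_{t+1} = \bw_t$; hence $\BE_t\Norm{\bw_{t+1}-\bz^*}^2 = p\,\BE_t\Norm{\bz_{t+1}-\bz^*}^2 + (1-p)\Norm{\bw_t-\bz^*}^2$, and I would add $c_1$ times this relation to the inequality for $\Norm{\bz_{t+1}-\bz^*}^2$ to form the Lyapunov combination on the left-hand side and produce the $(1-p)\Norm{\bz_t-\bz^*}^2 + (p + c_1(1-p))\Norm{\bw_t-\bz^*}^2$ on the right. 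The $\alpha\bz_t + (1-\alpha)\bw_t$ averaging in $\bz'_t$, with $\alpha = 1-p$, is what makes the $\bz_t$ and $\bw_t$ coefficients on the right come out cleanly after applying convexity of $\Norm{\cdot-\bz^*}^2$ to $\Norm{\bz'_t-\bz^*}^2$.

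Throughout, every occurrence of a consensus discrepancy — $\Norm{\bs_{t+1/2} - \bg(\bz_{t+1/2})}$, $\Norm{\bs_t - \bg(\bw_t)}$, and the analogous deviations of $\bz_{t+1/2}$, $\bw_t$, $\bs_t$ from their row-means — is bounded using Lemma~\ref{lem:grad-avg} and the definition of $r_t^2$; collecting these with the right numerical constants yields the $\frac{8L\eta(2\kappa + 3L\eta)\vone^\top r_t^2}{m}$ term. The main obstacle I anticipate is the bookkeeping: getting all the Young's-inequality splitting constants to align so that (i) the coefficient of $\Norm{\bz_{t+1/2}-\bz^*}^2$ from strong monotonicity exactly absorbs the $\eta\mu/2$ improvement transferred onto the $\Norm{\bz_{t+1}-\bz^*}^2$ coefficient, (ii) the leftover $\BE_t\Norm{\bz_{t+1/2}-\bz_{t+1}}^2$ and $\BE_t\Norm{\bz_{t+1/2}-\bw_t}^2$ terms survive with a strictly positive coefficient at least $1/(3n)$ after subtracting the variance contribution, and (iii) all consensus-error contributions consolidate into a single scalar product against $r_t^2$ with the stated constant. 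This is a delicate constant-chasing argument rather than a conceptually hard one, and the choice $\eta = 1/(6\sqrt n L)$ together with $p = 1/(2n)$ is precisely what makes the inequalities close; I would set up the algebra symbolically first and only fix the constants at the end.
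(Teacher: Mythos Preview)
Your plan is essentially the paper's proof: start from the mean updates, use the three-point/polarization identities, strong monotonicity for the $\inner{\bg(\bz_{t+1/2})}{\bz_{t+1/2}-\bz^*}$ term, Young's inequality on $2\eta\inner{\bs_{t+1/2}-\bs_t}{\bz_{t+1/2}-\bz_{t+1}}$, the SVRG variance bound $\BE_t\Norm{\bs_{t+1/2}-\bs_t}^2\le \tfrac{3L^2}{m}\Norm{\vz_{t+1/2}-\vw_t}^2$, Lemma~\ref{lem:grad-avg} for the bias, and finally the loopless coupling $\BE_t\Norm{\bw_{t+1}-\bz^*}^2=p\,\BE_t\Norm{\bz_{t+1}-\bz^*}^2+(1-p)\Norm{\bw_t-\bz^*}^2$ weighted by $c_1$.

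There is one point where your description is off and, if followed literally, would not close. You attribute the \emph{negative} $\Norm{\bz_{t+1/2}-\bw_t}^2$ contribution in $\delta_t$ to ``Young's inequality on the cross term''; Young's inequality only produces \emph{positive} terms there (in the paper this is the $+6\eta^2L^2\Norm{\bz_{t+1/2}-\bw_t}^2$ from the variance bound). The negative piece $-p\Norm{\bz_{t+1/2}-\bw_t}^2$ comes instead from expanding the three-point identity \emph{linearly} in the convex combination $\bz'_t=\alpha\bz_t+(1-\alpha)\bw_t$: writing $2\inner{\bz_{t+1/2}-\bz'_t}{\bz_{t+1}-\bz_{t+1/2}}=2\alpha\inner{\bz_{t+1/2}-\bz_t}{\bz_{t+1}-\bz_{t+1/2}}+2(1-\alpha)\inner{\bz_{t+1/2}-\bw_t}{\bz_{t+1}-\bz_{t+1/2}}$ and then applying $2\inner{a}{b}=\Norm{a+b}^2-\Norm{a}^2-\Norm{b}^2$ to each piece yields exactly $-(1-\alpha)\Norm{\bz_{t+1/2}-\bw_t}^2=-p\Norm{\bz_{t+1/2}-\bw_t}^2$. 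Your stated plan of ``applying convexity to $\Norm{\bz'_t-\bz^*}^2$'' (i.e.\ Jensen) is not enough by itself: Jensen only gives the upper bound $\alpha\Norm{\bz_t-\bz^*}^2+(1-\alpha)\Norm{\bw_t-\bz^*}^2$ and discards the companion negative term. You must use the exact linear split (equivalently, the identity $\Norm{\alpha a+(1-\alpha)b}^2=\alpha\Norm{a}^2+(1-\alpha)\Norm{b}^2-\alpha(1-\alpha)\Norm{a-b}^2$ on both $\Norm{\bz'_t-\bz^*}^2$ and $\Norm{\bz_{t+1/2}-\bz'_t}^2$, after which the $\alpha(1-\alpha)\Norm{\bz_t-\bw_t}^2$ pieces cancel). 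With that correction, $p-6\eta^2L^2=\tfrac{1}{2n}-\tfrac{1}{6n}=\tfrac{1}{3n}$ and $\tfrac12-\tfrac{5\eta\mu}{2}\ge\tfrac{1}{3n}$ give the claimed $-\delta_t/(3n)$, and the rest of your bookkeeping goes through as in the paper.
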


We also have the recursion of consensus error as follows.
\begin{lem}\label{lem:error}
Under the settings of Lemma \ref{lem:zw} and suppose $\rho\leq 1/\sqrt{291}$. Then for Algorithm \ref{alg:dsvre} holds that
\begin{align}\label{ieq:say-delta2}
    \BE_t\left[\vone^\top r_{t+1}^2\right]
\leq  \left(1-\frac{1}{3n}\right) \vone^\top r_t^2 +
4\rho^2m \delta_t.
\end{align}
\end{lem}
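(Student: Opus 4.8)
The plan is to track how each of the three consensus-error components in $r_{t+1}^2$ is produced by the algorithm, express it in terms of quantities at time $t$ (plus the one-step displacement $\delta_t$), and then apply Lemma \ref{lem:FM} to every \texttt{FastMix} call to pull out the contraction factor $\rho^2$. First I would write out the four \texttt{FastMix} invocations in the loop: $\vs_t$ from $\vs_{t-1}+\vv_t-\vv_{t-1}$; $\vz_{t+1/2}$ from $\fP_\FZ(\vz'_t-\eta\vs_t)$; $\vs_{t+1/2}$ from $\vs_t+\vv_{t+1/2}-\vv_t$; and $\vz_{t+1}$ from $\fP_\FZ(\vz'_t-\eta\vs_{t+1/2})$ (together with the probabilistic branch for $\vw_{t+1},\vv_{t+1},\vs_{t+1}$). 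For each, Lemma \ref{lem:FM} gives that the post-mix consensus error is at most $\rho$ times the pre-mix deviation from its own row-average, and since \texttt{FastMix} preserves the mean, the mean variables follow the clean recursion \eqref{eq:mean-update}. So for instance $\Norm{\vz_{t+1}-\vone\bz_{t+1}}^2 \le \rho^2 \Norm{\fP_\FZ(\vz'_t-\eta\vs_{t+1/2}) - \vone\,\overline{\fP_\FZ(\vz'_t-\eta\vs_{t+1/2})}}^2$, and the inner quantity is controlled by using nonexpansiveness of $\fP_\FZ$ (and the fact that $\vone\bz^*$ — or a suitable common point — is fixed by the projection) to bound it by $\Norm{\vz'_t - \eta\vs_{t+1/2} - \vone(\bz'_t-\eta\bs_{t+1/2})}^2$, i.e. by $O(\Norm{\vz'_t-\vone\bz'_t}^2 + \eta^2\Norm{\vs_{t+1/2}-\vone\bs_{t+1/2}}^2)$.

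The second step is to bound the intermediate deviations $\Norm{\vz'_t-\vone\bz'_t}^2$ and the stochastic-gradient deviations. Since $\vz'_t=\alpha\vz_t+(1-\alpha)\vw_t$, convexity of the squared norm gives $\Norm{\vz'_t-\vone\bz'_t}^2 \le \alpha\Norm{\vz_t-\vone\bz_t}^2+(1-\alpha)\Norm{\vw_t-\vone\bw_t}^2$, which is bounded by the first two entries of $\vone^\top r_t^2$. For the $\vs$-updates I would peel off one \texttt{FastMix} factor $\rho^2$ and then control the perturbation $\vv_{t+1/2}-\vv_t = g_{i,j_i}(\vz_{t+1/2})-g_{i,j_i}(\vw_t)$ using $L$-smoothness (Assumption \ref{asm:smooth}): its squared deviation from row-mean is bounded by $L^2\Norm{\vz_{t+1/2}-\vw_t}^2$ up to constants, and $\Norm{\vz_{t+1/2}-\vw_t}^2$ splits into consensus terms plus $\Norm{\bz_{t+1/2}-\bw_t}^2$, the latter being exactly part of $\delta_t$. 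The remaining $\bz_{t+1/2}-\bz_{t+1}=\eta(\bs_{t+1/2}-\bs_t)$ term similarly feeds into $\delta_t$ after taking $\BE_t$. Assembling, every term is either $\rho^2$ times an entry of $\vone^\top r_t^2$, with total multiplier bounded by something like $C\rho^2$ for an absolute constant $C$ (this is where $\rho\le 1/\sqrt{291}$ is used, so that $C\rho^2 \le 1 - \tfrac{1}{3n}$ given $\eta=1/(6\sqrt n L)$), or it is of the form $4\rho^2 m\,\delta_t$.

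The main obstacle I expect is the bookkeeping: getting the constants to close so that the aggregate multiplier on $\vone^\top r_t^2$ is exactly $1-\tfrac{1}{3n}$ rather than something merely $<1$. This requires (i) being careful that each of the three components of $r^2$ is weighted correctly — note $r^2$ uses $\eta^2$ on the $\vs$-term, which is what makes the smoothness substitution $\eta^2 L^2 = 1/(36 n)$ produce the $1/n$-scale contraction — and (ii) chasing the probabilistic branch for $\vw_{t+1}$: with probability $p$, $\vw_{t+1}=\vz_{t+1}$ so $\Norm{\vw_{t+1}-\vone\bw_{t+1}}^2=\Norm{\vz_{t+1}-\vone\bz_{t+1}}^2$, and with probability $1-p$ it equals $\Norm{\vw_t-\vone\bw_t}^2$; taking $\BE_t$ blends these, and one must check the blended coefficient still fits under $1-\tfrac{1}{3n}$. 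A secondary subtlety is handling the projection in the constrained cases — one needs a fixed common point in $\FZ$ (e.g. $\vone z^*$ or $\vone\bz_{t+1/2}^{\text{proj}}$) so that subtracting it and using $\|\fP_\FZ(a)-\fP_\FZ(b)\|\le\|a-b\|$ is legitimate — but since the mean of a projected aggregate is generally not the projection of the mean, care is needed to phrase the deviation bound purely in terms of $\|\,\cdot\,-\vone(\text{its own mean})\|$, for which one uses $\|\vu - \vone\bu\| \le \|\vu - \vone c\|$ for any row vector $c$.
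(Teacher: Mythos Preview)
Your plan is the same route the paper takes: bound each component of $r_{t+1}^2$ separately using the FastMix contraction (Lemma~\ref{lem:FM}), $L$-smoothness for the gradient differences, and the probabilistic branch for $\vw_{t+1}$, then sum. That is correct, and the paper executes it by first proving five one-line auxiliary lemmas (one per intermediate variable) and then assembling them into a $3\times 3$ matrix inequality for $r_{t+1}^2$.

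There is, however, a conceptual slip in your account of where the factor $1-\tfrac{1}{3n}$ comes from. You write that the multiplier on $\vone^\top r_t^2$ is ``something like $C\rho^2$'' and that $\rho\le 1/\sqrt{291}$ forces $C\rho^2\le 1-\tfrac{1}{3n}$; you also say $\eta^2 L^2=1/(36n)$ ``produces the $1/n$-scale contraction''. Neither is the actual mechanism. The $\vw$-component never passes through FastMix: it satisfies exactly
\[
\BE_t\Norm{\vw_{t+1}-\vone\bw_{t+1}}^2=(1-p)\Norm{\vw_t-\vone\bw_t}^2+p\,\BE_t\Norm{\vz_{t+1}-\vone\bz_{t+1}}^2,
\]
so the coefficient $(1-p)=1-\tfrac{1}{2n}$ on $\Norm{\vw_t-\vone\bw_t}^2$ is the \emph{slowest} term in the whole recursion and is what sets the $1/n$ rate. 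The condition $\rho\le 1/\sqrt{291}$ is used only to guarantee that the $O(\rho^2)$ cross-contamination coming from the $\vz$- and $\vs$-columns does not erode $(1-\tfrac{1}{2n})$ past $(1-\tfrac{1}{3n})$. If you carry out the bookkeeping believing the target is $C\rho^2\le 1-\tfrac{1}{3n}$, you will be puzzled when the $\vw$-column sum refuses to be $O(\rho^2)$.

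One smaller remark: the discussion of projections and ``a suitable common point fixed by the projection'' is unnecessary here. This lemma sits under the hypotheses of Lemma~\ref{lem:zw}, which include Assumption~\ref{asm:unconstrained}, so $\fP_\FZ$ is the identity and the paper simply writes $\vz_{t+1/2}=\BT(\vz'_t-\eta\vs_t)$, $\vz_{t+1}=\BT(\vz'_t-\eta\vs_{t+1/2})$ with no projection to worry about.
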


\begin{table*}[t]
\centering
\caption{\small Complexities to obtain $\BE\Norm{\bz-\bz^*}^2\leq\eps$ for unconstrained and strongly-convex-strongly-concave case} \vskip 0.15cm
\label{table:scsc-uc}
{\small\begin{tabular}{cccc}
\hline 
Algorithm & SFO Complexity & Communication Complexity & Reference  \\\hline 
\addlinespace
MC-SVRE & $\fO\left(\left(n+\kappa\sqrt{n}\right)\log\left(\frac{1}{\eps}\right)\right)$ & $\fO\left((n+\kappa\sqrt{n})\sqrt{\chi}\log(\kappa n)\log\left(\frac{1}{\eps}\right)\right)$  & Algorithm \ref{alg:dsvre} (Theorem \ref{thm:scsc-unconstrained}) \\ \addlinespace
MC-EG & $\fO\left(\kappa n\log\left(\frac{1}{\eps}\right)\right)$ & $\fO\left(\kappa\sqrt{\chi}\log\kappa\log\left(\frac{1}{\eps}\right)\right)$  & Algorithm \ref{alg:deg} (Theorem \ref{thm:eg-unconstrained}) \\ \addlinespace
GTEG & $\fO\big(\big(\kappa^{4/3}\chi^{4/3}\vee \kappa\chi^2\big)n\log\big(\frac{1}{\eps}\big)\big)$  & $\fO\big(\big(\kappa^{4/3}\chi^{4/3}\vee\kappa\chi^2\big)\log\big(\frac{1}{\eps}\big)\big)$  &  \citet{mukherjee2020decentralized} \\\addlinespace \hline
\end{tabular}}\vskip-0.25cm
\end{table*}

\begin{table*}[t]
\centering
\caption{Complexities to obtain $\BE\Norm{\bz-\bz^*}^2\leq\eps$ for constrained and strongly-convex-strongly-concave case}\vskip 0.15cm
\begin{tabular}{cccc}
\hline 
Algorithm & SFO Complexity & Communication Complexity & Reference  \\\hline 
\addlinespace
MC-SVRE & $\fO\left(\left(n+\kappa\sqrt{n}\right)\log\left(\frac{1}{\eps}\right)\right)$ & $\fO\left((n+\kappa\sqrt{n})\sqrt{\chi}\log(\frac{\kappa n}{\eps})\log\left(\frac{1}{\eps}\right)\right)$  & Algorithm~\ref{alg:dsvre} (Theorem \ref{thm:scsc-constrained})  \\ \addlinespace
MC-EG & $\fO\left(\kappa n\log\left(\frac{1}{\eps}\right)\right)$ & $\fO\left(\kappa\sqrt{\chi}\log\left(\frac{\kappa }{\eps}\right)\log\left(\frac{1}{\eps}\right)\right)$  & Algorithm \ref{alg:deg} (Theorem \ref{thm:eg-scsc-constarined})  \\ \addlinespace
DESM & $\fO\left(\frac{\sigma^2}{\mu^2\eps}\right)$ &  $\fO\left(\kappa\sqrt{\chi}\log(\frac{\kappa }{\eps})\log\left(\frac{1}{\eps}\right)\right)$ & \citet{beznosikov2020distributed} \\ \addlinespace \hline
\end{tabular}
\label{table:scsc-c}
\end{table*}

\begin{table*}[t]
\centering
\caption{Complexities to obtain $\BE\left[f(\hat x,y^*)-f(x^*,\hat y)\right]\leq\eps$ for constrained and convex-concave case}\vskip 0.15cm
\begin{tabular}{cccc}
\hline 
Algorithm & SFO Complexity & Communication Complexity & Reference\\\hline
\addlinespace
MC-SVRE & $\fO\left(n + \frac{\sqrt{n}L}{\eps}\right)$  & $\fO\left(\frac{\sqrt{n\chi}L}{\eps}\log\left(\frac{nL}{\eps}\right)\right)$  & Algorithm \ref{alg:dsvre} (Theorem \ref{thm:cc-constrained})   \\ \addlinespace 
MC-EG & $\fO\left(\frac{nL}{\eps}\right)$  & $\fO\left(\frac{L\sqrt{\chi}}{\eps}\log\left(\frac{L}{\eps}\right)\right)$  & Algorithm \ref{alg:deg} (Theorem \ref{thm:eg-cc-constrained})    \\ \addlinespace 
DESM & $\fO\left(\frac{\sigma^2}{\eps^2}\right)$  & $\fO\left(\frac{L\sqrt{\chi}}{\eps}\right)$  & \citet{beznosikov2020distributed}  \\ \addlinespace
\hline
\end{tabular}
\label{table:cc}
\end{table*}

Note that the term related to $\delta_t$ in (\ref{ieq:say-delta}) is typically ignored in the study of single machine convex-concave minimax optimization~\cite{mokhtari2020unified,alacaoglu2021stochastic}, while it is useful to the analysis of decentralized algorithms.
Specifically, connecting Lemma \ref{lem:zw} and \ref{lem:error} with sufficient small $\rho$ and appropriate $c_1$, we can cancel the last term of (\ref{ieq:say-delta2}) and establish the linear convergence of the weighed sum of $\Norm{\bz_{t} - \bz^*}^2$, $\Norm{\bw_{t} - \bz^*}^2$ and $\vone^\top r_{t}$.
\begin{lem}\label{lem:scsc-unconstrained}
Under the settings of Lemma \ref{lem:error}, define
\begin{align*}
V_t = \Norm{\bz_{t} - \bz^*}^2 + \frac{c_1\Norm{\bw_{t} - \bz^*}^2}{1 + \frac{\eta \mu}{2} - c_1p}  + \frac{c_2C\vone^\top r_{t}}{1 + \frac{\eta \mu}{2} - c_1p}
\end{align*}
where 
$C = \frac{8 L\eta\left(2\kappa + 3L\eta\right)}{m}$, 
$c_1 =\frac{2\eta \mu + 4 p}{\eta \mu + 4 p}$
and $c_2=\frac{3}{p}$. 
Then Algorithm \ref{alg:dsvre}  holds that
\begin{align*}
\BE_t\left[V_{t+1}\right] 
\leq \left(1-\frac{1}{6(n+4\kappa\sqrt{n})}\right)  V_t
\end{align*}
by taking 
$K = \left\lceil\sqrt{\chi}\log\left(\max\left\{\sqrt{12mnc_2C},~\sqrt{291}\right\}\right)\right\rceil$.
\end{lem}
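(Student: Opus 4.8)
The plan is to collapse Lemma~\ref{lem:zw} and Lemma~\ref{lem:error} into a single one‑step contraction of $V_t$. Abbreviate $A\triangleq 1+\frac{\eta\mu}{2}-c_1p$; since the stated $c_1=\frac{2\eta\mu+4p}{\eta\mu+4p}\in(1,2)$, a one‑line computation gives $A=1+\frac{\eta^2\mu^2-8p^2}{2(\eta\mu+4p)}\ge 1-p>0$, so all three weights in $V_t$ are positive and $V_t\ge 0$. First I would divide \eqref{ieq:say-delta} by $A$ — its left‑hand side is exactly $A\,\BE_t\Norm{\bz_{t+1}-\bz^*}^2+c_1\BE_t\Norm{\bw_{t+1}-\bz^*}^2$ — and add $\frac{c_2C}{A}$ times \eqref{ieq:say-delta2}. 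With $C=\frac{8L\eta(2\kappa+3L\eta)}{m}$ this gives
\begin{align*}
\BE_t[V_{t+1}] &\le \frac{1-p}{A}\Norm{\bz_t-\bz^*}^2 + \frac{p+c_1(1-p)}{A}\Norm{\bw_t-\bz^*}^2 \\
&\quad + \frac{C}{A}\left(1+c_2\left(1-\tfrac{1}{3n}\right)\right)\vone^\top r_t^2 + \frac{1}{A}\left(4\rho^2mc_2C-\tfrac{1}{3n}\right)\delta_t .
\end{align*}

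Next I would choose $\rho$ (equivalently $K$) so that the $\delta_t$ coefficient is nonpositive, i.e. $4\rho^2mc_2C\le\frac{1}{3n}$, and simultaneously $\rho\le 1/\sqrt{291}$ as Lemma~\ref{lem:error} demands; both are implied by $\rho\le 1/\max\{\sqrt{12mnc_2C},\sqrt{291}\}$. Since $1-\lambda_2(W)=1/\chi$ and $1-x\le e^{-x}$, FastMix gives $\rho=(1-\sqrt{1-\lambda_2(W)})^K\le e^{-K/\sqrt\chi}$, so the stated $K=\lceil\sqrt\chi\log(\max\{\sqrt{12mnc_2C},\sqrt{291}\})\rceil$ is exactly what forces $\rho$ below that threshold. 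Using $\delta_t\ge 0$, I may then drop the $\delta_t$ term, which leaves $\BE_t[V_{t+1}]$ as a nonnegative combination of $\Norm{\bz_t-\bz^*}^2$, $\Norm{\bw_t-\bz^*}^2$, $\vone^\top r_t^2$ to be compared coefficient‑by‑coefficient with $V_t$.

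The three resulting contraction ratios are $\frac{1-p}{A}$, $\frac{p+c_1(1-p)}{c_1}$ and $\frac{1}{c_2}+1-\frac{1}{3n}$. With $c_2=3/p=6n$ the third equals exactly $1-\frac{1}{6n}$. For the second, $1-\frac{p+c_1(1-p)}{c_1}=\frac{p(c_1-1)}{c_1}=\frac{p\,\eta\mu}{2(\eta\mu+2p)}$, and inserting $p=\frac{1}{2n}$ and $\eta\mu=\frac{1}{6\sqrt n\,\kappa}$ yields $\frac{1}{4(n+6\kappa\sqrt n)}$. For the first, $1-\frac{1-p}{A}=\frac{\eta\mu(\eta\mu+2p)}{2\eta\mu+8p+\eta^2\mu^2-8p^2}\ge\frac{2p\,\eta\mu}{3\eta\mu+8p}=\frac{1}{3(n+8\kappa\sqrt n)}$, where I used $\eta\mu\le 1$ (hence $\eta^2\mu^2\le\eta\mu$) and discarded $-8p^2$ and the numerator $\eta^2\mu^2$. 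Each of $\frac{1}{6n}$, $\frac{1}{4(n+6\kappa\sqrt n)}$, $\frac{1}{3(n+8\kappa\sqrt n)}$ is at least $\frac{1}{6(n+4\kappa\sqrt n)}$, so all three ratios are $\le 1-\frac{1}{6(n+4\kappa\sqrt n)}$; taking the maximum gives $\BE_t[V_{t+1}]\le\left(1-\frac{1}{6(n+4\kappa\sqrt n)}\right)V_t$.

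The main obstacle is precisely this last comparison. Unlike the single‑machine SVRG‑type analysis, one cannot afford the crude bound $\frac{1-p}{A}\le 1-p$: when $n\lesssim\kappa^2$ the quantity $A$ dips below $1$, and it is the retained $+\eta^2\mu^2$ term in the denominator of $1-\frac{1-p}{A}$ that injects the $\kappa\sqrt n$ into the rate. Likewise the specific choice $c_2=3/p$ is what makes the consensus‑error ratio collapse to $1-\frac{1}{6n}$ after merging the $C/A$ contribution of Lemma~\ref{lem:zw} with the $1-\frac{1}{3n}$ decay of Lemma~\ref{lem:error}; everything else is bookkeeping in $n$ and $\kappa$.
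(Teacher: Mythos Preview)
Your proposal is correct and follows essentially the same route as the paper: combine Lemma~\ref{lem:zw} and Lemma~\ref{lem:error} with weights $1/A$ and $c_2C/A$, use the stated $K$ to kill the $\delta_t$ term, and then compare the three resulting coefficients against those in $V_t$. The only cosmetic difference is in the algebra for the first ratio: the paper keeps the $\eta^2\mu^2$ in the numerator and simplifies to $1-\frac{1-p}{A}\ge\frac{\eta\mu}{4}=\frac{1}{24\kappa\sqrt n}$, whereas you drop it and obtain $\frac{1}{3(n+8\kappa\sqrt n)}$; both lower bounds exceed $\frac{1}{6(n+4\kappa\sqrt n)}$, so the conclusion is the same.
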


The update rule of $\vv_{t+1}$ indicates the full gradient $\vg(\vz_{t+1})$ is computed with probability $p=1/2n$ at each iteration. Hence, MC-SVRE has $\fO(Tnp)=\fO(T)$ SFO complexity totally in expectation.
Note that Algorithm \ref{alg:dsvre} runs FastMix with $K_0$ iterations on $\vg(\vz_0)$ in line 2, which is used to upper bound the term $\vone^\top r_0$ in $V_0$. Finally, we obtain our main result for the unconstrained and strongly-convex-strongly-concave case.

\begin{thm}\label{thm:scsc-unconstrained}
Under the settings of Lemma \ref{lem:scsc-unconstrained},
let
\begin{align*}
T = \left\lceil 6(n+4\kappa\sqrt{n})\log\left(\frac{6\Norm{\bz_0-z^*}^2 + \eps}{\eps}\right)\right\rceil, \quad
K_0 = \left\lceil\sqrt{\chi}\log\left(\frac{10\kappa}{3m\sqrt{n}L^2\eps}\Norm{\vg(\vz_0)-\frac{1}{m}\vone\vone^\top\vg(\vz_0)}^2 \right)\right\rceil
\end{align*}
and suppose $n\geq 2$ for Algorithm \ref{alg:dsvre}. Then it requires at most $\fO\left(\left(n+\kappa\sqrt{n}\right)\log\left(1/\eps\right)\right)$
SFO calls on each agent in expectation and 
$\fO\left(\left(n+\kappa\sqrt{n}\right)\sqrt{\chi}\log(\kappa n)\log\left(1/\eps\right)\right)$
communication rounds to obtain 
$\BE_T\Norm{\bz_{T}-\bz^*}^2 \leq \eps$.
\end{thm}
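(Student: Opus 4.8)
The plan is to combine the per-iteration contraction from Lemma~\ref{lem:scsc-unconstrained} with a careful accounting of the initial potential $V_0$. First I would unroll the recursion $\BE_t[V_{t+1}] \leq (1 - \tfrac{1}{6(n+4\kappa\sqrt n)}) V_t$ across $t = 0, \dots, T-1$, taking total expectations and using the tower property, to obtain $\BE[V_T] \leq (1 - \tfrac{1}{6(n+4\kappa\sqrt n)})^T V_0 \leq \exp(-T/(6(n+4\kappa\sqrt n))) V_0$. Plugging in the stated $T = \lceil 6(n+4\kappa\sqrt n)\log((6\Norm{\bz_0 - z^*}^2 + \eps)/\eps)\rceil$ makes the contraction factor at most $\eps/(6\Norm{\bz_0 - z^*}^2 + \eps)$, so it suffices to show $V_0 \leq 6\Norm{\bz_0 - z^*}^2 + \eps$; then $\BE[V_T] \leq \eps$, and since $\Norm{\bz_T - \bz^*}^2 \leq V_T$ by the nonnegativity of the other two terms in $V_t$ (here I use that $1 + \tfrac{\eta\mu}{2} - c_1 p > 0$, which should follow from the choice $\eta = 1/(6\sqrt n L)$, $c_1 = (2\eta\mu + 4p)/(\eta\mu + 4p) < 2$, and $p = 1/(2n)$ — one checks $c_1 p < \eta\mu/2 + 1$ directly), we conclude $\BE_T\Norm{\bz_T - \bz^*}^2 \leq \eps$.

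The substantive step is bounding $V_0$. By the initialization $\vw_0 = \vz_0 = [z_0^\top;\dots;z_0^\top]$, we have $\bz_0 = \bw_0 = z_0^\top$, so $\Norm{\bw_0 - \bz^*}^2 = \Norm{\bz_0 - \bz^*}^2$, and the first two terms of $V_0$ together contribute $(1 + c_1/(1+\tfrac{\eta\mu}{2}-c_1 p))\Norm{\bz_0 - \bz^*}^2$, which is bounded by a small absolute constant (at most $3$) times $\Norm{\bz_0 - \bz^*}^2$ using $c_1 < 2$ and $1 + \tfrac{\eta\mu}{2} - c_1 p \geq 1$. For the consensus term, note that $\vz_0$ and $\vw_0$ have identical rows so $\Norm{\vz_0 - \vone\bz_0}^2 = \Norm{\vw_0 - \vone\bw_0}^2 = 0$; the only nonzero component of $r_0^2$ is $\eta^2\Norm{\vs_0 - \vone\bs_0}^2$. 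Since $\vs_0 = \vv_0 = \FM(\vg(\vz_0), K_0)$, Lemma~\ref{lem:FM} gives $\Norm{\vs_0 - \vone\bs_0} \leq \rho_0 \Norm{\vg(\vz_0) - \tfrac1m\vone\vone^\top\vg(\vz_0)}$ where $\rho_0 = (1 - \sqrt{1-\lambda_2(W)})^{K_0}$, so $\vone^\top r_0^2 = \eta^2\Norm{\vs_0 - \vone\bs_0}^2 \leq \eta^2 \rho_0^2 \Norm{\vg(\vz_0) - \tfrac1m\vone\vone^\top\vg(\vz_0)}^2$. The choice of $K_0$ is engineered exactly so that $\rho_0^2 \leq 3m\sqrt n L^2 \eps/(10\kappa \Norm{\vg(\vz_0) - \tfrac1m\vone\vone^\top\vg(\vz_0)}^2)$ (reading off from $K_0 = \lceil\sqrt\chi\log(\cdot)\rceil$ and $\sqrt\chi = 1/\sqrt{1-\lambda_2(W)} \geq 1/(-\log(1-\sqrt{1-\lambda_2(W)}))$-type estimates), which forces $\tfrac{c_2 C}{1+\tfrac{\eta\mu}{2}-c_1 p}\vone^\top r_0^2 \leq \eps$ after substituting $C = 8L\eta(2\kappa+3L\eta)/m$, $c_2 = 3/p = 6n$, and $\eta = 1/(6\sqrt n L)$ and cancelling; I would carry this arithmetic out to confirm the constant $10\kappa/(3m\sqrt n L^2 \eps)$ matches. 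Combining, $V_0 \leq 3\Norm{\bz_0 - \bz^*}^2 + \eps \leq 6\Norm{\bz_0 - \bz^*}^2 + \eps$.

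Finally, for the complexity counts: the SFO cost per iteration is $\fO(1)$ amortized, since the expensive branch $\vv_{t+1} = \vg(\vz_{t+1})$ (costing $n$ SFO calls) fires with probability $p = 1/(2n)$ and the cheap branch costs $\fO(1)$; over $T = \fO((n + \kappa\sqrt n)\log(1/\eps))$ iterations this gives $\fO((n+\kappa\sqrt n)\log(1/\eps))$ SFO calls in expectation, plus the one-time $\fO(n)$ for $\vg(\vz_0)$ in line~2, which is absorbed. Each iteration invokes FastMix a constant number of times with $K = \lceil\sqrt\chi\log(\max\{\sqrt{12mnc_2C}, \sqrt{291}\})\rceil = \fO(\sqrt\chi\log(\kappa n))$ rounds (checking $12mnc_2 C = 12mn\cdot 6n\cdot 8L\eta(2\kappa+3L\eta)/m = \fO(n^2 \cdot L\eta\kappa) = \fO(n\kappa/\sqrt n\cdot\sqrt n) = \fO(n\kappa)$ with $\eta = \Theta(1/(\sqrt n L))$, so $\log$ of it is $\fO(\log(\kappa n))$), and the line-2 cost $K_0 = \fO(\sqrt\chi\log(\kappa n/\eps))$ is a lower-order additive term; multiplying by $T$ yields $\fO((n+\kappa\sqrt n)\sqrt\chi\log(\kappa n)\log(1/\eps))$ communication rounds. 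The main obstacle is the $V_0$ bound — specifically, verifying that the somewhat opaque constant inside $K_0$ is precisely what is needed to make $\tfrac{c_2 C}{1+\tfrac{\eta\mu}{2}-c_1 p}\vone^\top r_0^2 \leq \eps$, which requires threading the definitions of $C$, $c_2$, $\eta$, and the FastMix rate together without slack; the rest is bookkeeping.
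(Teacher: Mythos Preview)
Your proposal follows exactly the same route as the paper's proof: unroll the contraction of Lemma~\ref{lem:scsc-unconstrained}, bound $V_0$ by handling the consensus term via the $K_0$ rounds of FastMix at initialization, and then read off the SFO and communication complexities. One arithmetic slip to fix: the claim $1 + \tfrac{\eta\mu}{2} - c_1 p \geq 1$ is false in general, since $c_1 p$ can be as large as $1/n$ while $\eta\mu/2 = 1/(12\kappa\sqrt n)$; this is precisely where the hypothesis $n \geq 2$ is used, to get $1 + \tfrac{\eta\mu}{2} - c_1 p \geq 1 - 1/n \geq 1/2$, so $\tfrac{c_1}{1+\eta\mu/2 - c_1 p} \leq 4$ (not $2$), and the first two terms of $V_0$ contribute at most $5\Norm{\bz_0 - \bz^*}^2$ rather than $3$ --- still safely under the $6$ you need.
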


\subsection{The Constrained Case}\label{sec:mc-svre-constrained}

For the constrained minimax problem, the updates rule of mean vector can be written as
\begin{align*}
\begin{cases}
\bz'_t = \alpha \bz_t + (1 - \alpha) \bw_t, \\
\bz_{t+1/2} =  \fP(\bz'_t - \eta\bs_t)  + \Delta_t,  \\
\bz_{t+1}  =  \fP(\bz'_t - \eta\bs_{t+1/2}) + \Delta_{t+1/2}. \\
\end{cases}
\end{align*}
where $\Delta_{t+1/2}=\frac{1}{m}\vone^\top\fP(\vz'_t-\eta\vs_{t+1/2}) - \fP(\bz'_t - \eta\bs_{t+1/2})$ and $\Delta_t=\frac{1}{m}\vone^\top\fP(\vz_t'-\eta\vs_t) - \fP(\bz'_t - \eta\bs_t)$. 
Compared with update rule (\ref{eq:mean-update}) for unconstrained case, we need to deal with the additional term $\Delta_t$ and $\Delta_{t+1/2}$.

For the strongly-convex-strongly-concave case, we establish the recursion relationship of $\bz_t$ and $\bw_t$ as follows.
\begin{lem}\label{lem:rel-constrained}
Suppose Assumption~\ref{asm:smooth}, \ref{asm:scsc} and \ref{asm:constrained} hold. For Algorithm \ref{alg:dsvre} with $\eta = 1/\left(6\sqrt{n} L\right)$, we have
\begin{align*}
\begin{split}
& \!\left(1 + \frac{\eta \mu}{2} - c_1p\right) \BE_t \Norm{\bz_{t+1} - \bz^*}^2 + c_1 \E_{t}\Norm{\bw_{t+1} - \bz^*}^2  \\
\leq & \left(1 - p\right) \Norm{\bz_t - \bz^*}^2 + \left(p + c_1\left(1 - p\right)\right) \Norm{\bw_t - \bz^*}^2 + \zeta_t
\end{split}
\end{align*}
where
\begin{align*}
\zeta_t = & \frac{2L\eta\left(2\kappa+
3L\eta\right)}{m}\left(\BE_t\Norm{\vz_{t+1/2}-\vone\bz_{t+1/2}}^2+ \Norm{\vw_t-\vone\bw_t}^2\right) \\
& + \BE_t\left[\Norm{\bz_{t+1}-\bz'_t+\eta \bs_{t+1/2}+\bz_{t+1}-\bz^*}\Norm{\Delta_{t+1/2}}\right]  
  + \BE_t\left[\Norm{\bz_{t+1/2} - \bz'_t + \eta\bs_t+\bz_{t+1/2} -\bz_{t+1}}\Norm{\Delta_t}\right]. 
\end{align*}
\end{lem}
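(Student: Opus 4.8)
The plan is to mimic the unconstrained analysis of Lemma \ref{lem:zw}, tracking the additional projection-induced perturbations $\Delta_t$ and $\Delta_{t+1/2}$ through the argument. First I would set up the basic one-step identity. Working with the mean iterates, I would write $\bz_{t+1} = \fP(\bz'_t - \eta \bs_{t+1/2}) + \Delta_{t+1/2}$ and use the firm nonexpansiveness of $\fP$ together with the standard extragradient "three-point" estimate: for the exact projection $\hat z_{t+1} := \fP(\bz'_t - \eta \bs_{t+1/2})$ one has, for any $u \in \fZ$,
\begin{align*}
\Norm{\hat z_{t+1} - u}^2 \le \Norm{\bz'_t - u}^2 - \Norm{\bz'_t - \hat z_{t+1}}^2 + 2\eta \inner{\bs_{t+1/2}}{u - \hat z_{t+1}}.
\end{align*}
Then I would substitute $\bz_{t+1} = \hat z_{t+1} + \Delta_{t+1/2}$, expand $\Norm{\bz_{t+1} - u}^2 = \Norm{\hat z_{t+1} - u}^2 + 2\inner{\hat z_{t+1} - u}{\Delta_{t+1/2}} + \Norm{\Delta_{t+1/2}}^2$, and collect the $\Delta$-terms into a product-form bound $\Norm{\,\cdot\,}\Norm{\Delta_{t+1/2}}$ as it appears in $\zeta_t$; the analogous manipulation for the half-step $\bz_{t+1/2}$ produces the $\Norm{\Delta_t}$ term.

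Next I would handle the inner product $\inner{\bs_{t+1/2}}{u - \hat z_{t+1}}$ with $u = \bz^*$. Here I would split $\bs_{t+1/2} = \bg(\bz_{t+1/2}) + (\BE_t[\bs_{t+1/2}] - \bg(\bz_{t+1/2})) + (\bs_{t+1/2} - \BE_t[\bs_{t+1/2}])$ and use: (i) the monotonicity/strong monotonicity of $g$ coming from Assumption \ref{asm:scsc} to extract the contraction factor $1 + \eta\mu/2$; (ii) Lemma \ref{lem:grad-avg} to bound the bias term $\Norm{\BE_t[\bs_{t+1/2}] - \bg(\bz_{t+1/2})} \le \frac{L}{\sqrt m}\Norm{\vz_{t+1/2} - \vone\bz_{t+1/2}}$, which contributes to the first (consensus-error) group in $\zeta_t$; (iii) $L$-smoothness (Assumption \ref{asm:smooth}) to control $\Norm{\bg(\bz_{t+1/2}) - \bs_t}$-type cross terms against the negative $\Norm{\bz'_t - \hat z_{t+1}}^2$ slack, again leaking only consensus-error terms $\Norm{\vw_t - \vone\bw_t}^2$ and $\Norm{\vz_{t+1/2}-\vone\bz_{t+1/2}}^2$. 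The choice $\eta = 1/(6\sqrt n L)$ is what makes the smoothness cross terms absorbable with the stated constant $2L\eta(2\kappa + 3L\eta)/m$, exactly paralleling Lemma \ref{lem:zw}.

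Finally I would incorporate the variance-reduction bookkeeping: taking $\BE_t[\cdot]$ kills the martingale term $\bs_{t+1/2} - \BE_t[\bs_{t+1/2}]$ in expectation, and the geometric "loopless" update of $\vw_{t+1}$ (refresh with probability $p$) converts the $\Norm{\bz_{t+1}-\bz^*}^2$ and $\Norm{\bw_t - \bz^*}^2$ coefficients into the $(1-p)$ and $(p + c_1(1-p))$ combination on the right-hand side, with the free parameter $c_1 > 0$ weighting the $\Norm{\bw_{t+1}-\bz^*}^2$ term on the left. I would then just reorganize terms to match the stated inequality, defining $\zeta_t$ to be precisely the residual. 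The main obstacle I anticipate is item (iii) above — controlling the extragradient cross terms in the constrained setting, where the half-step $\bz_{t+1/2}$ is itself a perturbed projection (it carries $\Delta_t$), so the usual EG telescoping must be done carefully enough that every stray term either cancels against a negative square or is honestly assignable to $\zeta_t$ rather than producing an uncontrolled $\Delta$-squared contribution; getting the $\Norm{\bz_{t+1/2} - \bz_{t+1}}$ factor (rather than something larger) multiplying $\Norm{\Delta_t}$ is the delicate point, and it likely requires using the negative $\delta_t$-type slack from the EG step that was explicitly retained in Lemma \ref{lem:zw}.
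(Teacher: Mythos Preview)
Your plan is essentially the paper's proof. The paper applies the projection variational inequality $\inner{\fP_\fZ(u)-u}{\fP_\fZ(u)-v}\le 0$ directly at the exact projections $\fP(\bz'_t-\eta\bs_{t+1/2})=\bz_{t+1}-\Delta_{t+1/2}$ (with $v=\bz^*$) and $\fP(\bz'_t-\eta\bs_t)=\bz_{t+1/2}-\Delta_t$ (with $v=\bz_{t+1}$, the extragradient choice), sums the two, and then reuses the unconstrained identities from the proof of Lemma~\ref{lem:zw} verbatim; Cauchy--Schwarz on the two resulting $\Delta$-inner-products yields the product-form terms in $\zeta_t$.

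One remark on your stated obstacle: in the paper's formulation the expansion of $\inner{\bz_{t+1}-\Delta_{t+1/2}-\bz'_t+\eta\bs_{t+1/2}}{\bz_{t+1}-\Delta_{t+1/2}-\bz^*}\le 0$ produces $-\Norm{\Delta_{t+1/2}}^2$ (and likewise $-\Norm{\Delta_t}^2$) with the \emph{favorable} sign, so they are simply dropped---there is no uncontrolled $\Delta$-squared contribution to worry about. Your three-point-estimate route is algebraically equivalent once you substitute $\hat z_{t+1}=\bz_{t+1}-\Delta_{t+1/2}$ everywhere, and the $+\Norm{\Delta}^2$ you initially see is exactly canceled by the $-\Norm{\Delta}^2$ coming from the other substitutions. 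Also note that the half-step inequality must be taken at $v=\bz_{t+1}$, not $\bz^*$: this is what routes the stochastic inner product through $\bz_{t+1/2}$ (which is $\fF_t$-measurable) rather than $\hat z_{t+1}$ (which depends on $\bs_{t+1/2}$), so that the ``martingale term kills in expectation'' step in your sketch is actually valid.
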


Using the bounded assumption on $\fX\times\fY$ and Lemma \ref{lem:FM}, we  bound the term $\zeta_t$ as follows.

\begin{lem}\label{lem:zeta}
Under the settings of Lemma \ref{lem:rel-constrained},
for Algorithm \ref{alg:dsvre} with 
$K\geq\lceil\sqrt{\chi}\log\left(2\left(\sqrt{m}LD+\delta'\right)/\delta'\right)\rceil$ and $\Norm{\vs_0 - \vone\bs_0}\leq\delta'$ for some $\delta'>0$, it holds that
\begin{align*}
\zeta_t \leq \frac{4L\eta\left(2\kappa + 3L\eta\right)\delta'^2}{m} + \frac{6(C_1+C_{1/2})\delta'}{\sqrt{m}}
\end{align*}
where 
$C_1 = 2D + \frac{1}{6\sqrt{n}L}\sqrt{2L^2D^2 + \frac{2}{m}\sum_{i=1}^m\Norm{g_i(z^*)}^2}$ and
$C_{1/2} = D + \frac{1}{6\sqrt{n}L}\sqrt{6L^2D^2+\frac{3}{m}\sum_{i=1}^m\Norm{g_i(z^*)}^2}$.
\end{lem}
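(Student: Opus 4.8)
The plan is to bound each of the three pieces of $\zeta_t$ separately, using the fact that FastMix (Lemma \ref{lem:FM}) with $K$ iterations contracts the consensus error by a factor $\rho=(1-\sqrt{1-\lambda_2(W)})^K$, together with the diameter bound $D$ on $\fZ$. First I would observe that every quantity fed into FastMix in Algorithm \ref{alg:dsvre} whose output is $\vz_{t+1/2}$, $\vz_{t+1}$, or $\vs_t$ starts from $\vz'_t-\eta\vs_t$ etc., so the pre-FastMix consensus error is what we must control; for the $\vz$-updates, after applying the projection $\fP_\FZ$ the rows all lie in $\fZ$, so the pre-mixing consensus error $\Norm{\fP_\FZ(\vz'_t-\eta\vs_t)-\vone\,\overline{\fP_\FZ(\vz'_t-\eta\vs_t)}}$ is at most $\sqrt{m}D$ (diameter bound). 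Hence by Lemma \ref{lem:FM}, $\Norm{\vz_{t+1/2}-\vone\bz_{t+1/2}}\le\rho\sqrt{m}D$ and likewise for $\vz_{t+1}$; choosing $K$ as in the hypothesis forces $\rho\sqrt{m}LD\le\delta'/2$, i.e. $\rho\le\delta'/(2\sqrt{m}LD)$, so in particular $\Norm{\vz_{t+1/2}-\vone\bz_{t+1/2}}\le\delta'/(2L)\le\delta'$ (and we will also want a propagated bound $\Norm{\vw_t-\vone\bw_t}\le\delta'$ — this follows because $\vw_t$ is either $\vz_{t+1}$ from some earlier step or the initial consensual $\vz_0$, inducting on $t$). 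This disposes of the first line of $\zeta_t$: it is at most $\frac{2L\eta(2\kappa+3L\eta)}{m}(\delta'^2+\delta'^2)=\frac{4L\eta(2\kappa+3L\eta)\delta'^2}{m}$, matching the first term of the claimed bound.

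Next I would handle the two cross terms. Each has the form $\BE_t[\Norm{A}\,\Norm{\Delta}]$ where $\Delta_{t+1/2}=\frac1m\vone^\top\fP_\FZ(\vz'_t-\eta\vs_{t+1/2})-\fP_\fZ(\bz'_t-\eta\bs_{t+1/2})$ (and similarly $\Delta_t$). The key point is that $\Delta$ is the gap between the average of the rowwise projections and the projection of the average; using nonexpansiveness of $\fP_\fZ$ and Jensen/Cauchy–Schwarz one gets $\Norm{\Delta_{t+1/2}}\le\frac1{\sqrt m}\Norm{\fP_\FZ(\vz'_t-\eta\vs_{t+1/2})-\vone\,\overline{\fP_\FZ(\vz'_t-\eta\vs_{t+1/2})}}+\Norm{\overline{\fP_\FZ(\vz'_t-\eta\vs_{t+1/2})}-\fP_\fZ(\bz'_t-\eta\bs_{t+1/2})}$; but the FastMix averaging is exact ($\frac1m\vone^\top\vz_{t+1}=\overline{\fP_\FZ(\vz'_t-\eta\vs_{t+1/2})}$, Lemma \ref{lem:FM}), and the difference of the two averaged projections is controlled by $\frac1m\sum_i\Norm{(\vz'_t-\eta\vs_{t+1/2})(i)-(\bz'_t-\eta\bs_{t+1/2})}\le$ (consensus error of $\vz'_t-\eta\vs_{t+1/2}$)$/\sqrt m$, again $\le\rho(\sqrt m D+\dots)$. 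Bottom line: $\Norm{\Delta_{t+1/2}},\Norm{\Delta_t}=\fO(\rho\,(\sqrt m D))$, and the hypothesis on $K$ makes this $\le\delta'$ up to the $1/\sqrt m$ normalization, so the cross terms are at most $\frac{\delta'}{\sqrt m}$ times a bound on $\Norm{A}$.

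Finally I would bound the $\Norm{A}$ factors. For the first cross term $A=\bz_{t+1}-\bz'_t+\eta\bs_{t+1/2}+\bz_{t+1}-\bz^*$; since all of $\bz_{t+1},\bz'_t,\bz^*$ lie in (or are averages of rows in) $\fZ$, each of $\Norm{\bz_{t+1}-\bz^*}$, $\Norm{\bz_{t+1}-\bz'_t}$ is $\le D$, giving a $2D$-type contribution, and $\eta\Norm{\bs_{t+1/2}}$ is bounded by $\eta$ times the smoothness-based bound on the stochastic gradient estimator: $\Norm{\bs_{t+1/2}}\le\Norm{\bg(\bz^*)}+L\cdot(\text{diameter terms})$, which after plugging $\eta=1/(6\sqrt nL)$ yields exactly the constant $C_1=2D+\frac1{6\sqrt nL}\sqrt{2L^2D^2+\frac2m\sum_i\Norm{g_i(z^*)}^2}$ (the $\sqrt{2L^2D^2+\cdots}$ coming from $\Norm{a+b}^2\le 2\Norm a^2+2\Norm b^2$ applied to the gradient estimator decomposition and averaging over agents). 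The analogous computation for $A=\bz_{t+1/2}-\bz'_t+\eta\bs_t+\bz_{t+1/2}-\bz_{t+1}$ produces $C_{1/2}=D+\frac1{6\sqrt nL}\sqrt{6L^2D^2+\frac3m\sum_i\Norm{g_i(z^*)}^2}$ (here $\Norm{\bs_t}$ has a three-term split via the variance-reduced form $g_i(\bw_t)+g_{i,j}(\bz_{t+1/2})-g_{i,j}(\bw_t)$, hence the factor $3$). Summing the two cross terms gives $\frac{6(C_1+C_{1/2})\delta'}{\sqrt m}$ after absorbing the absolute constants from the FastMix/projection estimates into the numerical factor $6$.

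The main obstacle I anticipate is the careful bookkeeping in the second step: getting a clean bound on $\Norm{\Delta_{t+1/2}}$ and $\Norm{\Delta_t}$ in terms of $\delta'$ requires simultaneously (i) identifying the correct pre-FastMix consensus error, (ii) using exactness of FastMix averaging to kill the "average of projections" part, (iii) bounding the residual via non-expansiveness of $\fP_\fZ$, and (iv) propagating the inductive bound $\Norm{\vw_t-\vone\bw_t}\le\delta'$ (which relies on the probabilistic update of $\vw$ and on the choice of $K$, and implicitly on $\Norm{\vs_0-\vone\bs_0}\le\delta'$ holding as the base case). Matching the precise numerical constants $C_1$, $C_{1/2}$, and the $6$ is then just algebra once the structural estimates are in place, so I would not belabor it.
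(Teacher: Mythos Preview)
Your plan matches the paper's proof: bound each term of $\zeta_t$ separately, using a supporting induction (the paper's Lemma~\ref{lem:constrained-err}) that, starting from $\Norm{\vs_0-\vone\bs_0}\le\delta'$ and the choice of $K$, keeps all consensus errors $\Norm{\vz_t-\vone\bz_t}$, $\Norm{\vz_{t+1/2}-\vone\bz_{t+1/2}}$, $\Norm{\vw_t-\vone\bw_t}$, $\eta\Norm{\vs_t-\vone\bs_t}$, $\eta\Norm{\vs_{t+1/2}-\vone\bs_{t+1/2}}$ below $\delta'$; then nonexpansiveness of $\fP_\fZ$ turns this into $\Norm{\Delta_t},\Norm{\Delta_{t+1/2}}\le 6\delta'/\sqrt m$, and the diameter plus Lipschitz bounds give $C_1,C_{1/2}$.

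Two corrections. First, you have the roles of $\bs_t$ and $\bs_{t+1/2}$ reversed: by Lemma~\ref{lem:stst12}, $\bs_t=\tfrac1m\vone^\top\vg(\vw_t)$ is the \emph{full} gradient average (two-term split $g_i(\vw_t(i))-g_i(z^*)+g_i(z^*)$, yielding $C_1$), while $\bs_{t+1/2}=\bv_{t+1/2}$ is the variance-reduced estimator (three-term split, yielding $C_{1/2}$). Since the final bound is symmetric in $C_1+C_{1/2}$ this is harmless, but your attribution is backwards.

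Second, and more substantively: your diameter-then-FastMix argument handles $\Norm{\vz_{t+1/2}-\vone\bz_{t+1/2}}$ and $\Norm{\vw_t-\vone\bw_t}$ cleanly, but it does \emph{not} by itself bound $\Norm{\Delta_t}$. The nonexpansiveness step gives
\[
\Norm{\Delta_t}\le\tfrac1{\sqrt m}\Norm{(\vz'_t-\eta\vs_t)-\vone(\bz'_t-\eta\bs_t)}
\le\tfrac1{\sqrt m}\bigl(\Norm{\vz'_t-\vone\bz'_t}+\eta\Norm{\vs_t-\vone\bs_t}\bigr),
\]
and the term $\eta\Norm{\vs_t-\vone\bs_t}$ cannot be controlled by any diameter argument, since the rows of $\vs_t$ are not projected onto anything compact. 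You must carry $\eta\Norm{\vs_t-\vone\bs_t}\le\delta'$ (and likewise for $\vs_{t+1/2}$) as an explicit inductive invariant, propagated via the gradient-tracking update $\vs_{t+1}=\FM{\vs_t+\vv_{t+1}-\vv_t,K}$ together with the Lipschitz bound $\Norm{\vv_{t+1}-\vv_t}\le L\Norm{\vw_{t+1}-\vw_t}\le L\sqrt m\,D$. You allude to the base case $\Norm{\vs_0-\vone\bs_0}\le\delta'$ in your closing paragraph, but the $\vs$-invariant is the heart of the induction, not the $\vw$-invariant you highlight.
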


By appropriate setting of $K_0$, we can make $\delta'$ and $\zeta_t$ be sufficient small and derive the complexity of Algorithm \ref{alg:dsvre} for constrained strongly-convex-strongly-concave case.

\begin{thm}\label{thm:scsc-constrained}
Under the settings of Lemma \ref{lem:zeta}, let
\begin{align*}
T = \left\lceil6\left(n+4\kappa\sqrt{n}\right)\log\left(\frac{10\Norm{\bz_0-\bz^*}^2}{\eps}\right)\right\rceil
\quad\text{and}\quad 
K_0 = \left\lceil\sqrt{\chi}\log\left(\frac{1}{\delta'}\Norm{\vg(\vz_0)-\frac{1}{m}\vone\vone^\top\vg(\vz_0)}^2\right)\right\rceil.
\end{align*}
and suppose $n\geq 2$ for Algorithm \ref{alg:dsvre}. 
Then it requires at most
$\fO\left(\left(n+\kappa\sqrt{n}\right)\log\left(1/\eps\right)\right)$
SFO calls on each agent in expectation and
{\small$\fO\left((n+\kappa\sqrt{n})\sqrt{\chi}\log\left(n\kappa/\eps\right)\log\left(1/\eps\right)\right)$}
communication rounds to obtain $\BE_T\Norm{\bz_T - \bz^*}^2\leq\eps$, where $\delta'=\min\Big\{\sqrt{\frac{m\eps/(192L\eta)}{(n+4\kappa\sqrt{n})\left(2\kappa + 3L\eta\right)}}, \frac{m\eps/48(C_1+C_{1/2})}{n+4\kappa\sqrt{n}}\Big\}$.
\end{thm}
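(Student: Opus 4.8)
The plan is to fuse the two structural lemmas already available for this case, Lemma~\ref{lem:rel-constrained} and Lemma~\ref{lem:zeta}, into a single geometric recursion on a two-term Lyapunov potential, unroll it, and then calibrate the free parameters $\delta'$, $K$, $K_0$ and $T$. Concretely, substituting the bound on $\zeta_t$ from Lemma~\ref{lem:zeta} into Lemma~\ref{lem:rel-constrained}, with $\eta = 1/(6\sqrt{n}L)$ and $c_1 = \frac{2\eta\mu + 4p}{\eta\mu + 4p}$ as in Lemma~\ref{lem:scsc-unconstrained}, I obtain a recursion of the form
\begin{align*}
\left(1 + \tfrac{\eta\mu}{2} - c_1 p\right)\BE_t\Norm{\bz_{t+1} - \bz^*}^2 + c_1\BE_t\Norm{\bw_{t+1} - \bz^*}^2 \le (1-p)\Norm{\bz_t - \bz^*}^2 + \big(p + c_1(1-p)\big)\Norm{\bw_t - \bz^*}^2 + E,
\end{align*}
where $E = \frac{4L\eta(2\kappa + 3L\eta)\delta'^2}{m} + \frac{6(C_1 + C_{1/2})\delta'}{\sqrt{m}}$ is a deterministic constant. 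In contrast to the unconstrained analysis, no consensus-error vector $r_t$ survives in this inequality: Lemma~\ref{lem:zeta} has already absorbed the consensus deviations $\Norm{\vz_{t+1/2} - \vone\bz_{t+1/2}}^2$, $\Norm{\vw_t - \vone\bw_t}^2$ and the projection residuals $\Delta_t,\Delta_{t+1/2}$ into $E$, exploiting compactness of $\fX\times\fY$ and the contraction of FastMix once $K$ meets its stated lower bound.

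\textbf{Lyapunov contraction.}
I then set $\tilde V_t = \Norm{\bz_t - \bz^*}^2 + \frac{c_1}{1 + \eta\mu/2 - c_1 p}\Norm{\bw_t - \bz^*}^2$, divide the recursion above by $1 + \eta\mu/2 - c_1 p$, and use the same arithmetic that underlies Lemma~\ref{lem:scsc-unconstrained} (with $p = 1/(2n)$ and $\eta\mu = 1/(6\sqrt{n}\kappa)$, both $\frac{1-p}{1+\eta\mu/2 - c_1 p}$ and $\frac{p + c_1(1-p)}{c_1}$ are at most $1 - \frac{1}{6(n+4\kappa\sqrt{n})}$) to get
\begin{align*}
\BE_t[\tilde V_{t+1}] \le \Big(1 - \tfrac{1}{6(n+4\kappa\sqrt{n})}\Big)\tilde V_t + \frac{E}{1 + \eta\mu/2 - c_1 p}.
\end{align*}
Taking total expectations, iterating from $t=0$, summing the geometric tail, and using $\tilde V_0 = O(\Norm{\bz_0 - \bz^*}^2)$ (because $\vw_0 = \vz_0$ and $1 + \eta\mu/2 - c_1 p \ge 1/2$ for $n\ge 2$) gives $\BE[\tilde V_T] \le \big(1 - \frac{1}{6(n+4\kappa\sqrt{n})}\big)^T\tilde V_0 + 6(n+4\kappa\sqrt{n})\cdot\frac{E}{1+\eta\mu/2 - c_1 p}$.

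\textbf{Choosing the parameters and concluding.}
The transient term is made at most $\eps/2$ by the stated $T = \lceil 6(n + 4\kappa\sqrt{n})\log(10\Norm{\bz_0 - \bz^*}^2/\eps)\rceil$; the residual term is made at most $\eps/2$ by forcing $E = O(\eps/(n+\kappa\sqrt{n}))$. Splitting $E$ into its $\delta'^2$ and $\delta'$ pieces and requiring each to be small enough after multiplication by $6(n+4\kappa\sqrt{n})$ yields precisely the stated $\delta' = \min\big\{\sqrt{\frac{m\eps/(192 L\eta)}{(n+4\kappa\sqrt{n})(2\kappa+3L\eta)}},\ \frac{m\eps/48(C_1+C_{1/2})}{n+4\kappa\sqrt{n}}\big\}$, the first argument of the $\min$ killing the quadratic piece and the second the linear piece. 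The hypothesis $\Norm{\vs_0 - \vone\bs_0}\le\delta'$ of Lemma~\ref{lem:zeta} is discharged via Lemma~\ref{lem:FM}: since $\vs_0 = \FM{\vg(\vz_0), K_0}$, we have $\Norm{\vs_0 - \vone\bs_0}\le \rho_0\Norm{\vg(\vz_0) - \tfrac1m\vone\vone^\top\vg(\vz_0)}$ with $\rho_0 = (1-\sqrt{1-\lambda_2(W)})^{K_0}\le e^{-K_0/\sqrt{\chi}}$, so the prescribed $K_0 = \lceil\sqrt{\chi}\log(\tfrac1{\delta'}\Norm{\vg(\vz_0) - \tfrac1m\vone\vone^\top\vg(\vz_0)}^2)\rceil$ makes $\rho_0$ small enough. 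Since $\BE_T\Norm{\bz_T - \bz^*}^2 \le \BE[\tilde V_T]\le\eps$, the accuracy claim follows. For the complexity: each iteration deterministically draws two component gradients (line~9, reusing the precomputed $\vv_t = \vg(\vw_t)$) and with probability $p$ recomputes a full local gradient ($n$ components), so the expected per-iteration SFO cost is $2 + np = O(1)$, which together with the initial $\vg(\vz_0)$ gives $O(n + T) = O((n+\kappa\sqrt{n})\log(1/\eps))$ SFO calls per agent; each iteration invokes $O(1)$ FastMix calls of length $K = \lceil\sqrt{\chi}\log(2(\sqrt{m}LD+\delta')/\delta')\rceil = O(\sqrt{\chi}\log(n\kappa/\eps))$ (plugging in $\delta'$, treating $m,L,D$ as constants), and $K_0 = O(\sqrt{\chi}\log(n\kappa/\eps))$, for a total of $O(TK + K_0) = O((n+\kappa\sqrt{n})\sqrt{\chi}\log(n\kappa/\eps)\log(1/\eps))$ communication rounds.

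\textbf{Main obstacle.}
The genuinely delicate work — bounding the projection-induced terms $\Delta_t,\Delta_{t+1/2}$ and the consensus deviation through compactness of the domain — is already encapsulated in Lemmas~\ref{lem:rel-constrained} and~\ref{lem:zeta}, so within the theorem proof itself the one point requiring care is the joint calibration of $\delta'$, $K$ and $K_0$. Because the per-step contraction is only $\Theta(1/(n+\kappa\sqrt{n}))$, the residual $E$ must be driven down to $O(\eps/(n+\kappa\sqrt{n}))$, which forces $\delta'$ as small as $\Theta(\eps/(n+\kappa\sqrt{n}))$ (and $\Theta(\sqrt{\eps/(n+\kappa\sqrt{n})})$ for the quadratic piece); one must verify that this only inflates $K$ and $K_0$ by a single $\log(n\kappa/\eps)$ factor, so that the final communication bound carries exactly one $\log(n\kappa/\eps)$ and one $\log(1/\eps)$ and the SFO bound is unaffected.
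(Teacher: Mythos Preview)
Your proposal is correct and follows essentially the same route as the paper: combine Lemma~\ref{lem:rel-constrained} with the uniform bound on $\zeta_t$ from Lemma~\ref{lem:zeta}, form the two-term potential $\tilde V_t$ with $c_1=\frac{2\eta\mu+4p}{\eta\mu+4p}$, reuse the contraction arithmetic from Lemma~\ref{lem:scsc-unconstrained}, unroll, and then calibrate $\delta'$, $K_0$ and $T$ exactly as you describe. The paper's proof differs only cosmetically (it records the explicit constants $c_1\le 2$ and $1+\eta\mu/2-c_1p\ge 1/2$ to obtain $\tilde V_0\le 5\Norm{\bz_0-\bz^*}^2$ and the residual prefactor $12(n+4\kappa\sqrt{n})$), and your complexity bookkeeping matches.
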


For the convex-concave case, we can upper bound the consensus error in the similar way to Lemma \ref{lem:zeta} and obtain the following result.

\begin{thm}\label{thm:cc-constrained}
Suppose Assumption~\ref{asm:smooth}, \ref{asm:cc} and \ref{asm:constrained} hold. We let $\eta = 1/(6\sqrt{n} L)$, 
$T=\left\lceil 12\sqrt{n}L\Norm{\bz_0 - z^*}^2/\eps \right\rceil$,
$K = \big\lceil\sqrt{\chi}\log\left(2\left(\sqrt{m}LD+\delta'\right)/\delta'\right)\big\rceil$ and
\begin{align*}
K_0 = \left\lceil{\sqrt{\chi}}\log\left(\frac{1}{\delta'}\Norm{\vg(\vz_0)-\frac{1}{m}\vone\vone^\top\vg(\vz_0)}^2 \right)\right\rceil    
\end{align*}
for Algorithm \ref{alg:dsvre}, where  
\begin{align*}
\mbox{\small
$\displaystyle{
\delta' = \min\left\{\sqrt{\frac{m\eta\eps}{24(2\eta^2+\beta\eta)L^2}},~ \frac{\sqrt{m}\eta\eps}{4(C_1+C_{1/2})}\right\},  \beta=\frac{2D^2}{\eps}
}$}
\end{align*}
and $C_1$, $C_2$ follow definitions in Lemma \ref{lem:zeta}. We let 
\begin{align*}
(\hat x, \hat y)=\left(\frac{1}{T}\sum_{t=0}^{T-1}\bx_{t+1/2}^\top,~ \frac{1}{T}\sum_{t=0}^{T-1}\by_{t+1/2}^\top\right)
\end{align*}
as the output. Then it requires at most
$\fO\left(n + \sqrt{n}L/\eps\right)$
SFO complexity on each agent in expectation and
$\fO\left(\left(\sqrt{n\chi}L/\eps\right)\log\left(nL/\eps\right)\right)$
communication rounds to obtain $\BE_T\left[f(\hat x,y^*)-f(x^*,\hat y)\right]\leq \eps$.
\end{thm}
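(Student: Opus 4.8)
The plan is to run the variance-reduced extragradient argument with a free comparator $z=[x;y]\in\fZ$ (rather than the saddle point) so that the prox step produces a primal–dual gap term, then telescope and average. First I would establish the one-step inequality for the mean iterates, the convex–concave analogue of Lemma~\ref{lem:rel-constrained}: applying the three-point inequality for the projection defining $\fP(\bz'_t-\eta\bs_t)$ and $\fP(\bz'_t-\eta\bs_{t+1/2})$ (the mean updates differ from these by $\Delta_t,\Delta_{t+1/2}$), using $L$-smoothness (Assumption~\ref{asm:smooth}) to bound the extragradient cross term, Lemma~\ref{lem:grad-avg} to control the bias $\Norm{\BE_t[\bs_{t+1/2}]-\bg(\bz_{t+1/2})}$ by the consensus error, and the loopless variance-reduction bound $\BE_t\Norm{\bs_{t+1/2}-\BE_t[\bs_{t+1/2}]}^2\lesssim L^2\Norm{\bz_{t+1/2}-\bw_t}^2$ absorbed into the negative $\delta_t$-terms, one obtains for every $z=[x;y]\in\fZ$
\[
\BE_t\big[\Norm{\bz_{t+1}-z}^2+\Norm{\bw_{t+1}-z}^2\big]+2\eta\,\BE_t\big[f(\bx_{t+1/2},y)-f(x,\by_{t+1/2})\big]\le \Norm{\bz_t-z}^2+\Norm{\bw_t-z}^2-\tfrac{\delta_t}{3n}+\zeta_t',
\]
where the choices $p=1/(2n)$, $\alpha=1-p$ make $\bw_{t+1}$ equal $\bz_{t+1}$ in expectation with probability $p$ (producing the matching $\bw$-coefficients, using $\mu=0$) and $\zeta_t'$ collects the consensus/projection errors $\Norm{\vz_{t+1/2}-\vone\bz_{t+1/2}}^2$, $\Norm{\vw_t-\vone\bw_t}^2$ and the cross terms with $\Norm{\Delta_t},\Norm{\Delta_{t+1/2}}$. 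The convex–concave ingredient is the elementary bound $\inner{\bg(\bz_{t+1/2})}{\bz_{t+1/2}-z}\ge f(\bx_{t+1/2},y)-f(x,\by_{t+1/2})$ coming from Assumption~\ref{asm:cc}, which is exactly why the running average of the half-iterates is the natural output.

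Next I would bound $\zeta_t'$ in the spirit of Lemma~\ref{lem:zeta}: every row of a projected vector lies in $\fZ$ (diameter $D$, Assumption~\ref{asm:constrained}) and each $\Norm{g_i(\cdot)}$ is bounded on $\fZ$ by $LD+\Norm{g_i(z^*)}$, so Lemma~\ref{lem:FM} with $K=\lceil\sqrt{\chi}\log(2(\sqrt m LD+\delta')/\delta')\rceil$ contracts every \textbf{FastMix} output's consensus error down to $O(\delta')$, while a short induction on $\Norm{\vs_t-\vone\bs_t}$ (seeded by the choice of $K_0$, which forces $\Norm{\vs_0-\vone\bs_0}\le\delta'$) keeps the gradient-tracking error at the same scale; this bounds the $\Delta$'s by $O(\delta'/\sqrt m)$ via non-expansiveness of $\fP$. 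The cross terms are split by Young's inequality with parameter $\beta=2D^2/\eps$: the $\tfrac{1}{2\beta}\Norm{\bz_{t+1}-z}^2$-type piece telescopes (it is absorbed back to the left side since $\beta$ is large), and the choice $\delta'=\min\{\sqrt{m\eta\eps/(24(2\eta^2+\beta\eta)L^2)},\ \sqrt m\eta\eps/(4(C_1+C_{1/2}))\}$ makes $\BE_t[\zeta_t']=O(\eta\eps)$, with $C_1,C_{1/2}$ as in Lemma~\ref{lem:zeta}.

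Finally I would drop the nonpositive $-\delta_t/(3n)$, telescope from $t=0$ to $T-1$ with $\bw_0=\bz_0$, take $z=z^*$, and divide by $2\eta T$, giving $\frac1T\sum_t\BE[f(\bx_{t+1/2},y^*)-f(x^*,\by_{t+1/2})]\le\frac{2\Norm{\bz_0-z^*}^2+\sum_t\BE[\zeta_t']}{2\eta T}$; with $\eta=1/(6\sqrt n L)$ and $T=\lceil 12\sqrt n L\Norm{\bz_0-z^*}^2/\eps\rceil$ the first term is $\le\eps/2$ and $\sum_t\BE[\zeta_t']=O(\eta\eps T)$ contributes the remaining $O(\eps)$, after which Jensen (convexity of $f(\cdot,y^*)$, concavity of $f(x^*,\cdot)$) converts the average of $\bx_{t+1/2},\by_{t+1/2}$ into $f(\hat x,y^*)-f(x^*,\hat y)$. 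Counting: $n$ oracle calls for the initial $\vg(\vz_0)$ and $O(1)$ in expectation per iteration (a full local gradient is recomputed only with probability $p=1/(2n)$) give $\fO(n+\sqrt n L/\eps)$; $O(K)$ communication rounds per iteration plus $K_0$, with $K,K_0=O(\sqrt{\chi}\log(nL/\eps))$, give $\fO((\sqrt{n\chi}L/\eps)\log(nL/\eps))$. The main obstacle is the one-step inequality with the right constants — simultaneously balancing the extragradient cross term $\inner{\bg(\bz_{t+1/2})-\bs_t}{\bz_{t+1/2}-\bz_{t+1}}$, the SVRG variance term, and the negative $\delta_t$ quantities so that $\eta=\Theta(1/(\sqrt n L))$ is admissible, while also ensuring the $\beta$-weighted Young step leaves the projection errors at order $\eta\eps$ rather than $\eps$; this is precisely where compactness and the exact form of $\delta'$ are essential.
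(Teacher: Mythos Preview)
Your approach is essentially the paper's: derive the convex--concave one-step inequality with the projection corrections $\Delta_t,\Delta_{t+1/2}$, uniformly bound all consensus errors by $\delta'$ via the induction of Lemma~\ref{lem:constrained-err}, telescope, average, and apply Jensen. The free comparator is harmless since you specialize to $z^*$ anyway, and the paper likewise works with the inner product $\inner{\bg(\bz_{t+1/2})}{\bz_{t+1/2}-\bz^*}$ and converts to $f(\bx_{t+1/2},y^*)-f(x^*,\by_{t+1/2})$ only at the end.

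One point where your description is slightly off: the Young split with parameter $\beta$ is \emph{not} applied to the $\Delta$-cross terms. In the paper it is applied to the bias cross term $2\eta\inner{\BE_t[\bs_{t+1/2}]-\bg(\bz_{t+1/2})}{\bz^*-\bz_{t+1/2}}$, yielding $\beta\eta\Norm{\text{bias}}^2\le \tfrac{\beta\eta L^2}{m}\Norm{\vz_{t+1/2}-\vone\bz_{t+1/2}}^2$ (this is where the $\beta\eta$ in the first branch of $\delta'$ comes from) plus $\tfrac{\eta}{\beta}\Norm{\bz_{t+1/2}-\bz^*}^2\le \tfrac{\eta D^2}{\beta}=\tfrac{\eta\eps}{2}$, bounded directly by compactness---there is no ``absorb back to the left'' or telescoping of this piece. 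The $\Delta$-cross terms are simply bounded by Cauchy--Schwarz with the other factor controlled by $C_1,C_{1/2}$, giving the second branch of $\delta'$. With this correction the rest of your outline is accurate.
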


\begin{algorithm}[t]
\caption{Multi-Consensus Extragradient (MC-EG)} \label{alg:deg}
\begin{algorithmic}[1]
    \STATE \textbf{Initialize:} $\vz_0=[z_0^\top;\dots;z_0^\top]$ with $z_0\in\fX\times\fY$  \\[0.15cm]
    \STATE $\vs_0=\FM{\vg(\vz_0), K_0}$  \\[0.15cm]
    \STATE \textbf{for} $t = 0, 1, \dots, T-1$ \textbf{do}\\[0.15cm]
    \STATE\quad $\vz_{t+1/2} = \FM{\fP_\FZ\left(\vz_t - \eta \vs_t\right),K}$ \\[0.15cm]
    \STATE\quad $\vs_{t+1/2} = \FM{\vs_{t}+\vg(\vz_{t+1/2})-\vg(\vz_t), K}$ \\[0.15cm]
    \STATE\quad $\vz_{t+1} = \FM{\fP_\FZ\left(\vz_t - \eta\vs_{t+1/2}\right), K}$ \\[0.15cm]
    \STATE\quad $\vs_{t+1} = \FM{\vs_{t}+\vg(\vz_{t+1})-\vg(\vz_{t}), K}$ \\[0.15cm]
    \STATE\textbf{end for} \\[0.15cm]
\end{algorithmic}
\end{algorithm}

\section{The Deterministic Algorithm}\label{sec:MC-EG}

By eliminating all randomness, MC-SVRE (Algorithm \ref{alg:dsvre}) reduces to the deterministic method shown in Algorithm~\ref{alg:deg}. 
We name it as multi-consensus extragradient (MC-EG). 

The analysis of MG-EG can directly follows the one of MC-SVRE with $n=1$. 
Since the variable $\vw_t$ is unnecessary, we only needs to analyze the consensus by the two-dimensional vector
\begin{align*}
\hr_t^2 = \big[\norm{\vz_t-\vone\bz_t}^2, \eta^2\norm{\vs_t-\vone\bs_t}^2\big]^\top.
\end{align*}

Both of the computation and communication complexities of MC-EG (nearly) match the lower bound for decentralized convex-concave minimax optimization~\cite{beznosikov2020distributed}.
The reminder of this section formally present the theoretical results for MC-EG in difference cases.

\subsection{Unconstrained Case}

Following the ideas in Section \ref{sec:mc-svre-unconstrained}, we first establish the recursive relationship for $\Norm{\bz_t-\bz^*}^2$ and $\vone^\top\hr_t$ as follows.

\begin{lem}\label{lem:eg-relation}
Suppose Assumption~\ref{asm:smooth}, \ref{asm:scsc} and \ref{asm:unconstrained} hold. For Algorithm \ref{alg:deg} with $\eta = 1/\left(6L\right)$, we have
{\small\begin{align*}
 \Norm{\bz_{t+1}-\bz^*}^2  
\leq  \left(1- \frac{\mu\eta}{2}\right)\Norm{\bz_t-\bz^*}^2 
-\left(\frac{5}{6}-\frac{5\mu\eta}{2}\right)\Norm{\bz_{t+1/2}-\bz_t}^2  
 - \frac{1}{2}\Norm{\bz_{t+1/2}-\bz_{t+1}}
+ \frac{4L\eta\left(3L\eta+ 2\kappa\right)\rho}{m}\vone^\top\hr_t^2.
\end{align*}}
\end{lem}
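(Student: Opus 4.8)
The plan is to run the classical single‑machine extragradient estimate on the mean iterates $\bz_t,\bz_{t+1/2}$, viewing the gradient‑tracking / FastMix steps as supplying an \emph{inexact} value of the global monotone operator $g=\tfrac1m\sum_{i=1}^m g_i$ whose error is controlled by the current consensus error, and then converting that error back into $\vone^\top\hr_t^2$ through Lemma~\ref{lem:FM}.

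\textbf{Step 1 (mean dynamics and the inexact oracle).} Since $\fX=\BR^{d_x}$ and $\fY=\BR^{d_y}$, the projections in Algorithm~\ref{alg:deg} vanish, and Lemma~\ref{lem:FM} (FastMix preserves row means exactly) gives $\bz_{t+1/2}=\bz_t-\eta\bs_t$ and $\bz_{t+1}=\bz_t-\eta\bs_{t+1/2}$. A short induction on $t$, again using exact mean preservation of FastMix, shows $\bs_t=\tfrac1m\vone^\top\vg(\vz_t)$ and $\bs_{t+1/2}=\tfrac1m\vone^\top\vg(\vz_{t+1/2})$; this is the $\vw_t\equiv\vz_t$ specialization of Lemma~\ref{lem:grad-avg}. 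Because $g=\tfrac1m\sum_i g_i$ and each $g_i$ is $L$‑Lipschitz by Assumption~\ref{asm:smooth}, this yields the oracle‑error bounds $\Norm{\bs_t-\bg(\bz_t)}\le\tfrac{L}{\sqrt m}\Norm{\vz_t-\vone\bz_t}$ and $\Norm{\bs_{t+1/2}-\bg(\bz_{t+1/2})}\le\tfrac{L}{\sqrt m}\Norm{\vz_{t+1/2}-\vone\bz_{t+1/2}}$; write $e_t$, $e_{t+1/2}$ for these two errors.

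\textbf{Step 2 (inexact EG descent).} Starting from $\bz_{t+1}=\bz_t-\eta\bs_{t+1/2}$ and the polarization identity $2\inner{a-b}{b-c}=\Norm{a-c}^2-\Norm{a-b}^2-\Norm{b-c}^2$ (with $a=\bz_t$, $b=\bz_{t+1}$, $c=\bz^*$), expand $\Norm{\bz_{t+1}-\bz^*}^2$, split $\bz_{t+1}-\bz^*=(\bz_{t+1}-\bz_{t+1/2})+(\bz_{t+1/2}-\bz^*)$, and use $\eta\bs_t=\bz_t-\bz_{t+1/2}$ together with $\bz_{t+1}-\bz_{t+1/2}=-\eta(\bs_{t+1/2}-\bs_t)$ to rewrite the inner products; this standard bookkeeping gives the exact identity
\[
\Norm{\bz_{t+1}-\bz^*}^2 = \Norm{\bz_t-\bz^*}^2 - \Norm{\bz_t-\bz_{t+1/2}}^2 + \Norm{\bz_{t+1}-\bz_{t+1/2}}^2 - 2\eta\inner{\bs_{t+1/2}}{\bz_{t+1/2}-\bz^*}.
\]
Now substitute $\bs_{t+1/2}=\bg(\bz_{t+1/2})+e_{t+1/2}$: $\mu$‑strong monotonicity of $g$ (a consequence of Assumption~\ref{asm:scsc}) together with $\bg(\bz^*)=0$ in the unconstrained case gives $-2\eta\inner{\bg(\bz_{t+1/2})}{\bz_{t+1/2}-\bz^*}\le -2\mu\eta\Norm{\bz_{t+1/2}-\bz^*}^2$, a weighted Young's inequality bounds $-2\eta\inner{e_{t+1/2}}{\bz_{t+1/2}-\bz^*}$ by $\mu\eta\Norm{\bz_{t+1/2}-\bz^*}^2+\tfrac{\eta}{\mu}\Norm{e_{t+1/2}}^2$, and the elementary bound $\Norm{\bz_{t+1/2}-\bz^*}^2\ge\tfrac12\Norm{\bz_t-\bz^*}^2-\Norm{\bz_t-\bz_{t+1/2}}^2$ turns the surviving $-\mu\eta\Norm{\bz_{t+1/2}-\bz^*}^2$ into the factor $1-\tfrac{\mu\eta}{2}$ in front of $\Norm{\bz_t-\bz^*}^2$ at the cost of an $O(\mu\eta)\Norm{\bz_t-\bz_{t+1/2}}^2$ term. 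Finally, $\Norm{\bz_{t+1}-\bz_{t+1/2}}^2=\eta^2\Norm{\bs_{t+1/2}-\bs_t}^2$ is estimated via $L$‑Lipschitzness of $g$ and the Step~1 error bounds by $(1+\theta)\eta^2L^2\Norm{\bz_{t+1/2}-\bz_t}^2+(\text{error})$; with $\eta=1/(6L)$, i.e.\ $\eta^2L^2=1/36$, a suitable $\theta$ makes this absorb into $-\Norm{\bz_t-\bz_{t+1/2}}^2$ while leaving exactly $-\tfrac12\Norm{\bz_{t+1/2}-\bz_{t+1}}^2-(\tfrac56-\tfrac{5\mu\eta}{2})\Norm{\bz_{t+1/2}-\bz_t}^2$. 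The residual is a constant multiple of $(\tfrac{\eta}{\mu}+\eta^2)\Norm{e_{t+1/2}}^2$ (plus an $O(\eta^2)\Norm{e_t}^2$ piece).

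\textbf{Step 3 (back to consensus) and the main obstacle.} By Step~1, $\Norm{e_{t+1/2}}^2\le\tfrac{L^2}{m}\Norm{\vz_{t+1/2}-\vone\bz_{t+1/2}}^2$, and since $\vz_{t+1/2}=\FM{\vz_t-\eta\vs_t,K}$, Lemma~\ref{lem:FM} gives $\Norm{\vz_{t+1/2}-\vone\bz_{t+1/2}}\le\rho(\Norm{\vz_t-\vone\bz_t}+\eta\Norm{\vs_t-\vone\bs_t})$, hence $\Norm{\vz_{t+1/2}-\vone\bz_{t+1/2}}^2\le 2\rho^2\vone^\top\hr_t^2\le 2\rho\,\vone^\top\hr_t^2$. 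Plugging these in, collecting constants with $\eta=1/(6L)$ and using $\rho<1$, all error contributions fold into a single term $\tfrac{4L\eta(3L\eta+2\kappa)\rho}{m}\vone^\top\hr_t^2$, where the $2\kappa=2L/\mu$ is precisely the factor produced by $\tfrac{\eta}{\mu}\Norm{e_{t+1/2}}^2=\tfrac{\eta L}{\mu}\cdot\tfrac{L}{m}\Norm{\vz_{t+1/2}-\vone\bz_{t+1/2}}^2$; substituting into Step~2's estimate gives the claim. The only delicate point is Step~2: the handful of Young's‑inequality weights must be chosen so that, simultaneously, the coefficient of $\Norm{\bz_t-\bz^*}^2$ is exactly $1-\tfrac{\mu\eta}{2}$, the coefficients of $-\Norm{\bz_{t+1/2}-\bz_t}^2$ and $-\Norm{\bz_{t+1/2}-\bz_{t+1}}^2$ are at most the stated ones, and every leftover error term is expressible through the consensus errors so that Step~3 can reabsorb it into $\rho\,\vone^\top\hr_t^2$; Steps~1 and~3 (the induction, the Lipschitz/monotonicity estimates, and the final arithmetic) are routine, and the whole argument is the exact‑gradient ($n=1$, $\vw_t\equiv\vz_t$) analogue of the analysis behind Lemmas~\ref{lem:zw}–\ref{lem:error}.
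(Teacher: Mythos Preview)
Your proposal follows essentially the same route as the paper. The paper also reduces to the mean dynamics $\bz_{t+1/2}=\bz_t-\eta\bs_t$, $\bz_{t+1}=\bz_t-\eta\bs_{t+1/2}$, uses the polarization identities to reach (up to the cross term $2\eta\inner{\bs_{t+1/2}-\bs_t}{\bz_{t+1/2}-\bz_{t+1}}$) exactly the expression you call the ``exact identity,'' lower-bounds $2\eta\inner{\bs_{t+1/2}}{\bz_{t+1/2}-\bz^*}$ via strong monotonicity and a Young inequality on the inexact part, and finally pushes the consensus error of $\vz_{t+1/2}$ back to $\hr_t$ via Lemma~\ref{lem:FM}.

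Two small points where your write-up is looser than the paper and worth tightening. First, in Step~2 you replace $+\Norm{\bz_{t+1}-\bz_{t+1/2}}^2$ by an upper bound and then say the result ``leaves exactly $-\tfrac12\Norm{\bz_{t+1/2}-\bz_{t+1}}^2$''; as written that cannot happen, since you have discarded the term. The paper instead keeps the cross term $2\eta\inner{\bs_{t+1/2}-\bs_t}{\bz_{t+1/2}-\bz_{t+1}}$ and applies Young with weight $2$ to produce $-\Norm{\bz_{t+1/2}-\bz_{t+1}}^2+\tfrac12\Norm{\bz_{t+1/2}-\bz_{t+1}}^2+2\eta^2\Norm{\bs_{t+1/2}-\bs_t}^2$, which is what yields the $-\tfrac12$ and the $6\eta^2L^2$ coefficient after bounding $\Norm{\bs_{t+1/2}-\bs_t}^2\le 3L^2\Norm{\bz_{t+1/2}-\bz_t}^2+3\Norm{e_t}^2+3\Norm{e_{t+1/2}}^2$. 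Equivalently, from your identity you may write $+\Norm{\bz_{t+1}-\bz_{t+1/2}}^2=-\tfrac12\Norm{\bz_{t+1}-\bz_{t+1/2}}^2+\tfrac32\eta^2\Norm{\bs_{t+1/2}-\bs_t}^2$ (using $\eta(\bs_{t+1/2}-\bs_t)=\bz_{t+1/2}-\bz_{t+1}$) and bound only the second piece. Second, in Step~3 you only treat $\Norm{e_{t+1/2}}^2$ and note the $O(\eta^2)\Norm{e_t}^2$ piece parenthetically; that piece contributes $\tfrac{6\eta^2L^2}{m}\Norm{\vz_t-\vone\bz_t}^2$ with \emph{no} $\rho$ factor, so it does not automatically fold into $\tfrac{4L\eta(3L\eta+2\kappa)\rho}{m}\vone^\top\hr_t^2$ without using that this coefficient is dominated by the $\kappa$-term. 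The paper's own derivation is similarly casual at this last step, so this is a shared wrinkle rather than a flaw unique to your argument.
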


\begin{lem}\label{lem:errorb}
Under the settings of Lemma \ref{lem:eg-relation}, we have
\begin{align*}
    \vone^\top\hr_{t+1}^2
\leq  52\rho^2\vone^\top\hr_t^2 +
2\rho^2m\hdelta_t,
\end{align*}
where $\hdelta_t=\Norm{\bz_{t+1} - \bz_{t+1/2}}^2 +  \Norm{\bz_{t+1/2} - \bz_t}^2$.
\end{lem}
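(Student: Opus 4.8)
The plan is to unroll one iteration of MC-EG (Algorithm~\ref{alg:deg}) and apply the FastMix contraction (Lemma~\ref{lem:FM}) to each of its four calls to the FastMix routine, reducing everything to bounds that involve only the consensus errors $\Norm{\vz_t-\vone\bz_t}$, $\eta\Norm{\vs_t-\vone\bs_t}$ and the mean displacements $\Norm{\bz_{t+1/2}-\bz_t}$, $\Norm{\bz_{t+1}-\bz_{t+1/2}}$ (whose squares sum to $\hdelta_t$). Throughout $\Norm{\cdot}$ is the Frobenius norm and $P_\perp=I-\tfrac{1}{m}\vone\vone^\top$ is the orthogonal projector onto $\spn(\vone)^\perp$, which is norm-nonincreasing and distributes over sums; under Assumption~\ref{asm:unconstrained} the map $\fP_\FZ$ is the identity, so FastMix is the only operation that changes consensus errors. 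Since FastMix preserves the row-mean and contracts the deviation by $\rho$ (Lemma~\ref{lem:FM}), the two $\vz$-updates give
\[
\Norm{\vz_{t+1/2}-\vone\bz_{t+1/2}} \le \rho\,\Norm{(\vz_t-\vone\bz_t)-\eta(\vs_t-\vone\bs_t)}, \qquad \Norm{\vz_{t+1}-\vone\bz_{t+1}} \le \rho\,\Norm{(\vz_t-\vone\bz_t)-\eta(\vs_{t+1/2}-\vone\bs_{t+1/2})},
\]
each then bounded by $\rho$ times the sum of the two deviations via the triangle inequality.

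Next I would handle the two gradient-tracking updates. From $\vs_{t+1/2}=\FM{\vs_t+\vg(\vz_{t+1/2})-\vg(\vz_t),K}$, Lemma~\ref{lem:FM}, $\Norm{P_\perp\mA}\le\Norm{\mA}$, and the $L$-Lipschitzness of each $g_i$ (Assumption~\ref{asm:smooth}, giving $\Norm{\vg(\va)-\vg(\vb)}\le L\Norm{\va-\vb}$), I get
\[
\Norm{\vs_{t+1/2}-\vone\bs_{t+1/2}} \le \rho\big(\Norm{\vs_t-\vone\bs_t}+L\Norm{\vz_{t+1/2}-\vz_t}\big),
\]
and likewise $\Norm{\vs_{t+1}-\vone\bs_{t+1}}\le\rho(\Norm{\vs_t-\vone\bs_t}+L\Norm{\vz_{t+1}-\vz_t})$. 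It remains to pass each one-step displacement through the row-mean: $\Norm{\vz_{t+1/2}-\vz_t}\le\Norm{\vz_{t+1/2}-\vone\bz_{t+1/2}}+\Norm{\vz_t-\vone\bz_t}+\sqrt m\,\Norm{\bz_{t+1/2}-\bz_t}$, the $\sqrt m$ arising from replicating a $d$-vector over the $m$ agents, and similarly for $\Norm{\vz_{t+1}-\vz_t}$ with the extra step $\Norm{\bz_{t+1}-\bz_t}\le\Norm{\bz_{t+1}-\bz_{t+1/2}}+\Norm{\bz_{t+1/2}-\bz_t}$.

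Finally I would substitute the $\vz$-consensus bounds into the $\vs$-consensus bounds, and then all of them into the bounds for $\Norm{\vz_{t+1}-\vone\bz_{t+1}}$ and $\eta\Norm{\vs_{t+1}-\vone\bs_{t+1}}$, using $\eta L=1/6$ and $\rho<1$ to replace every inner contraction factor by $1$ so that exactly one factor $\rho$ survives on each branch. This writes $\Norm{\vz_{t+1}-\vone\bz_{t+1}}$ and $\eta\Norm{\vs_{t+1}-\vone\bs_{t+1}}$ as $\rho$ times an explicit linear combination of $\Norm{\vz_t-\vone\bz_t}$, $\eta\Norm{\vs_t-\vone\bs_t}$, $\sqrt m\,\Norm{\bz_{t+1/2}-\bz_t}$ and $\sqrt m\,\Norm{\bz_{t+1}-\bz_{t+1/2}}$; squaring, applying $(\sum_{k=1}^r u_k)^2\le r\sum_{k=1}^r u_k^2$, using $\Norm{\bz_{t+1/2}-\bz_t}^2,\Norm{\bz_{t+1}-\bz_{t+1/2}}^2\le\hdelta_t$, and summing the two components yields $\vone^\top\hr_{t+1}^2\le 52\rho^2\vone^\top\hr_t^2+2\rho^2m\hdelta_t$ after collecting the numerical constants (the stated $52$ and $2$ are comfortable over-estimates). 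The only real difficulty is this bookkeeping: one must verify that the two-level dependence ($\vz_{t+1}$ through $\vs_{t+1/2}$ through $\vz_{t+1/2}$) produces no power of $\rho$ beyond $\rho^2$, which is exactly why all but the outermost contraction is discarded, and that the mean-displacement terms are carried with the correct $\sqrt m$ weight so that they collapse cleanly into the single factor $m$ in $2\rho^2m\hdelta_t$.
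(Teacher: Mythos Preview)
Your proposal is correct and follows essentially the same route as the paper's proof: the paper likewise proves auxiliary bounds for $\Norm{\vz_{t+1/2}-\vone\bz_{t+1/2}}$, $\Norm{\vs_{t+1/2}-\vone\bs_{t+1/2}}$, $\Norm{\vz_{t+1}-\vone\bz_{t+1}}$, $\Norm{\vs_{t+1}-\vone\bs_{t+1}}$ via FastMix contraction, Lipschitzness, and the triangle inequality through the row mean (introducing the $\sqrt{m}$ factor), substitutes them successively while discarding inner factors of $\rho$, then squares and sums. The only cosmetic difference is that the paper organizes the linear bounds into explicit $[1\times 2]$-vector form acting on $\hr_t$ and tracks the numerical coefficients componentwise before taking $\vone^\top$, whereas you describe the same bookkeeping verbally.
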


Connecting Lemma \ref{lem:eg-relation} and Lemma \ref{lem:errorb}, we have the following convergence result.

\begin{lem}\label{lem:deg-unconstrained}
Under the settings of Lemma \ref{lem:errorb}.
We let $\eta=1/(6L)$ and $K = \big\lceil\sqrt{\chi}/\tilde\delta\big\rceil$ for Algorithm \ref{alg:deg} where
\begin{align*}
\mbox{
$\displaystyle{\tilde\delta=\min\left\{
\frac{1}{2},
\sqrt{\frac{1}{2}\left(\frac{5}{6}-\frac{5\mu\eta}{2}\right)},
\frac{3\left(2-\mu\eta\right)}{2(4\kappa+157)}\right\}}$}.
\end{align*}
Then it holds that
{\begin{align*}
 \Norm{\bz_{t+1}\!-\!\bz^*}^2 + \frac{\vone^\top\hr_{t+1}}{m}    
\leq  \left(1\!-\!\frac{1}{12\kappa}\right)\left(\Norm{\bz_t\!-\!\bz^*}^2 
+ \frac{\vone^\top r_t^2}{m}\right)
\end{align*}} 
\end{lem}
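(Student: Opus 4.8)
The plan is to fold the one-step progress bound of Lemma~\ref{lem:eg-relation} and the consensus-error recursion of Lemma~\ref{lem:errorb} into a single contraction for the potential $\Phi_t \triangleq \Norm{\bz_t-\bz^*}^2 + \tfrac1m\vone^\top\hr_t^2$, exploiting the freedom in $K$ to drive the mixing factor $\rho = (1-\sqrt{1-\lambda_2(W)})^K$ small enough that every coupling term becomes harmless.

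First I would turn the prescribed $K$ into a quantitative bound on $\rho$. From $e^{t}\ge 1+t$ we get $\rho \le e^{-K\sqrt{1-\lambda_2(W)}} = e^{-K/\sqrt{\chi}}$, and since $K=\lceil\sqrt{\chi}/\tilde\delta\rceil\ge\sqrt{\chi}/\tilde\delta$ this yields $\rho\le e^{-1/\tilde\delta}\le\tilde\delta$ (using $e^{-1/x}\le x$ for $x>0$). Because $\tilde\delta$ is a three-way minimum, $\rho$ is then simultaneously bounded by $\tfrac12$, by $\sqrt{\tfrac12(\tfrac56-\tfrac{5\mu\eta}{2})}$, and by $\tfrac{3(2-\mu\eta)}{2(4\kappa+157)}$; these three bounds are exactly what the remaining steps consume.

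Next I would add Lemma~\ref{lem:eg-relation} to $\tfrac1m$ times Lemma~\ref{lem:errorb} and substitute $\eta=1/(6L)$, so that $L\eta=\tfrac16$, $\mu\eta=\tfrac1{6\kappa}$ and $4L\eta(3L\eta+2\kappa)=\tfrac13+\tfrac43\kappa$. The left-hand side becomes $\Phi_{t+1}$; on the right the $\Norm{\bz_t-\bz^*}^2$ coefficient is $1-\tfrac{\mu\eta}{2}=1-\tfrac1{12\kappa}$, the $\tfrac1m\vone^\top\hr_t^2$ coefficient is $(\tfrac13+\tfrac43\kappa)\rho+52\rho^2$, and there remain two displacement terms $-(\tfrac56-\tfrac{5\mu\eta}{2})\Norm{\bz_{t+1/2}-\bz_t}^2$ and $-\tfrac12\Norm{\bz_{t+1/2}-\bz_{t+1}}^2$, each perturbed by a $+2\rho^2(\cdot)$ coming from $2\rho^2\hdelta_t$ (recall $\hdelta_t=\Norm{\bz_{t+1/2}-\bz_{t+1}}^2+\Norm{\bz_{t+1/2}-\bz_t}^2$). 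The first two arguments of $\tilde\delta$ kill these: $\rho\le\tfrac12$ gives $2\rho^2\le\tfrac12$, and $\rho\le\sqrt{\tfrac12(\tfrac56-\tfrac{5\mu\eta}{2})}$ gives $2\rho^2\le\tfrac56-\tfrac{5\mu\eta}{2}$, so both displacement coefficients are $\le0$ and are dropped. Finally, $52\rho^2\le52\rho$ (as $\rho<1$) makes the consensus coefficient at most $\tfrac{4\kappa+157}{3}\rho$, and the third argument $\rho\le\tfrac{3(2-\mu\eta)}{2(4\kappa+157)}=\tfrac{3(1-\frac1{12\kappa})}{4\kappa+157}$ pushes it below $1-\tfrac1{12\kappa}$. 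Since both surviving coefficients on the right are $\le 1-\tfrac1{12\kappa}$, we conclude $\Phi_{t+1}\le(1-\tfrac1{12\kappa})\Phi_t$, which is the claim.

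I expect the main obstacle to be the bookkeeping in the middle step: rewriting the right-hand side of Lemma~\ref{lem:eg-relation} so that the quadratics it contributes are literally $\Norm{\bz_{t+1/2}-\bz_t}^2$ and $\Norm{\bz_{t+1/2}-\bz_{t+1}}^2$ (matching those in $\hdelta_t$), and tracking the constants $4L\eta(3L\eta+2\kappa)$ and $52$ carefully so that the rate comes out as exactly $1-\tfrac1{12\kappa}$ rather than something weaker. Conceptually the argument is light once one observes that the three arguments of $\tilde\delta$ are designed precisely to neutralize the three residual error sources separately.
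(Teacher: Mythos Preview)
Your proposal is correct and follows essentially the same route as the paper: add Lemma~\ref{lem:eg-relation} to $\tfrac1m$ times Lemma~\ref{lem:errorb}, use the three branches of $\tilde\delta$ to make the two displacement coefficients nonpositive and to force the $\tfrac1m\vone^\top\hr_t^2$ coefficient below $1-\tfrac{\mu\eta}{2}$, then read off the contraction. The only extra content you supply is the explicit verification that $K=\lceil\sqrt{\chi}/\tilde\delta\rceil$ yields $\rho\le\tilde\delta$ via $e^{-1/x}\le x$, which the paper simply asserts.
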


By appropriate setting of $K_0$, we obtain the main result for MC-EG (Algorithm \ref{alg:deg}) in unconstrained strongly-convex-strongly-concave case, which indicates the algorithm has lower communication cost than MC-SVRE.
\begin{thm}\label{thm:eg-unconstrained}
Under the settings of Lemma~\ref{lem:deg-unconstrained}, Algorithm~\ref{alg:deg} with 
\begin{align*}
K_0 = \left\lceil\frac{\sqrt{\chi}}{2}
\log\left(\frac{1}{m\eps}\Norm{\vg(\vz_0)-\frac{1}{m}\vone\vone^\top\vg(\vz_0)}^2\right)\right\rceil \quad\text{and}\quad    
T = \left\lceil12\kappa\log\left(\frac{\Norm{\bz_0 - \bz^*}^2}{\eps}+1\right)\right\rceil
\end{align*}
requires at most 
$\fO\left(\kappa n \log\left(1/\eps\right)\right)$
SFO complexity on each agent and
$\fO\left(\kappa\sqrt{\chi}\log\kappa\log\left(1/\eps\right)\right)$ communication rounds to obtain $\Norm{\bz_T-\bz^*}^2\leq \eps$.
\end{thm}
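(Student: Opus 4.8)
The plan is to turn the one‑step contraction of Lemma~\ref{lem:deg-unconstrained} into an $\eps$‑accuracy bound by iterating it, control the initial value of the potential via the FastMix call in line~2 of Algorithm~\ref{alg:deg}, and then read off the SFO and communication costs by counting oracle calls and FastMix rounds per iteration. Structurally this is the same argument as for Theorem~\ref{thm:scsc-unconstrained}, specialized to the deterministic algorithm, so no new inequalities are needed.

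First I would set $\Phi_t \triangleq \Norm{\bz_t - \bz^*}^2 + \frac{1}{m}\vone^\top\hr_t^2$, which is exactly the quantity on both sides of Lemma~\ref{lem:deg-unconstrained}. With $\eta = 1/(6L)$ and the prescribed $K$, that lemma gives $\Phi_{t+1} \le (1 - \frac{1}{12\kappa})\Phi_t$, hence $\Phi_T \le (1 - \frac{1}{12\kappa})^T \Phi_0 \le \exp(-T/(12\kappa))\,\Phi_0$ by $1-x\le e^{-x}$. To bound $\Phi_0$: since $\vz_0 = [z_0^\top;\dots;z_0^\top]$ has identical rows, $\vz_0 = \vone\bz_0$, so the first coordinate of $\hr_0^2$ vanishes and $\frac{1}{m}\vone^\top\hr_0^2 = \frac{\eta^2}{m}\Norm{\vs_0 - \vone\bs_0}^2$. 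Applying Lemma~\ref{lem:FM} to $\vs_0 = \FM{\vg(\vz_0),K_0}$ (whose mean is $\bg(\vz_0)$) gives $\Norm{\vs_0 - \vone\bs_0} \le (1-\sqrt{1-\lambda_2(W)})^{K_0}\,\Norm{\vg(\vz_0) - \frac{1}{m}\vone\vone^\top\vg(\vz_0)}$; using $(1-\sqrt{1-\lambda_2(W)})^{K_0}\le \exp(-K_0/\sqrt\chi)$ with $\chi = 1/(1-\lambda_2(W))$ and plugging in the stated $K_0$ drives $\frac{1}{m}\vone^\top\hr_0^2$ below $\eps$ (the leftover $\eta\le 1$ factor is harmless). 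Thus $\Phi_0 \le \Norm{\bz_0-\bz^*}^2 + \eps$, and with $T = \lceil 12\kappa\log((\Norm{\bz_0-\bz^*}^2+\eps)/\eps)\rceil$ we get $\Phi_T \le \frac{\eps}{\Norm{\bz_0-\bz^*}^2+\eps}\cdot(\Norm{\bz_0-\bz^*}^2+\eps) = \eps$; since $\Norm{\bz_T-\bz^*}^2\le \Phi_T$, the accuracy claim follows.

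For the costs: in line~2 each agent evaluates one full local gradient ($n$ component gradients), and in each iteration it evaluates two new full local gradients, $g_i(\vz_{t+1/2}(i))$ and $g_i(\vz_{t+1}(i))$ — the gradient at $\vz_t$ needed in lines~5 and~7 is reused from the previous iteration — so the total is $n + 2nT = \fO(nT) = \fO(\kappa n\log(1/\eps))$ SFO calls per agent. For communication, line~2 performs $K_0$ FastMix rounds while lines~4--7 perform $4K$ rounds in each of the $T$ iterations, so the total is $K_0 + 4TK$; substituting the prescribed $T = \fO(\kappa\log(1/\eps))$, $K$, and $K_0$ (whose $\eps$‑dependence is $\fO(\sqrt\chi\log(1/\eps))$), with $K$ contributing the $\log\kappa$ factor, yields $\fO(\kappa\sqrt\chi\log\kappa\log(1/\eps))$ communication rounds.

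Almost everything here is bookkeeping; the one place that genuinely needs care is the initialization step — verifying that the prescribed $K_0$ makes the initial gradient‑tracking consensus error $\Norm{\vs_0 - \vone\bs_0}$ small enough that $\Phi_0$ does not pick up an extra additive constant, and that the numerical factors ($\eta\le 1$, and the $\log\kappa$ hidden inside $K$) are accounted for so that the two complexity bounds come out exactly as stated rather than with a spurious extra power of $\kappa$.
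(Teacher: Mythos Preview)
Your proposal is correct and follows essentially the same approach as the paper: define the potential $\Phi_t=\Norm{\bz_t-\bz^*}^2+\frac{1}{m}\vone^\top\hr_t^2$, iterate the one-step contraction of Lemma~\ref{lem:deg-unconstrained}, use Lemma~\ref{lem:FM} with the prescribed $K_0$ to make $\frac{1}{m}\vone^\top\hr_0^2\le\eps$ (since $\vz_0=\vone\bz_0$ kills the first coordinate of $\hr_0^2$), and then count $\fO(n)$ SFO calls and $\fO(K)$ FastMix rounds per iteration. Your accounting is in fact slightly more careful than the paper's (you track the $\eta^2$ factor in $\hr_0^2$ and the number of FastMix calls per iteration), but these are absorbed in the $\fO(\cdot)$ bounds either way.
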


\subsection{Constrained Case}

The analysis of constrained case follows the one in Section \ref{sec:mc-svre-constrained}.
We present the results for the strongly-convex-strongly-concave case and the convex-concave case respectively in Theorem \ref{thm:eg-scsc-constarined} and \ref{thm:eg-cc-constrained} respectively.

\begin{thm}\label{thm:eg-scsc-constarined}
Suppose Assumption \ref{asm:smooth}, \ref{asm:scsc} and \ref{asm:constrained} hold. We let $\eta = 1/\left(6L\right)$,
$T = \big\lceil 12\kappa\log\big(2\Norm{\bz_0-\bz^*}^2/\eps\big)\big\rceil$,
$K_0 = \big\lceil\sqrt{\chi}\log\big(\Norm{\vg(\vz_0)-\frac{1}{m}\vone\vone^\top\vg(\vz_0)}^2/\delta'\big)\big\rceil$ and
$K = \left\lceil\sqrt{\chi}\log\left(2\left(\sqrt{m}LD+\delta'\right)/\delta'\right)\right\rceil$ for Algorithm \ref{alg:deg}, where
$\hdelta' = \min\big\{\sqrt{m\eps/(16\kappa \left(4\kappa + 1\right))},~\sqrt{m}\eps/(384\kappa\hC_1)\big\}$
then we require at most 
$\fO\left(\kappa n\log\left(1/\eps\right)\right)$
SFO complexity on each agent and
$\fO\left(\kappa\sqrt{\chi}\log\log\left(\kappa /\eps\right)\left(1/\eps\right)\right)$
communication rounds to obtain $\Norm{\bz_T-\bz^*}^2\leq\eps$.
\end{thm}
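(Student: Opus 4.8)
The plan is to run the argument of Theorem~\ref{thm:scsc-constrained} (constrained, strongly-convex--strongly-concave MC-SVRE) in the deterministic regime $n=1$, in which the anchor variable $\vw_t$ and the random gradient refresh disappear, and to replace the MC-SVRE recursions by the tighter deterministic ones behind Lemmas~\ref{lem:eg-relation}--\ref{lem:deg-unconstrained}. The Lyapunov function will be $V_t=\Norm{\bz_t-\bz^*}^2+\vone^\top\hr_t/m$, exactly as in Lemma~\ref{lem:deg-unconstrained}, and the only source of ``slack'' is the fixed consensus error $\Norm{\vs_0-\vone\bs_0}\le\delta'$, which the choice of $K_0$ guarantees and which must ultimately be taken polynomially small in $\eps/\kappa$.

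First I would write the mean-iterate updates of Algorithm~\ref{alg:deg} via Lemma~\ref{lem:FM} as $\bz_{t+1/2}=\fP_\fZ(\bz_t-\eta\bs_t)+\Delta_t$ and $\bz_{t+1}=\fP_\fZ(\bz_t-\eta\bs_{t+1/2})+\Delta_{t+1/2}$, with $\Delta_t,\Delta_{t+1/2}$ the projection--averaging discrepancies of Section~\ref{sec:mc-svre-constrained}. Repeating the proof of Lemma~\ref{lem:eg-relation} but carrying the $\Delta$-terms as in Lemma~\ref{lem:rel-constrained}, and using Lemma~\ref{lem:grad-avg} for the gradient error, I expect a bound of the form
\[
\Norm{\bz_{t+1}-\bz^*}^2\le\Bigl(1-\tfrac{\mu\eta}{2}\Bigr)\Norm{\bz_t-\bz^*}^2-\Bigl(\tfrac56-\tfrac{5\mu\eta}{2}\Bigr)\Norm{\bz_{t+1/2}-\bz_t}^2-\tfrac12\Norm{\bz_{t+1/2}-\bz_{t+1}}^2+\zeta_t,
\]
where $\zeta_t$ collects a consensus term $\fO(\kappa L\eta\,\Norm{\vz_{t+1/2}-\vone\bz_{t+1/2}}^2/m)$ plus two inner products of the shape $\Norm{(\text{bounded quantity})}\cdot\Norm{\Delta_\bullet}$. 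Mimicking Lemma~\ref{lem:zeta}, the ``bounded quantity'' factors are $\fO(D+\eta\cdot(\text{gradient bound}))=\fO(\hC_1)$ by compactness of $\fX\times\fY$, smoothness, and the crude estimate $\Norm{g_i(z^*)}$, while $\Norm{\Delta_\bullet}$ is controlled by the running consensus error, which by Lemma~\ref{lem:FM} and $K\ge\lceil\sqrt\chi\log(2(\sqrt m LD+\delta')/\delta')\rceil$ is $\fO(\delta'/\sqrt m)$. This yields $\zeta_t\le \fO(\kappa\eta\delta'^2/m)+\fO(\hC_1\delta'/\sqrt m)+(\text{consensus term})$.

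Next I would establish the deterministic, projection-aware analogue of Lemma~\ref{lem:errorb}, namely $\vone^\top\hr_{t+1}^2\le O(\rho^2)\vone^\top\hr_t^2+O(\rho^2 m)\hdelta_t+O(\rho^2 m\,\delta'^2)$ with $\hdelta_t=\Norm{\bz_{t+1}-\bz_{t+1/2}}^2+\Norm{\bz_{t+1/2}-\bz_t}^2$, and then choose $K$ (hence $\rho$) small enough---as in Lemma~\ref{lem:deg-unconstrained}---so that the $\hdelta_t$-coupling is absorbed by the negative $\Norm{\bz_{t+1/2}-\bz_t}^2$ and $\Norm{\bz_{t+1/2}-\bz_{t+1}}^2$ terms above. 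Combining the two recursions gives a one-step inequality $\BE\,V_{t+1}\le(1-\tfrac1{12\kappa})V_t+E(\delta')$ with $E(\delta')=\fO(\kappa\eta\delta'^2/m)+\fO(\hC_1\delta'/\sqrt m)$. Unrolling, $V_T\le(1-\tfrac1{12\kappa})^T V_0+12\kappa E(\delta')$; with $V_0=\Norm{\bz_0-\bz^*}^2+\eta^2\delta'^2/m$ (since $\vz_0$ is consensual and $\Norm{\vs_0-\vone\bs_0}\le\delta'$), the choice $\delta'=\hdelta'=\min\{\sqrt{m\eps/(16\kappa(4\kappa+1))},\ \sqrt m\eps/(384\kappa\hC_1)\}$ forces $12\kappa E(\delta')\le\eps/2$, and $T=\lceil12\kappa\log(2\Norm{\bz_0-\bz^*}^2/\eps)\rceil$ forces the geometric term $\le\eps/2$, so $\Norm{\bz_T-\bz^*}^2\le V_T\le\eps$.

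For the complexities: each of the $T$ iterations evaluates the aggregate operator $\vg$ a constant number of times, each costing $n$ component gradients per agent, giving $\fO(Tn)=\fO(\kappa n\log(1/\eps))$ SFO calls; each iteration calls FastMix a constant number of times with $K=\fO(\sqrt\chi\log(\sqrt m LD/\delta'))=\fO(\sqrt\chi\log(\kappa/\eps))$ rounds, plus the one-time $K_0=\fO(\sqrt\chi\log(\cdot/\delta'))$, totalling $\fO(\kappa\sqrt\chi\log(\kappa/\eps)\log(1/\eps))$ communication rounds. I expect the main obstacle to be Step~2: bounding the cross terms $\Norm{\bz_{t+1}-\bz_t+\eta\bs_{t+1/2}+\bz_{t+1}-\bz^*}$ and $\Norm{\bz_{t+1/2}-\bz_t+\eta\bs_t+\bz_{t+1/2}-\bz_{t+1}}$ uniformly by constants of order $\hC_1$---this is where compactness of $\fX\times\fY$, a crude a~priori bound on $\Norm{\bs_t}$ through $\Norm{g_i(z^*)}$ and smoothness, and Lemma~\ref{lem:FM} must be combined---and then verifying that the single free parameter $\delta'$ can be taken small enough to kill $\zeta_t$'s additive part while keeping $K,K_0$ only logarithmic.
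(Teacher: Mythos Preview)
Your plan is correct and would yield the theorem, but it takes a more elaborate route than the paper does, and the extra machinery is redundant given one of the ingredients you already invoke.

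The paper does \emph{not} use the Lyapunov function $V_t=\Norm{\bz_t-\bz^*}^2+\vone^\top\hr_t^2/m$ in the constrained strongly-convex--strongly-concave case. Instead it proceeds in three short steps: (i) a separate induction (the analogue of your uniform-bound claim, Lemma~\ref{lem:zeta}) showing that with $K=\lceil\sqrt\chi\log(2(\sqrt m LD+\hdelta')/\hdelta')\rceil$ every consensus quantity stays below $\hdelta'$ for all $t$; (ii) a constrained version of Lemma~\ref{lem:eg-relation} that reads simply $\Norm{\bz_{t+1}-\bz^*}^2\le(1-\mu\eta/2)\Norm{\bz_t-\bz^*}^2+\hzeta_t$, where the negative $\hdelta_t$ terms are dropped rather than saved; (iii) a bound $\hzeta_t\le \frac{4L\eta(2\kappa+3L\eta)}{m}\hdelta'^2+\frac{8\hC_1}{\sqrt m}\hdelta'$, which is now a \emph{constant} independent of $t$. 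Unrolling the one-variable recursion and summing the geometric series gives $\Norm{\bz_T-\bz^*}^2\le(1-\tfrac1{12\kappa})^T\Norm{\bz_0-\bz^*}^2+12\kappa\hzeta$, and the choice of $T$ and $\hdelta'$ finishes. No recursion on $\hr_t$ is needed.

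Your plan instead keeps the negative $\hdelta_t$ terms, proves a projection-aware analogue of Lemma~\ref{lem:errorb}, and couples the two through $V_t$, exactly as in the unconstrained Lemma~\ref{lem:deg-unconstrained}. This works, but notice that you \emph{also} invoke the uniform bound ``$\Norm{\Delta_\bullet}=\fO(\delta'/\sqrt m)$'' to control the projection-mismatch terms---and that uniform bound is precisely step~(i) above. Once you have it, every entry of $\hr_t^2$ is already $\le\hdelta'^2$, so tracking $\hr_t^2$ dynamically buys nothing: the ``consensus term'' in your $\zeta_t$ is already $\fO(\kappa\hdelta'^2/m)$, and the Lyapunov coupling via $\hdelta_t$ is unnecessary. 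The paper's route is therefore strictly simpler; yours is correct but carries a redundant layer (and the projection-aware version of Lemma~\ref{lem:errorb} you would need picks up an extra factor of $2$ from the splitting in Lemma~\ref{lem:proj-dc}, which requires re-checking the numerical constants in the $\rho$-threshold).
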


\begin{thm}\label{thm:eg-cc-constrained}
Suppose Assumption~\ref{asm:smooth}, \ref{asm:cc} and \ref{asm:constrained} hold. We let $\eta = 1/(6L)$, $K = \big\lceil\sqrt{\chi}\log\big(2\big(\sqrt{m}LD+\hdelta'\big)/\hdelta'\big)\big\rceil$, 
$K_0 = \big\lceil\sqrt{\chi}\log\big(\Norm{\vg(\vz_0)-\frac{1}{m}\vone\vone^\top\vg(\vz_0)}^2/\hdelta' \big)\big\rceil$ and
$T = \big\lceil12L\Norm{\bz_0 - \bz^*}^2/\eps\big\rceil$ where
\begin{align*}
\hdelta'=\min\left\{\sqrt{\frac{m\eta\eps/48}{(\eta^2+\beta\eta)L^2}},\frac{\sqrt{m}\eta\eps}{32\hC_1}\right\}, \quad \beta = \frac{2D^2}{\eps}
\end{align*}
and $\hat C_1 = 2D + \frac{1}{6L}\sqrt{2L^2D^2 + \frac{2}{m}\sum_{i=1}^m\Norm{g_i(z^*)}^2}$. 
Then we require 
$\fO\left(nL/\eps\right)$ SFO complexity on each agent and
$\fO\left((L\sqrt{\chi}/\eps)\log\left(LD/\eps\right)\right)$
communication rounds to obtain $f(\hat x,y^*)-f(x^*,\hat y)\leq \eps$, where 
$\hat x=\frac{1}{T}\sum_{t=0}^{T-1}\bx_{t+1/2}^\top$ and
$\hat y=\frac{1}{T}\sum_{t=0}^{T-1}\by_{t+1/2}^\top$.
\end{thm}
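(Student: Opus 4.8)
\emph{Proof plan.} Following the remark that MC-EG is the $n=1$ specialisation of MC-SVRE, I would reuse the constrained convex-concave machinery behind Theorem~\ref{thm:cc-constrained}. First, since every FastMix call preserves row means (Lemma~\ref{lem:FM}), an easy induction on lines~2--7 of Algorithm~\ref{alg:deg} gives $\bs_t=\frac{1}{m}\vone^\top\vg(\vz_t)$ and $\bs_{t+1/2}=\frac{1}{m}\vone^\top\vg(\vz_{t+1/2})$; by $L$-smoothness and Cauchy--Schwarz this yields $\Norm{\bs_t-\bg(\bz_t)}\le\frac{L}{\sqrt m}\Norm{\vz_t-\vone\bz_t}$ and the analogous bound at $\bz_{t+1/2}$ (the MC-EG counterpart of Lemma~\ref{lem:grad-avg}), and the mean iterates obey the inexact projected extragradient recursion
\begin{align*}
\bz_{t+1/2}=\fP(\bz_t-\eta\bs_t)+\Delta_t,\qquad \bz_{t+1}=\fP(\bz_t-\eta\bs_{t+1/2})+\Delta_{t+1/2},
\end{align*}
with $\Delta_t,\Delta_{t+1/2}$ the averaging/projection mismatch terms from Section~\ref{sec:mc-svre-constrained}.

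Next I would run the standard extragradient analysis for monotone variational inequalities on this recursion --- projection optimality, nonexpansiveness of $\fP$, $L$-smoothness to control $\Norm{\bs_{t+1/2}-\bs_t}$, and the convex-concave structure of $f$ (Assumption~\ref{asm:cc}), which gives $\inner{\bg(\bz_{t+1/2})}{\bz_{t+1/2}-z}\ge f(\bx_{t+1/2},y)-f(x,\by_{t+1/2})$ for all $z=[x;y]\in\fZ$ --- to obtain, for each $t$,
\begin{align*}
2\eta\big(f(\bx_{t+1/2},y)-f(x,\by_{t+1/2})\big)\le\Norm{\bz_t-z}^2-\Norm{\bz_{t+1}-z}^2+E_t,
\end{align*}
where $E_t$ collects a gradient-consensus term of order $(\eta^2+\beta\eta)L^2\Norm{\vz_{t+1/2}-\vone\bz_{t+1/2}}^2/m$ --- here $\beta=2D^2/\eps$ is inserted in the Young split of $\eta\inner{\bg(\bz_{t+1/2})-\bs_{t+1/2}}{\bz_{t+1/2}-z}$ so that the leftover distance term, bounded by $\eta D^2/\beta$ through compactness, contributes only $\fO(\eps)$ once the telescoped sum is divided by $2\eta T$ --- together with projection-mismatch terms of order $\hC_1(\Norm{\Delta_t}+\Norm{\Delta_{t+1/2}})/\sqrt m$, bounded exactly as $\zeta_t$ in Lemma~\ref{lem:zeta} with $n=1$. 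Telescoping over $t=0,\dots,T-1$, taking $z$ to be the saddle point $(x^*,y^*)$, dropping $-\Norm{\bz_T-\bz^*}^2$, and applying Jensen's inequality (convexity in $x$, concavity in $y$) to $(\hat x,\hat y)=\big(\frac1T\sum_t\bx_{t+1/2}^\top,\frac1T\sum_t\by_{t+1/2}^\top\big)$ gives $f(\hat x,y^*)-f(x^*,\hat y)\le\frac{\Norm{\bz_0-\bz^*}^2}{2\eta T}+\frac{1}{2\eta T}\sum_{t=0}^{T-1}E_t$.

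Finally I would control the consensus errors uniformly in $t$. Because all $\vz_t,\vz_{t+1/2}$ stay within $\fO(\hdelta')$ of $\FZ$ (diameter $D$) and the local gradients there are bounded by $\Norm{g_i(z^*)}+LD$, the quantities fed to FastMix in lines~4--7 deviate from their means by $\fO(\sqrt m LD+\hdelta')$; since one FastMix call contracts that deviation by $\rho=(1-\sqrt{1-\lambda_2(W)})^K$, the choice $K=\lceil\sqrt\chi\log(2(\sqrt m LD+\hdelta')/\hdelta')\rceil$ makes the post-FastMix deviation at most $\hdelta'$, and an induction --- base case from the $K_0$-round warm start on $\vg(\vz_0)$, with $\Norm{\vg(\vz_0)-\frac1m\vone\vone^\top\vg(\vz_0)}^2/\hdelta'$ inside the logarithm defining $K_0$ --- gives $\Norm{\vz_t-\vone\bz_t},\Norm{\vz_{t+1/2}-\vone\bz_{t+1/2}},\Norm{\vs_t-\vone\bs_t}\le\hdelta'$ and $\Norm{\Delta_t},\Norm{\Delta_{t+1/2}}\le\fO(\hdelta'/\sqrt m)$ for all $t$. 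Then $E_t\le\fO\big((\eta^2+\beta\eta)L^2\hdelta'^2/m+\hC_1\hdelta'/\sqrt m\big)$, and the two branches of $\hdelta'=\min\{\sqrt{m\eta\eps/(48(\eta^2+\beta\eta)L^2)},\ \sqrt m\eta\eps/(32\hC_1)\}$ make each piece small enough that $\frac{1}{2\eta T}\sum_t E_t\le\frac\eps2$; together with $\frac{\Norm{\bz_0-\bz^*}^2}{2\eta T}=\frac\eps4$ for $\eta=1/(6L)$ and $T=\lceil12L\Norm{\bz_0-\bz^*}^2/\eps\rceil$, this yields $f(\hat x,y^*)-f(x^*,\hat y)\le\eps$. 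The counts follow at once: each of the $T$ iterations evaluates $\vg$ twice ($2n$ component gradients per agent), so $\fO(nT)=\fO(nL/\eps)$ SFO calls; each runs $\fO(1)$ FastMix calls of $K$ rounds plus the one-time $K_0$ rounds, so $\fO(KT+K_0)=\fO((L\sqrt\chi/\eps)\log(LD/\eps))$ communication rounds after substituting $\log(1/\hdelta')=\fO(\log(LD/\eps))$.

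The main obstacle is this last step: since the problem is only convex-concave there is no contraction to absorb errors, so every consensus error must be pushed below the single tolerance $\hdelta'$ and kept there for all $T$ iterations. This needs a self-consistent induction --- the FastMix inputs must themselves be $\fO(\sqrt m LD+\hdelta')$, which uses the gradient-tracking identity $\bs_t=\frac1m\vone^\top\vg(\vz_t)$ and boundedness of gradients on the compact feasible set to stop $\vs_t$ from drifting --- followed by matching the qualitatively different quadratic (gradient-consensus) and linear (projection-mismatch) error contributions to the two branches of the minimum defining $\hdelta'$, with $\beta=2D^2/\eps$ chosen so that the Young-split residual leaves no un-summable per-iteration constant.
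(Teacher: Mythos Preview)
Your proposal is correct and follows essentially the same route as the paper's proof: the paper also derives the inexact projected extragradient recursion on the means, splits the consensus-induced error $\inner{\bg(\bz_{t+1/2})-\bs_{t+1/2}}{\bz_{t+1/2}-\bz^*}$ via Young's inequality with the same parameter $\beta=2D^2/\eps$, controls all consensus deviations uniformly at level $\hdelta'$ by an induction over FastMix steps (its Lemma~\ref{lem:eg-err}), bounds the projection-mismatch terms via $\hC_1$ exactly as in Lemma~\ref{lem:hzeta}, telescopes, and applies Jensen together with the convex-concave inequality $f(\bx_{t+1/2},y^*)-f(x^*,\by_{t+1/2})\le\inner{\bg(\bz_{t+1/2})}{\bz_{t+1/2}-\bz^*}$. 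One small normalization detail: in the paper's induction the tracking variable satisfies $\Norm{\vs_t-\vone\bs_t}\le\hdelta'/\eta$ rather than $\hdelta'$, which is what makes the $\eta\Norm{\vs_t-\vone\bs_t}$ contributions match the $\hdelta'$ budget; otherwise your outline matches the paper step for step.
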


\begin{remark}\label{remark:trade-off}
Based on results summarized in Table~\ref{table:scsc-uc}-\ref{table:cc}, we observe that MC-EG is more communication efficient than MC-SVRE, but it has more computational cost. 
This comparison implies we can introduce a trade-off between computation and communication for MC-SVRE in practice. Specifically, we can replace $\vv_{t+1/2}(i)$ by the mini-batch variance reduced estimator as follows
\begin{align*}
g_i(\vw_t(i)) + \frac{1}{l}\sum_{k=1}^l\left(g_{i,j_{i,k}}(\vz_{t+1/2}(i)) - g_{i,j_{i,k}}(\vw_t(i))\right),
\end{align*}
where $\ell$ is the mini-batch size and each $j_{i,k}$ is sampled from $\{1,\dots,n\}$ uniformly. The probability of computing the full gradient should be scaled into $p=l/(2n)$ for such strategy.
\end{remark}

\begin{figure}[t]\centering
\begin{tabular}{cccc}
\includegraphics[scale=0.29]{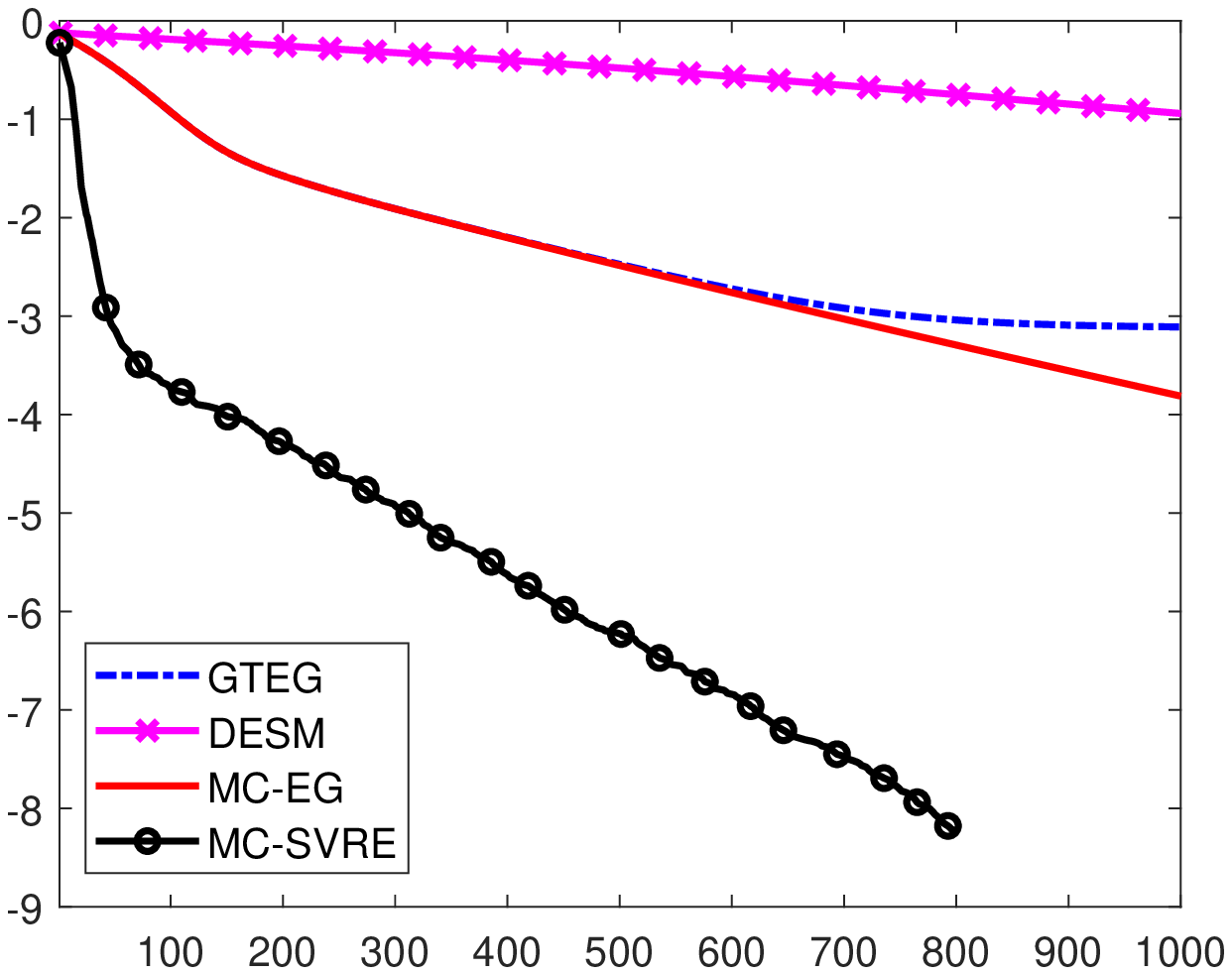} &
\includegraphics[scale=0.29]{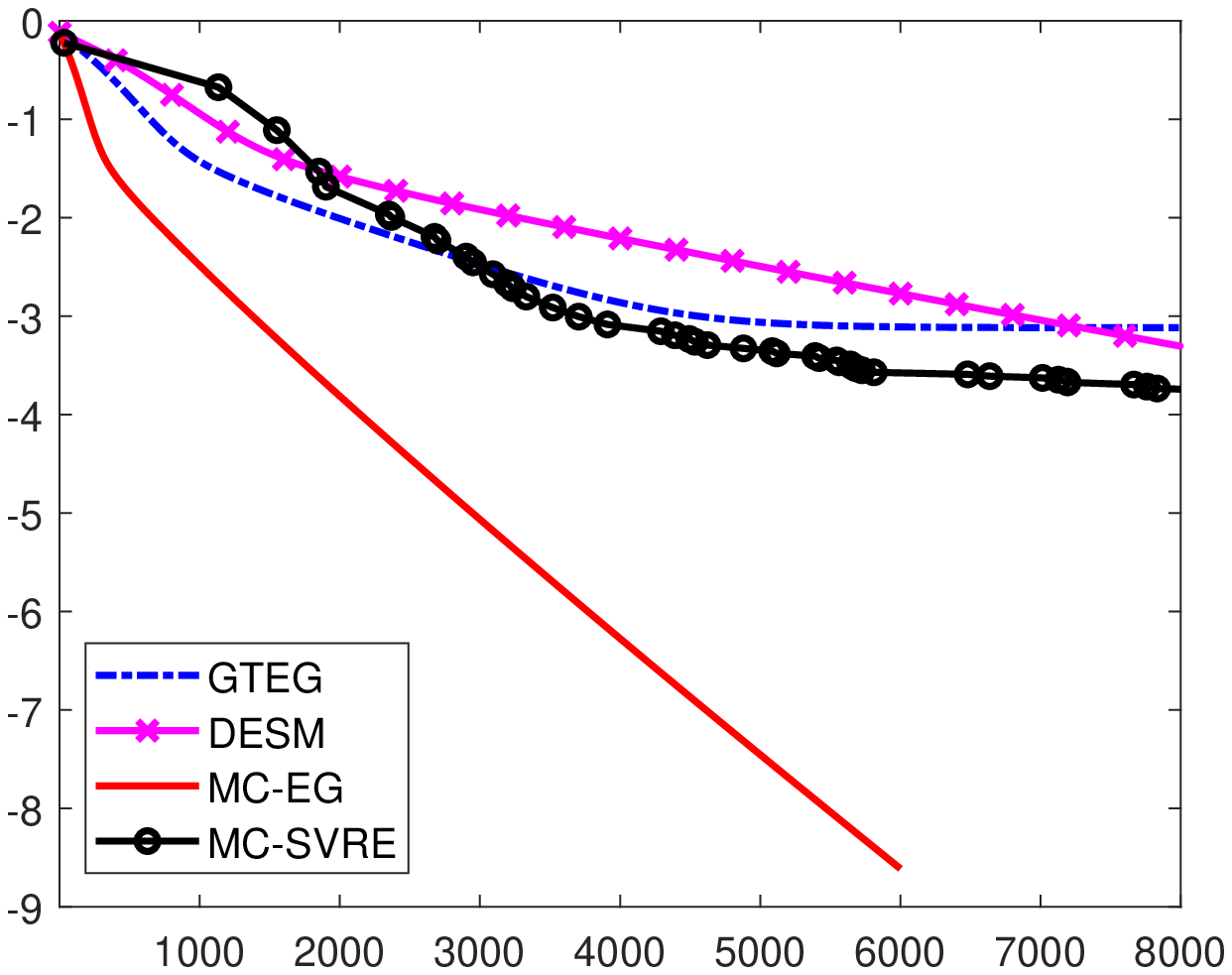} &
\includegraphics[scale=0.29]{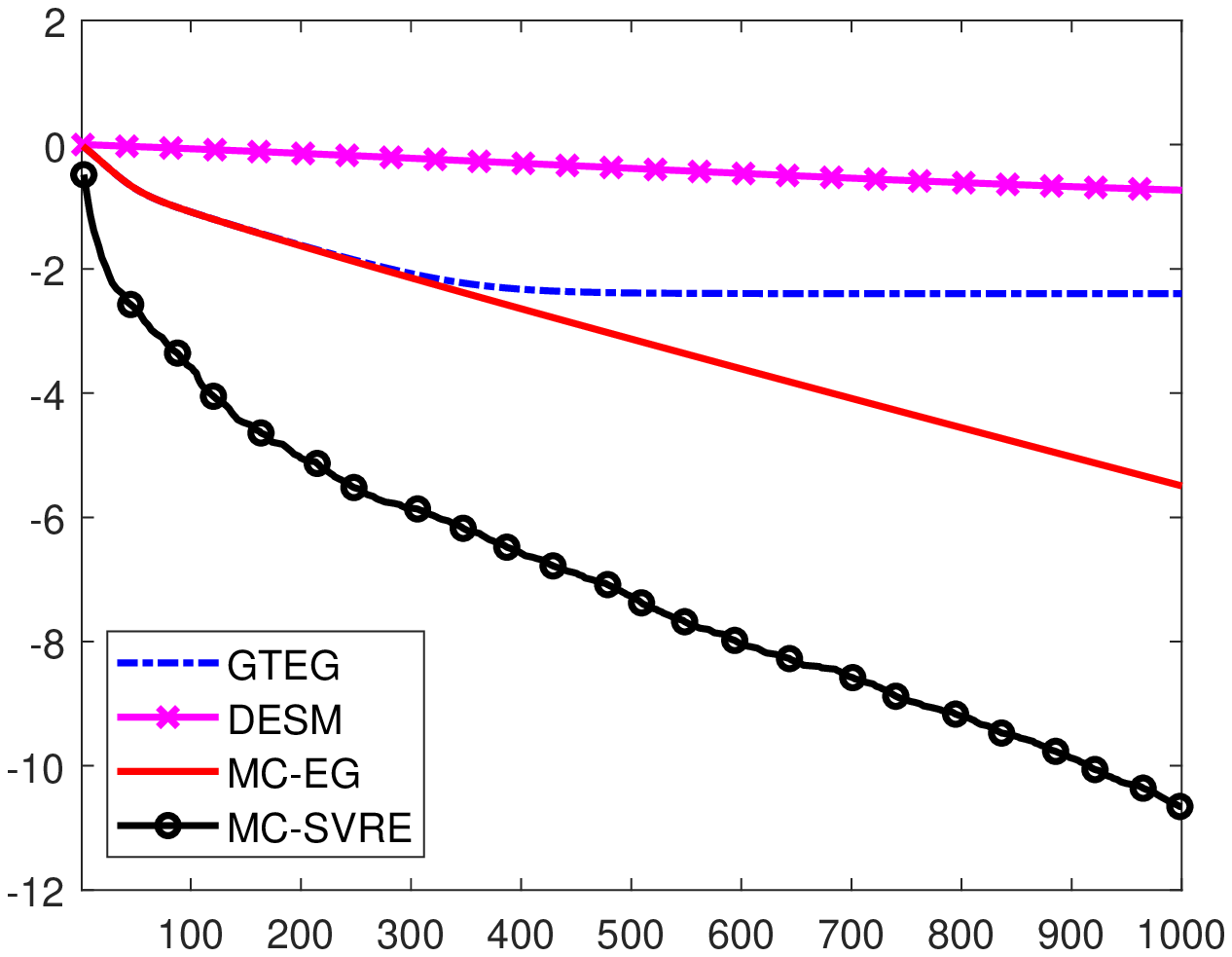} &
\includegraphics[scale=0.29]{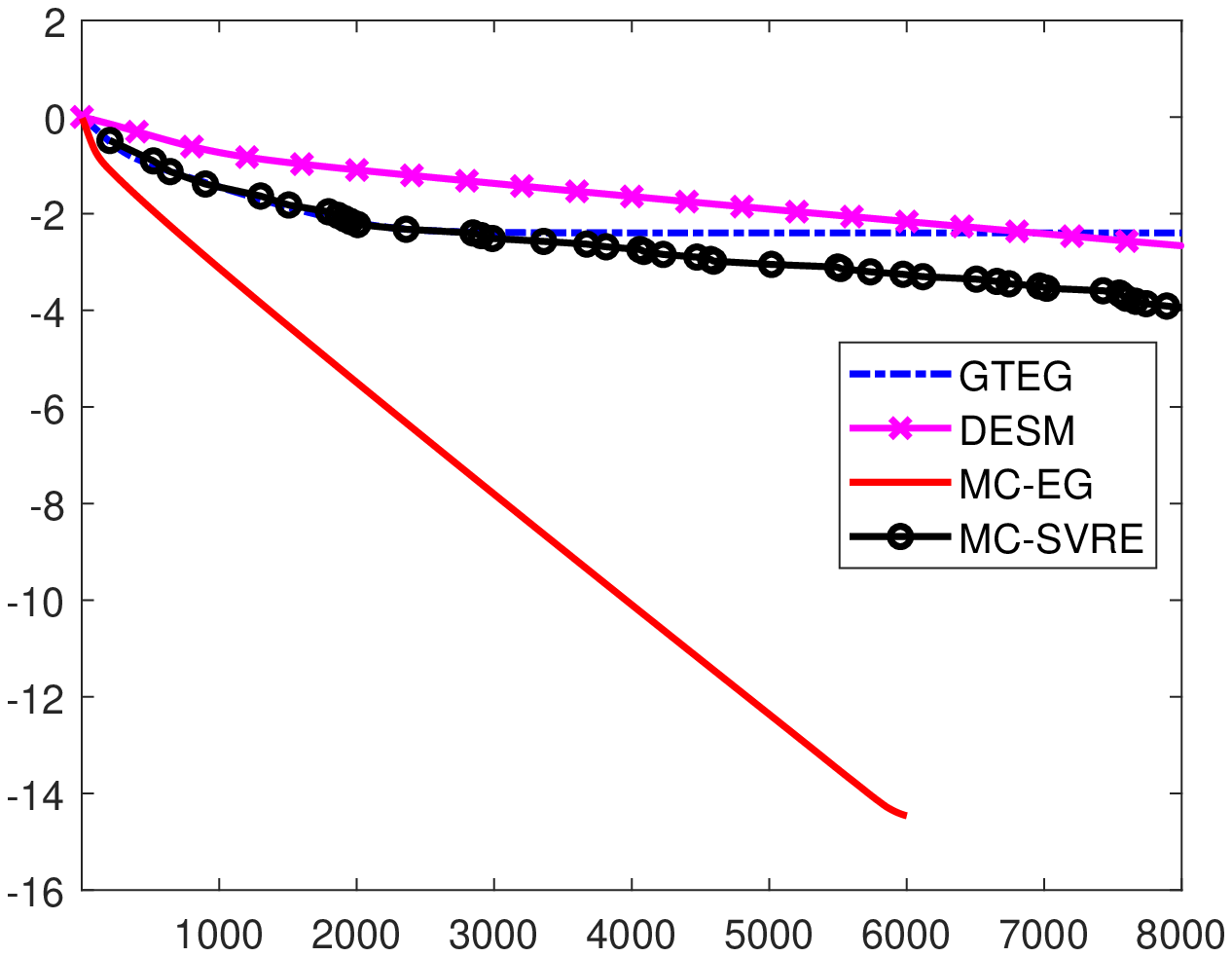} \\
\footnotesize (a) a9a, epoch vs $\log\Norm{g(z)}$ & 
\footnotesize (b) a9a, comm vs $\log\Norm{g(z)}$ &
\footnotesize (c) w8a, epoch vs $\log\Norm{g(z)}$ & 
\footnotesize (d) w8a, comm vs $\log\Norm{g(z)}$ \\[-0.1cm]
\end{tabular}
\caption{We show the results of epochs vs. $\log\Norm{g(z)}$ and rounds of communication vs. $\log\Norm{g(z)}$ for AUC maximization on datasets ``a9a''  and ``w8a''.}\label{figure:auc}
\end{figure}

\section{Numerical Experiments}\label{sec:experiments}
 
In this section, we provide numerical experiments on the applications of AUC maximization and distributionally robust optimization (DRO). We evaluation the performance by the norm of gradient operator $g(z)$ and the norm of gradient mapping $h(z)=\Norm{z-\fP_\fZ(z - \tau g(z))}/\tau$ for unconstrained and constrained problems respectively, where we set $\tau = 0.5$.
All experiments are conducted in a random graph of ten nodes and the $10 \times 10$ gossip matrix related to this graph satisfies Assumption~\ref{asm:W}.
Furthermore, our experiments of AUC maximization and distributionally robust optimization are conducted on two datasets:  ``a9a'' and ``w8a''. 
The samples of dataset are uniformly distributed into the ten agents.
We compare the proposed MC-SVRE (Algorithm \ref{alg:dsvre}) and MC-EG (Algorithm \ref{alg:deg}) with existing algorithms GTEG \cite{mukherjee2020decentralized} and DESM \cite{beznosikov2020distributed}.
To achieve the fair comparison, we tune the parameters for all algorithms properly.

\subsection{AUC Maximization}

AUC maximization~\cite{hanley1982meaning,ying2016stochastic} is targeted to find the binary classifier $\theta\in\BR^d$ on training set $\{(a_{i,j}, b_{i,j})\}_{i,j}$ with $mn$ samples, where $a_{i,j}\in\BR^d$, $b_{i,j}\in\{+1,-1\}$, $i=1,\dots,m$ and $j=1,\dots,n$. 
We denote $N^+$ be the numbers of positive and negative instances and let $q=N^+/(mn)$. The unconstrained minimax formulation for this model is
\begin{align*}
    \min_{x\in\BR^{d+2}}\max_{y\in\BR} f(x,y)   \triangleq \frac{1}{mn}\sum_{i=1}^m\sum_{j=1}^n f_{i,j}(x,y;a_{i,j},b_{i,j},\lambda),
\end{align*}
where $x=[\theta;u;v]\in\BR^{d+2}$, $\lambda$ is the regularization parameter and each component function $f_{i,j}(x,y;a_{i,j},b_{i,j},\lambda)$ is defined as
\begin{align*}
f_{i,j}(x,y;a_{i,j},b_{i,j},\lambda)  
\triangleq & \frac{\lambda}{2}\norm{x}^2 -q(1-q)y^2    + (1-q)\left((\theta^\top a_{i,j}-u)^2 - 2(1+y)\theta^\top a_{i,j} \right)\BI_{\{b_{i,j}=1\}} \\
&  + q\left((\theta^\top a_{i,j}-v)^2 +  2(1+y)\theta^\top a_{i,j}\right)\BI_{\{b_{i,j}=-1\}}. \end{align*}
We set $\lambda=0.01$ for our experiment.

\subsection{Distributionally Robust Optimization}

We consider the distributionally robust optimization with logistic loss and $\ell_1$-ball constraint~\cite{duchi2019variance,yan2019stochastic}. Given a training set $\left\{(a_{i,j}, b_{i,j})\right\}_{i,j}$ with $mn$ samples where $a_{i,j}\in\BR^d$, $b_{i,j}\in\{1,-1\}$, $i=1,\dots,m$ and $j=1,\dots,n$. 
The constrained minimax formulation for this model is
\begin{align*}
    \min_{x\in\fX} \max_{y\in{\fY}} f(x,y)\triangleq \frac{1}{mn}\sum_{i=1}^m\sum_{j=1}^n f_{i,j}(x,y),
\end{align*}
where 
$f_{i,j}(x,y)=y_{i,j} l_{i,j}(x) + \frac{\lambda_2}{2}\Norm{x}^2 -V(y)$,
$l_{i,j}(x)=\log\left(1+\exp(-b_{i,j} a_{i,j}^\top x)\right)$ and
$V(y)=\frac{\lambda_3}{2}\norm{mny\!-\!\bf 1}^2$.
The constrained sets are $\fX=\{x\in\BR^d: \Norm{x}_1\leq \lambda_1\}$ and
$\fY=\{y\in{\mathbb R}^{mn}: 0 \leq y_i \leq 1, \sum_{i=1}^{mn} y_i=1\}$. We set $\lambda_1=1$, $\lambda_2=0.1$ and $\lambda_3=1/n^2$ for our experiment. 

\begin{figure}[t]\centering
\begin{tabular}{cccc}
\includegraphics[scale=0.288]{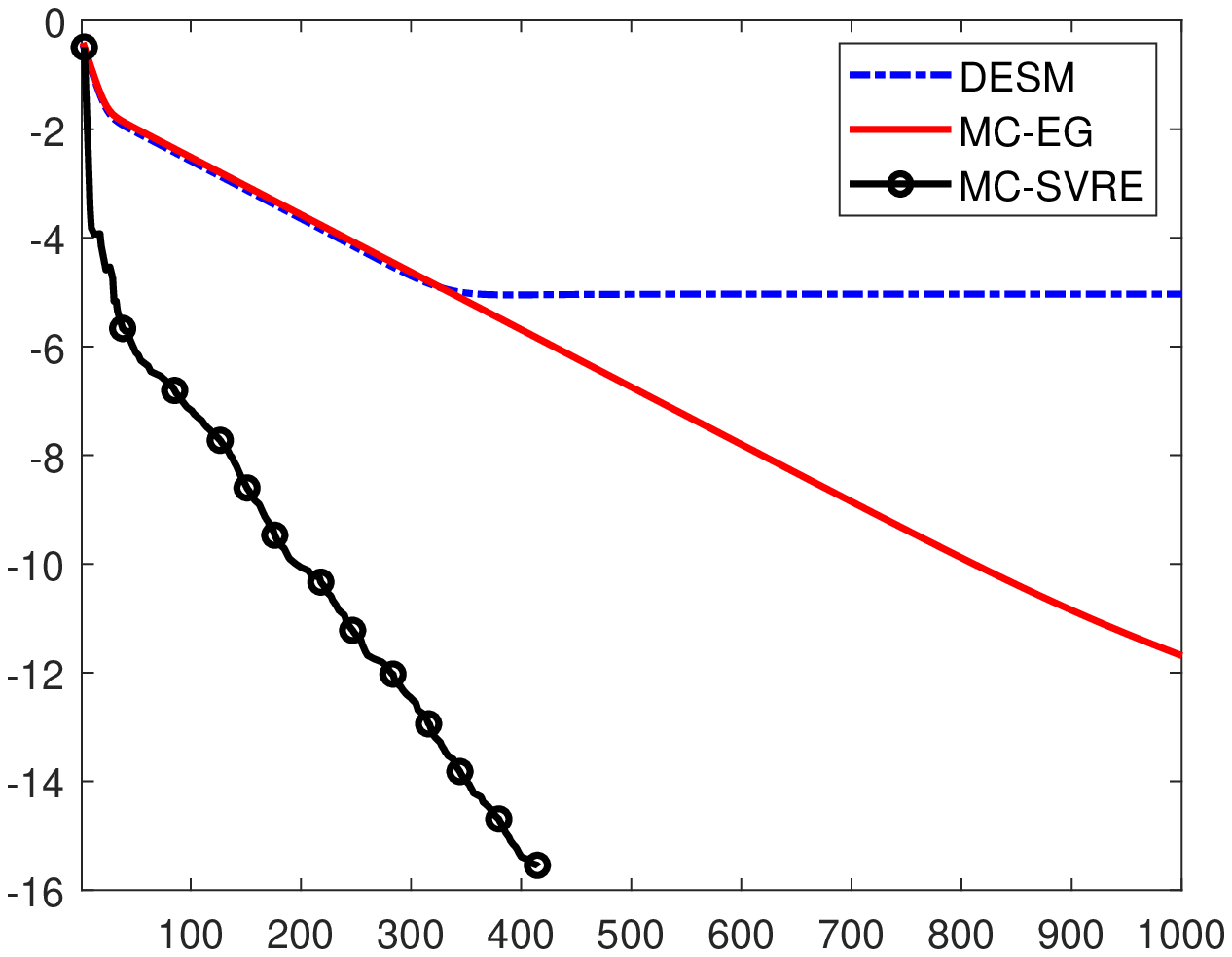} &
\includegraphics[scale=0.288]{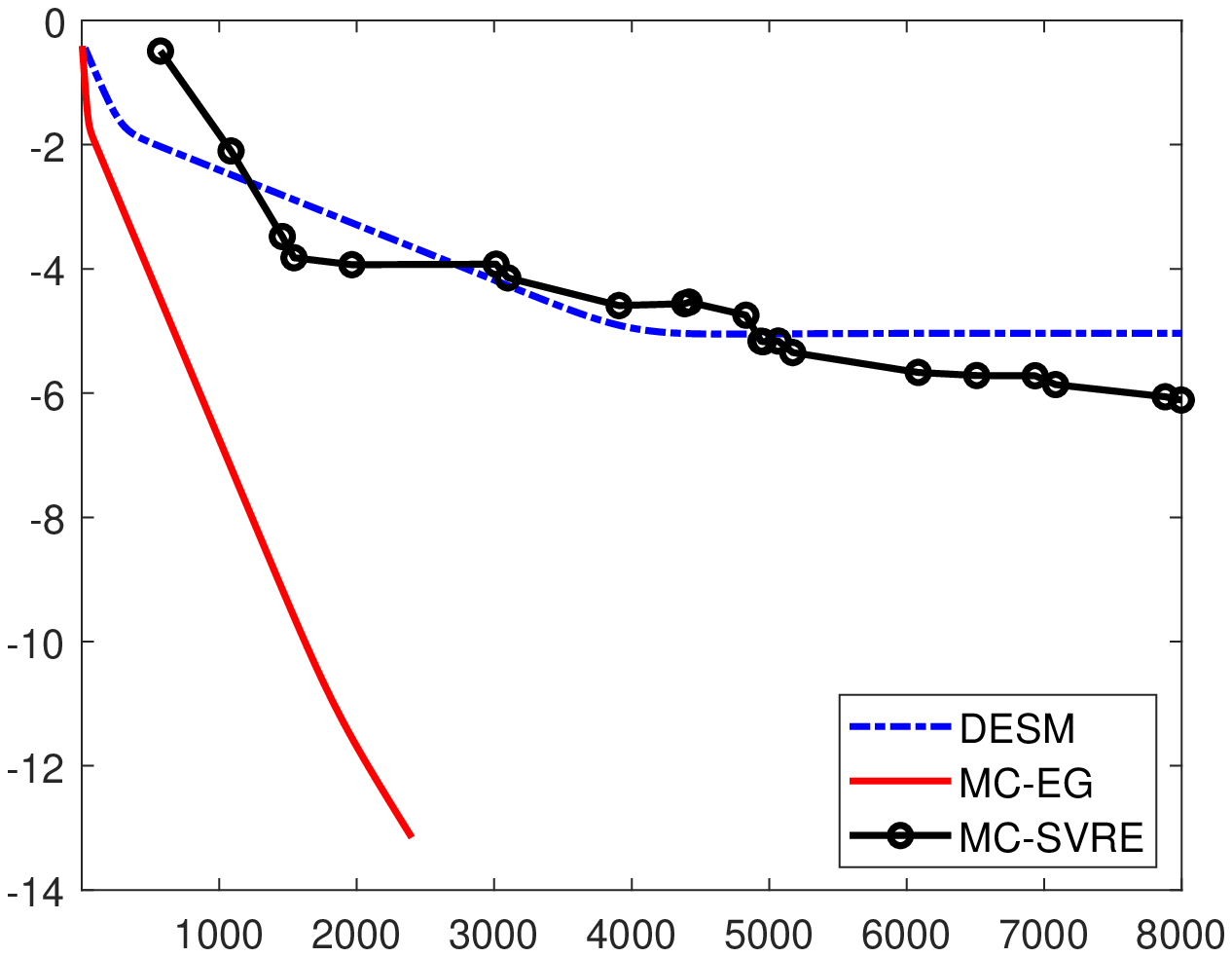} &
\includegraphics[scale=0.288]{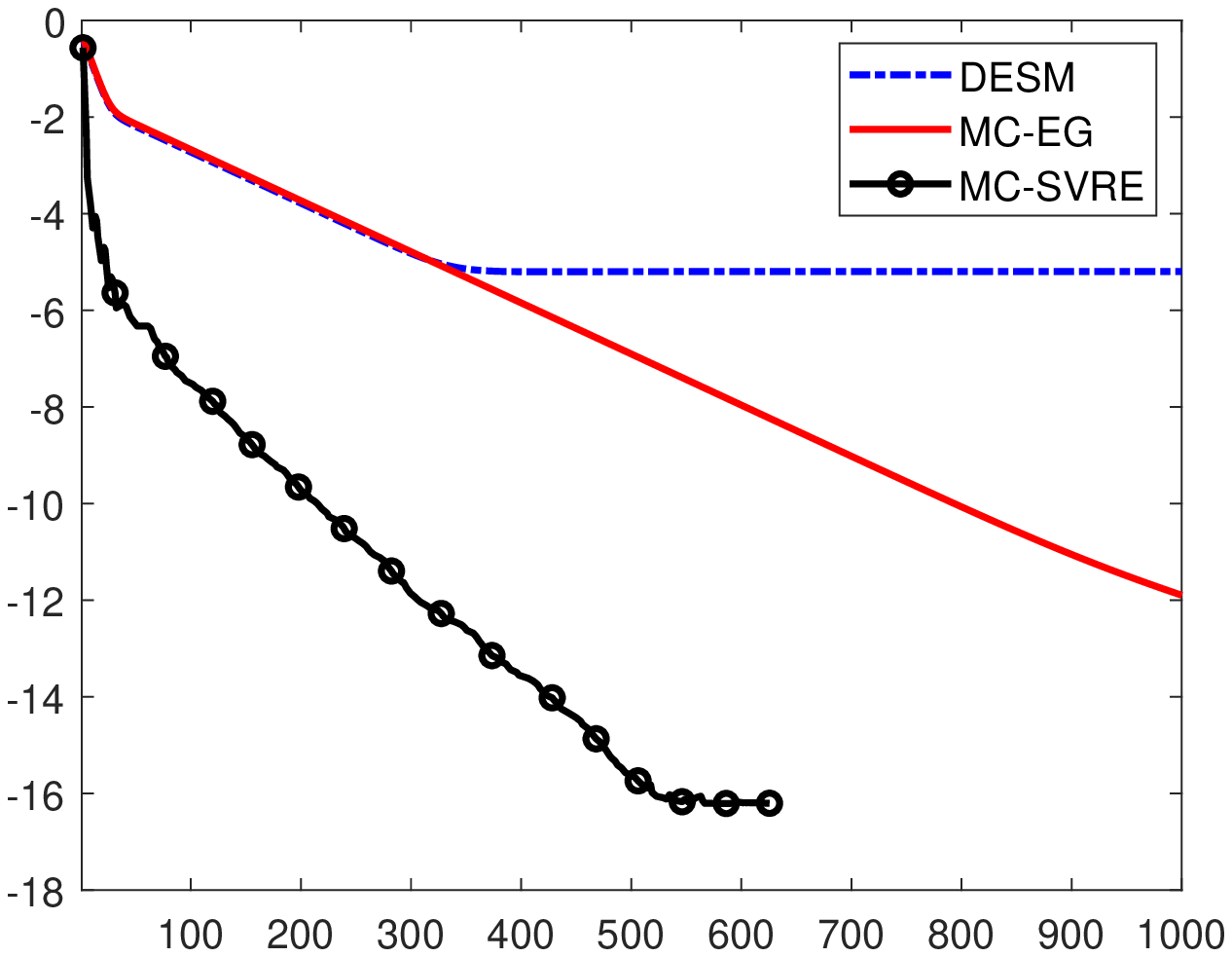} &
\includegraphics[scale=0.288]{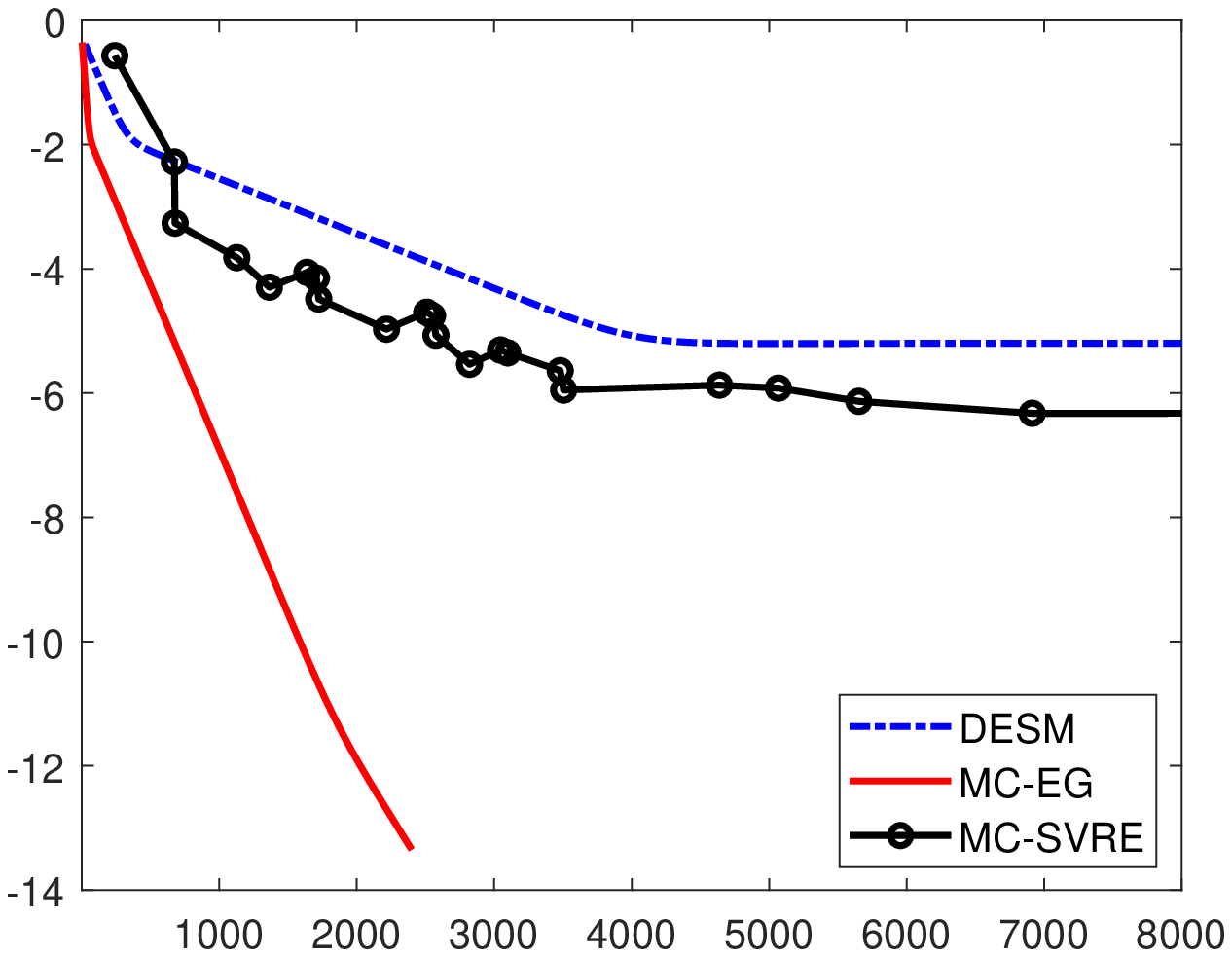} \\
\footnotesize (a) a9a, epoch vs $\log\Norm{h(z)}$ & 
\footnotesize (b) a9a, comm vs $\log\Norm{h(z)}$ &
\footnotesize (c) w8a, epoch vs $\log\Norm{h(z)}$ & 
\footnotesize (d) w8a, comm vs $\log\Norm{h(z)}$ \\[-0.1cm]
\end{tabular}
\caption{We show the results of epochs vs. $\log\Norm{h(z)}$ and rounds of communication vs. $\log\Norm{h(z)}$ for DRO problem on datasets ``a9a''  and ``w8a''.}\label{figure:dro}
\end{figure}

\subsection{Experimental Results and Discussion}

We report our experimental results in Figure \ref{figure:auc} and \ref{figure:dro}.
We observe that MC-EG outperforms GTEG since GTEG lacks the multi-consensus step and each agent only communicates with its neighbors only once for per update of the variable.
At the same time, the results show that MC-EG is both more communication efficient and more computation efficient than DESM.
This is because of DESM does not include the gradient-tracking step, leading to the algorithm requires more communication rounds when it tries to obtain a high precision solution.
On the other hand, the comparisons show that MC-SVRE achieves the lowest computation cost among all algorithms, but it commonly requires more communication cost than MC-EG, which implies MC-SVRE is suitable for computation sensitive cases. All of these empirical results validates our theoretical analysis. 

\section{Conclusion}\label{conclusion}

In this paper, we have proposed a variance reduced extragradient method for decentralized convex-concave minimax optimization.  
We prove the algorithm achieves best known SFO complexity in theoretical. 
We also provide a deterministic variant for this method, which is more communication efficient. 
The numerical experiments on machine learning applications of AUC maximization and distributionally robust optimization validate the effectiveness of proposed algorithms.
It would be interesting to extend our algorithms to more general case, such as nonconvex-concave or nonconvex-nonconcave minimax optimization problems.

\bibliographystyle{plainnat}
\bibliography{reference}

\newpage
\appendix
Appendices are organized as follows.
In Section \ref{appendix:lemmas}, we provide some useful lemmas for our proofs.
In Section \ref{appendix:MC-SVRE-unconstrained}, we give the detailed proofs for Section \ref{sec:mc-svre-unconstrained}.
In Section \ref{appendix:MC-SVRE-constrained}, we give the detailed proofs for Section \ref{sec:mc-svre-constrained}.
In Section \ref{appendix:MCEG}, we give the detailed proofs for Section \ref{sec:MC-EG}.
We always use $\BT(\cdot)$ to present the procedure of FastMix (Algorithm \ref{alg:fm}), that is
\begin{align*}
    \BT(\vz) = {\rm FastMix}(\vz, K).
\end{align*}
Recall that we have defined
\begin{align*}
\rho=\left(1-\sqrt{1-\lambda_2(W)}\right)^K,
\end{align*}
where $K$ is the number of iterations in Algorithm \ref{alg:fm}.

\section{Some Useful Lemmas}\label{appendix:lemmas}

We first provide some useful lemmas will be used in the analysis of MC-SVRE and MC-EG. 

\begin{lem}\label{lem:norm}
For any $a_1,\dots,a_m\in\BR^d$, we have 
\begin{align*}
\Norm{\frac{1}{m}\sum_{i=1}^m a_i}^2 \leq \frac{1}{m} \sum_{i=1}^m \Norm{a_i}^2.
\end{align*}
\end{lem}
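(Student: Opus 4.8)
The final statement to prove is Lemma~\ref{lem:norm}: for any $a_1,\dots,a_m\in\BR^d$, $\Norm{\frac{1}{m}\sum_{i=1}^m a_i}^2 \leq \frac{1}{m}\sum_{i=1}^m\Norm{a_i}^2$. This is the standard convexity (Jensen) inequality for the squared Euclidean norm, so the proof should be short and self-contained.

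The plan is to apply convexity of the map $v\mapsto\Norm{v}^2$ on $\BR^d$. Concretely, I would write $\frac{1}{m}\sum_{i=1}^m a_i$ as a uniform convex combination of the $a_i$ and invoke Jensen's inequality for the convex function $\phi(v)=\Norm{v}^2$, giving $\phi\big(\frac{1}{m}\sum_i a_i\big)\le\frac{1}{m}\sum_i\phi(a_i)$, which is exactly the claim. Alternatively, and perhaps cleaner for a self-contained write-up, I would give a direct computation: expand $\Norm{\frac{1}{m}\sum_i a_i}^2 = \frac{1}{m^2}\sum_{i,j}\inner{a_i}{a_j}$, use $\inner{a_i}{a_j}\le\frac12(\Norm{a_i}^2+\Norm{a_j}^2)$ by Young's inequality (or Cauchy--Schwarz plus AM--GM), and sum to get $\frac{1}{m^2}\cdot m\sum_i\Norm{a_i}^2 = \frac{1}{m}\sum_i\Norm{a_i}^2$. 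A third route is Cauchy--Schwarz directly: $\Norm{\sum_i a_i}^2 = \Norm{\sum_i 1\cdot a_i}^2 \le \big(\sum_i 1^2\big)\big(\sum_i\Norm{a_i}^2\big) = m\sum_i\Norm{a_i}^2$, then divide by $m^2$.

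I expect no real obstacle here — this is a one-line inequality. The only thing to be careful about is stating which elementary fact is being used (convexity of $\Norm{\cdot}^2$, or Cauchy--Schwarz, or Young's inequality) and making the constant bookkeeping correct, since the $1/m$ versus $1/m^2$ factors are the sole place an error could creep in. I would present the Cauchy--Schwarz version as it is the most transparent: apply $\big|\sum_i \inner{u_i}{w_i}\big| \le \sqrt{\sum_i\Norm{u_i}^2}\sqrt{\sum_i\Norm{w_i}^2}$ with $u_i = a_i$ and $w_i$ a scalar multiple of a fixed unit vector, or more simply use the finite-dimensional Cauchy--Schwarz on the vectors $(1,\dots,1)$ and $(\Norm{a_1},\dots,\Norm{a_m})$ after first noting $\Norm{\sum_i a_i}\le\sum_i\Norm{a_i}$ by the triangle inequality. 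Either way the proof is two lines and closes immediately.
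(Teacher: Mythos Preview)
Your proposal is correct; all three routes you sketch (Jensen, direct expansion with Young's inequality, Cauchy--Schwarz) are valid one-liners. The paper's own proof is essentially your second route: it expands the difference $\frac{1}{m}\sum_i\Norm{a_i}^2 - \Norm{\frac{1}{m}\sum_i a_i}^2$ directly and rewrites it as $\frac{1}{m^2}\sum_{i<j}\Norm{a_i-a_j}^2\ge 0$, which is the same computation packaged as the variance identity (and incidentally yields the equality case).
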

\begin{proof}
We have
\begin{align*}
  &  \frac{1}{m} \sum_{i=1}^m \Norm{a_i}^2 - \Norm{\frac{1}{m}\sum_{i=1}^m a_i}^2 \\ 
= & \frac{1}{m^2}\left(m\sum_{i=1}^m \Norm{a_i}^2 - \sum_{i=1}^m\sum_{j=1}^m \inner{a_i}{a_j} \right) \\
= & \frac{1}{m^2}\left((m-1)\sum_{i=1}^m \Norm{a_i}^2 - 2\sum_{i\neq j} \inner{a_i}{a_j} \right) \\
= & \frac{1}{m^2}\left((m-1)\sum_{i=1}^m \Norm{a_i}^2 - 2\sum_{i\neq j} \inner{a_i}{a_j} \right) \\
= & \frac{1}{m^2}\sum_{i\neq j} \norm{a_i-a_j}^2 \geq 0.
\end{align*}
\end{proof}

\begin{lem}[{\citet[Lemma 3]{ye2020multi}}]\label{lem:avg-norm}
For any matrix $\vz\in\BR^{m\times d}$, we have
$\Norm{\vz - \vone\bz} \leq \Norm{\vz}$ where $\bz=\frac{1}{m}\vone^\top\vz$.
\end{lem}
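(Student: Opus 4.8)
The plan is to recognize $\vone\bz$ as an orthogonal projection of $\vz$ and then invoke the Pythagorean identity for the Frobenius norm. Concretely, I would introduce $P \triangleq \frac{1}{m}\vone\vone^\top \in \BR^{m\times m}$. Since $\vone^\top\vone = m$, one checks immediately that $P=P^\top$ and $P^2=P$, so $P$ is the orthogonal projector onto $\spn(\vone)$ acting on the rows of a matrix, and $\mI-P$ is the complementary projector. Because $\bz = \frac{1}{m}\vone^\top\vz$, we have $\vone\bz = P\vz$ and hence $\vz - \vone\bz = (\mI-P)\vz$.

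The second step is to expand the squared Frobenius norm using $\Norm{A}^2 = \Tr(A^\top A)$:
\[
\Norm{\vz}^2 = \Norm{P\vz + (\mI-P)\vz}^2 = \Norm{P\vz}^2 + 2\Tr\big(\vz^\top P^\top(\mI-P)\vz\big) + \Norm{(\mI-P)\vz}^2 .
\]
The cross term vanishes because $P^\top(\mI-P) = P - P^2 = 0$. Therefore
\[
\Norm{\vz - \vone\bz}^2 = \Norm{(\mI-P)\vz}^2 = \Norm{\vz}^2 - \Norm{P\vz}^2 \leq \Norm{\vz}^2 ,
\]
which is the claim. (A self-contained variant avoids projector language entirely: writing $\vz(i)^\top$ for the $i$-th row and $\bz=\frac{1}{m}\sum_{i=1}^m\vz(i)^\top$, one gets $\Norm{\vz-\vone\bz}^2 = \sum_{i=1}^m\Norm{\vz(i)-\bz^\top}^2 = \sum_{i=1}^m\Norm{\vz(i)}^2 - m\Norm{\bz}^2$, since the linear cross term collapses as $\bz^\top$ is the mean of the $\vz(i)$'s; this is the same bias–variance decomposition behind Lemma \ref{lem:norm}.)

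There is no real obstacle here; the single point requiring care is verifying that the cross term is zero — equivalently, that $\vone\bz$ and $\vz-\vone\bz$ are orthogonal in the Frobenius inner product — which is exactly what the symmetry and idempotency of $P$ deliver.
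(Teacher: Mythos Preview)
Your proof is correct. The paper does not actually supply its own proof of this lemma; it simply cites it from \citet[Lemma~3]{ye2020multi}. Your orthogonal-projector argument (and the equivalent row-wise bias--variance expansion you sketch) is the standard way to establish this inequality, so there is nothing to compare against here.
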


\begin{lem}\label{lem:smooth-b}
Under Assumption~\ref{asm:smooth}, we have
$\Norm{\vg(\vz)-\vg(\vz')} \leq  L\Norm{\vz-\vz'}$ for any $\vz, \vz'\in\BR^{m\times d}$.
\end{lem}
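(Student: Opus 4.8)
\textbf{Proof proposal for Lemma~\ref{lem:smooth-b}.}

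The plan is to reduce the aggregate-variable inequality to the componentwise smoothness hypothesis of Assumption~\ref{asm:smooth}. First I would recall that, by definition, the $i$-th row of $\vg(\vz)$ is $g_i(\vz(i)) = [\nabla_x f_i(\vx(i),\vy(i)); -\nabla_y f_i(\vx(i),\vy(i))]$, and that $f_i = \frac{1}{n}\sum_{j=1}^n f_{i,j}$, so $g_i(\vz(i)) = \frac{1}{n}\sum_{j=1}^n [\nabla_x f_{i,j}(\vz(i)); -\nabla_y f_{i,j}(\vz(i))]$. Since the Frobenius norm squares and sums over rows, $\Norm{\vg(\vz)-\vg(\vz')}^2 = \sum_{i=1}^m \Norm{g_i(\vz(i)) - g_i(\vz'(i))}^2$, and likewise $\Norm{\vz-\vz'}^2 = \sum_{i=1}^m \Norm{\vz(i)-\vz'(i)}^2$. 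So it suffices to prove the single-agent bound $\Norm{g_i(\vz(i))-g_i(\vz'(i))}^2 \le L^2 \Norm{\vz(i)-\vz'(i)}^2$ and then sum over $i$.

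For the single-agent bound I would write $z = \vz(i)=[x;y]$ and $z'=\vz'(i)=[x';y']$, and note that flipping the sign of the $\nabla_y$ block does not change the norm, so $\Norm{g_i(z)-g_i(z')}^2 = \Norm{\nabla_x f_i(x,y)-\nabla_x f_i(x',y')}^2 + \Norm{\nabla_y f_i(x,y)-\nabla_y f_i(x',y')}^2 = \Norm{\nabla f_i(x,y) - \nabla f_i(x',y')}^2$, where $\nabla f_i$ denotes the full gradient in $(x,y)$. Using $\nabla f_i = \frac{1}{n}\sum_j \nabla f_{i,j}$, Lemma~\ref{lem:norm} (applied to the $n$ vectors $\nabla f_{i,j}(x,y)-\nabla f_{i,j}(x',y')$) gives $\Norm{\nabla f_i(x,y)-\nabla f_i(x',y')}^2 \le \frac{1}{n}\sum_{j=1}^n \Norm{\nabla f_{i,j}(x,y)-\nabla f_{i,j}(x',y')}^2$. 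Each summand is at most $L^2(\Norm{x-x'}^2+\Norm{y-y'}^2) = L^2\Norm{z-z'}^2$ by Assumption~\ref{asm:smooth}, so the average is too, yielding $\Norm{g_i(z)-g_i(z')}^2 \le L^2\Norm{z-z'}^2$.

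Finally, summing this over $i=1,\dots,m$ gives $\Norm{\vg(\vz)-\vg(\vz')}^2 \le L^2 \sum_{i=1}^m \Norm{\vz(i)-\vz'(i)}^2 = L^2\Norm{\vz-\vz'}^2$, and taking square roots completes the proof. There is no real obstacle here; the only point requiring a moment's care is the bookkeeping that the sign flip in the $\nabla_y$ block of $g_i$ is harmless for the norm, and that the finite-sum averaging combined with Lemma~\ref{lem:norm} converts the per-component smoothness constant $L$ into the same constant $L$ for $g_i$ (no extra $\sqrt{n}$ factor, since convexity of $\Norm{\cdot}^2$ is what is used, not the triangle inequality).
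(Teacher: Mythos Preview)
Your proposal is correct and follows essentially the same route as the paper: decompose the Frobenius norm row-wise and apply the per-agent Lipschitz bound $\Norm{g_i(\vz(i))-g_i(\vz'(i))}\le L\Norm{\vz(i)-\vz'(i)}$, then sum over $i$. The paper's proof simply asserts that Assumption~\ref{asm:smooth} makes each $g_i$ $L$-Lipschitz and proceeds, whereas you additionally spell out why this holds (sign flip is norm-preserving, plus the averaging inequality of Lemma~\ref{lem:norm} on the $n$ components $f_{i,j}$); apart from that extra detail the two arguments are identical.
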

\begin{proof}
Assumption~\ref{asm:smooth} means $g_i$ is $L$-Lipschitz continuous. Then we have
\begin{align*}
& \Norm{\vg(\vz)-\vg(\vz')} \\
= & \Norm{\begin{bmatrix}
g_1(\vz(1))^\top \\ \vdots \\ g_m(\vz(m))^\top
\end{bmatrix} -
\begin{bmatrix}
g_1(\vz'(1))^\top \\ \vdots \\ g_m(\vz'(m))^\top
\end{bmatrix}}^2  \\
= & \sum_{i=1}^m \Norm{g_i(\vz(i))-g_i(\vz'(i))}^2 \\
\leq & L^2\sum_{i=1}^m \Norm{\vz(i)-\vz'(i)}^2 \\
= & L^2\Norm{\vz-\vz'}^2.
\end{align*}
\end{proof}

\begin{lem}[{\citet[Theorem 1]{rockafellar1970monotone}}]\label{lem:mm}
For any $z, z'\in\BR^{d}$, we have 
\begin{align*}
\inner{g(z)-g(z')}{z-z'} \geq  0    
\end{align*}
under Assumption~\ref{asm:cc} and
\begin{align*}
\inner{g(z)-g(z')}{z-z'} \geq  \mu\Norm{z-z'} 
\end{align*}
under Assumption~\ref{asm:scsc}.
\end{lem}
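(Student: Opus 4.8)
The plan is to give the short self-contained argument: the statement is the classical fact that the saddle-point operator of a convex-concave function is monotone (and strongly monotone in the strongly-convex-strongly-concave case), so instead of quoting \citet{rockafellar1970monotone} one may derive it directly from the first-order characterizations in Assumption~\ref{asm:cc}. Throughout write $z=[x;y]$ and $z'=[x';y']$, and use $g(z)=[\nabla_x f(x,y);-\nabla_y f(x,y)]$, so that
\begin{align*}
\inner{g(z)-g(z')}{z-z'}=\inner{\nabla_x f(x,y)-\nabla_x f(x',y')}{x-x'}-\inner{\nabla_y f(x,y)-\nabla_y f(x',y')}{y-y'}.
\end{align*}

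For the convex-concave case I would invoke four instances of Assumption~\ref{asm:cc}. Using convexity in the $x$-variable anchored at the base point $(x,y)$ gives $f(x',y)\ge f(x,y)+\inner{\nabla_x f(x,y)}{x'-x}$, i.e.\ a lower bound on $\inner{\nabla_x f(x,y)}{x-x'}$; convexity in $x$ anchored at $(x',y')$ gives a lower bound on $-\inner{\nabla_x f(x',y')}{x-x'}$; and the concavity inequalities in the $y$-variable anchored at $(x,y)$ and at $(x',y')$ give lower bounds on $-\inner{\nabla_y f(x,y)}{y-y'}$ and on $\inner{\nabla_y f(x',y')}{y-y'}$ respectively. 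It is essential here that the inequalities of Assumption~\ref{asm:cc} hold for every value of the frozen coordinate, so the ``cross'' points $(x',y)$ and $(x,y')$ that appear are admissible. Summing the four inequalities, the left-hand sides add to exactly $\inner{g(z)-g(z')}{z-z'}$, while on the right-hand side the four values $f(x,y),f(x,y'),f(x',y),f(x',y')$ cancel in pairs, leaving $0$; hence $\inner{g(z)-g(z')}{z-z'}\ge 0$.

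For the strongly-convex-strongly-concave case, put $\tilde f(x,y)\triangleq f(x,y)-\frac{\mu}{2}\Norm{x}^2+\frac{\mu}{2}\Norm{y}^2$, which is convex-concave by Assumption~\ref{asm:scsc} and whose saddle-point operator is $\tilde g(z)=g(z)-\mu z$. Applying the convex-concave case to $\tilde f$ yields $\inner{\tilde g(z)-\tilde g(z')}{z-z'}\ge 0$, and rearranging gives $\inner{g(z)-g(z')}{z-z'}\ge\mu\Norm{z-z'}^2$, which is the claimed bound.

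The only step requiring care is the bookkeeping in the convex-concave case: each first-order inequality must be anchored at the correct base point so that every mixed term $f(x',y)$ and $f(x,y')$ is produced with both signs and therefore telescopes away; with any other pairing a nonzero residual survives. Everything else is immediate algebra.
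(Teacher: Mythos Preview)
Your argument is correct and is the standard elementary derivation of (strong) monotonicity of the saddle-point operator from the first-order convexity and concavity inequalities. The four-inequality telescoping works exactly as you describe, and the reduction in the strongly-convex-strongly-concave case via $\tilde f(x,y)=f(x,y)-\tfrac{\mu}{2}\Norm{x}^2+\tfrac{\mu}{2}\Norm{y}^2$ and $\tilde g(z)=g(z)-\mu z$ is clean and correct; in particular you correctly obtain $\mu\Norm{z-z'}^2$ on the right-hand side (the statement as printed drops the square, which is evidently a typo).

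As for comparison: the paper does not supply its own proof of this lemma at all --- it simply attributes the result to \citet[Theorem~1]{rockafellar1970monotone} and moves on. Your self-contained derivation is therefore strictly more informative than what the paper provides, and it has the added benefit of making transparent that only the first-order inequalities of Assumption~\ref{asm:cc} (applied with the ``other'' variable frozen) are needed, so the result holds pointwise without any smoothness.
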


\begin{lem}\label{lem:stst12}
For Algorithm \ref{alg:dsvre}, we have
$\bs_t=\frac{1}{m}\vone^\top\vg(\vw_t)$ and $\BE_t[\bs_{t+1/2}]=\frac{1}{m}\vone^\top\vg(\vz_{t+1/2})$.
\end{lem}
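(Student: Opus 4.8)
\textbf{Proof proposal for Lemma \ref{lem:stst12}.}

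The plan is to prove both identities by induction on $t$, exploiting the mean-preserving property of FastMix (Lemma \ref{lem:FM}), which guarantees $\frac{1}{m}\vone^\top\BT(\vu) = \bar{\vu}$ for any input $\vu$. First I would set up the base case: at initialization, $\vw_0 = \vz_0$ and line 2 sets $\vv_0 = \vs_0 = \BT(\vg(\vz_0))$, so by Lemma \ref{lem:FM} we get $\bs_0 = \frac{1}{m}\vone^\top\vg(\vz_0) = \frac{1}{m}\vone^\top\vg(\vw_0)$, establishing the first identity at $t=0$. The key structural observation driving the induction is that every update of $\vs$ in Algorithm \ref{alg:dsvre} — whether $\vs_t = \BT(\vs_{t-1} + \vv_t - \vv_{t-1})$ on line 5, or the end-of-loop updates in the cases block — has the form $\vs_{\mathrm{new}} = \BT(\vs_{\mathrm{old}} + \vv_{\mathrm{new}} - \vv_{\mathrm{old}})$, so taking row-averages and using mean-preservation gives $\bs_{\mathrm{new}} = \bs_{\mathrm{old}} + \bv_{\mathrm{new}} - \bv_{\mathrm{old}}$. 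Thus the quantity $\bs_t - \bv_t$ is invariant under all these updates, and since $\bv_t = \frac{1}{m}\vone^\top\vg(\vw_t)$ whenever $\vv_t$ is refreshed to $\vg(\vw_t)$ (and $\vv_t, \vw_t$ are held together when not refreshed), the telescoping collapses to $\bs_t = \bv_t = \frac{1}{m}\vone^\top\vg(\vw_t)$.

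To make this rigorous I would carry the stronger inductive hypothesis that $\bs_t = \bv_t = \frac{1}{m}\vone^\top\vg(\vw_t)$ simultaneously. For the inductive step, I trace through one iteration: line 5 computes $\vs_t = \BT(\vs_{t-1} + \vv_t - \vv_{t-1})$ (note the indexing subtlety — the $\vs_{t-1}, \vv_{t-1}, \vv_t$ here are the values from the end of iteration $t-1$, which by the cases block satisfy $\bs_{t-1} = \bv_{t-1}$ and $\bv_t = \frac{1}{m}\vone^\top\vg(\vw_t)$); then line 11 gives $\vs_{t+1/2} = \BT(\vs_t + \vv_{t+1/2} - \vv_t)$, so $\bs_{t+1/2} = \bs_t + \bv_{t+1/2} - \bv_t$. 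For the second identity I then take the conditional expectation $\BE_t[\cdot]$: since line 9 sets $\vv_{t+1/2}(i) = g_i(\vw_t(i)) + g_{i,j_i}(\vz_{t+1/2}(i)) - g_{i,j_i}(\vw_t(i))$ with $j_i$ uniform on $[n]$, unbiasedness of the SVRG estimator gives $\BE_t[\vv_{t+1/2}(i)] = g_i(\vw_t(i)) + \frac{1}{n}\sum_{j=1}^n\big(g_{i,j}(\vz_{t+1/2}(i)) - g_{i,j}(\vw_t(i))\big) = g_i(\vz_{t+1/2}(i))$, hence $\BE_t[\bv_{t+1/2}] = \frac{1}{m}\vone^\top\vg(\vz_{t+1/2})$, and combined with $\bs_t - \bv_t = 0$ this yields $\BE_t[\bs_{t+1/2}] = \frac{1}{m}\vone^\top\vg(\vz_{t+1/2})$. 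Finally I check that the cases block at line 13 preserves $\bs_{t+1} = \bv_{t+1} = \frac{1}{m}\vone^\top\vg(\vw_{t+1})$: in the probability-$p$ branch, $\vw_{t+1} = \vz_{t+1}$, $\vv_{t+1} = \vg(\vz_{t+1})$, and $\vs_{t+1} = \BT(\vs_t + \vv_{t+1} - \vv_t)$ so $\bs_{t+1} = \bs_t + \bv_{t+1} - \bv_t = \bv_{t+1}$ using the hypothesis $\bs_t = \bv_t$; in the probability-$(1-p)$ branch everything is copied verbatim so the identity persists trivially.

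I expect the main obstacle to be purely bookkeeping rather than mathematical: correctly tracking which version of $\vs, \vv, \vw$ each line refers to across the loop boundary (the line-5 update at the top of iteration $t$ uses end-of-iteration-$(t-1)$ values, and $\vs_t$ is overwritten again inside the cases block), and being careful that the invariant $\bs = \bv$ is what propagates — the identity $\bv_t = \frac{1}{m}\vone^\top\vg(\vw_t)$ only holds because $\vv$ and $\vw$ are always updated jointly, never independently. Once the invariant is stated cleanly, each of the finitely many update rules is a one-line verification via Lemma \ref{lem:FM} and linearity of the row-average operator, and the unbiasedness computation for the half-step is the standard SVRG identity applied coordinate-wise over agents.
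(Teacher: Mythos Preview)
Your proposal is correct and follows essentially the same approach as the paper: induction on $t$, using the mean-preserving property of FastMix (Lemma \ref{lem:FM}) to reduce each $\vs$-update to $\bs_{\mathrm{new}} = \bs_{\mathrm{old}} + \bv_{\mathrm{new}} - \bv_{\mathrm{old}}$, and the standard SVRG unbiasedness computation for the half-step. Your explicit invariant $\bs_t = \bv_t = \frac{1}{m}\vone^\top\vg(\vw_t)$ and your careful tracking of the line-5 overwrite versus the line-13 cases block are more thorough bookkeeping than the paper's proof provides, but the mathematical content is the same.
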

\begin{proof}
We prove this lemma by induction.
For $t=0$, we have
\begin{align*}
\bs_0 =  \frac{1}{m}\vone^\top\vs_0 = \frac{1}{m}\vone^\top\vg(\vz_0) = \frac{1}{m}\vone^\top\vg(\vw_0) 
\end{align*}
and
\begin{align*}
\BE_0[\bs_{1/2}] = \frac{1}{m}\vone^\top\BE_0[\vs_{1/2}] = \frac{1}{m}\vone^\top\BE_t[\vs_0+\vv_{1/2}-\vv_0] = \frac{1}{m}\vone^\top\BE_t[\vv_{1/2}] = \frac{1}{m}\vone^\top\vg(\vz_{1/2}).
\end{align*}
Suppose the statement holds for $t\leq t'$. Then for $t=t'+1$, Lemma~\ref{lem:FM} and induction base means
\begin{align*}
& \bs_{t+1} \\
= &  \bs_t + \bv_{t+1} - \bv_t \\
= & \frac{1}{m}\vone^\top\vg(\vw_t) + \frac{1}{m}\vone^\top\vg(\vw_{t+1}) - \frac{1}{m}\vone^\top\vg(\vw_t) \\
= & \frac{1}{m}\vone^\top\vg(\vw_{t+1}),
\end{align*}
and
\begin{align*}
  & \BE_t[\bs_{t+1/2}] \\
= & \BE_t[\bs_t + \bv_{t+1/2} - \bv_t]  \\
= & \BE_t\left[\frac{1}{m}\vone^\top\vg(\vw_t) + \bv_{t+1/2} - \frac{1}{m}\vone^\top\vg(\vw_t)\right]  \\
= & \BE_t[\bv_{t+1/2}]  \\
= & \frac{1}{m}\vone^\top\BE_t\begin{bmatrix}
g_1(\vw_t(1))^\top + g_{1,j_1}(\vz_{t+1/2}(1))^\top - g_{1,j_1}(\vw_t(1))^\top \\
\vdots \\
g_m(\vw_t(m))^\top + g_{m,j_m}(\vz_{t+1/2}(m))^\top - g_{m,j_m}(\vw_t(m))^\top
\end{bmatrix} \\
= & \frac{1}{m}\vone^\top\vg(\vz_{t+1/2}).
\end{align*}
\end{proof}

\begin{lem}\label{lem:zpzw}
For Algorithm \ref{alg:dsvre}, we have
\begin{align*}
\Norm{\vone^\top\vz'_t-\bz'_t} \leq \alpha\Norm{\vz_t-\vone\bz_t} + (1-\alpha)\Norm{\vw_t-\vone\bw_t}.    
\end{align*}
\end{lem}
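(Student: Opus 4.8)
This is a short consensus-error estimate for the averaging step $\vz'_t = \alpha\vz_t + (1-\alpha)\vw_t$ in line~4 of Algorithm~\ref{alg:dsvre}, and I expect it to follow from linearity of the row-averaging operator together with the triangle inequality. The first step is to apply $\frac{1}{m}\vone^\top$ to the defining identity $\vz'_t = \alpha\vz_t + (1-\alpha)\vw_t$; since this operator is linear, it yields $\bz'_t = \alpha\bz_t + (1-\alpha)\bw_t$. Consequently
\begin{align*}
\vz'_t - \vone\bz'_t = \alpha\bigl(\vz_t - \vone\bz_t\bigr) + (1-\alpha)\bigl(\vw_t - \vone\bw_t\bigr),
\end{align*}
which is just a rearrangement of the two defining identities.

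The second step is to take Frobenius norms on both sides and invoke the triangle inequality, pulling the scalars $\alpha$ and $1-\alpha$ out of the norms. Here I would note that the initialization $\alpha = 1-p$ with $p = 1/(2n)$ (and $n\ge 1$) guarantees $0 < \alpha < 1$, so both coefficients are nonnegative and no sign issues arise; this gives exactly
\begin{align*}
\Norm{\vz'_t - \vone\bz'_t} \leq \alpha\Norm{\vz_t - \vone\bz_t} + (1-\alpha)\Norm{\vw_t - \vone\bw_t},
\end{align*}
which is the claimed bound (the left-hand side being the consensus error of the averaged iterate $\vz'_t$).

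\textbf{Main obstacle.} There is essentially no obstacle: the only thing to be careful about is bookkeeping of the row-mean notation — that $\bz'_t \in \BR^{1\times d}$ is the row average and $\vone\bz'_t \in \BR^{m\times d}$ broadcasts it — and that $\frac{1}{m}\vone^\top$ indeed commutes with the convex combination. No assumption beyond the definitions in Algorithm~\ref{alg:dsvre} is needed; in particular Lemma~\ref{lem:FM} is not required since line~4 involves no FastMix call.
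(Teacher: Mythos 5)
Your proposal is correct and is essentially identical to the paper's own argument: substitute the update rule $\vz'_t=\alpha\vz_t+(1-\alpha)\vw_t$, note that the row-average commutes with the convex combination so $\bz'_t=\alpha\bz_t+(1-\alpha)\bw_t$, and apply the triangle inequality with the nonnegative weights $\alpha$ and $1-\alpha$. (You also correctly read the left-hand side as the consensus error $\Norm{\vz'_t-\vone\bz'_t}$, which is what the paper actually proves and uses.)
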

\begin{proof}
The update rule of $\vz'_t = \alpha \vz_t + (1 - \alpha) \vw_t$ means
\begin{align*}
  \Norm{\vz'_t-\vone\bz'_t} 
=  \Norm{\alpha\vz_t+(1-\alpha)\vw_t-\vone(\alpha\bz_t+(1-\alpha))\bw_t)} 
\leq \alpha\Norm{\vz_t-\vone\bz_t} + (1-\alpha)\Norm{\vw_t-\vone\bw_t}.
\end{align*}
\end{proof}

\section{The Proof Details for Section \ref{sec:mc-svre-unconstrained}}\label{appendix:MC-SVRE-unconstrained}

This section provide the detailed proofs for theoretical results of MC-SVRE in unconstrained case.

\subsection{The Proof of Lemma \ref{lem:grad-avg}}
\begin{proof}
Assumption~\ref{asm:smooth} and Lemma \ref{lem:stst12} means
\begin{align*}
  & \Norm{\bs_t - \bg(\bw_t)}^2 \\
= & \Norm{\frac{1}{m}\sum_{i=1}^m \left(g_i(\vw_t(i)) - g_i(\bw_t^\top)\right)}^2   \\
\leq & \frac{1}{m}\sum_{i=1}^m\Norm{g_i(\vw_t(i)) - g_i(\bw_t^\top)}^2 \\
\leq & \frac{L^2}{m}\sum_{i=1}^m\Norm{\vw_t(i) - \bw_t^\top}^2 \\
= & \frac{L^2}{m}\Norm{\vw_t - \vone\bw_t}^2.
\end{align*}
Similarly, we can also prove
$\BE_t[\bv_{t+1/2}]=\frac{1}{m}\vone^\top\vg(\vz_{t+1/2})$. 
\end{proof}

\subsection{The Proof of Lemma \ref{lem:zw}}
\begin{proof}
Lemma \ref{lem:FM} means
\begin{align*}
   \bz_{t+1/2} 
= \frac{1}{m}\vone^\top\BT(\vz'_t-\eta\vs_t) 
= \bz'_t - \eta\bs_t 
\qquad \text{and} \qquad
   \bz_{t+1}  
=  \frac{1}{m}\vone^\top\BT(\vz'_t-\eta\vs_{t+1/2})
= \bz'_t - \eta\bs_{t+1/2}.
\end{align*}
which implies
\begin{align}\label{eq:1}
\begin{split}
& 2\inner{\bz_{t+1} - \bz'_t}{\bz^* - \bz_{t+1}} + 2\inner{\bz_{t+1/2} - \bz'_t}{\bz_{t+1} - \bz_{t+1/2}} \\
& + 2\eta\inner{\bs_{t+1/2}}{\bz^* - \bz_{t+1/2}} + 2\eta \inner{\bs_{t+1/2} - \bs_t}{\bz_{t+1/2} - \bz_{t+1}} = 0.
\end{split}
\end{align}
Using the fact $2\inner{a}{b} = \Norm{a + b}^2 - \Norm{a}^2 - \Norm{b}^2$, we have
\begin{align}\label{eq:1-1}%
\begin{split}
    & 2\inner{\bz_{t+1} - \bz'_t}{\bz^* - \bz_{t+1}} \\
    =& 2 \inner{\bz_{t+1} - \alpha \bz_t - (1 - \alpha) \bw_t}{\bz^* - \bz_{k + 1}} \\
    =& 2\alpha \inner{\bz_{t+1} - \bz_t}{\bz^* - \bz_{t+1}} + 2(1 - \alpha) \inner{\bz_{t+1} - \bw_t}{\bz^* - \bz_{t+1}} \\
    =& \alpha \big( \Norm{\bz_t - \bz^*}^2 - \Norm{\bz_{t+1} - \bz^*}^2 - \Norm{\bz_{t+1} - \bz_t}^2 \big) 
     + (1 - \alpha) \big( \Norm{\bw_t - \bz^*}^2 - \Norm{\bz_{t+1} - \bz^*}^2 - \Norm{\bz_{t+1} - \bw_t}^2 \big) \\
    =& \alpha \Norm{\bz_t - \bz^*}^2 + (1 - \alpha) \Norm{\bw_t - \bz^*}^2 - \Norm{\bz_{t+1} - \bz^*}^2  - \alpha \Norm{\bz_{t+1} - \bz_t}^2 - (1 - \alpha) \Norm{\bz_{t+1} - \bw_t}^2.
\end{split}
\end{align}
and
\begin{align}\label{eq:1-2}
\begin{split}
& 2\inner{\bz_{t+1/2} - \bz'_t}{\bz_{t+1} - \bz_{t+1/2}} \\
=& \alpha \Norm{\bz_t - \bz_{t+1}}^2 + (1 - \alpha) \Norm{\bw_t - \bz_{t+1}}^2 - \Norm{\bz_{t+1/2} - \bz_{t+1}}^2 \\
& - \alpha \Norm{\bz_{t+1/2} - \bz_t}^2 - (1 - \alpha) \Norm{\bz_{t+1/2} - \bw_t}^2.
\end{split}
\end{align}
Additionally, we have
\begin{align}\label{eq:1-3}
\begin{split}
& 2 \BE_t \left[\inner{\bs_{t+1/2}}{\bz^* - \bz_{t+1/2}} \right] \\
= & 2\inner{\bg(\bz_{t+1/2})}{\bz^* - \bz_{t+1/2}} + 2\inner{\BE_t[\bs_{t+1/2}] - g(\bz_{t+1/2})}{\bz^* - \bz_{t+1/2}} \\
\leq & 2 \inner{\bg(\bz^*)}{\bz^* - \bz_{t+1/2}} - 2 \mu \Norm{\bz_{t+1/2} - \bz^*}^2 + 2\inner{\BE_t[\bs_{t+1/2}] - g(\bz_{t+1/2})}{\bz^* - \bz_{t+1/2}} \\
\leq & - \mu \BE_t\Norm{\bz_{t+1} - \bz^*}^2 + 2 \mu \BE_t\Norm{\bz_{t+1/2} - \bz_{t+1}}^2 + \frac{4}{\mu}\Norm{\BE_t[\bs_{t+1/2}] - g(\bz_{t+1/2})}^2 + \frac{\mu}{4}\Norm{\bz_{t+1/2}-\bz^*}^2 \\
\leq & - \mu \BE_t\Norm{\bz_{t+1} - \bz^*}^2 + 2 \mu \BE_t\Norm{\bz_{t+1/2} - \bz_{t+1}}^2 + \frac{4L^2}{m\mu}\Norm{\vz_{t+1/2} - \vone\bz_{t+1/2}}^2 \\
& + \frac{\mu}{2}\Norm{\bz_{t+1/2}-\bz_{t+1}}^2  + \frac{\mu}{2}\Norm{\bz_{t+1}-\bz^*}^2 \\ 
= & - \mu \BE_t\Norm{\bz_{t+1} - \bz^*}^2 + \frac{5\mu}{2} \BE_t\Norm{\bz_{t+1/2} - \bz_{t+1}}^2 + \frac{4L^2}{m\mu}\Norm{\vz_{t+1/2} - \vone\bz_{t+1/2}}^2 + \frac{\mu}{2}\Norm{\bz_{t+1}-\bz^*}^2,
\end{split}
\end{align}
where the first inequality follows Lemma \ref{lem:mm}; the second inequality is according to Lemma \ref{lem:optimal} and Young's inequality; and the last inequality is based on Lemma \ref{lem:grad-avg}.

Moreover, the update rule  $\vs_{t+1}=\BT(\vs_t+\vg(\vw_{t+1})-\vg(\vw_t))$, $\vv_t=\vg(\vw_t)$ and Lemma \ref{lem:FM} implies
\begin{align*}
  &  \BE_t\Norm{\bs_{t+1/2}-\bs_t}^2 \\
= &  \BE_t\Norm{\frac{1}{m}\vone^\top\vs_{t+1/2}-\frac{1}{m}\vone^\top\vs_t}^2 \\
= &  \BE_t\Norm{\frac{1}{m}\vone^\top\left(\vs_{t}+\vv_{t+1/2}-\vv_t\right)-\frac{1}{m}\vone^\top\vs_t}^2 \\
= &  \BE_t\Norm{\frac{1}{m}\vone^\top\left(\vv_{t+1/2}-\vv_t\right)}^2 \\
= &  \BE_t\Norm{\frac{1}{m}\vone^\top
\begin{bmatrix}
g_1(\vw_t(1))^\top + g_{1,j_1}(\vz_{t+1/2}(1))^\top - g_{1,j_1}(\vw_t(1))^\top - g_1(\vw_t(1))^\top \\
\vdots \\
g_m(\vw_t(m))^\top + g_{m,j_m}(\vz_{t+1/2}(m))^\top - g_{m,j_m}(\vw_t(m))^\top - g_m(\vw_t(m))^\top
\end{bmatrix}}^2 \\
\leq &  \BE_t\left[\frac{1}{m}\sum_{i=1}^m \Norm{g_{i,j_i}(\vz_{t+1/2}(i)) - g_{i,j_i}(\vw_t(i))}^2\right] \\
\leq &  \BE_t\left[\frac{L^2}{m}\sum_{i=1}^m \Norm{\vz_{t+1/2}(i) - \vw_t(i)}^2\right] \\
= &  \frac{L^2}{m}\BE_t\Norm{\vz_{t+1/2} - \vw_t}^2 \\
\leq &  \frac{3L^2}{m}\BE_t\left[\Norm{\vz_{t+1/2}-\vone\bz_{t+1/2}}^2 + \Norm{\vone\bz_{t+1/2} - \vone\bw_t}^2 + \Norm{\vw_t-\vone\bw_t}^2\right] \\
= & 3L^2\BE_t\Norm{\bz_{t+1/2} - \bw_t}^2 + \frac{3L^2}{m}\BE_t\Norm{\vz_{t+1/2}-\vone\bz_{t+1/2}}^2  + \frac{3L^2}{m}\BE_t\Norm{\vw_t-\vone\bw_t}^2,
\end{align*}
where the first inequality use Lemma~\ref{lem:norm}; the second inequality use \ref{lem:smooth-b} and the last inequality is due to the fact 
$\Norm{a+b+c}^2\leq3\big(\Norm{a}^2+\Norm{b}^2+\Norm{c}^2\big)$.

By Young's inequality, there also holds
\begin{align}\label{eq:1-4}
\begin{split}
& \BE_t \left[ 2\eta \inner{\bs_{t+1/2} - \bs_t}{\bz_{t+1/2} - \bz_{t+1}} \right] \\
\leq & 2\eta^2 \BE_t\Norm{\bs_{t+1/2} - \bs_t}^2 + \frac{1}{2} \BE_t\Norm{{\bz_{t+1/2} - \bz_{t+1}}}^2 \\
\leq & 6\eta^2L^2\BE_t\Norm{\bz_{t+1/2} - \bw_t}^2 + \frac{6\eta^2L^2}{m}\BE_t\Norm{\vz_{t+1/2}-\vone\bz_{t+1/2}}^2  + \frac{6\eta^2L^2}{m}\BE_t\Norm{\vw_t-\vone\bw_t}^2 \\
& + \frac{1}{2} \BE_t \Norm{{\bz_{t+1/2} - \bz_{t+1}}}^2.
\end{split}
\end{align}
Plugging results of (\ref{eq:1-1}), (\ref{eq:1-2}), (\ref{eq:1-3}) and (\ref{eq:1-4}) into inequality (\ref{eq:1}), we obtain that
\begin{align*}
\begin{split}
& \alpha \Norm{\bz_t - \bz^*}^2 + (1 - \alpha) \Norm{\bw_t - \bz^*}^2 - \BE_t\Norm{\bz_{t+1} - \bz^*}^2 - \BE_t\Norm{\bz_{t+1/2} - \bz_{t+1}}^2 \\
& - \alpha \Norm{\bz_{t+1/2} - \bz_t}^2 - (1 - \alpha) \Norm{\bz_{t+1/2} - \bw_t}^2 \\
& - \eta\mu \BE_t\Norm{\bz_{t+1} - \bz^*}^2 + \frac{5\eta\mu}{2} \BE_t\Norm{\bz_{t+1/2} - \bz_{t+1}}^2 + \frac{4L^2\eta}{m\mu}\Norm{\vz_{t+1/2} - \vone\bz_{t+1/2}}^2 + \frac{\eta\mu}{2}\Norm{\bz_{t+1}-\bz^*}^2 \\
& + 6\eta^2L^2\BE_t\Norm{\bz_{t+1/2} - \bw_t}^2 + \frac{1}{2} \BE_t\Norm{{\bz_{t+1/2} - \bz_{t+1}}}^2 \\
& + \frac{6\eta^2L^2}{m}\BE_t\Norm{\vz_{t+1/2}-\vone\bz_{t+1/2}}^2  + \frac{6\eta^2L^2}{m}\BE_t\Norm{\vw_t-\vone\bw_t}^2 \geq 0,
\end{split}
\end{align*}
that is
\begin{align}\label{eq:2-1}
\begin{split}
& \left(1+\frac{\eta\mu}{2}\right)\Norm{\bz_{t+1}-\bz^*}^2 \\
\leq & \alpha \Norm{\bz_t - \bz^*}^2 + (1 - \alpha) \Norm{\bw_t - \bz^*}^2  - \left(\frac{1}{2}-\frac{5\eta\mu}{2}\right)\BE_t\Norm{\bz_{t+1/2} - \bz_{t+1}}^2 \\
& - \alpha \Norm{\bz_{t+1/2} - \bz_t}^2 - (1 - \alpha-6\eta^2L^2) \Norm{\bz_{t+1/2} - \bw_t}^2 \\
& + \frac{2L\eta(2\kappa + 3L\eta)}{m} \BE_t\Norm{\vz_{t+1/2}-\vone\bz_{t+1/2}}^2  + \frac{6\eta^2L^2}{m}\BE_t\Norm{\vw_t-\vone\bw_t}^2.
\end{split}
\end{align}
On the other hand, by update rule of $\vw_{t+1}$ means
\begin{align}\label{eq:2-2}
\begin{split}
& \BE_t\norm{\bw_{t+1}-\bz^*}^2 \\
= & \BE_t\norm{\frac{1}{m}\vone^\top\left(\vw_{t+1}-\vz^*\right)}^2 \\
= & p\BE_t\norm{\frac{1}{m}\vone^\top\left(\vz_{t+1}-\vz^*\right)}^2 + (1-p)\BE_t\norm{\frac{1}{m}\vone^\top\left(\vw_t-\vz^*\right)}^2 \\
= & p\BE_t\norm{\bz_{t+1}-\bz^*}^2 + (1-p)\BE_t\norm{\bw_t-\bz^*}^2,
\end{split}
\end{align}
where $\vz^*=[z^*,\dots,z^*]^\top\in\BR^{m\times d}$.

Combining the results of (\ref{eq:2-1}), (\ref{eq:2-2}) and the setting $\alpha=1-p$, we have
\begin{align*}
& \left(1 + \frac{\eta \mu}{2}\right) \BE_t \Norm{\bz_{t+1} - \bz^*}^2 + c_1 \E_{t}\Norm{\bw_{t+1} - \bz^*}^2  \\
\leq & (1 - p) \Norm{\bz_t - \bz^*}^2 + p \Norm{\bw_t - \bz^*}^2 - \left(\frac{1}{2}-\frac{5\eta\mu}{2}\right)\BE_t\Norm{\bz_{t+1/2} - \bz_{t+1}}^2 \\
& - (1-p)\Norm{\bz_{t+1/2} - \bz_t}^2 - (p-6\eta^2L^2) \Norm{\bz_{t+1/2} - \bw_t}^2 \\
& + c_1 p \E_{t}\Norm{\bz_{t+1} - \bz^*}^2 + c_1(1 - p) \Norm{\bw_t - \bz^*}^2 \\
& + \frac{2 L\eta\left(2\kappa + 3L\eta\right)}{m}\BE_t\Norm{\vz_{t+1/2}-\vone\bz_{t+1/2}}^2 
 + \frac{6\eta^2L^2}{m}\BE_t\Norm{\vw_t-\vone\bw_t}^2 ,
\end{align*}
which implies
\begin{align} 
\begin{split}
& \left(1 + \frac{\eta \mu}{2} - c_1p\right) \BE_t \Norm{\bz_{t+1} - \bz^*}^2 + c_1 \E_{t}\Norm{\bw_{t+1} - \bz^*}^2  \\
\leq & (1 - p) \Norm{\bz_t - \bz^*}^2 + (p + c_1(1 - p)) \Norm{\bw_t - \bz^*}^2 - \left(\frac{1}{2}-\frac{5\eta\mu}{2}\right)\BE_t\Norm{\bz_{t+1/2} - \bz_{t+1}}^2 \\
& - (1-p)\Norm{\bz_{t+1/2} - \bz_t}^2 - (p-6\eta^2L^2) \Norm{\bz_{t+1/2} - \bw_t}^2 \\
& + \frac{2 L\eta\left(2\kappa + 3L\eta\right)}{m}\BE_t\Norm{\vz_{t+1/2}-\vone\bz_{t+1/2}}^2 
 + \frac{6\eta^2L}{m}\BE_t\Norm{\vw_t-\vone\bw_t}^2 \\
\leq & (1 - p) \Norm{\bz_t - \bz^*}^2 + (p + c_1(1 - p)) \Norm{\bw_t - \bz^*}^2 - \left(\frac{1}{2}-\frac{5\eta\mu}{2}\right)\BE_t\Norm{\bz_{t+1/2} - \bz_{t+1}}^2 \\
& - (1-p)\Norm{\bz_{t+1/2} - \bz_t}^2 - (p-6\eta^2L^2) \Norm{\bz_{t+1/2} - \bw_t}^2 \\
& + \frac{2 L\eta\left(2\kappa + 3L\eta\right)}{m}\BE_t\left[\Norm{\vz_{t+1/2}-\vone\bz_{t+1/2}}^2 
 + \BE_t\Norm{\vw_t-\vone\bw_t}^2\right] \\
\leq & (1 - p) \Norm{\bz_t - \bz^*}^2 + (p + c_1(1 - p)) \Norm{\bw_t - \bz^*}^2 - \frac{1}{3n}\delta_t \\
& + \frac{8 L\eta\left(2\kappa + 3L\eta\right)}{m}\left(\Norm{\vz_t - \vone\bz_t}^2 + \Norm{\vw_t - \vone\bw_t}^2 + \eta\Norm{\vs_t - \vone\bs_t}^2\right),
\end{split}
\end{align}
where the last step use $\eta=1/\big(6\sqrt{n}L\big)$ and Lemma \ref{lem:diff-z12} which leads to
\begin{align*}
& \Norm{\vz_{t+1/2} - \vone\bz_{t+1/2}}^2 + \Norm{\vw_t-\vone\bw_t}^2 \\
\leq & 3\rho^2\left(\Norm{\vz_t - \vone\bz_t}^2 + \Norm{\vw_t - \vone\bw_t}^2 + \eta^2\Norm{\vs_t - \vone\bs_t}^2\right) + \Norm{\vw_t - \vone\bw_t}^2 \\
\leq & 4\left(\Norm{\vz_t - \vone\bz_t}^2 + \Norm{\vw_t - \vone\bw_t}^2 + \eta^2\Norm{\vs_t - \vone\bs_t}^2\right).
\end{align*}
\end{proof}

\subsection{The Proof of Lemma \ref{lem:error}}

We first introduce some lemmas for the consensus error of each variables.

\begin{lem}\label{lem:diff-z12}
Suppose Assumption \ref{asm:unconstrained} holds. For Algorithm \ref{alg:dsvre}, we have
\begin{align*}
   \Norm{\vz_{t+1/2} - \vone\bz_{t+1/2}} 
\leq \rho\left(\Norm{\vz_t - \vone\bz_t} + \Norm{\vw_t - \vone\bw_t} + \eta\Norm{\vs_t - \vone\bs_t}\right).
\end{align*}
\end{lem}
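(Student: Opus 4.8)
## Proof Plan for Lemma~\ref{lem:diff-z12}

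The plan is to trace through one iteration of Algorithm~\ref{alg:dsvre}, applying the contraction estimate of Lemma~\ref{lem:FM} (FastMix) at the step that produces $\vz_{t+1/2}$, and then to control the "input" to that FastMix call in terms of the consensus errors of $\vz_t$, $\vw_t$, and $\vs_t$. Concretely, $\vz_{t+1/2} = \FM{\fP_\FZ(\vz'_t - \eta\vs_t), K}$, but under Assumption~\ref{asm:unconstrained} the projection $\fP_\FZ$ is the identity, so $\vz_{t+1/2} = \FM{\vz'_t - \eta\vs_t, K}$. By Lemma~\ref{lem:FM}, taking the input matrix to be $\vz'_t - \eta\vs_t$ and noting that its row-mean is preserved, we get
\begin{align*}
\Norm{\vz_{t+1/2} - \vone\bz_{t+1/2}} \leq \rho\,\Norm{(\vz'_t - \eta\vs_t) - \vone\,\overline{(\vz'_t - \eta\vs_t)}}
= \rho\,\Norm{(\vz'_t - \vone\bz'_t) - \eta(\vs_t - \vone\bs_t)},
\end{align*}
where $\rho = (1-\sqrt{1-\lambda_2(W)})^K$ as defined.

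Next I would apply the triangle inequality to split this into $\rho\big(\Norm{\vz'_t - \vone\bz'_t} + \eta\Norm{\vs_t - \vone\bs_t}\big)$, and then invoke Lemma~\ref{lem:zpzw}, which bounds $\Norm{\vz'_t - \vone\bz'_t} \leq \alpha\Norm{\vz_t - \vone\bz_t} + (1-\alpha)\Norm{\vw_t - \vone\bw_t}$. Since $\alpha = 1-p \in (0,1)$, both coefficients $\alpha$ and $1-\alpha$ are at most $1$, so we may crudely upper bound $\Norm{\vz'_t - \vone\bz'_t} \leq \Norm{\vz_t - \vone\bz_t} + \Norm{\vw_t - \vone\bw_t}$. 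Combining the two displays yields exactly
\begin{align*}
\Norm{\vz_{t+1/2} - \vone\bz_{t+1/2}} \leq \rho\big(\Norm{\vz_t - \vone\bz_t} + \Norm{\vw_t - \vone\bw_t} + \eta\Norm{\vs_t - \vone\bs_t}\big),
\end{align*}
which is the claimed inequality.

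The only subtle point is making sure Lemma~\ref{lem:FM} is applied with the correct "centering": the lemma's conclusion involves $\Norm{\vz^{(0)} - \vone\bz^{(0)}}$ where $\bz^{(0)}$ is the row-mean of the \emph{input}, and since $\FM{\cdot}$ preserves row-means, $\bz_{t+1/2}$ equals the row-mean of $\vz'_t - \eta\vs_t$, so the centering on both sides is consistent — there is no mismatch. I don't anticipate a genuine obstacle here; the main thing to be careful about is that the projection step is indeed vacuous under Assumption~\ref{asm:unconstrained} (stated in the lemma's hypotheses), which is what lets us pull the FastMix contraction straight through, and that the crude bound $\alpha, 1-\alpha \leq 1$ from Lemma~\ref{lem:zpzw} is good enough for the stated (non-tight) form of the result.
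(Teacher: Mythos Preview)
Your proposal is correct and follows essentially the same approach as the paper: apply the FastMix contraction (Lemma~\ref{lem:FM}) to the unprojected input $\vz'_t - \eta\vs_t$, split via the triangle inequality, and then bound $\Norm{\vz'_t - \vone\bz'_t}$ using the convex combination $\vz'_t = \alpha\vz_t + (1-\alpha)\vw_t$ (Lemma~\ref{lem:zpzw}) together with $\alpha,\,1-\alpha\le 1$. Your explicit remarks about the projection being vacuous under Assumption~\ref{asm:unconstrained} and about row-mean preservation are exactly the justifications the paper uses implicitly.
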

\begin{proof}
Using Lemma \ref{lem:FM} and the update rule $\vz'_t=\alpha\vz_t+(1-\alpha)\vw_t$, we have
\begin{align*}
   & \Norm{\vz_{t+1/2} - \vone\bz_{t+1/2}} \\
= & \Norm{\BT(\vz'_t - \eta\vs_t) - \frac{1}{m}\vone\vone^\top\BT(\vz'_t - \eta\vs_t)} \\
\leq  & \rho \Norm{(\vz'_t - \eta\vs_t)  - \frac{1}{m}\vone\vone^\top\left(\vz'_t - \eta\vs_t\right)} \\
=  & \rho \Norm{(\vz'_t - \eta\vs_t)  - \vone(\bz'_t - \eta\bs_t)} \\
\leq  & \rho \Norm{\vz'_t - \vone\bz'_t} + \rho\eta\Norm{\vs_t - \vone\bs_t} \\
=  & \rho\left(\alpha\Norm{\vz_t - \vone\bz_t} + (1-\alpha)\Norm{\vw_t - \vone\bw_t} + \eta\Norm{\vs_t - \vone\bs_t}\right) \\
\leq  & \rho\left(\Norm{\vz_t - \vone\bz_t} + \Norm{\vw_t - \vone\bw_t} + \eta\Norm{\vs_t - \vone\bs_t}\right).
\end{align*}
\end{proof}

\begin{lem}\label{lem:errs12}
Suppose Assumption \ref{asm:smooth} holds. For Algorithm \ref{alg:dsvre}, we have
\begin{align*}
   \Norm{\vs_{t+1/2} - \vone\bs_{t+1/2}} 
\leq  \rho\left(\Norm{\vs_t - \vone\bs_t} + L\Norm{\vz_{t+1/2} - \vone\bz_{t+1/2}} + L\sqrt{m}\Norm{\bz_{t+1/2} - \bw_t} + L\Norm{\vw_t - \vone\bw_t}\right).
\end{align*}
\end{lem}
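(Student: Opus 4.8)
The plan is to unwind the single FastMix call that produces $\vs_{t+1/2}$ and then control the pre-mixing input by elementary distance inequalities. Line~11 of Algorithm~\ref{alg:dsvre} sets $\vs_{t+1/2} = \BT(\vu)$ with $\vu \triangleq \vs_t + \vv_{t+1/2} - \vv_t$. Since FastMix preserves the row average and contracts the deviation from it by the factor $\rho = \big(1-\sqrt{1-\lambda_2(W)}\big)^K$ (Lemma~\ref{lem:FM}), I immediately get $\Norm{\vs_{t+1/2} - \vone\bs_{t+1/2}} \leq \rho\,\Norm{\vu - \vone\bu}$, where $\bu = \frac1m\vone^\top\vu$. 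So everything reduces to bounding $\Norm{\vu - \vone\bu}$ by the four terms inside the parentheses (without the leading $\rho$).

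First I would split $\vu - \vone\bu = (\vs_t - \vone\bs_t) + (\vd - \vone\bd)$, where $\vd \triangleq \vv_{t+1/2} - \vv_t$ and $\bd = \frac1m\vone^\top\vd$, and apply the triangle inequality; the first summand is already one of the target terms. For the second, Lemma~\ref{lem:avg-norm} gives $\Norm{\vd - \vone\bd} \leq \Norm{\vd}$. The key observation is that the stochastic correction telescopes: since $\vv_t = \vg(\vw_t)$ throughout the run (a one-line induction on the update of $\vv_{t+1}$, as used in the proof of Lemma~\ref{lem:stst12}), the $i$-th row of $\vd$ equals $g_{i,j_i}(\vz_{t+1/2}(i)) - g_{i,j_i}(\vw_t(i))$. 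Assumption~\ref{asm:smooth} makes each $g_{i,j}$ $L$-Lipschitz, so $\Norm{\vd}^2 = \sum_{i=1}^m \Norm{g_{i,j_i}(\vz_{t+1/2}(i)) - g_{i,j_i}(\vw_t(i))}^2 \leq L^2\sum_{i=1}^m\Norm{\vz_{t+1/2}(i)-\vw_t(i)}^2 = L^2\Norm{\vz_{t+1/2} - \vw_t}^2$.

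Finally I would decompose $\vz_{t+1/2} - \vw_t = (\vz_{t+1/2} - \vone\bz_{t+1/2}) + \vone(\bz_{t+1/2} - \bw_t) + (\vone\bw_t - \vw_t)$ and use the triangle inequality, together with $\Norm{\vone(\bz_{t+1/2} - \bw_t)} = \sqrt{m}\,\Norm{\bz_{t+1/2} - \bw_t}$ (the single row $\bz_{t+1/2} - \bw_t$ is repeated $m$ times). Collecting the pieces gives exactly the claimed bound. I do not expect any real obstacle: this is a pathwise estimate needing no expectation argument, and the only thing to be careful about is the bookkeeping of mean-subtracted quantities — invoking Lemma~\ref{lem:avg-norm} on the right object and keeping the $\sqrt m$ factor straight. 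This lemma is a routine building block feeding the consensus-error recursion of Lemma~\ref{lem:error}, where the substantive work lies.
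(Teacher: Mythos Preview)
Your proposal is correct and matches the paper's argument essentially line for line: apply Lemma~\ref{lem:FM} to the FastMix input $\vs_t+\vv_{t+1/2}-\vv_t$, split off $\vs_t-\vone\bs_t$, use Lemma~\ref{lem:avg-norm} to drop the centering on $\vv_{t+1/2}-\vv_t$, exploit the cancellation $\vv_{t+1/2}(i)-\vv_t(i)=g_{i,j_i}(\vz_{t+1/2}(i))-g_{i,j_i}(\vw_t(i))$ together with $L$-Lipschitzness, and conclude via the three-term decomposition of $\vz_{t+1/2}-\vw_t$. Your write-up is in fact cleaner than the paper's, which contains a stray detour through $z^*$ and a squared/unsquared norm slip in the display for $\Norm{\vv_{t+1/2}-\vv_t}$.
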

\begin{proof}
We have
\begin{align}\label{eq:s12-1}
\begin{split}
   & \Norm{\vs_{t+1/2} - \vone\bs_{t+1/2}} \\
= & \Norm{\BT(\vs_t + \vv_{t+1/2} - \vv_t) - \frac{1}{m}\vone\vone^\top\BT(\vs_t + \vv_{t+1/2} - \vv_t)} \\
\leq & \rho\Norm{\vs_t + \vv_{t+1/2} - \vv_t - \frac{1}{m}\vone\vone^\top(\vs_t + \vv_{t+1/2} - \vv_t)} \\
\leq & \rho\Norm{\vs_t - \frac{1}{m}\vone\vone^\top\vs_t} + \rho\Norm{\vv_{t+1/2} - \vv_t - \frac{1}{m}\vone\vone^\top(\vv_{t+1/2} - \vv_t)} \\
\leq & \rho\left(\Norm{\vs_t - \vone\bs_t} + \Norm{\vv_{t+1/2} - \vv_t}\right),
\end{split}
\end{align}
where the first inequality uses Lemma \ref{lem:FM} and the last one uses Lemma \ref{lem:avg-norm}. 

Then we bound the last term as follows
\begin{align}\label{eq:s12-2}
\begin{split}
  &  \Norm{\vv_{t+1/2}-\vv_t}^2 \\
= &  \Norm{\begin{bmatrix}
g_1(\vw_t(1))^\top + g_{1,j_1}(\vz_{t+1/2}(1))^\top - g_{1,j_1}(\vw_t(1))^\top - g_1(\vw_t(1))^\top \\
\vdots \\
g_m(\vw_t(m))^\top + g_{m,j_m}(\vz_{t+1/2}(m))^\top - g_{m,j_m}(\vw_t(m))^\top - g_m(\vw_t(m))^\top
\end{bmatrix}}^2 \\
= &  \sum_{i=1}^m \Norm{g_{i,j_i}(\vz_{t+1/2}(i)) - g_{i,j_i}(\vw_t(i))}^2 \\
\leq & L^2\sum_{i=1}^m \Norm{\vz_{t+1/2}(i) - \vw_t(i)}^2 \\
\leq & 2L^2\sum_{i=1}^m \left(\Norm{\vz_{t+1/2}(i) - z^*}^2 + \Norm{\vw_t(i)- z^*}^2\right)  \\
= & 2L^2\left(\Norm{\vz_{t+1/2} - \vone z^*}^2 + \Norm{\vw_t- \vone z^*}^2\right)  \\
\leq & L\left(\Norm{\vz_{t+1/2} - \vone\bz_{t+1/2}} + \Norm{\vone\bz_{t+1/2} - \vone\bw_t} + \Norm{\vw_t - \vone\bw_t}\right) \\
= & L\left(\Norm{\vz_{t+1/2} - \vone\bz_{t+1/2}} + \sqrt{m}\Norm{\bz_{t+1/2} - \bw_t} + \Norm{\vw_t - \vone\bw_t}\right),
\end{split}
\end{align}
where the first inequality use the Lipschitz continuity of $g_{ij}$ and the second inequality use Young's inequality.

Combing the results of (\ref{eq:s12-1}) and (\ref{eq:s12-2}), we have
\begin{align*}
   \Norm{\vs_{t+1/2} - \vone\bs_{t+1/2}} 
\leq  \rho\left(\Norm{\vs_t - \vone\bs_t} + L\Norm{\vz_{t+1/2} - \vone\bz_{t+1/2}} + L\sqrt{m}\Norm{\bz_{t+1/2} - \bw_t} + L\Norm{\vw_t - \vone\bw_t}\right).
\end{align*}
\end{proof}

\begin{lem}\label{lem:errz1}
Suppose Assumption \ref{asm:unconstrained} holds.
For Algorithm \ref{alg:dsvre}, we have
\begin{align*}
   \Norm{\vz_{t+1} - \vone\bz_{t+1}} 
\leq  \rho\left(\Norm{\vz_t - \vone\bz_t} + \Norm{\vw_t - \vone\bw_t} + \eta\Norm{\vs_{t+1/2} - \vone\bs_{t+1/2}}\right). 
\end{align*}
\end{lem}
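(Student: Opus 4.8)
The plan is to mirror the proof of Lemma~\ref{lem:diff-z12} almost verbatim, replacing $\vz_{t+1/2}$ by $\vz_{t+1}$ and $\vs_t$ by $\vs_{t+1/2}$. The crucial observation is that under Assumption~\ref{asm:unconstrained} the projection $\fP_\FZ$ is the identity map on $\BR^{m\times d}$, so line~12 of Algorithm~\ref{alg:dsvre} reduces to $\vz_{t+1} = \BT(\vz'_t - \eta\vs_{t+1/2})$, where $\BT(\cdot)=\FM{\cdot,K}$. This is exactly the form to which Lemma~\ref{lem:FM} applies.

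First I would invoke Lemma~\ref{lem:FM}, which gives both that $\BT$ preserves the row-average (hence $\bz_{t+1}=\bz'_t-\eta\bs_{t+1/2}$) and that it contracts the consensus error by the factor $\rho=(1-\sqrt{1-\lambda_2(W)})^K$. Concretely,
\[
\Norm{\vz_{t+1} - \vone\bz_{t+1}} = \Norm{\BT(\vz'_t - \eta\vs_{t+1/2}) - \tfrac{1}{m}\vone\vone^\top\BT(\vz'_t - \eta\vs_{t+1/2})} \le \rho\,\Norm{(\vz'_t - \eta\vs_{t+1/2}) - \vone(\bz'_t - \eta\bs_{t+1/2})}.
\]
Then the triangle inequality bounds the right-hand side by $\rho\big(\Norm{\vz'_t - \vone\bz'_t} + \eta\Norm{\vs_{t+1/2} - \vone\bs_{t+1/2}}\big)$.

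Finally I would apply Lemma~\ref{lem:zpzw} to estimate $\Norm{\vz'_t - \vone\bz'_t} \le \alpha\Norm{\vz_t - \vone\bz_t} + (1-\alpha)\Norm{\vw_t - \vone\bw_t}$, and then use $0\le\alpha\le1$ to drop the convex-combination weights, which yields the stated inequality. There is essentially no obstacle here beyond bookkeeping; the one point that genuinely uses the hypothesis is the step that removes $\fP_\FZ$, which is precisely what Assumption~\ref{asm:unconstrained} supplies --- in the constrained case this argument would have to be modified to carry the extra projection discrepancy (the terms $\Delta_t,\Delta_{t+1/2}$ of Section~\ref{sec:mc-svre-constrained}) through the bound.
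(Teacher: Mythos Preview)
Your proposal is correct and matches the paper's own proof essentially step for step: apply Lemma~\ref{lem:FM} to the unconstrained update $\vz_{t+1}=\BT(\vz'_t-\eta\vs_{t+1/2})$, split by the triangle inequality, and then bound $\Norm{\vz'_t-\vone\bz'_t}$ via Lemma~\ref{lem:zpzw} before dropping the coefficients $\alpha,1-\alpha$. There is nothing to add.
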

\begin{proof}
Using Lemma \ref{lem:FM}, Lemma \ref{lem:avg-norm} and the update rule of $\vz_t'$, we have
\begin{align*}
   & \Norm{\vz_{t+1} - \vone\bz_{t+1}} \\
= & \Norm{\BT(\vz'_t - \eta\vs_{t+1/2}) - \frac{1}{m}\vone\vone^\top\BT(\vz'_t - \eta\vs_{t+1/2})} \\
\leq  & \rho \Norm{(\vz'_t - \eta\vs_{t+1/2})  - \frac{1}{m}\vone\vone^\top\left(\vz'_t - \eta\vs_{t+1/2}\right)} \\
\leq  & \rho \Norm{\vz'_t - \eta\vs_{t+1/2}  - \vone(\bz'_t - \eta\bs_{t+1/2})} \\
\leq  & \rho\left(\Norm{\vz'_t - \vone\bz'_t} + \eta\Norm{\vs_{t+1/2} - \vone\bs_{t+1/2}}\right) \\
\leq  & \rho\left(\alpha\Norm{\vz_t - \vone\bz_t} + (1-\alpha)\Norm{\vw_t - \vone\bw_t} + \eta\Norm{\vs_{t+1/2} - \vone\bs_{t+1/2}}\right) \\
\leq  & \rho\left(\Norm{\vz_t - \vone\bz_t} + \Norm{\vw_t - \vone\bw_t} + \eta\Norm{\vs_{t+1/2} - \vone\bs_{t+1/2}}\right). 
\end{align*}
\end{proof}

\begin{lem}\label{lem:errw1}
For Algorithm \ref{alg:dsvre}, we have
\begin{align*}
\BE_t\norm{\vw_{t+1}-\vone\bw_{t+1}} =  \frac{1}{2n}\BE_t\norm{\vz_{t+1}-\vone\bz_{t+1}} + \left(1-\frac{1}{2n}\right)\BE_t\norm{\vw_t-\vone\bw_t} 
\end{align*}
and
\begin{align*}
\BE_t\norm{\vw_{t+1}-\vone\bw_{t+1}}^2 =  \frac{1}{2n}\BE_t\norm{\vz_{t+1}-\vone\bz_{t+1}}^2 + \left(1-\frac{1}{2n}\right)\BE_t\norm{\vw_t-\vone\bw_t}^2. 
\end{align*}
\end{lem}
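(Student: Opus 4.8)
The whole statement is a direct consequence of the two‑point distribution of $\vw_{t+1}$ prescribed in the last update block of Algorithm~\ref{alg:dsvre}, plus the fact that row‑averaging commutes with that update. First I would record the structural observation: conditioned on everything generated up to and including the sampling of the indices $j_1,\dots,j_m$ in iteration $t$ (so that $\vz_{t+1}$ is already determined), the assignment of $\vw_{t+1}$ is a single independent Bernoulli experiment with $\vw_{t+1}=\vz_{t+1}$ with probability $p=1/(2n)$ and $\vw_{t+1}=\vw_t$ with probability $1-p$. Moreover, since $\bw_{t+1}=\tfrac1m\vone^\top\vw_{t+1}$, in the first branch $\bw_{t+1}=\bz_{t+1}$ and hence $\vw_{t+1}-\vone\bw_{t+1}=\vz_{t+1}-\vone\bz_{t+1}$, while in the second branch $\bw_{t+1}=\bw_t$ and hence $\vw_{t+1}-\vone\bw_{t+1}=\vw_t-\vone\bw_t$. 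Therefore the random quantity $\Norm{\vw_{t+1}-\vone\bw_{t+1}}$ (and likewise its square) takes only two possible values, $\Norm{\vz_{t+1}-\vone\bz_{t+1}}$ or $\Norm{\vw_t-\vone\bw_t}$, with probabilities $p$ and $1-p$.

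Next I would take expectation over the Bernoulli coin, getting, conditionally on the step‑$t$ sampling randomness,
\[
\BE\!\left[\Norm{\vw_{t+1}-\vone\bw_{t+1}}\;\middle|\;\cdot\right]
= p\,\Norm{\vz_{t+1}-\vone\bz_{t+1}} + (1-p)\,\Norm{\vw_t-\vone\bw_t},
\]
and then apply the tower property to pass to $\BE_t$. Since $\vw_t$ is measurable with respect to the conditioning $\sigma$‑field of $\BE_t$, we have $\BE_t\Norm{\vw_t-\vone\bw_t}=\Norm{\vw_t-\vone\bw_t}$, so substituting $p=1/(2n)$ yields the first claimed identity. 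The second identity follows by the identical argument with $\Norm{\cdot}$ replaced by $\Norm{\cdot}^2$ throughout: because the coin flip has only two atoms, expectation acts linearly on the two values and there is no Jensen gap, so the relation is an exact equality rather than an inequality.

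I do not expect a genuine obstacle here; the only point requiring care is bookkeeping of which randomness is being averaged. One must use that the Bernoulli draw selecting $\vw_{t+1}$ is independent of the index draws $j_i$ and of all past iterates, which is exactly what the algorithm specifies, and one must note that the averaging operator $\vz\mapsto \vone\bz$ is applied componentwise so it respects the case split. With those two remarks the computation is a one‑line conditional expectation, and both displayed equalities drop out simultaneously.
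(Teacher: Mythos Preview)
Your proposal is correct and matches the paper's proof essentially line for line: both simply expand the conditional expectation over the Bernoulli update $\vw_{t+1}\in\{\vz_{t+1},\vw_t\}$ with probabilities $p=1/(2n)$ and $1-p$, and then substitute. Your extra care about independence of the coin flip and about $\bw_{t+1}$ following the same branch is accurate but not something the paper spells out.
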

\begin{proof}
Using the update rule of $\vw_{t+1}$ and $p=1/(2n)$, we have
\begin{align*}
  &  \BE_t\norm{\vw_{t+1}-\vone\bw_{t+1}} \\
= & p\BE_t\norm{\vz_{t+1}-\vone\bz_{t+1}} + (1-p)\BE_t\norm{\vw_t-\vone\bw_t}  \\
= & \frac{1}{2n}\BE_t\norm{\vz_{t+1}-\vone\bz_{t+1}} + \left(1-\frac{1}{2n}\right)\BE_t\norm{\vw_t-\vone\bw_t}.
\end{align*}
and
\begin{align*}
  &  \BE_t\norm{\vw_{t+1}-\vone\bw_{t+1}}^2 \\
= & p\BE_t\norm{\vz_{t+1}-\vone\bz_{t+1}}^2 + (1-p)\BE_t\norm{\vw_t-\vone\bw_t}^2  \\
= & \frac{1}{2n}\BE_t\norm{\vz_{t+1}-\vone\bz_{t+1}}^2 + \left(1-\frac{1}{2n}\right)\BE_t\norm{\vw_t-\vone\bw_t}^2.
\end{align*}
\end{proof}

\begin{lem}\label{lem:err-s}
Suppose Assumption \ref{asm:smooth} holds. For Algorithm \ref{alg:dsvre}, we have
\begin{align*}
  &  \BE_t\Norm{\vs_{t+1} - \vone\bs_{t+1}} \\
\leq & \rho\BE_t\Norm{\vs_t - \vone\bs_t} + \rho L \BE_t\left[\Norm{\vz_{t+1} - \vone\bz_{t+1}} + 2\Norm{\vz_{t+1/2} - \vone\bz_{t+1/2}} + \Norm{\vw_t - \vone\bw_t}\right] \\
& + \rho L\sqrt{m}\BE_t\left[\Norm{\bz_{t+1} - \bz_{t+1/2}} +  \Norm{\bz_{t+1/2} - \bw_t} \right].
\end{align*}
\end{lem}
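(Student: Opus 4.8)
The plan is to write the update of $\vs$ as a single FastMix call and then invoke the contraction of Lemma~\ref{lem:FM}. First I would record the invariant $\vv_t=\vg(\vw_t)$ for every $t$, which follows by an immediate induction on the randomized step of Algorithm~\ref{alg:dsvre}, exactly as in Lemma~\ref{lem:stst12}: when the snapshot is refreshed we have $\vw_{t+1}=\vz_{t+1}$ and $\vv_{t+1}=\vg(\vz_{t+1})$, and otherwise $\vw_{t+1}=\vw_t$, $\vv_{t+1}=\vv_t$. Combining the randomized assignment of $\vs_{t+1}$ with the re-averaging performed at the start of the next iteration, the value of $\vs_{t+1}$ that enters the consensus vector is $\vs_{t+1}=\BT\left(\vs_t+\vv_{t+1}-\vv_t\right)$, where $\vv_{t+1}-\vv_t=\vg(\vz_{t+1})-\vg(\vw_t)$ with probability $p$ and $\vv_{t+1}-\vv_t=\vzero$ with probability $1-p$. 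Applying Lemma~\ref{lem:FM}, then the triangle inequality and Lemma~\ref{lem:avg-norm}, gives the pointwise bound $\Norm{\vs_{t+1}-\vone\bs_{t+1}}\le\rho\big(\Norm{\vs_t-\vone\bs_t}+\Norm{\vv_{t+1}-\vv_t}\big)$.

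Next I would take the conditional expectation $\BE_t[\cdot]$. Since the coin flip in the randomized step is independent of $\vz_{t+1}$ and $\Norm{\vg(\vz_{t+1})-\vg(\vw_t)}\le L\Norm{\vz_{t+1}-\vw_t}$ by Lemma~\ref{lem:smooth-b}, we get $\BE_t\Norm{\vv_{t+1}-\vv_t}\le pL\,\BE_t\Norm{\vz_{t+1}-\vw_t}\le L\,\BE_t\Norm{\vz_{t+1}-\vw_t}$, hence $\BE_t\Norm{\vs_{t+1}-\vone\bs_{t+1}}\le\rho\,\BE_t\Norm{\vs_t-\vone\bs_t}+\rho L\,\BE_t\Norm{\vz_{t+1}-\vw_t}$. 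It then remains to split $\Norm{\vz_{t+1}-\vw_t}\le\Norm{\vz_{t+1}-\vz_{t+1/2}}+\Norm{\vz_{t+1/2}-\vw_t}$ and, inside each term, insert the averaged rows $\vone\bz_{t+1}$, $\vone\bz_{t+1/2}$, $\vone\bw_t$ and use $\Norm{\vone\va}=\sqrt{m}\,\Norm{\va}$ to obtain $\Norm{\vz_{t+1}-\vz_{t+1/2}}\le\Norm{\vz_{t+1}-\vone\bz_{t+1}}+\Norm{\vz_{t+1/2}-\vone\bz_{t+1/2}}+\sqrt{m}\,\Norm{\bz_{t+1}-\bz_{t+1/2}}$ and $\Norm{\vz_{t+1/2}-\vw_t}\le\Norm{\vz_{t+1/2}-\vone\bz_{t+1/2}}+\Norm{\vw_t-\vone\bw_t}+\sqrt{m}\,\Norm{\bz_{t+1/2}-\bw_t}$. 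Adding these two and multiplying by $\rho L$ yields exactly the claimed inequality, the coefficient $2$ on $\Norm{\vz_{t+1/2}-\vone\bz_{t+1/2}}$ arising because that term is present in both pieces.

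The main obstacle is the bookkeeping in the first step rather than any estimate. One must observe that $\vs_{t+1}$ still passes through one FastMix round even when the gradient snapshot is not refreshed (there $\vv_{t+1}-\vv_t=\vzero$, so $\vs_{t+1}=\BT(\vs_t)$), which is why the factor in front of $\Norm{\vs_t-\vone\bs_t}$ is $\rho<1$ and not $1$; and one must keep track of the fact that the nonzero increment $\vg(\vz_{t+1})-\vg(\vw_t)$ occurs only with probability $p$, so the factor $p\le1$ is harmlessly absorbed into the constant in front of $\BE_t\Norm{\vz_{t+1}-\vw_t}$. Everything else — the triangle-inequality splittings and the interchange of norms with $\BE_t$ — is routine, since every intermediate bound holds pointwise before averaging, and I do not expect the $\vz$-decomposition step to cause any difficulty.
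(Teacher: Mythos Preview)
Your proposal is correct and follows essentially the same route as the paper's proof. The paper writes the update directly as $\vs_{t+1}=\BT(\vs_t+\vg(\vw_{t+1})-\vg(\vw_t))$, applies Lemma~\ref{lem:FM} and Lemma~\ref{lem:avg-norm} to get $\rho\Norm{\vs_t-\vone\bs_t}+\rho L\,\BE_t\Norm{\vw_{t+1}-\vw_t}$, then uses the randomized rule for $\vw_{t+1}$ to obtain $\rho L p\,\BE_t\Norm{\vz_{t+1}-\vw_t}$, drops $p$, and performs exactly the same triangle-inequality splitting through $\vone\bz_{t+1}$, $\vone\bz_{t+1/2}$, $\vone\bw_t$ that you describe; your phrasing in terms of $\vv_{t+1}-\vv_t$ and the explicit remark about the line-5/line-13 interaction are only presentational differences.
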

\begin{proof}
Using Lemma \ref{lem:FM}, \ref{lem:smooth-b}, \ref{lem:avg-norm}  and the update rule of $\vz_t'$ and $\vw_{t+1}$, we have
\begin{align*}
   & \BE_t\Norm{\vs_{t+1} - \vone\bs_{t+1}} \\
= & \BE_t\Norm{\BT(\vs_t + \vg(\vw_{t+1}) - \vg(\vw_t)) - \frac{1}{m}\vone\vone^\top\BT(\vs_t + \vg(\vw_{t+1}) - \vg(\vw_t))} \\
\leq & \rho\BE_t\Norm{\vs_t + \vg(\vw_{t+1}) - \vg(\vw_t) - \frac{1}{m}\vone\vone^\top(\vs_t + \vg(\vw_{t+1}) - \vg(\vw_t))} \\
\leq & \rho\BE_t\Norm{\vs_t - \vone\bs_t} + \rho\BE_t\Norm{\vg(\vw_{t+1}) - \vg(\vw_t) - \frac{1}{m}\vone\vone^\top(\vg(\vw_{t+1}) - \vg(\vw_t))} \\
\leq & \rho\BE_t\Norm{\vs_t - \vone\bs_t} + \rho\Norm{\vg(\vw_{t+1}) - \vg(\vw_t)} \\
\leq & \rho\BE_t\Norm{\vs_t - \vone\bs_t} + \rho L\BE_t\Norm{\vw_{t+1} - \vw_t} \\
\leq & \rho\BE_t\Norm{\vs_t - \vone\bs_t} + \rho L p\BE_t\Norm{\vz_{t+1} - \vw_t} + \rho L (1-p)\BE_t\Norm{\vw_{t} - \vw_t} \\
\leq & \rho\BE_t\Norm{\vs_t - \vone\bs_t} + \rho L p\BE_t\Norm{\vz_{t+1} - \vz_{t+1/2}} + \rho L p\BE_t\Norm{\vz_{t+1/2} - \vw_t} \\
\leq & \rho\BE_t\Norm{\vs_t - \vone\bs_t} + \rho L p\BE_t\left[\Norm{\vz_{t+1} - \vone\bz_{t+1}} + \Norm{\vone\bz_{t+1} - \vone\bz_{t+1/2}} + \Norm{\vone\bz_{t+1/2} - \vz_{t+1/2}}\right] \\
& + \rho L p\BE_t\left[\Norm{\vz_{t+1/2} - \vone\bz_{t+1/2}} + \Norm{\vone\bz_{t+1/2} - \vone\bw_t} + \Norm{\vw_t - \vone\bw_t}\right] \\
= & \rho\BE_t\Norm{\vs_t - \vone\bs_t} + \rho L \BE_t\left[\Norm{\vz_{t+1} - \vone\bz_{t+1}} + 2\Norm{\vz_{t+1/2} - \vone\bz_{t+1/2}} + \Norm{\vw_t - \vone\bw_t}\right] \\
& + \rho L\sqrt{m}\BE_t\left[\Norm{\bz_{t+1} - \bz_{t+1/2}} +  \Norm{\bz_{t+1/2} - \bw_t} \right]. 
\end{align*}
\end{proof}

We use the notations of 
\begin{align*}
r_t = \begin{bmatrix}
\norm{\vz_t-\vone\bz_t} \\[0.1cm] \norm{\vw_t-\vone\bw_t} \\[0.1cm]
\eta\norm{\vs_t-\vone\bs_t}
\end{bmatrix}
\qquad \text{and} \qquad
r_t^2 = \begin{bmatrix}
\norm{\vz_t-\vone\bz_t}^2 \\[0.1cm] \norm{\vw_t-\vone\bw_t}^2 \\[0.1cm]
\eta^2\norm{\vs_t-\vone\bs_t}^2
\end{bmatrix}.
\end{align*}
Then we provide the proof of Lemma \ref{lem:error}.
\begin{proof}
Using Lemma \ref{lem:diff-z12}, we have
\begin{align*}
 &  \Norm{\vz_{t+1/2} - \vone\bz_{t+1/2}} 
\leq \rho\begin{bmatrix}
1 & 1 & 1 
\end{bmatrix} r_t.
\end{align*}
Using Lemma \ref{lem:errs12}, we have
\begin{align*}
   & \Norm{\vs_{t+1/2} - \vone\bs_{t+1/2}} \\
\leq & \rho\left(\begin{bmatrix} 0 & L & 1/\eta \end{bmatrix} r_t
+ L \Norm{\vz_{t+1/2} - \vone\bz_{t+1/2}}  +  L\sqrt{m}\Norm{\bz_{t+1/2} - \bw_t}\right) \\
= & \rho\left(\begin{bmatrix} 0 & L & 1/\eta \end{bmatrix} r_t
+ \rho L\begin{bmatrix} 1 & 1 & 1 \end{bmatrix} r_t  
+  L\sqrt{m}\Norm{\bz_{t+1/2} - \bw_t}\right) \\
\leq & \rho\begin{bmatrix} L & 2L & L+1/\eta \end{bmatrix} r_t  
+  \rho L\sqrt{m}\Norm{\bz_{t+1/2} - \bw_t} \\
\leq & \rho\begin{bmatrix} L & 2L & 3/(2\eta) \end{bmatrix} r_t  
+  \rho L\sqrt{m}\Norm{\bz_{t+1/2} - \bw_t}.
\end{align*}
Note that $\eta\leq 1/(4L)$. Using Lemma \ref{lem:errz1}, we have
\begin{align*}
 &  \Norm{\vz_{t+1} - \vone\bz_{t+1}} \\
= & 2\rho\left(
\begin{bmatrix} 1 & 1 & 0 \end{bmatrix} r_t
 + \eta\Norm{\vs_{t+1/2} - \vone\bs_{t+1/2}}\right) \\
\leq & 2\rho\left(
\begin{bmatrix} 1 & 1 & 0 \end{bmatrix} r_t
 + \eta\rho\begin{bmatrix} L & 2L & 3/(2\eta) \end{bmatrix} r_t  
+  \eta\rho L\sqrt{m}\Norm{\bz_{t+1/2} - \bw_t}\right) \\
\leq & 2\rho\left(
\begin{bmatrix} 1 & 1 & 0 \end{bmatrix} r_t
 + \begin{bmatrix} 0.25 & 0.5 & 1.5 \end{bmatrix} r_t  
+  0.25\rho  \sqrt{m}\Norm{\bz_{t+1/2} - \bw_t}\right) \\
\leq & \rho\begin{bmatrix} 2.5 & 3 & 3 \end{bmatrix} r_t  
+  0.5\rho \sqrt{m}\Norm{\bz_{t+1/2} - \bw_t} 
\end{align*}
and
\begin{align*}
&  \Norm{\vz_{t+1} - \vone\bz_{t+1}}^2  \\
\leq & 4\rho^2\begin{bmatrix} 6.25 & 9 & 9 \end{bmatrix} r_t^2  
+  \rho^2m\Norm{\bz_{t+1/2} - \bw_t}^2 \\
\leq & \rho^2\begin{bmatrix} 25 & 36 & 36 \end{bmatrix} r_t^2  
+  \rho^2m\Norm{\bz_{t+1/2} - \bw_t}^2.
\end{align*}
Using Lemma \ref{lem:errw1}, we have
\begin{align*}
  &  \BE_t\norm{\vw_{t+1}-\vone\bw_{t+1}}^2 \\
= & p\BE_t\norm{\vz_{t+1}-\vone\bz_{t+1}}^2 + (1-p)\norm{\vw_t-\vone\bw_t}^2  \\
\leq & \frac{1}{2n}\left(
\rho^2\begin{bmatrix} 25 & 36 & 36 \end{bmatrix} r_t^2  
+  \rho^2 m\Norm{\bz_{t+1/2} - \bw_t}^2\right)
+ \left(1-\frac{1}{2n}\right)\begin{bmatrix}
0 & 1 & 0
\end{bmatrix} r_t^2  \\
\leq & \begin{bmatrix}
\dfrac{25\rho^2}{2n} & 1-\dfrac{1-36\rho^2}{2n} & \dfrac{18\rho^2}{n}
\end{bmatrix} r_t^2 + \frac{\rho^2m}{2n}\Norm{\bz_{t+1/2} - \bw_t}^2.
\end{align*}
Using Lemma \ref{lem:err-s}, we have
\begin{align*}
  &  \BE_t\Norm{\vs_{t+1} - \vone\bs_{t+1}} \\
\leq & \rho\begin{bmatrix}
 0 & L & 1/\eta
\end{bmatrix}r_t + \rho L\left(\rho\begin{bmatrix} 2.5 & 3 & 3 \end{bmatrix} r_t  
+  0.5\rho\sqrt{m}\Norm{\bz_{t+1/2} - \bw_t}\right) \\
& + 2\rho^2 L\begin{bmatrix}1 & 1 & 2 \end{bmatrix} r_t
 + \rho L\sqrt{m}\BE_t\left[\Norm{\bz_{t+1} - \bz_{t+1/2}} +  \Norm{\bz_{t+1/2} - \bw_t} \right] \\
\leq & \rho\begin{bmatrix}
 2.5L & 4L & 1/\eta+3L
\end{bmatrix}r_t +  0.5\rho^3 L\sqrt{m}\Norm{\bz_{t+1/2} - \bw_t} \\
& + \rho \begin{bmatrix}2L & 2L & 4L \end{bmatrix} r_t
 + \rho L\sqrt{m}\BE_t\left[\Norm{\bz_{t+1} - \bz_{t+1/2}} +  \Norm{\bz_{t+1/2} - \bw_t} \right] \\ 
\leq & \rho\begin{bmatrix}
 4.5L & 6L & 1/\eta+7L
\end{bmatrix}r_t  + 1.5\rho L\sqrt{m}\BE_t\left[\Norm{\bz_{t+1} - \bz_{t+1/2}} +  \Norm{\bz_{t+1/2} - \bw_t} \right], 
\end{align*}
that is
\begin{align*}
    \eta\BE_t\Norm{\vs_{t+1} - \vone\bs_{t+1}} 
\leq  \rho\begin{bmatrix}
 9/8 & 3/2 & 11/4
\end{bmatrix}r_t  + \frac{3}{8}\rho\sqrt{m}\BE_t\left[\Norm{\bz_{t+1} - \bz_{t+1/2}} +  \Norm{\bz_{t+1/2} - \bw_t} \right], 
\end{align*}
which implies
\begin{align*}
 & \eta^2\BE_t\Norm{\vs_{t+1} - \vone\bs_{t+1}} \\
\leq & 4\rho^2\begin{bmatrix}
 (9/8)^2 & (3/2)^2 & (11/4)^2
\end{bmatrix}r_t  + \frac{36}{64}\rho^2m\BE_t\left[\Norm{\bz_{t+1} - \bz_{t+1/2}} +  \Norm{\bz_{t+1/2} - \bw_t} \right]^2 \\
\leq & \rho^2\begin{bmatrix}
 \dfrac{81}{16} & 9 & \dfrac{121}{4}
\end{bmatrix}r_t  + \frac{9}{8}\rho^2m\BE_t\left[\Norm{\bz_{t+1} - \bz_{t+1/2}}^2 +  \Norm{\bz_{t+1/2} - \bw_t}^2 \right].
\end{align*}
Combing all above results, we have
\begin{align*}
    \BE_t\left[r_{t+1}^2\right]
\leq & \begin{bmatrix}
 25\rho^2 & 36\rho^2 & 36\rho^2 \\[0.3cm]
 \dfrac{25\rho^2}{2n} & 1-\dfrac{1-36\rho^2}{2n} & \dfrac{18\rho^2}{n} \\[0.3cm]
 \dfrac{81\rho^2}{16} & 9\rho^2 & \dfrac{121\rho^2}{4}
\end{bmatrix} r_t^2 +
\begin{bmatrix}  
\rho^2m\Norm{\bz_{t+1/2} - \bw_t}^2  \\[0.25cm] 
\dfrac{\rho^2m}{2n}\Norm{\bz_{t+1/2} - \bw_t}^2 \\[0.25cm] 
\dfrac{9}{8}\rho^2m\BE_t\left[\Norm{\bz_{t+1} - \bz_{t+1/2}}^2 +  \Norm{\bz_{t+1/2} - \bw_t}^2 \right]
\end{bmatrix}.
\end{align*}
Consider that $\rho\leq 1/\sqrt{291}$, we have
\begin{align*}
    \BE_t\left[\vone^\top r_{t+1}^2\right]
\leq & \left(1-\frac{1}{3n}\right) \vone^\top r_t^2 +
4\rho^2m \BE_t\left[\Norm{\bz_{t+1} - \bz_{t+1/2}}^2 +  \Norm{\bz_{t+1/2} - \bw_t}^2 \right].
\end{align*}
\end{proof}

\subsection{The Proof of Lemma \ref{lem:scsc-unconstrained}}
\begin{proof}
Using Lemma~\ref{lem:zw} and Lemma~\ref{lem:error}, we have
\begin{align*} 
\begin{split}
& \left(1 + \frac{\eta \mu}{2} - c_1p\right) \BE_t \Norm{\bz_{t+1} - \bz^*}^2 + c_1 \E_{t}\Norm{\bw_{t+1} - \bz^*}^2  \\
\leq & \left(1 - p\right) \Norm{\bz_t - \bz^*}^2 + \left(p + c_1\left(1 - p\right)\right) \Norm{\bw_t - \bz^*}^2 - \frac{1}{3n}\delta_t  + C\vone^\top\vr^2_t
\end{split}
\end{align*}
and
\begin{align*}
    \vone^\top r_{t+1}^2
\leq  \left(1-\frac{1}{3n}\right) \vone^\top r_t^2 +
4\rho^2m \delta_t =  \left(1-\frac{2p}{3}\right) \vone^\top r_t^2 +
4\rho^2m \delta_t.
\end{align*}
Hence, we combining above results, we obtain
\begin{align*}
 & \left(1 + \frac{\eta \mu}{2} - c_1p\right) \E \Norm{\bz_{t+1} - \bz^*}^2 + c_1 \E\Norm{\bw_{t+1} - \bz^*}^2 + c_2C\BE_t\left[\vone^\top r_{t+1}\right] \\
\leq & \left(1 - p\right) \Norm{\bz_t - \bz^*}^2 + \left(p + c_1\left(1 - p\right)\right) \Norm{\bw_t - \bz^*}^2 - \frac{1}{3n}\delta_t  + C\vone^\top\vr^2_t 
+ c_2C\left(1-\frac{2p}{3}\right) \vone^\top r_t^2 +4c_2C\rho^2m \delta_t \\
= & \left(1 - p\right) \Norm{\bz_t - \bz^*}^2 + \left(p + c_1\left(1 - p\right)\right) \Norm{\bw_t - \bz^*}^2  
+ \left(1+ c_2\left(1-\frac{2p}{3}\right)\right)C \vone^\top r_t^2 - \left(\frac{1}{3n}-4c_2C\rho^2m\right) \delta_t \\
\leq & \left(1 - p\right) \Norm{\bz_t - \bz^*}^2 + \left(p + c_1\left(1 - p\right)\right) \Norm{\bw_t - \bz^*}^2  
+ \left(1+ c_2\left(1-\frac{2p}{3}\right)\right)C \vone^\top r_t^2
\end{align*}
where the last step is due to the choice of $K$ leads to
$\rho \leq 1/\sqrt{12mnc_2C}$ and $1/(3n)\geq 4c_2C\rho^2m$.

We rewrite above inequality as
\begin{align}\label{ieq:rel-V}
\begin{split}
 & \BE_t \Norm{\bz_{t+1} - \bz^*}^2 + \frac{c_1}{1 + \frac{\eta \mu}{2} - c_1p} \E\Norm{\bw_{t+1} - \bz^*}^2 + \frac{c_2}{1 + \frac{\eta \mu}{2} - c_1p}C\BE_t\left[\vone^\top r_{t+1}\right] \\
\leq & \frac{1 - p}{1 + \frac{\eta \mu}{2} - c_1p} \Norm{\bz_t - \bz^*}^2 + \frac{p + c_1\left(1 - p\right)}{1 + \frac{\eta \mu}{2} - c_1p}\Norm{\bw_t - \bz^*}^2  
+ \frac{1+ c_2\left(1-\frac{2p}{3}\right)}{1 + \frac{\eta \mu}{2} - c_1p}C \vone^\top r_t^2.
\end{split}
\end{align}
The settings $c_1 =\frac{2\eta \mu + 4 p}{\eta \mu + 4 p}$ and $\eta=\frac{1}{6\sqrt{n}L}$ mean
\begin{align*}
  &  \frac{1-p}{1+\frac{\eta\mu}{2}-c_1p} 
= 1 - \frac{\frac{\eta\mu}{2}-p(c_1-1)}{1+\frac{\eta\mu}{2}-c_1p}
= 1 - \frac{\frac{\eta\mu}{2}-\frac{\eta\mu p}{\eta\mu+4p}}{1+\frac{\eta\mu}{2}-\frac{p(2\eta \mu + 4 p)}{\eta \mu + 4 p}} 
= 1 - \frac{\frac{\eta\mu(\eta \mu + 4 p)}{2} - \eta\mu p}{(1+\frac{\eta\mu}{2})(\eta \mu + 4 p)-p(2\eta \mu + 4 p)} \\
= & 1 - \frac{\eta^2\mu^2 + 2\eta\mu p}{(2+\eta\mu)\eta \mu + (2+\eta\mu)4 p - 2p(2\eta \mu + 4 p)} 
= 1 - \frac{\eta^2\mu^2 + 2\eta\mu p}{(2+\eta\mu)\eta \mu + 8p(1 - p)} 
\leq 1 - \frac{\eta^2\mu^2 + 2\eta\mu p}{3\eta\mu + 8p} \\
\leq & 1 - \frac{\eta\mu(\eta\mu + 2p)}{3\eta\mu + 8p}
\leq 1 - \frac{\eta\mu(\eta\mu + 2p)}{4\eta\mu + 8p}
= 1 - \frac{\eta\mu}{4} = 1-\frac{1}{24\kappa\sqrt{n}}
\end{align*}
and
\begin{align*}
\frac{p+c_1(1-p)}{c_1} 
= 1 - \frac{p(c_1-1)}{c_1}
= 1 - \frac{\frac{p\eta \mu}{\eta \mu + 4 p}}{\frac{2\eta \mu + 4 p}{\eta \mu + 4 p}}
= 1 - \frac{p\eta \mu}{2\eta \mu + 4 p}
= 1 - \frac{1}{\frac{2}{p} + \frac{4}{\eta\mu}}
= 1 - \frac{1}{n + 24\kappa\sqrt{n}}.
\end{align*}
Additionally, the value of $c_2=3/p=6n$ means
\begin{align*}
\frac{1+c_2\left(1-\frac{2p}{3}\right)}{c_2}
= \frac{1+\frac{3}{p}\left(1-\frac{2p}{3}\right)}{c_2}
= \frac{\frac{3}{p}\left(\frac{p}{3}+\left(1-\frac{2p}{3}\right)\right)}{\frac{3}{p}}
= 1-\frac{p}{3}=1-\frac{1}{6n}.
\end{align*}
Hence, the inequality (\ref{ieq:rel-V}) implies
\begin{align*}
    \BE_t\left[V_{t+1}\right]
\leq \max\left\{1-\frac{1}{24\kappa\sqrt{n}},1-\frac{1}{n+24\kappa\sqrt{n}},1-\frac{1}{6n}\right\}  V_t
\leq \left(1-\frac{1}{6(n+4\kappa\sqrt{n})}\right)  V_t.
\end{align*}
\end{proof}

\subsection{The Proof of Theorem \ref{thm:scsc-unconstrained}}
\begin{proof}
Note that 
\begin{align*}
1 + \frac{\eta \mu}{\eta \mu + 4 p}
\leq c_1 = \frac{2\eta \mu + 4 p}{\eta \mu + 4 p}
\leq \frac{2\eta \mu + 8 p}{\eta \mu + 4 p} = 2
\end{align*}
which means
\begin{align*}
    \frac{c_1}{1 + \frac{\eta \mu}{2} - c_1p}
\leq \frac{2}{1 + \frac{\eta \mu}{2} - 2p}
= \frac{2}{1 + \frac{1}{12\kappa\sqrt{n}} - \frac{1}{n}}
\leq 4
\end{align*}
We also have
\begin{align*}
C = \frac{8\eta\left(2\kappa + 3L\eta\right)}{m} 
= \frac{\frac{8}{6L\sqrt{n}}\left(2\kappa + \frac{3L}{6L\sqrt{n}}\right)}{m} 
=\frac{2\left(4\kappa\sqrt{n} + 1\right)}{3mnL}
\end{align*}
and
\begin{align*}
\frac{c_2 C}{1 + \frac{\eta \mu}{2} - c_1p}
\leq \frac{\frac{12n\left(4\kappa\sqrt{n} + 1\right)}{3mnL}}{1 + \frac{1}{12\kappa\sqrt{n}} - 2p}
\leq \frac{\frac{12\left(4\kappa\sqrt{n} + 1\right)}{m}}{1 + \frac{1}{12\kappa\sqrt{n}} - \frac{1}{n}}
\leq \frac{24\left(4\kappa\sqrt{n} + 1\right)}{m}
\leq \frac{120\kappa\sqrt{n}}{m}
\end{align*}

Using above inequalities, we have
\begin{align*}
V_t = \Norm{\bz_{t} - \bz^*}^2 + \frac{c_1}{1 + \frac{\eta \mu}{2} - c_1p} \Norm{\bw_{t} - \bz^*}^2 + \frac{c_2}{1 + \frac{\eta \mu}{2} - c_1p}C\vone^\top r_{t}^2.
\end{align*}

Since $\vz_0=\vw_0=\vone\bz_0=\vone\bw_0$, we have
\begin{align*}
 \vone^\top r_0^2 
=  \norm{\vz_0-\vone\bz_0}^2 + \norm{\vw_0-\vone\bw_0}^2 + \eta^2\norm{\vs_0-\vone\bs_0}^2 
=  \frac{1}{36nL^2}\Norm{\vs_0-\vone\bs_0}^2 
\end{align*}
and
\begin{align*}
    \frac{c_2}{1 + \frac{\eta \mu}{2} - c_1p}C\vone^\top r_{t}^2
\leq \frac{120\kappa\sqrt{n}}{36mnL^2}\Norm{\vs_0-\vone\bs_0}^2 
= \frac{10\kappa}{3m\sqrt{n}L^2}\Norm{\vs_0-\vone\bs_0}^2 
\end{align*}
which implies
\begin{align*}
V_0 = & \Norm{\bz_{0} - \bz^*}^2 + \frac{c_1}{1 + \frac{\eta \mu}{2} - c_1p} \Norm{\bw_{0} - \bz^*}^2 + \frac{c_2}{1 + \frac{\eta \mu}{2} - c_1p}C\vone^\top r_{0}^2 \\
\leq & 5\Norm{\bz_{0} - \bz^*}^2 + \frac{10\kappa}{3m\sqrt{n}L^2}\Norm{\vs_0-\vone\bs_0}^2 \\
= & 5\Norm{\bz_{0} - \bz^*}^2 + \frac{10\kappa}{3m\sqrt{n}L^2}\Norm{\vg(\vz_0)-\frac{1}{m}\vone\vone^\top\vg(\vz_0)}^2  \\
\leq & 5\Norm{\bz_{0} - \bz^*}^2 + \frac{10\kappa\rho}{3m\sqrt{n}L^2}\Norm{\vg(\vz_0)-\frac{1}{m}\vone\vone^\top\vg(\vz_0)}^2.
\end{align*}
Then, by setting
\begin{align*}
K_0 \geq \sqrt{\chi}\log\left(\frac{10\kappa}{3m\sqrt{n}L^2\eps}\Norm{\vg(\vz_0)-\frac{1}{m}\vone\vone^\top\vg(\vz_0)}^2 \right) = \fO\left(\sqrt{\chi}\log\left(\frac{\kappa}{mnL^2\eps} \right)\right)
\end{align*}
we have
\begin{align*}
V_0 \leq \Norm{\bz_0-\bz^*}^2 + 5\Norm{\bw_0-\bz^*}^2 + \eps \leq 6\Norm{\bz_0-\bz^*}^2 + \eps.
\end{align*}
Hence, we require
\begin{align*}
T = 6(n+4\kappa\sqrt{n})\log\left(\frac{6\Norm{\bz_0-\bz^*}^2 + \eps}{\eps}\right) 
= \fO\left(\left(n+\kappa\sqrt{n}\right)\log\left(\frac{1}{\eps}\right)\right)    
\end{align*}
to obtain
\begin{align*}
& \BE_t\left[\Norm{\bz_{T}-\bz^*}^2\right]
\leq \BE_t\left[V_T\right] \\
\leq & \left(1-\frac{1}{6(n+4\kappa\sqrt{n})}\right)^TV_0 \\
\leq & \left(1-\frac{1}{6(n+4\kappa\sqrt{n})}\right)^T\left(6\Norm{\bz_0-\bz^*}^2 + \eps\right) \\
\leq & \eps.
\end{align*}
Since each iteration requires $1+np=\fO(1)$ SFO calls in expectation, the total complexity is
\begin{align*}
\fO((1+np)T)=\fO\left(\left(n+\kappa\sqrt{n}\right)\log\left(\frac{1}{\eps}\right)\right)
\end{align*}
in expectation. The number of communication rounds is 
\begin{align*}
KT+K_0 = \fO\left(\left(n+\kappa\sqrt{n}\right)\sqrt{\chi}\log(\kappa n)\log\left(\frac{1}{\eps}\right)\right).
\end{align*}
\end{proof}

\section{The Proof Details for Section \ref{sec:mc-svre-constrained}}\label{appendix:MC-SVRE-constrained}

We first give two lemmas to address the constraint in the problem.

\begin{lem}[{\citet[Lemma 6]{luo2021near}}]\label{lem:proj}
For any $u\in\BR^{d}$ and $v\in\fX\times\fY$, we have
$\inner{\fP_\fZ(u)-u}{\fP_\fZ(u)-v} \leq 0$.
\end{lem}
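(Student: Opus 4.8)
The statement is the standard obtuse-angle (variational) characterization of the Euclidean projection onto a convex set, so the plan is to derive it directly from the minimality of $\fP_\fZ(u)$ over $\fZ=\fX\times\fY$, without invoking any machinery beyond elementary calculus. First I would record that, under Assumption~\ref{asm:constrained}, $\fX$ and $\fY$ are convex and compact, so their product $\fZ$ is a nonempty closed convex subset of $\BR^{d}$; together with the strict convexity of the map $z'\mapsto\Norm{u-z'}^2$, this guarantees that $p\triangleq\fP_\fZ(u)=\argmin_{z'\in\fZ}\Norm{u-z'}$ exists and is unique, so that the quantity appearing in the statement is well-defined.

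Next I would exploit convexity of $\fZ$ to build a feasible perturbation of the minimizer toward the arbitrary point $v\in\fZ$. For $t\in[0,1]$ the convex combination $p+t(v-p)=(1-t)p+tv$ lies in $\fZ$, so the scalar function $\phi(t)\triangleq\Norm{u-p-t(v-p)}^2$ attains its minimum over $[0,1]$ at $t=0$. Expanding the square yields the quadratic $\phi(t)=\Norm{u-p}^2-2t\inner{u-p}{v-p}+t^2\Norm{v-p}^2$, whose one-sided derivative at the left endpoint is $\phi'(0^+)=-2\inner{u-p}{v-p}$.

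Finally, since $t=0$ minimizes $\phi$ on $[0,1]$, the right derivative must be nonnegative, $\phi'(0^+)\ge 0$, which gives $\inner{u-p}{v-p}\le 0$. I would then rewrite this in the form asserted by the lemma: because $\inner{p-u}{p-v}=\inner{-(u-p)}{-(v-p)}=\inner{u-p}{v-p}$, the inequality becomes $\inner{\fP_\fZ(u)-u}{\fP_\fZ(u)-v}\le 0$, as claimed. There is no genuine obstacle in this argument; the only two points that warrant care are (i) justifying well-definedness of the projection from the closedness and convexity of $\fZ$, and (ii) using the boundary one-sided optimality condition at $t=0$ rather than an interior stationarity condition, since the minimizer of $\phi$ over $[0,1]$ sits at an endpoint and the interior first-order equation need not hold.
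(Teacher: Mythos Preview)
Your proof is correct and is the standard variational (obtuse-angle) characterization of the Euclidean projection onto a nonempty closed convex set. The paper itself does not supply a proof of this lemma; it simply cites the result from \citet[Lemma~6]{luo2021near}, so your self-contained derivation provides more than the paper does. One minor remark: compactness of $\fX$ and $\fY$ is not needed for the argument---closedness and convexity alone (which follow from Assumption~\ref{asm:constrained}) already guarantee existence and uniqueness of the projection in finite dimensions---but invoking the assumption as stated is harmless.
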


\begin{lem}\label{lem:optimal}
Under Assumption \ref{asm:smooth} and \ref{asm:cc}, we have $\inner{g(z^*)}{z-z^*}\geq 0$ for any $z\in\fX\times\fY$.
\end{lem}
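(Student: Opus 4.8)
The plan is to obtain the desired inequality $\inner{g(z^*)}{z-z^*}\ge 0$ directly from the definition of the saddle point $z^*=[x^*;y^*]$ together with the first-order optimality conditions for constrained minimization (resp.\ maximization) over the convex sets $\fX$ and $\fY$. Recall that $g(z^*)=\big[\nabla_x f(x^*,y^*);\,-\nabla_y f(x^*,y^*)\big]$ and $z-z^*=[x-x^*;\,y-y^*]$, so
\[
\inner{g(z^*)}{z-z^*}=\inner{\nabla_x f(x^*,y^*)}{x-x^*}-\inner{\nabla_y f(x^*,y^*)}{y-y^*},
\]
and it suffices to show that both terms on the right are nonnegative for every $z=[x;y]\in\fX\times\fY$.

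First I would treat the $x$-term. The saddle-point inequality $f(x^*,y^*)\le f(x,y^*)$ for all $x\in\fX$ says that $x^*$ minimizes $f(\cdot,y^*)$ over $\fX$. Fix $x\in\fX$; since $\fX$ is convex, $x_\tau:=(1-\tau)x^*+\tau x\in\fX$ for every $\tau\in(0,1]$, hence $f(x_\tau,y^*)-f(x^*,y^*)\ge 0$ and therefore $\tfrac{1}{\tau}\big(f(x_\tau,y^*)-f(x^*,y^*)\big)\ge 0$. Letting $\tau\downarrow 0$ and using that $f$ is differentiable (Assumption~\ref{asm:smooth} makes each $f_{i,j}$, and hence $f$, continuously differentiable), the left-hand side tends to $\inner{\nabla_x f(x^*,y^*)}{x-x^*}$, so that quantity is nonnegative. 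The $y$-term is symmetric: $f(x^*,y)\le f(x^*,y^*)$ for all $y\in\fY$ says $y^*$ maximizes $f(x^*,\cdot)$ over the convex set $\fY$, and the same limiting argument applied to $y_\tau:=(1-\tau)y^*+\tau y\in\fY$ yields $\inner{\nabla_y f(x^*,y^*)}{y-y^*}\le 0$, i.e.\ $-\inner{\nabla_y f(x^*,y^*)}{y-y^*}\ge 0$. Summing the two bounds gives $\inner{g(z^*)}{z-z^*}\ge 0$, as claimed.

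I do not anticipate any genuine difficulty here: this is the classical first-order characterization of a saddle point. The two small points needing care are that the segments $x_\tau$ and $y_\tau$ remain feasible (immediate from convexity of $\fX$ and $\fY$, which holds under either Assumption~\ref{asm:unconstrained} or Assumption~\ref{asm:constrained}) and that the difference quotient converges to the gradient inner product (this is just the one-sided directional derivative of the differentiable function $f$, guaranteed by Assumption~\ref{asm:smooth}). If one prefers to avoid the explicit limit, the same two facts---$x^*\in\argmin_{\fX}f(\cdot,y^*)$ and $y^*\in\argmax_{\fY}f(x^*,\cdot)$---can instead be plugged into the standard optimality condition for a convex (resp.\ concave) function on a convex set, which is precisely $\inner{\nabla_x f(x^*,y^*)}{x-x^*}\ge 0$ on $\fX$ and $\inner{\nabla_y f(x^*,y^*)}{y-y^*}\le 0$ on $\fY$; here Assumption~\ref{asm:cc} supplies the convexity/concavity needed for that textbook form, and the rest of the argument is unchanged.
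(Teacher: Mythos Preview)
Your proposal is correct and follows essentially the same approach as the paper: split $\inner{g(z^*)}{z-z^*}$ into its $x$- and $y$-components and invoke the first-order optimality conditions at the saddle point to show each component is nonnegative. Your version is more detailed (you spell out the limiting argument behind the first-order condition, whereas the paper simply asserts the two inequalities by appealing to convexity--concavity), but the structure of the argument is the same.
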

\begin{proof}
Since the objective function is convex-concave, for any $x\in\fX$ and $y\in\fY$, we have
\begin{align*}
\inner{\nabla_x f(x^*,y^*)}{x-x^*} \geq 0 \qquad \text{and} \qquad
\inner{-\nabla_y f(x^*,y^*)}{y-y^*} \geq 0
\end{align*}
which means $\inner{g(z^*)}{z-z^*}\geq 0$.
\end{proof}

\begin{lem}[{\citet[Lemma 11]{DBLP:conf/nips/YeZL020}}]\label{lem:proj-dc}
For any $\vu\in\BR^{m\times d}$, it holds that
\begin{align*}
\Norm{\fP_{\FZ}\left(\frac{1}{m}\vone\vone^\top\vu\right) - \frac{1}{m}\vone\vone^\top\fP_{\FZ}\left(\vu\right)} \leq \Norm{\vu - \vone\bu}.
\end{align*}
\end{lem}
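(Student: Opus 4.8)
The plan is to reduce this matrix inequality to a scalar statement — how far the projection of an average can be from the average of projections — and then to dispatch that with the Jensen-type bound of Lemma~\ref{lem:norm} together with the non-expansiveness of $\fP_\fZ$.

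First I would unpack both sides. Since $\tfrac1m\vone\vone^\top\vu=\vone\bu$ and $\fP_{\FZ}$ acts rowwise, the matrix $\fP_{\FZ}\big(\tfrac1m\vone\vone^\top\vu\big)$ has every row equal to $\fP_\fZ(\bu)$, whereas $\tfrac1m\vone\vone^\top\fP_{\FZ}(\vu)$ has every row equal to $\bar p:=\tfrac1m\sum_{i=1}^m\fP_\fZ(\vu(i))$. Both are rank-one matrices of the form $\vone\,a$, whose Frobenius norm is $\sqrt m\,\Norm{a}$, so the left-hand side is $\sqrt m\,\Norm{\fP_\fZ(\bu)-\bar p}$; since $\Norm{\vu-\vone\bu}^2=\sum_{i=1}^m\Norm{\vu(i)-\bu}^2$, it therefore suffices to show
\begin{align*}
m\,\Norm{\fP_\fZ(\bu)-\bar p}^2\ \le\ \sum_{i=1}^m\Norm{\vu(i)-\bu}^2 .
\end{align*}

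Next I would record that $\fP_\fZ$ is non-expansive: applying Lemma~\ref{lem:proj} once with $(u,v)=(a,\fP_\fZ(b))$ and once with $(u,v)=(b,\fP_\fZ(a))$ and adding the two inequalities, the cross terms combine to give $\Norm{\fP_\fZ(a)-\fP_\fZ(b)}^2\le\inner{a-b}{\fP_\fZ(a)-\fP_\fZ(b)}$, hence $\Norm{\fP_\fZ(a)-\fP_\fZ(b)}\le\Norm{a-b}$ by Cauchy--Schwarz. Finally I would chain the estimates: writing $\fP_\fZ(\bu)-\bar p=\tfrac1m\sum_{i=1}^m\big(\fP_\fZ(\bu)-\fP_\fZ(\vu(i))\big)$ and applying Lemma~\ref{lem:norm} followed by non-expansiveness,
\begin{align*}
\Norm{\fP_\fZ(\bu)-\bar p}^2\ \le\ \frac1m\sum_{i=1}^m\Norm{\fP_\fZ(\bu)-\fP_\fZ(\vu(i))}^2\ \le\ \frac1m\sum_{i=1}^m\Norm{\bu-\vu(i)}^2 ,
\end{align*}
and multiplying through by $m$ gives exactly the displayed inequality from the first step.

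There is no genuine analytic obstacle here; the only point that needs care is the bookkeeping in the first step — identifying the two $m\times d$ objects as rank-one matrices with identical rows and matching their Frobenius norms against the Euclidean norms of the generating vectors. As an aside, an alternative and slightly longer route expands the projection inequality for $\fP_\fZ(\bu)$ tested against $\bar p$ (which lies in $\fZ$ by convexity) together with the inequalities for each $\fP_\fZ(\vu(i))$ tested against $\fP_\fZ(\bu)$, then combines them by Cauchy--Schwarz; that argument even yields the sharper constant $\tfrac14$ on the right-hand side, but the route above is cleaner and already suffices for the stated bound.
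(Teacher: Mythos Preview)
Your proof is correct. The reduction to the rowwise identity $\sqrt m\,\Norm{\fP_\fZ(\bu)-\bar p}$, followed by Lemma~\ref{lem:norm} and non-expansiveness (which you correctly derive from Lemma~\ref{lem:proj}), is exactly what is needed. Note that the paper does not supply its own proof of this lemma---it is quoted verbatim from \citet[Lemma 11]{DBLP:conf/nips/YeZL020}---so there is no in-paper argument to compare against; your write-up would serve as a clean self-contained proof.
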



Then we provide the detailed proofs for theoretical results of MC-SVRE in constrained case.

\subsection{The Proof of Lemma \ref{lem:rel-constrained}}
\begin{proof}
Lemma \ref{lem:FM} means
\begin{align*}
   \bz_{t+1/2} 
= & \frac{1}{m}\vone^\top\BT(\fP(\vz_t'-\eta\vs_t)) 
=  \frac{1}{m}\vone^\top\fP(\vz_t'-\eta\vs_t) \\
= & \fP(\bz'_t - \eta\bs_t)  + \underbrace{\frac{1}{m}\vone^\top\fP(\vz_t'-\eta\vs_t) - \fP(\bz'_t - \eta\bs_t)}_{\Delta_t}  \\
= & \fP(\bz'_t - \eta\bs_t)  + {\Delta_t} 
\end{align*}
and
\begin{align*}
   \bz_{t+1}  
= & \frac{1}{m}\vone^\top\BT(\fP(\vz'_t-\eta\vv_{t+1/2})) 
=  \frac{1}{m}\vone^\top\fP(\vz'_t-\eta\vv_{t+1/2}) \\
= & \fP(\bz'_t - \eta\bv_{t+1/2}) + \underbrace{\frac{1}{m}\vone^\top\fP(\vz'_t-\eta\vs_{t+1/2}) - \fP(\bz'_t - \eta\bs_{t+1/2})}_{\Delta_{t+1/2}} \\
= & \fP(\bz'_t - \eta\bs_{t+1/2}) + \Delta_{t+1/2}.
\end{align*}
which implies
\begin{align*}
\fP(\bz'_t - \eta\bg_t)  =  \bz_{t+1/2} - \Delta_t \qquad\text{and}\qquad
\fP(\bz'_t - \eta\bg_{t+1/2}) =   \bz_{t+1}  - \Delta_{t+1/2}.
\end{align*}
Using Lemma \ref{lem:proj} with $u=\bz'_t-\eta \bg_{t+1/2}$ and $v=\bz^*$, we have
\begin{align*}
    \inner{\bz_{t+1}-\Delta_{t+1/2}-\bz'_t+\eta \bs_{t+1/2}}{\bz_{t+1}-\Delta_{t+1/2}-\bz^*} \leq 0.
\end{align*}
Using Lemma \ref{lem:proj} with $u=\bz'_t-\eta \bg_t$ and $v=\bz_{t+1}$, we have
\begin{align*}
    \inner{\bz_{t+1/2} - \Delta_{t} - \bz'_t + \eta\bs_t}{\bz_{t+1/2} - \Delta_{t} -\bz_{t+1}} \leq 0.
\end{align*}
Summing over above inequalities, we have
\begin{align*}
& \inner{\bz_{t+1}-\bz'_t+\eta \bs_{t+1/2}}{\bz^*-\bz_{t+1}} + \inner{\bz_{t+1}-\bz'_t+\eta \bs_{t+1/2}+\bz_{t+1}-\bz^*}{\Delta_{t+1/2}} - \Norm{\Delta_{t+1/2}}^2 \\
& + \inner{\bz_{t+1/2}- \bz'_t + \eta\bs_t}{\bz_{t+1} - \bz_{t+1/2}} + \inner{\bz_{t+1/2} - \bz'_t + \eta\bs_t+\bz_{t+1/2} -\bz_{t+1}}{\Delta_t} - \Norm{\Delta_t}^2 \geq 0,
\end{align*}
which means
\begin{align}\label{eq:proj-ieq0}
\begin{split}
& \inner{\bz_{t+1}-\bz'_t}{\bz^*-\bz_{t+1}} + \inner{\bz_{t+1/2}- \bz'_t}{\bz_{t+1} - \bz_{t+1/2}}   \\
& + \eta \inner{\bs_{t+1/2}}{\bz^*-\bz_{t+1/2}} + \eta\inner{\bs_{t+1/2}-\bs_t}{\bz_{t+1/2}-\bz_{t+1}} \\
& + \inner{\bz_{t+1}-\bz'_t+\eta \bs_{t+1/2}+\bz_{t+1}-\bz^*}{\Delta_{t+1/2}} \\
& + \inner{\bz_{t+1/2} - \bz'_t + \eta\bs_t+\bz_{t+1/2} -\bz_{t+1}}{\Delta_t}  \geq 0.
\end{split}
\end{align}
In constrained case. It is easy to verify the results of (\ref{eq:1-1}), (\ref{eq:1-2}) and (\ref{eq:1-4}) also hold. Additionally, we also have (\ref{eq:1-3}) due to Lemma~\ref{lem:optimal}.

Plugging results of (\ref{eq:1-1}), (\ref{eq:1-2}), (\ref{eq:1-3}) and (\ref{eq:1-4}) into (\ref{eq:proj-ieq0}), we obtain that
\begin{align}\label{ieq:constraind-zw}
\begin{split}
& \alpha \Norm{\bz_t - \bz^*}^2 + (1 - \alpha) \Norm{\bw_t - \bz^*}^2 - \BE_t\Norm{\bz_{t+1} - \bz^*}^2 - \BE_t\Norm{\bz_{t+1/2} - \bz_{t+1}}^2 \\
& - \alpha \Norm{\bz_{t+1/2} - \bz_t}^2 - (1 - \alpha) \Norm{\bz_{t+1/2} - \bw_t}^2 \\
& - \eta\mu \BE_t\Norm{\bz_{t+1} - \bz^*}^2 + \frac{5\eta\mu}{2} \BE_t\Norm{\bz_{t+1/2} - \bz_{t+1}}^2 + \frac{4L^2\eta}{m\mu}\Norm{\vz_{t+1/2} - \vone\bz_{t+1/2}}^2 + \frac{\eta\mu}{2}\Norm{\bz_{t+1}-\bz^*}^2 \\
& + 6\eta^2L^2\BE_t\Norm{\bz_{t+1/2} - \bw_t}^2 + \frac{1}{2} \BE_t\Norm{{\bz_{t+1/2} - \bz_{t+1}}}^2 \\
& + \frac{6\eta^2L^2}{m}\BE_t\Norm{\vz_{t+1/2}-\vone\bz_{t+1/2}}^2  + \frac{6\eta^2L^2}{m}\Norm{\vw_t-\vone\bw_t}^2 \\
 & + \BE_t\left[\inner{\bz_{t+1}-\bz'_t+\eta \bs_{t+1/2}+\bz_{t+1}-\bz^*}{\Delta_{t+1/2}}\right] \\
& + \BE_t\left[\inner{\bz_{t+1/2} - \bz'_t + \eta\bs_t+\bz_{t+1/2} -\bz_{t+1}}{\Delta_t}\right]\geq 0,
\end{split}
\end{align}
Combining inequality (\ref{ieq:constraind-zw}) with
\begin{align*}
\inner{\bz_{t+1}-\bz'_t+\eta \bs_{t+1/2}+\bz_{t+1}-\bz^*}{\Delta_{t+1/2}}
\leq \Norm{\bz_{t+1}-\bz'_t+\eta \bs_{t+1/2}+\bz_{t+1}-\bz^*}\Norm{\Delta_{t+1/2}}
\end{align*}
and
\begin{align*}
\inner{\bz_{t+1/2} - \bz'_t + \eta\bs_t+\bz_{t+1/2} -\bz_{t+1}}{\Delta_t}
\leq \Norm{\bz_{t+1/2} - \bz'_t + \eta\bs_t+\bz_{t+1/2} -\bz_{t+1}}\Norm{\Delta_t},
\end{align*}
we have
\begin{align*} 
\begin{split}
& \left(1 + \frac{\eta \mu}{2} - c_1p\right) \BE_t \Norm{\bz_{t+1} - \bz^*}^2 + c_1 \E_{t}\Norm{\bw_{t+1} - \bz^*}^2  \\
\leq & \left(1 - p\right) \Norm{\bz_t - \bz^*}^2 + \left(p + c_1\left(1 - p\right)\right) \Norm{\bw_t - \bz^*}^2  \\
& + \frac{2 L\eta\left(2\kappa + 3L\eta\right)}{m}\left(\BE_t\Norm{\vz_{t+1/2} - \vone\bz_{t+1/2}}^2 + \Norm{\vw_t - \vone\bw_t}^2\right) \\
& + \BE_t\left[\Norm{\bz_{t+1}-\bz'_t+\eta \bs_{t+1/2}+\bz_{t+1}-\bz^*}\Norm{\Delta_{t+1/2}}\right]
 + \BE_t\left[\Norm{\bz_{t+1/2} - \bz'_t + \eta\bs_t+\bz_{t+1/2} -\bz_{t+1}}\Norm{\Delta_t}\right] \\
= & \left(1 - p\right) \Norm{\bz_t - \bz^*}^2 + \left(p + c_1\left(1 - p\right)\right) \Norm{\bw_t - \bz^*}^2 + \zeta_t.
\end{split}
\end{align*}
\end{proof}

\subsection{The Proof of Lemma \ref{lem:zeta}}

We first give a lemma for consensus error.

\begin{lem}\label{lem:constrained-err}
Suppose Assumption~\ref{asm:smooth} and \ref{asm:constrained} holds and we have $\Norm{\vs_0 - \vone\bs_0}\leq\delta'$ for some $\delta'>0$. 
Then Algorithm \ref{alg:dsvre} with $\eta=1/\left(6\sqrt{n}L\right)$ and
\begin{align*}
K \geq \sqrt{\chi}\log\left(\frac{2\left(\sqrt{m}LD+\delta'\right)}{\delta'}\right),
\end{align*}
holds that {\small$\Norm{\vz_t - \vone\bz_t}\leq\delta'$}, {\small$\Norm{\vz_{t+1/2} - \vone\bz_{t+1/2}}\leq\delta'$}, {\small$\Norm{\vw_t - \vone\bw_t}\leq\delta'$}, {\small$\Norm{\vs_t - \vone\bs_t}\leq\delta'/\eta$} and {\small$\Norm{\vs_{t+1/2} - \vone\bs_{t+1/2}}\leq\delta'/\eta$} for any $t\geq 0$.
\end{lem}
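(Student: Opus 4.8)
The plan is to prove all five estimates at once by induction on $t$, with inductive hypothesis ``$\Norm{\vz_t-\vone\bz_t}\le\delta'$, $\Norm{\vw_t-\vone\bw_t}\le\delta'$, $\Norm{\vs_t-\vone\bs_t}\le\delta'/\eta$''; the bounds on $\vz_{t+1/2}$, $\vz_{t+1}$, $\vw_{t+1}$, $\vs_{t+1/2}$, $\vs_{t+1}$ will all fall out of the inductive step. The two facts I would lean on about one FastMix call $\BT(\cdot)$ (Lemma~\ref{lem:FM}) are that it preserves the row-average and contracts the consensus error by $\rho=\big(1-\sqrt{1-\lambda_2(W)}\big)^K$; combining $1-x\le e^{-x}$ with $1-\lambda_2(W)=1/\chi$ shows the hypothesis $K\ge\sqrt{\chi}\log\big(2(\sqrt{m}LD+\delta')/\delta'\big)$ forces $\rho\le\delta'/\big(2(\sqrt{m}LD+\delta')\big)$, hence $\rho\le 1/2$ and $\rho\sqrt{m}D\le\delta'$. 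The base case $t=0$ is immediate: $\vz_0=\vw_0=\vone z_0^{\top}$ gives zero consensus error for $\vz_0,\vw_0$, and $\Norm{\vs_0-\vone\bs_0}\le\delta'\le\delta'/\eta$ is assumed.

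For the inductive step I would first dispatch the $\vz$- and $\vw$-bounds, which are essentially unconditional in the constrained case. Every row of $\fP_\FZ(\vz'_t-\eta\vs_t)$ lies in $\fZ$, and since FastMix preserves the row-average, $\bz_{t+1/2}=\tfrac{1}{m}\vone^\top\fP_\FZ(\vz'_t-\eta\vs_t)$ is a convex combination of those rows, hence also in $\fZ$; so every row of $\fP_\FZ(\vz'_t-\eta\vs_t)$ is within distance $D$ of $\bz_{t+1/2}$, giving $\Norm{\fP_\FZ(\vz'_t-\eta\vs_t)-\vone\bz_{t+1/2}}\le\sqrt{m}D$ and therefore $\Norm{\vz_{t+1/2}-\vone\bz_{t+1/2}}\le\rho\sqrt{m}D\le\delta'$ by Lemma~\ref{lem:FM}, while $\bz_{t+1/2}\in\fZ$. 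The same argument with $\vs_{t+1/2}$ in place of $\vs_t$ bounds $\Norm{\vz_{t+1}-\vone\bz_{t+1}}\le\delta'$ and puts $\bz_{t+1}\in\fZ$; since $\vw_{t+1}$ is either $\vz_{t+1}$ or $\vw_t$ we get $\Norm{\vw_{t+1}-\vone\bw_{t+1}}\le\delta'$, and since each $\vw_t$ is copied from some earlier $\vz$ (or from $\vz_0$) we have $\bw_t\in\fZ$ throughout. (One also has $\Norm{\vz'_t-\vone\bz'_t}\le\delta'$ from Lemma~\ref{lem:zpzw} and the hypothesis, but it is not actually needed here.)

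Next I would handle the gradient trackers, where the consensus errors of $\vz$ and $\vw$ finally get used. Writing $\vv_t=\vg(\vw_t)$ (which the update of $\vv$ maintains), the $i$-th block of $\vv_{t+1/2}-\vv_t$ is $g_{i,j_i}(\vz_{t+1/2}(i))-g_{i,j_i}(\vw_t(i))$, of norm at most $L\Norm{\vz_{t+1/2}(i)-\vw_t(i)}$ by Assumption~\ref{asm:smooth}; summing over $i$ and using $\bz_{t+1/2},\bw_t\in\fZ$,
\begin{align*}
\Norm{\vv_{t+1/2}-\vv_t}
&\le L\Norm{\vz_{t+1/2}-\vw_t}\\
&\le L\Big(\Norm{\vz_{t+1/2}-\vone\bz_{t+1/2}}+\sqrt{m}\,\Norm{\bz_{t+1/2}-\bw_t}+\Norm{\vw_t-\vone\bw_t}\Big)\\
&\le L\big(\sqrt{m}D+2\delta'\big),
\end{align*}
and likewise $\Norm{\vv_{t+1}-\vv_t}$ is either $0$ or $\Norm{\vg(\vz_{t+1})-\vg(\vw_t)}\le L(\sqrt{m}D+2\delta')$ by Lemma~\ref{lem:smooth-b} and the identical chain. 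Since $\vs_{t+1/2}=\BT(\vs_t+\vv_{t+1/2}-\vv_t)$ and $\vs_{t+1}=\BT(\vs_t+\vv_{t+1}-\vv_t)$, Lemma~\ref{lem:FM} with Lemma~\ref{lem:avg-norm} gives
\begin{align*}
\max\big\{\Norm{\vs_{t+1/2}-\vone\bs_{t+1/2}},\,\Norm{\vs_{t+1}-\vone\bs_{t+1}}\big\}
&\le\rho\big(\Norm{\vs_t-\vone\bs_t}+L(\sqrt{m}D+2\delta')\big)\\
&\le\rho\big(\delta'/\eta+L\sqrt{m}D+2L\delta'\big),
\end{align*}
and plugging $\eta=1/(6\sqrt{n}L)$ into $\rho\le\delta'/\big(2(\sqrt{m}LD+\delta')\big)$ makes the right-hand side $\le\delta'/\eta$. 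This closes the induction.

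The substance of the argument is the constant-chasing in the last display: compactness of $\fZ$ alone pins the $\vz$- and $\vw$-consensus errors at $\rho\sqrt{m}D$, but the tracker errors $\Norm{\vs_{\bullet}-\vone\bs_{\bullet}}$ feed on $\Norm{\vz_{t+1/2}-\vw_t}$ through $L$-smoothness, which is controllable only because the $\vz$- and $\vw$-errors are already $\le\delta'$ and $\fZ$ has diameter $D$ — so one has to check that a single FastMix contraction by $\rho$ simultaneously beats $\sqrt{m}$, $\sqrt{n}$, $L$, $D$ and $1/\eta$, which is precisely what the stated lower bound on $K$ is engineered to guarantee. A minor wrinkle worth flagging is that FastMix is an affine, not convex, combination, so the rows of $\vz_{t+1/2},\vz_{t+1}$ themselves may leave $\fZ$; hence the whole argument is phrased in terms of consensus errors, with membership in $\fZ$ invoked only for the row-averages $\bz_{t+1/2},\bz_{t+1},\bw_t$.
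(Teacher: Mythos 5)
Your proof is correct and follows the same inductive skeleton as the paper's (induction on the consensus errors of $\vz_t$, $\vw_t$, $\vs_t$; trackers controlled through $L$-smoothness and the diameter $D$), but you handle the projection step by a genuinely different route. The paper bounds $\Norm{\vz_{t+1/2}-\vone\bz_{t+1/2}}$ by splitting the pre-FastMix consensus error through the point $\fP_\FZ\left(\vone(\bz'_t-\eta\bs_t)\right)$, invoking non-expansiveness of the projection for one piece and Lemma~\ref{lem:proj-dc} for the other, which yields the bound $2\rho\left(\Norm{\vz'_t-\vone\bz'_t}+\eta\Norm{\vs_t-\vone\bs_t}\right)$; you instead note that the FastMix input $\fP_\FZ(\vz'_t-\eta\vs_t)$ has all rows in $\fZ$ and, since FastMix preserves the row mean and $\fZ$ is convex, its mean $\bz_{t+1/2}$ also lies in $\fZ$, so the input consensus error is at most $\sqrt{m}D$ and one contraction gives $\rho\sqrt{m}D$. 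This avoids Lemma~\ref{lem:proj-dc} entirely, is slightly tighter, and is in fact more scrupulous than the paper about where membership in $\fZ$ is legitimately available: the paper's intermediate estimates such as $\Norm{\vz'_t-\vone\bz'_t}\le\sqrt{m}D$ and $\Norm{\vw_{t+1}-\vw_t}\le\sqrt{m}D$ tacitly treat rows of unprojected iterates as if they lay in $\fZ$, whereas you invoke $D$ only for the means $\bz_{t+1/2},\bz_{t+1},\bw_t$ and the projected rows, absorbing the rows' deviations into the $\delta'$ terms. Your treatment of $\vs_{t+1/2}$ and $\vs_{t+1}$ coincides with the paper's. One shared caveat: your final numeric checks (e.g.\ $\rho\sqrt{m}D\le\delta'$ and $\rho(\delta'/\eta+L\sqrt{m}D+2L\delta')\le\delta'/\eta$) follow from $\rho\le\delta'/\left(2(\sqrt{m}LD+\delta')\right)$ only up to an implicit absolute-constant assumption of the form $L\gtrsim 1$ (otherwise the factor $L$ in the definition of $K$ does not cancel the plain $\sqrt{m}D$); the paper's own verification, e.g.\ $\rho(2\sqrt{m}D+2\delta')\le\delta'$, has exactly the same looseness, so this is not a gap specific to your argument.
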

\begin{proof}
We prove this lemma by induction. 
It is obvious the statement holds for $t=0$.
Suppose we have $\Norm{\vz_t - \vone\bz_t}\leq\delta'$, $\Norm{\vw_t - \vone\bw_t}\leq\delta'$, $\Norm{\vs_t - \vone\bs_t}\leq\delta'/\eta$ for $t\geq 1$, then
Lemma \ref{lem:FM} and
Lemma \ref{lem:errs12} means
\begin{align*}
   & \Norm{\vs_{t+1/2} - \vone\bs_{t+1/2}} \\
\leq & \rho\left(\Norm{\vs_t - \vone\bs_t} + L\Norm{\vz_{t+1/2} - \vone\bz_{t+1/2}} + L\sqrt{m}\Norm{\bz_{t+1/2} - \bw_t} + L\Norm{\vw_t - \vone\bw_t}\right) \\
\leq & \rho\left(\Norm{\vs_t - \vone\bs_t} + L\sqrt{m}D + L\sqrt{m}D + L\sqrt{m}D\right) \\
\leq & \rho\left(\delta'/\eta + 3\sqrt{m}LD\right) 
\leq \delta'/\eta.
\end{align*}
We also have
\begin{align*}
   & \Norm{\vs_{t+1} - \vone\bs_{t+1}} \\
= & \Norm{\BT(\vs_t + \vg(\vw_{t+1}) - \vg(\vw_t)) - \frac{1}{m}\vone\vone^\top\BT(\vs_t + \vg(\vw_{t+1}) - \vg(\vw_t))} \\
\leq & \rho\Norm{\vs_t + \vg(\vw_{t+1}) - \vg(\vw_t) - \frac{1}{m}\vone\vone^\top(\vs_t + \vg(\vw_{t+1}) - \vg(\vw_t))} \\
\leq & \rho\Norm{\vs_t - \vone\bs_t} + \rho\Norm{\vg(\vw_{t+1}) - \vg(\vw_t) - \frac{1}{m}\vone\vone^\top(\vg(\vw_{t+1}) - \vg(\vw_t))} \\
\leq & \rho\Norm{\vs_t - \vone\bs_t} + \rho\Norm{\vg(\vw_{t+1}) - \vg(\vw_t)} \\
\leq & \rho\Norm{\vs_t - \vone\bs_t} + \rho L\Norm{\vw_{t+1} - \vw_t} \\
\leq & \rho(\delta'/\eta + \sqrt{m}LD) \leq \delta'/\eta.
\end{align*}
We modify the proof of Lemma \ref{lem:diff-z12} as follows
\begin{align*}
   & \Norm{\vz_{t+1/2} - \vone\bz_{t+1/2}} \\
= & \Norm{\BT(\fP_\FZ(\vz'_t - \eta\vs_t)) - \frac{1}{m}\vone\vone^\top\BT(\fP_\FZ(\vz'_t - \eta\vs_t))} \\
\leq  & \rho \Norm{\fP_\FZ(\vz'_t - \eta\vs_t)  - \frac{1}{m}\vone\vone^\top\fP_\FZ\left(\vz'_t - \eta\vs_t\right)} \\
\leq  & \rho \Norm{\fP_\FZ(\vz'_t - \eta\vs'_t)  - \fP_\FZ\left(\vone(\bz'_t - \eta\bs_t)\right)} + \rho \Norm{\fP_\FZ\left(\vone(\bz'_t - \eta\bs_t)\right)  - \frac{1}{m}\vone\vone^\top\fP_\FZ\left(\vz'_t - \eta\vs_t\right)} \\
\leq  & \rho \Norm{\vz'_t - \eta\vs_t  - \vone(\bz'_t - \eta\bs_t)} + \rho \Norm{(\vz'_t - \eta\vs_t) - \vone(\bz'_t - \eta\bs_t)} \\
\leq  & 2\rho \Norm{\vz'_t - \vone\bz'_t} + 2\rho\eta\Norm{\vs_t - \vone\bs_t} \\
\leq  & \rho \left(2\sqrt{m}D + 2\delta'\right) \leq \delta', 
\end{align*}
where the first inequality use Lemma \ref{lem:FM} and third inequality use the non-expansiveness of projection and Lemma \ref{lem:proj-dc} with $\bu=\vz_t-\eta\vv_t$.

Similarly, we modify the proof of Lemma \ref{lem:errz1} as follows
\begin{align*}
   & \Norm{\vz_{t+1} - \vone\bz_{t+1}} \\
= & \Norm{\BT(\fP_\FZ(\vz'_t - \eta\vs_{t+1/2})) - \frac{1}{m}\vone\vone^\top\BT(\fP_\FZ(\vz'_t - \eta\vs_{t+1/2}))} \\
\leq  & \rho \Norm{\fP_\FZ(\vz'_t - \eta\vs_{t+1/2})  - \frac{1}{m}\vone\vone^\top\fP_\FZ\left(\vz'_t - \eta\vs_{t+1/2}\right)} \\
\leq  & \rho \Norm{\fP_\FZ(\vz'_t - \eta\vs_{t+1/2})  - \fP_\FZ\left(\vone(\bz'_t - \eta\bs_{t+1/2})\right)} + \rho \Norm{\fP_\FZ\left(\vone(\bz'_t - \eta\bs_{t+1/2})\right)  - \frac{1}{m}\vone\vone^\top\fP_\FZ\left(\vz'_t - \eta\vs_{t+1/2}\right)} \\
\leq  & \rho \Norm{\vz'_t - \eta\vs_{t+1/2}  - \vone(\bz'_t - \eta\bs_{t+1/2})} + \rho \Norm{(\vz'_t - \eta\vs_{t+1/2}) - \vone(\bz'_t - \eta\bs_{t+1/2})} \\
\leq  & 2\rho\left(\Norm{\vz'_t - \vone\bz'_t} + \eta\Norm{\vs_{t+1/2} - \vone\bs_{t+1/2}}\right) \\
\leq  & \rho\left(2\sqrt{m}D + 2\delta'\right) \leq \delta'.
\end{align*}
Finally, we have
\begin{align*}
 \Norm{\vw_{t+1} - \vone\bw_{t+1}} 
\leq  \max\left\{\Norm{\vz_{t+1} - \vone\bz_{t+1}}, \Norm{\vw_t - \vone\bw_t}\right\} \leq \delta'.
\end{align*}
\end{proof}

Then we provide the proof of of Lemma \ref{lem:zeta}.
\begin{proof}
Using the non-expansiveness of projection, the update rule $\vz'_t=\alpha\vz_t+(1-\alpha)\vw_t$ and Lemma \ref{lem:constrained-err}, we have
\begin{align*}
\Norm{\Delta_t} = & \Norm{\frac{1}{m}\vone^\top\fP_\FZ(\vz_t'-\eta\vs_t) - \fP_\fZ(\bz_t' - \eta\bs_t)}  \\
= & \sqrt{\frac{1}{m}\sum_{i=1}^m\Norm{\fP_\FZ(\vz'_t(i)-\eta\vs_t(i)) - \fP_\fZ(\bz^{\prime\top}_t - \eta\bs_t^\top)}^2} \\
\leq  & \sqrt{\frac{1}{m}\sum_{i=1}^m\Norm{(\vz'_t(i)-\eta\vs_t(i)) - (\bz^{\prime\top}_t - \eta\bs_t^\top)}^2} \\
\leq  & \sqrt{\frac{1}{m}\sum_{i=1}^m 2\left(\Norm{\vz'_t(i)-\bz^{\prime\top}_t}^2 + \eta^2\Norm{\vs_t(i) - \bs_t^\top}^2\right)} \\
\leq  & \sqrt{\frac{1}{m}}\sqrt{2\left(\Norm{\vz'_t-\vone\bz'_t}^2 + \eta^2\Norm{\vs_t - \vone\bs_t}^2\right)} \\
\leq  & 2\sqrt{\frac{1}{m}}\left(\Norm{\vz_t-\vone\bz_t}^2 + \Norm{\vw_t-\vone\bw_t}^2 + \eta\Norm{\vs_t - \vone\bs_t}\right) \\
\leq &  \frac{6\delta'}{\sqrt{m}}
\end{align*}
Similarly, we obtain
\begin{align*}
\Norm{\Delta_{t+1/2}} \leq \frac{6\delta'}{\sqrt{m}}.
\end{align*}
We also have
\begin{align*}
    & \Norm{\bz_{t+1/2} - \bz'_t + \eta\bs_t+\bz_{t+1/2} -\bz_{t+1}} \\
\leq & \Norm{\bz_{t+1/2} - \bz'_t} + \Norm{\bz_{t+1/2} -\bz_{t+1}} + \eta\Norm{\bs_t} \\
\leq & 2D + \eta\Norm{\frac{1}{m}\vone^\top\vg(\vw_t)} \\
= & 2D + \eta\Norm{\frac{1}{m}\sum_{i=1}^m g_i(\vw_t(i))} \\
\leq & 2D + \eta\sqrt{\frac{1}{m}\sum_{i=1}^m \Norm{g_i(\vw_t(i))}^2} \\
\leq & 2D + \eta\sqrt{\frac{2}{m}\sum_{i=1}^m \left(\Norm{g_i(\vw_t(i))-g_i(z^*)}^2 + \Norm{g_i(z^*)}^2\right)}  \\
\leq & 2D + \eta\sqrt{\frac{2}{m}\sum_{i=1}^m \left(L^2\Norm{\vw_t(i)-z^*}^2 + \Norm{g_i(z^*)}^2\right)}  \\
\leq & 2D + \frac{1}{6\sqrt{n}L}\sqrt{2L^2D^2 + \frac{2}{m}\sum_{i=1}^m\Norm{g_i(z^*)}^2} \\
\triangleq & C_1.
\end{align*}
Similarly, we have
\begin{align*}
    & \Norm{\bz_{t+1}-\bz'_t+\eta \bs_{t+1/2}+\bz_{t+1}-\bz^*} \\
\leq & \Norm{\bz_{t+1}-\bz'_t} + \Norm{\bz_{t+1}-\bz^*} + \eta\Norm{\bs_{t+1/2}} \\
\leq & 2D + \eta\Norm{\bv_{t+1/2}} \\
\leq & 2D + \eta\Norm{\frac{1}{m}\sum_{i=1}^m\left(g_i(\vw_t(i)) + g_{i,j_i}(\vz_{t+1/2}(i)) - g_{i,j_i}(\vw_t(i))\right)} \\
\leq & 2D + \eta\sqrt{\frac{1}{m}\sum_{i=1}^m\Norm{g_i(\vw_t(i)) + g_{i,j_i}(\vz_{t+1/2}(i)) - g_{i,j_i}(\vw_t(i))}^2} \\
= & 2D + \eta\sqrt{\frac{1}{m}\sum_{i=1}^m\Norm{g_i(\vw_t(i)) - g_i(z^*) + g_{i,j_i}(\vz_{t+1/2}(i)) - g_{i,j_i}(\vw_t(i)) + g_i(z^*)}^2} \\
\leq & 2D + \eta\sqrt{\frac{3}{m}\sum_{i=1}^m\left(\Norm{g_i(\vw_t(i)) - g_i(z^*)}^2 + \Norm{g_{i,j_i}(\vz_{t+1/2}(i)) - g_{i,j_i}(\vw_t(i))}^2 + \Norm{g_i(z^*)}^2\right)} \\
\leq & 2D + \eta\sqrt{\frac{3}{m}\sum_{i=1}^m\left(L^2\Norm{\vw_t(i) - z^*}^2 + L^2\Norm{\vz_{t+1/2}(i) - \vw_t(i)}^2 + \Norm{g_i(z^*)}^2\right)} \\
\leq & 2D + \frac{1}{6\sqrt{n}L}\sqrt{6L^2D^2+\frac{3}{m}\sum_{i=1}^m\Norm{g_i(z^*)}^2} \\
\triangleq & C_{1/2}.
\end{align*}
Plunging all above results into the expression of $\zeta_t$, we have
\begin{align*}
\zeta_t \leq  \frac{4L\eta\left(2\kappa + 3L\eta\right)\delta'^2}{m} + \frac{6(C_1+C_{1/2})\delta'}{\sqrt{m}}.
\end{align*}
\end{proof}

\subsection{The Proof of Theorem \ref{thm:scsc-constrained}}
\begin{proof}
The setting of $K_0$ means $\Norm{\vs_0 - \vone\bs_0}\leq\delta'$. 
Then Lemma \ref{lem:rel-constrained} and Lemma \ref{lem:zeta} imply
\begin{align*}
& \Norm{\bz_{t+1} - \bz^*}^2 + \frac{c_1}{1 + \frac{\eta \mu}{2} - c_1p} \Norm{\bw_{t+1} - \bz^*}^2 \\
\leq & \frac{1 - p}{1 + \frac{\eta \mu}{2} - c_1p} \Norm{\bz_t - \bz^*}^2 + \frac{p + c_1(1 - p)}{1 + \frac{\eta \mu}{2} - c_1p} \Norm{\bw_t - \bz^*}^2 + \frac{\zeta_t}{1 + \frac{\eta \mu}{2} - c_1p} \\
\leq & \left(1-\frac{1}{6(n+4\kappa\sqrt{n})}\right)\left(\Norm{\bz_{t} - \bz^*}^2 + \frac{c_1}{1 + \frac{\eta \mu}{2} - c_1p} \Norm{\bw_{t} - \bz^*}^2\right) \\
& + \frac{1}{1 + \frac{\eta \mu}{2} - c_1p}\left(\frac{4L\eta\left(2\kappa + 3L\eta\right)\delta'^2}{m} + \frac{6(C_1+C_{1/2})\delta'}{\sqrt{m}}\right),
\end{align*}
where the last step follows the proof of Lemma \ref{lem:scsc-unconstrained}.
Recall that
\begin{align*}
c_1 =\frac{2\eta \mu + 4 p}{\eta \mu + 4 p} \leq 2 
\qquad\text{and}\qquad
    \frac{c_1}{1 + \frac{\eta \mu}{2} - c_1p}
\leq \frac{2}{1 + \frac{\eta \mu}{2} - 2p}
= \frac{2}{1 + \frac{1}{12\kappa\sqrt{n}} - \frac{1}{2}}
\leq 4.
\end{align*}
Then we have
\begin{align*}
& \Norm{\bz_T - \bz^*}^2 + \frac{c_1}{1 + \frac{\eta \mu}{2} - c_1p} \Norm{\bw_T - \bz^*}^2 \\
\leq & \left(1-\frac{1}{6(n+4\kappa\sqrt{n})}\right)^T\left(\Norm{\bz_{0} - \bz^*}^2 + \frac{c_1}{1 + \frac{\eta \mu}{2} - c_1p} \Norm{\bw_{0} - \bz^*}^2\right) \\
& + \frac{6(n+4\kappa\sqrt{n})}{1 + \frac{\eta \mu}{2} - c_1p}\left(\frac{4L\eta\left(2\kappa + 3L\eta\right)\delta'^2}{m} + \frac{6(C_1+C_{1/2})\delta'}{\sqrt{m}}\right) \\
\leq & 5\left(1-\frac{1}{6(n+4\kappa\sqrt{n})}\right)^T\Norm{\bz_{0} - \bz^*}^2 + 12(n+4\kappa\sqrt{n})\left(\frac{4L\eta\left(2\kappa + 3L\eta\right)\delta'^2}{m} + \frac{(C_1+C_{1/2})\delta'}{\sqrt{m}}\right). 
\end{align*}
The value of $T$ and $\delta'$ means
we have
\begin{align*}
5\left(1-\frac{1}{6(n+4\kappa\sqrt{n})}\right)^T\Norm{\bz_{0} - \bz^*}^2 \leq \frac{\eps}{2} 
\end{align*}
and
\begin{align*}
12(n+4\kappa\sqrt{n})\left(\frac{4L\eta\left(2\kappa + 3L\eta\right)\delta'^2}{m} + \frac{(C_1+C_{1/2})\delta'}{\sqrt{m}}\right) \leq \frac{\eps}{2},
\end{align*}
which implies
\begin{align*}
\BE_t\Norm{\bz_T - \bz^*}^2
\leq \BE_t\Norm{\bz_T - \bz^*}^2 + \frac{c_1}{1 + \frac{\eta \mu}{2} - c_1p}
\leq \frac{\eps}{2} + \frac{\eps}{2} = \eps.
\end{align*}
Since each iteration requires $1+np=\fO(1)$ SFO calls in expectation, the total complexity is
\begin{align*}
\fO((1+np)T)=\fO\left(\left(n+\kappa\sqrt{n}\right)\log\left(\frac{1}{\eps}\right)\right)
\end{align*}
in expectation. The number of communication rounds is 
\begin{align*}
KT+K_0 = \fO\left(\left(n+\kappa\sqrt{n}\right)\sqrt{\chi}\log\left(\frac{\kappa n}{\eps}\right)\log\left(\frac{1}{\eps}\right)\right).
\end{align*}
\end{proof}

\subsection{The Proof of Lemma \ref{thm:cc-constrained}}
\begin{proof}
Since the objective function is non-strongly-convex and non-strongly-concave, we first modify (\ref{eq:1-3}) as follows
\begin{align}\label{eq:1-3b}
\begin{split}
& 2 \BE_t \left[\inner{\bs_{t+1/2}}{\bz^* - \bz_{t+1/2}} \right] \\
= & 2\inner{\bg(\bz_{t+1/2})}{\bz^* - \bz_{t+1/2}} + 2\inner{\BE_t[\bs_{t+1/2}] - \bg(\bz_{t+1/2})}{\bz^* - \bz_{t+1/2}} \\
\leq & 2 \inner{\bg(\bz^*)}{\bz^* - \bz_{t+1/2}} + 2\inner{\BE_t[\bs_{t+1/2}] - \bg(\bz_{t+1/2})}{\bz^* - \bz_{t+1/2}} \\
\leq & \beta\Norm{\BE_t[\bs_{t+1/2}] - \bg(\bz_{t+1/2})}^2 + \frac{1}{\beta}\Norm{\bz_{t+1/2}-\bz^*}^2 
\end{split}
\end{align}
where we use Lemma~\ref{lem:mm} and \ref{lem:optimal}.

Plugging results of (\ref{eq:1-1}), (\ref{eq:1-2}), (\ref{eq:1-3b}) and (\ref{eq:1-4}) into (\ref{eq:proj-ieq0}), we obtain that
\begin{align}\label{ieq:cc-relation1}
\begin{split}
& \alpha \Norm{\bz_t - \bz^*}^2 + (1 - \alpha) \Norm{\bw_t - \bz^*}^2 - \BE_t\Norm{\bz_{t+1} - \bz^*}^2 - \BE_t\Norm{\bz_{t+1/2} - \bz_{t+1}}^2 \\
& - \alpha \Norm{\bz_{t+1/2} - \bz_t}^2 - (1 - \alpha) \Norm{\bz_{t+1/2} - \bw_t}^2 + \beta\eta\Norm{\BE_t[\bs_{t+1/2}] - g(\bz_{t+1/2})}^2 + \frac{\eta}{\beta}\Norm{\bz_{t+1/2}-\bz^*}^2  \\
& + 6\eta^2L^2\BE_t\Norm{\bz_{t+1/2} - \bw_t}^2 + \frac{1}{2} \BE_t\Norm{{\bz_{t+1/2} - \bz_{t+1}}}^2 \\
& + \frac{6\eta^2L^2}{m}\BE_t\Norm{\vz_{t+1/2}-\vone\bz_{t+1/2}}^2  + \frac{6\eta^2L^2}{m}\BE_t\Norm{\vw_t-\vone\bw_t}^2 \\
 & + \BE_t\left[\inner{\bz_{t+1}-\bz'_t+\eta \bs_{t+1/2}+\bz_{t+1}-\bz^*}{\Delta_{t+1/2}} 
+ \inner{\bz_{t+1/2} - \bz'_t + \eta\bs_t+\bz_{t+1/2} -\bz_{t+1}}{\Delta_t}\right] \\
& \geq 2\eta\BE_t\left[\inner{\bg(\bz_{t+1/2})}{\bz_{t+1/2}-\bz^*}\right],
\end{split}
\end{align}
Using the update rule of $\vw_t$, we have
\begin{align}\label{ieq:cc-relation2}
\BE_t\Norm{\bw_{t+1}-\bz^*}^2  = (1-p)\BE_t\Norm{\bw_{t}-\bz^*}^2 + p\BE_t\Norm{\bz_{t+1}-\bz^*}^2.
\end{align}
Connecting the inequalities (\ref{ieq:cc-relation1}) and (\ref{ieq:cc-relation2}) implies
\begin{align*}
&  2\eta\BE_t\inner{\bg(\bz_{t+1/2})}{\bz_{t+1/2}-\bz^*} + \BE_t\Norm{\bw_{t+1}-\bz^*} \\
\leq & (1-p) \Norm{\bz_t - \bz^*}^2 - \BE_t\Norm{\bz_{t+1} - \bz^*}^2 + p\Norm{\bw_t - \bz^*}^2 + \frac{\eta}{\beta}\Norm{\bz_{t+1/2}-\bz^*}^2   
+ \beta\eta\Norm{\BE_t[\bs_{t+1/2}] - g(\bz_{t+1/2})}^2   \\
& + \frac{6\eta^2L^2}{m}\BE_t\Norm{\vz_{t+1/2}-\vone\bz_{t+1/2}}^2  + \frac{6\eta^2L^2}{m}\BE_t\Norm{\vw_t-\vone\bw_t}^2 \\
 & + \BE_t\left[\inner{\bz_{t+1}-\bz'_t+\eta \bs_{t+1/2}+\bz_{t+1}-\bz^*}{\Delta_{t+1/2}} 
 + \inner{\bz_{t+1/2} - \bz'_t + \eta\bs_t+\bz_{t+1/2} -\bz_{t+1}}{\Delta_t}\right] \\
& + p\Norm{\bz_{t+1}-\bz^*}^2 + (1-p)\Norm{\bw_{t}-\bz^*}^2 \\
\leq & (1-p) \Norm{\bz_t - \bz^*}^2 - \BE_t\Norm{\bz_{t+1} - \bz^*}^2 + p\Norm{\bw_t - \bz^*}^2 + \frac{\eta D^2}{\beta}  
 + \frac{\beta L^2\eta}{m}\Norm{\vz_{t+1/2} - \vone\bz_{t+1/2}}^2  \\
& + \frac{6\eta^2L^2}{m}\BE_t\Norm{\vz_{t+1/2}-\vone\bz_{t+1/2}}^2  + \frac{6\eta^2L^2}{m}\BE_t\Norm{\vw_t-\vone\bw_t}^2 \\
 & + \BE_t\left[\Norm{\bz_{t+1}-\bz'_t+\eta \bs_{t+1/2}+\bz_{t+1}-\bz^*}\Norm{\Delta_{t+1/2}} 
 + \Norm{\bz_{t+1/2} - \bz'_t + \eta\bs_t+\bz_{t+1/2} -\bz_{t+1}}\Norm{\Delta_t}\right] \\
& + p\Norm{\bz_{t+1}-\bz^*}^2 + (1-p)\Norm{\bw_{t}-\bz^*}^2,
\end{align*}
that is
\begin{align}\label{ieq:cc-relation3}
\begin{split}
&  2\eta\BE_t\inner{\bg(\bz_{t+1/2})}{\bz_{t+1/2}-\bz^*} \\
\leq & (1-p)\left(\Norm{\bz_t - \bz^*}^2 - \BE_t\Norm{\bz_{t+1} - \bz^*}^2\right) + \left(\Norm{\bw_t - \bz^*}^2-\BE_t\Norm{\bw_{t+1} - \bz^*}^2\right) +\zeta'_t
\end{split}
\end{align}
where 
\begin{align*}
\zeta'_t 
= & \frac{\eta D^2}{\beta} + \frac{6(\eta^2+\beta\eta)L^2}{m}\BE_t\Norm{\vz_{t+1/2}-\vone\bz_{t+1/2}}^2  + \frac{6\eta^2L^2}{m}\BE_t\Norm{\vw_t-\vone\bw_t}^2 \\
 & + \BE_t\left[\Norm{\bz_{t+1}-\bz'_t+\eta \bs_{t+1/2}+\bz_{t+1}-\bz^*}\Norm{\Delta_{t+1/2}} 
 + \Norm{\bz_{t+1/2} - \bz'_t + \eta\bs_t+\bz_{t+1/2} -\bz_{t+1}}\Norm{\Delta_t}\right] \\
\leq & \frac{\eta D^2}{\beta} + \frac{6(2\eta^2+\beta\eta )L^2\delta'^2}{m}  + \frac{6\left(C_1 + C_{1/2}\right)\delta'}{\sqrt{m}}.
\end{align*}
The upper bound of $\zeta'_t$ follows the proof of Lemma \ref{lem:zeta}.

Summing over (\ref{ieq:cc-relation3}) with $t=0,\dots,T-1$, we obtain
\begin{align*}
\begin{split}
& \BE_t\left[f(\hat x,y^*)-f(x^*,\hat y)\right] \\
\leq &  \frac{1}{T}\sum_{t=0}^{T-1}\BE_t\left[f(\bx_{t+1/2},y^*)-f(x^*,\by_{t+1/2})\right]  \\
\leq &  \frac{1}{T}\sum_{t=0}^{T-1}\BE_t\inner{g(\bz_{t+1/2})}{\bz_{t+1/2}-\bz^*}  \\
\leq & \frac{(1-p)\left(\Norm{\bz_0 - \bz^*}^2-\BE_T\Norm{\bz_T - \bz^*}^2\right) + \Norm{\bw_0 - \bz^*}^2 - \BE_T\Norm{\bw_T - \bz^*}^2 + \sum_{t=0}^{T-1}\zeta'_t}{2\eta T} \\
\leq & \frac{\Norm{\bz_0 - \bz^*}^2}{\eta T} + \frac{1}{2\eta}\left(\frac{\eta D^2}{\beta} + \frac{6(2\eta^2+\beta\eta )L^2\delta'^2}{m}  + \frac{6\left(C_1 + C_{1/2}\right)\delta'}{\sqrt{m}}\right) \\
\leq & \frac{\eps}{2}+ \frac{\eps}{4} + \frac{\eps}{8} + \frac{\eps}{8} = \eps.
\end{split}
\end{align*}
where the first inequality use Jensen's inequality; the second inequality use the objective function is convex-concave; the third inequality use the upper bound of $\zeta'_t$; and the last one is based on the value of parameter settings.

Since each iteration requires $1+np=\fO(1)$ SFO calls in expectation and we the algorithm needs to compute the full gradient at first, the total SFO complexity is
\begin{align*}
\fO(n + (1+np)T) = \fO\left(n + \frac{\sqrt{n}L}{\eps}\right)
\end{align*}
in expectation. The number of communication rounds is 
\begin{align*}
KT+K_0 = \fO\left(\frac{\sqrt{n\chi}L}{\eps} \log\left(\frac{nL}{\eps}\right)\right).
\end{align*}
\end{proof}

\section{The Proof Details for Section \ref{sec:MC-EG}}\label{appendix:MCEG}

This section provide the detailed proofs for theoretical results of MC-EG.

\subsection{The Proof of Lemma \ref{lem:eg-relation}}
\begin{proof}
Using the fact $2\inner{a}{b} = \Norm{a + b}^2 - \Norm{a}^2 - \Norm{b}^2$, we have
\begin{align}
 2\inner{\bz_t-\bz_{t+1}}{\bz_{t+1}-\bz^*} 
= & \Norm{\bz_t-\bz^*}^2 - \Norm{\bz_t-\bz_{t+1}}^2 - \Norm{\bz_{t+1}-\bz^*}^2, \label{ieq:eg-00} 
\end{align}
and
\begin{align}
  2\inner{\bz_t-\bz_{t+1/2}}{\bz_{t+1/2}-\bz_{t+1}} 
= & \Norm{\bz_t-\bz_{t+1}}^2 - \Norm{\bz_t-\bz_{t+1/2}}^2 - \Norm{\bz_{t+1/2}-\bz_{t+1}}^2. \label{ieq:eg-01}
\end{align}
Hence, we have
\begin{align}\label{ieq:eg-1}
\begin{split}
& 2\eta\inner{\bs_{t+1/2}}{\bz_{t+1/2}-\bz^*} \\
= & 2\eta\inner{\bs_{t+1/2}}{\bz_{t+1}-\bz^*} + 2\eta\inner{\bs_{t+1/2}}{\bz_{t+1/2}-\bz_{t+1}} \\
= & 2\inner{\bz_t-\bz_{t+1}}{\bz_{t+1}-\bz^*} 
+ 2\inner{\bz_t-\bz_{t+1/2}}{\bz_{t+1/2}-\bz_{t+1}} 
+ 2\inner{\bz_{t+1/2}-\bz_{t+1}}{\bz_{t+1/2}-\bz_{t+1}} \\
= & \Norm{\bz_t-\bz^*}^2 - \Norm{\bz_t-\bz_{t+1}}^2 - \Norm{\bz_{t+1}-\bz^*}^2 + \Norm{\bz_t-\bz_{t+1}}^2 - \Norm{\bz_t-\bz_{t+1/2}}^2 - \Norm{\bz_{t+1/2}-\bz_{t+1}}^2 \\
& + 2\eta\inner{\bs_{t+1/2}-\bs_t}{\bz_{t+1/2}-\bz_{t+1}} \\
\leq & \Norm{\bz_t-\bz^*}^2 - \Norm{\bz_{t+1}-\bz^*}^2 
 - \Norm{\bz_t-\bz_{t+1/2}}^2 - \Norm{\bz_{t+1/2}-\bz_{t+1}}^2 \\
& + 2\eta^2\Norm{\bs_{t+1/2}-\bs_t}^2 + \frac{1}{2}\Norm{\bz_{t+1/2}-\bz_{t+1}}^2 \\
\leq & \Norm{\bz_t-\bz^*}^2 - \Norm{\bz_{t+1}-\bz^*}^2 
- \frac{1}{2}\Norm{\bz_{t+1/2}-\bz_{t+1}}^2
 -(1-6\eta^2L^2)\Norm{\bz_{t+1/2}-\bz_t}^2 \\
 & + \frac{6L^2\eta^2}{m}\left(\Norm{\vz_{t+1/2} - \vone\bz_{t+1/2}}^2 + \Norm{\vz_t - \vone\bz_t}^2\right),
\end{split}
\end{align}
where the second equality is based on the update rule; the third one is based on (\ref{ieq:eg-00}), (\ref{ieq:eg-01}) and Lemma \ref{lem:FM}; the inequality is due to $2\inner{a}{b} \leq \Norm{a}^2+\Norm{b}^2$; the last step is because of
\begin{align}\label{ieq:eg-3}
\begin{split}
      &  \Norm{\bs_{t+1/2}-\bs_t}^2 \\
\leq & 3\Norm{\bs_{t+1/2}-\bg(\bz_{t+1/2})}^2 + 3\Norm{\bg(\bz_{t+1/2})-\bg(\bz_t)}^2 + 3\Norm{\bg(\bz_t)-\bs_t}^2 \\
\leq &  3\left(\frac{L^2}{m}\Norm{\vz_{t+1/2} - \vone\bz_{t+1/2}}^2 
+  L^2\Norm{\bz_{t+1/2}-\bz_t}^2  + \frac{L^2}{m}\Norm{\vz_t - \vone\bz_t}^2\right).
\end{split}
\end{align}
We also have
\begin{align}\label{ieq:eg-2}
\begin{split}
& 2\eta\inner{\bs_{t+1/2}}{\bz_{t+1/2}-\bz^*} \\
= & 2\eta\inner{\bg(\bz_{t+1/2})}{\bz_{t+1/2}-\bz^*} + 2\eta\inner{\bs_{t+1/2} - \bg(\bz_{t+1/2})}{\bz_{t+1/2}-\bz^*}  \\
\geq & 2\eta\inner{\bg(\bz^*)}{\bz_{t+1/2}-\bz^*} + 2\eta\mu\Norm{\bz_{t+1/2}-\bz^*}^2 - \frac{4\eta}{\mu}\Norm{\bs_{t+1/2} - \bg(\bz_{t+1/2})}^2 - \frac{\mu\eta}{4}\Norm{\bz_{t+1/2}-\bz^*}^2 \\
\geq & \eta\mu\Norm{\bz_t-\bz^*}^2 - 2\eta\mu\Norm{\bz_t-\bz_{t+1/2}}^2 - \frac{4\eta}{\mu}\Norm{\bs_{t+1/2} - \bg(\bz_{t+1/2})}^2 - \frac{\mu\eta}{4}\Norm{\bz_{t+1/2}-\bz^*}^2,
\end{split}
\end{align}
where the first inequality follows Lemma \ref{lem:mm} and Young's inequality; the second inequality is according to Lemma~\ref{lem:optimal}.

Connecting inequalities (\ref{ieq:eg-1}) and (\ref{ieq:eg-2}), we have
\begin{align*}
& \eta\mu\Norm{\bz_t-\bz^*}^2 - 2\eta\mu\Norm{\bz_t-\bz_{t+1/2}}^2  \\
\leq & 2\eta\inner{\bs_{t+1/2}}{\bz_{t+1/2}-\bz^*} + \frac{4\eta}{\mu}\Norm{\bs_{t+1/2} - \bg(\bz_{t+1/2})}^2 + \frac{\mu\eta}{4}\Norm{\bz_{t+1/2}-\bz^*}^2  \\
\leq & \Norm{\bz_t-\bz^*}^2 - \Norm{\bz_{t+1}-\bz^*}^2 - \frac{1}{2}\Norm{\bz_{t+1/2}-\bz_{t+1}}
  -(1-6\eta^2L^2)\Norm{\bz_{t+1/2}-\bz_t}^2 \\
 & + \frac{6L^2\eta^2}{m}\left(\Norm{\vz_{t+1/2} - \vone\bz_{t+1/2}}^2 + \Norm{\vz_t - \vone\bz_t}^2\right) \\
& + \frac{4\eta}{\mu}\Norm{\bs_{t+1/2} - g(\bz_{t+1/2})}^2  + \frac{\mu\eta}{2}\Norm{\bz_{t+1/2}-\bz_t}^2 + \frac{\mu\eta}{2}\Norm{\bz_t-\bz^*}^2 \\
\leq & \Norm{\bz_t-\bz^*}^2 - \Norm{\bz_{t+1}-\bz^*}^2 - \frac{1}{2}\Norm{\bz_{t+1/2}-\bz_{t+1}}
  -(1-6\eta^2L^2)\Norm{\bz_{t+1/2}-\bz_t}^2 \\
 & + \frac{6L^2\eta^2}{m}\left(\Norm{\vz_{t+1/2} - \vone\bz_{t+1/2}}^2 + \Norm{\vz_t - \vone\bz_t}^2\right) \\
& + \frac{4\eta L^2}{m\mu}\Norm{\vz_{t+1/2} - \vone\bz_{t+1/2}}^2  + \frac{\mu\eta}{2}\Norm{\bz_{t+1/2}-\bz_t}^2 + \frac{\mu\eta}{2}\Norm{\bz_t-\bz^*}^2,  
\end{align*}
where the last step follows the proof of Lemma \ref{lem:grad-avg}.
By rearranging above result, we have
\begin{align*}
& \Norm{\bz_{t+1}-\bz^*}^2    \\
\leq & \left(1- \frac{\mu\eta}{2}\right)\Norm{\bz_t-\bz^*}^2 - \frac{1}{2}\Norm{\bz_{t+1/2}-\bz_{t+1}}
  -(1-6\eta^2L^2)\Norm{\bz_{t+1/2}-\bz_t}^2 \\
 & + \frac{6L^2\eta^2}{m}\left(\Norm{\vz_{t+1/2} - \vone\bz_{t+1/2}}^2 + \Norm{\vz_t - \vone\bz_t}^2\right) + \frac{4\eta L^2}{m\mu}\Norm{\vz_{t+1/2} - \vone\bz_{t+1/2}}^2 \\
& + \frac{\mu\eta}{2}\Norm{\bz_{t+1/2}-\bz_t}^2 + 2\eta\mu\Norm{\bz_t-\bz_{t+1/2}}^2 \\
= & \left(1- \frac{\mu\eta}{2}\right)\Norm{\bz_t-\bz^*}^2 - \frac{1}{2}\Norm{\bz_{t+1/2}-\bz_{t+1}}
-\left(1-\frac{5\mu\eta}{2}-6\eta^2L^2\right)\Norm{\bz_{t+1/2}-\bz_t}^2 \\
& + \frac{2L\eta\left(3L\eta+ 2\kappa\right)}{m}\Norm{\vz_{t+1/2} - \vone\bz_{t+1/2}}^2 + \frac{6L^2\eta^2}{m}\Norm{\vz_t - \vone\bz_t}^2 \\
\leq & \left(1- \frac{\mu\eta}{2}\right)\Norm{\bz_t-\bz^*}^2 - \frac{1}{2}\Norm{\bz_{t+1/2}-\bz_{t+1}}
-\left(1-\frac{5\mu\eta}{2}-6\eta^2L^2\right)\Norm{\bz_{t+1/2}-\bz_t}^2 \\
& + \frac{2L\eta\left(3L\eta+ 2\kappa\right)}{m}\left(\rho\Norm{\vz_t - \vone\bz_t} + \rho\eta\Norm{\vs_t - \vone\bs_t}\right) + \frac{6L^2\eta^2}{m}\Norm{\vz_t - \vone\bz_t}^2 \\
\leq & \left(1- \frac{\mu\eta}{2}\right)\Norm{\bz_t-\bz^*}^2 - \frac{1}{2}\Norm{\bz_{t+1/2}-\bz_{t+1}}
-\left(1-\frac{5\mu\eta}{2}-6\eta^2L^2\right)\Norm{\bz_{t+1/2}-\bz_t}^2 \\
& + \frac{4L\eta\left(3L\eta+ 2\kappa\right)\rho}{m}\vone^\top\hr_t^2,
\end{align*}
where the second inequality use Lemma \ref{lem:diff-z12b}.
\end{proof}

\subsection{The Proof of Lemma \ref{lem:errorb}}

We first introduce some lemmas for the consensus error of each variables.
We use the notations of 
\begin{align*}
\hr_t = \begin{bmatrix}
\norm{\vz_t-\vone\bz_t} \\[0.1cm] \eta\norm{\vs_t-\vone\bs_t}
\end{bmatrix}
\qquad \text{and} \qquad
\hr_t^2 = \begin{bmatrix}
\norm{\vz_t-\vone\bz_t}^2 \\[0.1cm] \eta^2\norm{\vs_t-\vone\bs_t}^2
\end{bmatrix}.
\end{align*}

\begin{lem}\label{lem:diff-z12b}
Suppose Assumption \ref{asm:unconstrained} holds. For Algorithm \ref{alg:deg}, we have
\begin{align*}
    \Norm{\vz_{t+1/2} - \vone\bz_{t+1/2}} 
\leq \rho\left(\Norm{\vz_t - \vone\bz_t} + \eta\Norm{\vs_t - \vone\bs_t}\right).
\end{align*}
\end{lem}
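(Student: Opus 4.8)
The plan is to unwind the definition of $\vz_{t+1/2}$ from line 4 of Algorithm \ref{alg:deg} and apply Lemma \ref{lem:FM} together with Lemma \ref{lem:avg-norm} in the same way the analogous estimate (Lemma \ref{lem:diff-z12}) was handled for MC-SVRE. The key observation is that in the deterministic setting, since $n=1$, there is no auxiliary variable $\vw_t$, so the update reads $\vz_t' = \vz_t$ and hence the $(1-\alpha)$-term simply disappears from the bound.

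First I would write $\vz_{t+1/2} = \BT(\fP_\FZ(\vz_t - \eta\vs_t))$ and use the fact from Lemma \ref{lem:FM} that FastMix preserves the row-mean, so $\bz_{t+1/2} = \frac{1}{m}\vone^\top\fP_\FZ(\vz_t - \eta\vs_t)$ in the unconstrained case this is just $\bz_t - \eta\bs_t$. Then the contraction property of FastMix (Lemma \ref{lem:FM}, with contraction factor $\rho$) gives
\begin{align*}
\Norm{\vz_{t+1/2} - \vone\bz_{t+1/2}} \leq \rho\Norm{(\vz_t - \eta\vs_t) - \vone(\bz_t - \eta\bs_t)}.
\end{align*}
Next I would split the right-hand side by the triangle inequality into $\rho\Norm{\vz_t - \vone\bz_t} + \rho\eta\Norm{\vs_t - \vone\bs_t}$, which is exactly the claimed estimate. (Under Assumption \ref{asm:unconstrained} the projection $\fP_\FZ$ is the identity, so no non-expansiveness argument is needed; this mirrors the unconstrained version in Lemma \ref{lem:diff-z12} rather than the constrained Lemma \ref{lem:constrained-err}.)

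There is essentially no hard part here — the statement is a direct specialization of Lemma \ref{lem:diff-z12} to the case where $\vw_t$ is absent, so the proof is a two-line computation chaining the mean-preservation and contraction clauses of Lemma \ref{lem:FM} with the triangle inequality. The only thing to be careful about is making sure the $\vz_t' = \vz_t$ substitution is legitimate, i.e. that Algorithm \ref{alg:deg} genuinely has no averaging step before the FastMix call on line 4; a glance at the algorithm confirms this. So the "main obstacle" is purely bookkeeping: keeping the $\rho$ factor attached correctly and not accidentally introducing a stray $\vw_t$-term inherited from the MC-SVRE analysis.
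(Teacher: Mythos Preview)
Your proposal is correct and matches the paper's own proof essentially line for line: both unwind $\vz_{t+1/2} = \BT(\vz_t - \eta\vs_t)$ (projection being the identity under Assumption \ref{asm:unconstrained}), apply the mean-preservation and contraction clauses of Lemma \ref{lem:FM} to get $\rho\Norm{(\vz_t - \eta\vs_t) - \vone(\bz_t - \eta\bs_t)}$, and finish with the triangle inequality. The paper in fact writes $\vz'_t$ in its proof by copy-paste from Lemma \ref{lem:diff-z12}, but since Algorithm \ref{alg:deg} has no such variable this is just $\vz_t$, exactly as you observed.
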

\begin{proof}
Similar to the proof of Lemma \ref{lem:diff-z12}, we have
\begin{align*}
   & \Norm{\vz_{t+1/2} - \vone\bz_{t+1/2}} \\
= & \Norm{\BT(\vz'_t - \eta\vs_t) - \frac{1}{m}\vone\vone^\top\BT(\vz'_t - \eta\vs_t)} \\
\leq  & \rho \Norm{(\vz'_t - \eta\vs_t)  - \frac{1}{m}\vone\vone^\top\left(\vz'_t - \eta\vs_t\right)} \\
=  & \rho \Norm{(\vz'_t - \eta\vs_t)  - \vone(\bz'_t - \eta\bs_t)} \\
\leq  & \rho \Norm{\vz'_t - \vone\bz'_t} + \rho\eta\Norm{\vs_t - \vone\bs_t}.
\end{align*}
\end{proof}

\begin{lem}\label{lem:errs12b}
Suppose Assumption \ref{asm:unconstrained} holds. For Algorithm \ref{alg:deg}, we have
\begin{align*}
   \Norm{\vs_{t+1/2} - \vone\bs_{t+1/2}} 
\leq \rho\left(L\Norm{\vz_{t} - \vone\bz_{t}} + L\Norm{\vz_{t+1/2} - \vone\bz_{t+1/2}} + \Norm{\vs_t - \vone\bs_t} + L\sqrt{m}\Norm{\bz_{t+1/2} - \bz_t}\right).
\end{align*}
\end{lem}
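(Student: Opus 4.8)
The plan is to mirror the proof of Lemma~\ref{lem:errs12}, but working with the exact aggregate gradient operator $\vg(\cdot)$ in place of the stochastic variance-reduced estimator $\vv$. First I would invoke the contraction bound of FastMix (Lemma~\ref{lem:FM}) on the update rule $\vs_{t+1/2} = \BT(\vs_t + \vg(\vz_{t+1/2}) - \vg(\vz_t))$, which gives
\[
\Norm{\vs_{t+1/2} - \vone\bs_{t+1/2}} \le \rho\,\Norm{\big(\vs_t + \vg(\vz_{t+1/2}) - \vg(\vz_t)\big) - \tfrac1m \vone\vone^\top\big(\vs_t + \vg(\vz_{t+1/2}) - \vg(\vz_t)\big)}.
\]
Then I would split the right-hand side by the triangle inequality into the part coming from $\vs_t$ and the part coming from $\vg(\vz_{t+1/2}) - \vg(\vz_t)$, and apply Lemma~\ref{lem:avg-norm} to each (the centering map $\vu \mapsto \vu - \vone\bu$ is non-expansive), obtaining the intermediate bound $\rho\big(\Norm{\vs_t - \vone\bs_t} + \Norm{\vg(\vz_{t+1/2}) - \vg(\vz_t)}\big)$.

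Next I would control the gradient-difference term. By Lemma~\ref{lem:smooth-b} (aggregate $L$-smoothness), $\Norm{\vg(\vz_{t+1/2}) - \vg(\vz_t)} \le L\Norm{\vz_{t+1/2} - \vz_t}$. To expose the consensus errors and the mean displacement, I would insert the averaged iterates: $\Norm{\vz_{t+1/2} - \vz_t} \le \Norm{\vz_{t+1/2} - \vone\bz_{t+1/2}} + \Norm{\vone\bz_{t+1/2} - \vone\bz_t} + \Norm{\vone\bz_t - \vz_t}$, together with the identity $\Norm{\vone\bz_{t+1/2} - \vone\bz_t} = \sqrt{m}\,\Norm{\bz_{t+1/2} - \bz_t}$. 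Substituting back yields exactly the claimed inequality.

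I do not anticipate a genuine obstacle: this is a routine consensus-error estimate, essentially Lemma~\ref{lem:errs12} with $\vv_{t+1/2} - \vv_t$ replaced by $\vg(\vz_{t+1/2}) - \vg(\vz_t)$. The only point requiring slight care is the decomposition of $\Norm{\vz_{t+1/2} - \vz_t}$ through the mean vectors, so that the final bound is stated purely in terms of $\Norm{\vz_t - \vone\bz_t}$, $\Norm{\vz_{t+1/2} - \vone\bz_{t+1/2}}$, $\Norm{\vs_t - \vone\bs_t}$ and $\Norm{\bz_{t+1/2} - \bz_t}$. Unlike the stochastic case, there is no need for Lipschitz continuity of the individual component operators $g_{i,j}$ nor for a detour through $z^\ast$: the exact gradients appear directly, so aggregate smoothness via Lemma~\ref{lem:smooth-b} suffices.
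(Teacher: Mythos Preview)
Your proposal is correct and follows essentially the same approach as the paper's proof: apply the FastMix contraction (Lemma~\ref{lem:FM}) to the update for $\vs_{t+1/2}$, split via the triangle inequality and Lemma~\ref{lem:avg-norm}, bound $\Norm{\vg(\vz_{t+1/2})-\vg(\vz_t)}$ by $L\Norm{\vz_{t+1/2}-\vz_t}$ using Lemma~\ref{lem:smooth-b}, and then decompose $\Norm{\vz_{t+1/2}-\vz_t}$ through the averaged iterates. Your remark that the deterministic case avoids the component-wise Lipschitz argument needed for the stochastic estimator is also accurate.
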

\begin{proof}
Similar to the proof of Lemma \ref{lem:errs12}, we have
\begin{align*}
   & \Norm{\vs_{t+1/2} - \vone\bs_{t+1/2}} \\
= & \Norm{\BT(\vs_t + \vg(\vz_{t+1/2}) - \vg(\vz_t)) - \frac{1}{m}\vone\vone^\top\BT(\vs_t + \vg(\vz_{t+1/2}) - \vg(\vz_t))} \\
\leq & \rho\Norm{\vs_t + \vg(\vz_{t+1/2}) - \vg(\vz_t) - \frac{1}{m}\vone\vone^\top(\vs_t + \vg(\vz_{t+1/2}) - \vg(\vz_t))} \\
\leq & \rho\Norm{\vs_t - \frac{1}{m}\vone\vone^\top\vs_t} + \rho\Norm{\vg(\vz_{t+1/2}) - \vg(\vz_t) - \frac{1}{m}\vone\vone^\top(\vg(\vz_{t+1/2}) - \vg(\vz_t))} \\
\leq & \rho\left(\Norm{\vs_t - \vone\bs_t} + \Norm{\vg(\vz_{t+1/2}) - \vg(\vz_t)}\right) \\
\leq & \rho\left(\Norm{\vs_t - \vone\bs_t} + L\Norm{\vz_{t+1/2} - \vz_t}\right) \\
\leq & \rho\left(\Norm{\vs_t - \vone\bs_t} + L\Norm{\vz_{t+1/2} - \vone\bz_{t+1/2}} + L\sqrt{m}\Norm{\bz_{t+1/2} - \bz_t} + L\Norm{\vz_{t} - \vone\bz_{t}}\right).
\end{align*}
\end{proof}

\begin{lem}\label{lem:diff-z11b}
Suppose Assumption \ref{asm:unconstrained} holds. For Algorithm \ref{alg:deg}, we have
\begin{align*}
   \Norm{\vz_{t+1} - \vone\bz_{t+1}} 
\leq \rho\left(\Norm{\vz_t - \vone\bz_t} + \eta\Norm{\vs_{t+1/2} - \vone\bs_{t+1/2}}\right).
\end{align*}
\end{lem}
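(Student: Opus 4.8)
The plan is to mirror the argument for Lemma \ref{lem:diff-z12b} essentially verbatim, replacing $\vs_t$ with $\vs_{t+1/2}$ and using the second $z$-update of Algorithm \ref{alg:deg}. First I would observe that under Assumption \ref{asm:unconstrained} the projection $\fP_\FZ$ is the identity map, so line 6 of Algorithm \ref{alg:deg} reads $\vz_{t+1} = \BT(\vz_t - \eta\vs_{t+1/2})$, where $\BT(\cdot) = {\rm FastMix}(\cdot,K)$. Note that, unlike the MC-SVRE case, there is no auxiliary variable $\vz'_t$ here, so the starting point of FastMix is simply $\vz_t - \eta\vs_{t+1/2}$.

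Next I would write the consensus error of the output as
\begin{align*}
\Norm{\vz_{t+1} - \vone\bz_{t+1}}
= \Norm{\BT(\vz_t - \eta\vs_{t+1/2}) - \tfrac{1}{m}\vone\vone^\top\BT(\vz_t - \eta\vs_{t+1/2})},
\end{align*}
and apply Lemma \ref{lem:FM} (with the contraction factor $\rho = (1-\sqrt{1-\lambda_2(W)})^K$) together with the fact that the mean of the FastMix output equals the mean of its input, to bound this by
\begin{align*}
\rho\,\Norm{(\vz_t - \eta\vs_{t+1/2}) - \tfrac{1}{m}\vone\vone^\top(\vz_t - \eta\vs_{t+1/2})}
= \rho\,\Norm{(\vz_t - \eta\vs_{t+1/2}) - \vone(\bz_t - \eta\bs_{t+1/2})}.
\end{align*}
A triangle inequality then splits this into $\rho\Norm{\vz_t - \vone\bz_t} + \rho\eta\Norm{\vs_{t+1/2} - \vone\bs_{t+1/2}}$, which is the claimed bound.

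There is no real obstacle: the only things being used are the linear-rate consensus estimate of Lemma \ref{lem:FM}, the mean-preservation property of FastMix, and the triangle inequality for the Frobenius norm. The single point worth a sentence of care is that Assumption \ref{asm:unconstrained} is invoked precisely to drop the projection, which is why this lemma is stated only for the unconstrained setting; in the constrained case one would instead need the non-expansiveness of $\fP_\FZ$ and Lemma \ref{lem:proj-dc}, exactly as in the proof of Lemma \ref{lem:constrained-err}.
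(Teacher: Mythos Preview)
Your proposal is correct and matches the paper's proof essentially line for line: write $\vz_{t+1} = \BT(\vz_t - \eta\vs_{t+1/2})$ using Assumption~\ref{asm:unconstrained}, apply the FastMix contraction of Lemma~\ref{lem:FM} to the consensus error, and finish with a triangle inequality. The only cosmetic difference is that the paper cites Lemma~\ref{lem:errz1} as the template rather than Lemma~\ref{lem:diff-z12b}, but the argument is the same.
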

\begin{proof}
Similar to the proof of Lemma \ref{lem:errz1}, we have
\begin{align*}
   & \Norm{\vz_{t+1} - \vone\bz_{t+1}} \\
= & \Norm{\BT(\vz_t - \eta\vs_{t+1/2}) - \frac{1}{m}\vone\vone^\top\BT(\vz_t - \eta\vs_{t+1/2})} \\
\leq  & \rho \Norm{(\vz_t - \eta\vs_{t+1/2})  - \frac{1}{m}\vone\vone^\top\left(\vz_t - \eta\vs_{t+1/2}\right)} \\
\leq  & \rho \Norm{\vz_t - \eta\vs_{t+1/2}  - \vone(\bz_t - \eta\bs_{t+1/2})} \\
\leq  & \rho\left(\Norm{\vz_t - \vone\bz_t} + \eta\Norm{\vs_{t+1/2} - \vone\bs_{t+1/2}}\right) 
\end{align*}
\end{proof}

\begin{lem}\label{lem:err-sb}
Suppose Assumption \ref{asm:smooth} holds. For Algorithm \ref{alg:deg}, we have
\begin{align*}
    \Norm{\vs_{t+1} - \vone\bs_{t+1}} 
\leq & \rho\Norm{\vs_t - \vone\bs_t} + \rho L\left(\Norm{\vz_{t+1} - \vone\bz_{t+1}}  +  2\Norm{\vz_{t+1/2} - \vone\bz_{t+1/2}} + \Norm{\vz_t - \vone\bz_t}\right) \\
& + \rho L\left(\sqrt{m}\Norm{\bz_{t+1} - \bz_{t+1/2}} + \sqrt{m}\Norm{\bz_{t+1/2} - \bz_t} \right). 
\end{align*}
\end{lem}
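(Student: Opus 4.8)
The plan is to reuse the template of the proof of Lemma~\ref{lem:err-s}, but with the reference variable $\vw_t$ replaced by $\vz_t$, since MC-EG (Algorithm~\ref{alg:deg}) does not maintain a snapshot point and its tracking update reads $\vs_{t+1}=\FM{\vs_t+\vg(\vz_{t+1})-\vg(\vz_t),K}$. First I would invoke this update rule together with Lemma~\ref{lem:FM}, which gives the FastMix contraction
\[
\Norm{\vs_{t+1}-\vone\bs_{t+1}} \leq \rho\,\Norm{\vs_t+\vg(\vz_{t+1})-\vg(\vz_t)-\tfrac1m\vone\vone^\top\big(\vs_t+\vg(\vz_{t+1})-\vg(\vz_t)\big)}.
\]
Splitting the right-hand side with the triangle inequality, and applying Lemma~\ref{lem:avg-norm} to discard the centering on the gradient-difference block, yields the bound $\rho\Norm{\vs_t-\vone\bs_t}+\rho\Norm{\vg(\vz_{t+1})-\vg(\vz_t)}$.

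Next I would control the gradient term by the block Lipschitz estimate of Lemma~\ref{lem:smooth-b}, obtaining $\Norm{\vg(\vz_{t+1})-\vg(\vz_t)}\leq L\Norm{\vz_{t+1}-\vz_t}$. The remaining step is to unfold $\vz_{t+1}-\vz_t$ through the intermediate iterate $\vz_{t+1/2}$ and the row-averages, via the telescoping identity
\[
\vz_{t+1}-\vz_t=(\vz_{t+1}-\vone\bz_{t+1})+\vone(\bz_{t+1}-\bz_{t+1/2})+(\vone\bz_{t+1/2}-\vz_{t+1/2})+(\vz_{t+1/2}-\vone\bz_{t+1/2})+\vone(\bz_{t+1/2}-\bz_t)+(\vone\bz_t-\vz_t),
\]
so that the triangle inequality together with $\Norm{\vone a}=\sqrt{m}\Norm{a}$ produces exactly the claimed estimate; the consensus error $\Norm{\vz_{t+1/2}-\vone\bz_{t+1/2}}$ enters twice, which is the source of the coefficient $2$ in the statement.

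I do not expect any genuine obstacle here: the argument is a routine combination of the FastMix contraction (Lemma~\ref{lem:FM}), Lipschitz continuity of the aggregate operator (Lemma~\ref{lem:smooth-b}), and the triangle inequality, entirely parallel to Lemma~\ref{lem:err-s}. The only point requiring care is the bookkeeping in the decomposition of $\vz_{t+1}-\vz_t$, which must be routed through both the consensus-error terms ($\vz_\cdot-\vone\bz_\cdot$) and the progress of the mean iterate ($\bz_\cdot-\bz_\cdot$), keeping track of the two occurrences of the $\vz_{t+1/2}$ consensus error so that the constants match the stated inequality.
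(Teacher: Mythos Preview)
Your proposal is correct and follows essentially the same route as the paper: FastMix contraction (Lemma~\ref{lem:FM}), triangle inequality plus Lemma~\ref{lem:avg-norm} to drop the centering, Lipschitz continuity via Lemma~\ref{lem:smooth-b}, and then routing $\vz_{t+1}-\vz_t$ through $\vz_{t+1/2}$ and the row-averages. The paper first splits $\Norm{\vz_{t+1}-\vz_t}\le\Norm{\vz_{t+1}-\vz_{t+1/2}}+\Norm{\vz_{t+1/2}-\vz_t}$ and then decomposes each half into three pieces, whereas you write a single six-term identity (with the $\pm(\vz_{t+1/2}-\vone\bz_{t+1/2})$ pair explicitly inserted); both yield the same bound with the coefficient $2$ on the $\vz_{t+1/2}$ consensus error.
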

\begin{proof}
Similar to the proof of Lemma \ref{lem:err-s}, we have
\begin{align*}
   & \Norm{\vs_{t+1} - \vone\bs_{t+1}} \\
= & \Norm{\BT(\vs_t + \vg(\vz_{t+1}) - \vg(\vz_t)) - \frac{1}{m}\vone\vone^\top\BT(\vs_t + \vg(\vz_{t+1}) - \vg(\vz_t))} \\
\leq & \rho\Norm{\vs_t + \vg(\vz_{t+1}) - \vg(\vz_t) - \frac{1}{m}\vone\vone^\top(\vs_t + \vg(\vz_{t+1}) - \vg(\vz_t))} \\
\leq & \rho\Norm{\vs_t - \vone\bs_t} + \rho\Norm{\vg(\vz_{t+1}) - \vg(\vz_t) - \frac{1}{m}\vone\vone^\top(\vg(\vz_{t+1}) - \vg(\vz_t))} \\
\leq & \rho\Norm{\vs_t - \vone\bs_t} + \rho\Norm{\vg(\vz_{t+1}) - \vg(\vz_t)} \\
\leq & \rho\Norm{\vs_t - \vone\bs_t} + \rho L\Norm{\vz_{t+1} - \vz_t} \\
\leq & \rho\Norm{\vs_t - \vone\bs_t} + \rho L\left(\Norm{\vz_{t+1} - \vz_{t+1/2}} + \Norm{\vz_{t+1/2} - \vz_t}\right) \\
\leq & \rho\Norm{\vs_t - \vone\bs_t} + \rho L\left(\Norm{\vz_{t+1} - \vone\bz_{t+1}} + \Norm{\vone\bz_{t+1} - \vone\bz_{t+1/2}} +  \Norm{\vz_{t+1/2} - \vone\bz_{t+1/2}}\right) \\
& + \rho L\left(\Norm{\vz_{t+1/2} - \vone\bz_{t+1/2}} + \Norm{\vone\bz_{t+1/2} - \vone\bz_t} + \Norm{\vz_t - \vone\bz_t}\right) \\
= & \rho\Norm{\vs_t - \vone\bs_t} + \rho L\left(\Norm{\vz_{t+1} - \vone\bz_{t+1}} + \sqrt{m}\Norm{\bz_{t+1} - \bz_{t+1/2}} +  \Norm{\vz_{t+1/2} - \vone\bz_{t+1/2}}\right) \\
& + \rho L\left(\Norm{\vz_{t+1/2} - \vone\bz_{t+1/2}} + \sqrt{m}\Norm{\bz_{t+1/2} - \bz_t} + \Norm{\vz_t - \vone\bz_t}\right) \\
= & \rho\Norm{\vs_t - \vone\bs_t} + \rho L\left(\Norm{\vz_{t+1} - \vone\bz_{t+1}}  +  2\Norm{\vz_{t+1/2} - \vone\bz_{t+1/2}} + \Norm{\vz_t - \vone\bz_t}\right) \\
& + \rho L\left(\sqrt{m}\Norm{\bz_{t+1} - \bz_{t+1/2}} + \sqrt{m}\Norm{\bz_{t+1/2} - \bz_t} \right). 
\end{align*}
\end{proof}

Then we give the proof of Lemma \ref{lem:errorb}.
\begin{proof}
The analysis is similar to the proof of Lemma \ref{lem:error}.
Using Lemma \ref{lem:diff-z12b}, we have
\begin{align*}
\Norm{\vz_{t+1/2} - \vone\bz_{t+1/2}} \leq \rho\begin{bmatrix}
1 & 1 
\end{bmatrix} \hr_t.
\end{align*}
Using Lemma \ref{lem:errs12b}, we have
\begin{align*}
   & \Norm{\vs_{t+1/2} - \vone\bs_{t+1/2}} \\
\leq & \rho\left(\begin{bmatrix} L & 1/\eta \end{bmatrix} \hr_t
+ L \Norm{\vz_{t+1/2} - \vone\bz_{t+1/2}}  +  L\sqrt{m}\Norm{\bz_{t+1/2} - \bz_t}\right) \\
= & \rho\left(\begin{bmatrix} L & 1/\eta \end{bmatrix} \hr_t
+ \rho L\begin{bmatrix} 1 & 1 \end{bmatrix} \hr_t  
+  L\sqrt{m}\Norm{\bz_{t+1/2} - \bz_t}\right) \\
\leq & \rho\begin{bmatrix} L & L+1/\eta \end{bmatrix} \hr_t  
+  \rho L\sqrt{m}\Norm{\bz_{t+1/2} - \bz_t} \\
\leq & \rho\begin{bmatrix} L & 3/(2\eta) \end{bmatrix} \hr_t  
+  \rho L\sqrt{m}\Norm{\bz_{t+1/2} - \bz_t}.
\end{align*}
Using Lemma \ref{lem:diff-z12b}, we have
\begin{align*}
 &  \Norm{\vz_{t+1} - \vone\bz_{t+1}} \\
= & 2\rho\left(
\begin{bmatrix} 1 & 0 \end{bmatrix} \hr_t
 + \eta\Norm{\vs_{t+1/2} - \vone\bs_{t+1/2}}\right) \\
\leq & 2\rho\left(
\begin{bmatrix} 1 & 0 \end{bmatrix} \hr_t
 + \eta\rho\begin{bmatrix} L & 3/(2\eta) \end{bmatrix} \hr_t  
+  \eta\rho L\sqrt{m}\Norm{\bz_{t+1/2} - \bz_t}\right) \\
\leq & 2\rho\left(
\begin{bmatrix} 1 & 0 \end{bmatrix} \hr_t
 + \begin{bmatrix} 0.25 & 1.5 \end{bmatrix} \hr_t  
+  0.25\rho  \sqrt{m}\Norm{\bz_{t+1/2} - \bz_t}\right) \\
\leq & \rho\begin{bmatrix} 2.5 & 3 \end{bmatrix} \hr_t  
+  0.5\rho \sqrt{m}\Norm{\bz_{t+1/2} - \bz_t} 
\end{align*}
and
\begin{align*}
&  \Norm{\vz_{t+1} - \vone\bz_{t+1}}^2  \\
\leq & 4\rho^2\begin{bmatrix} 6.25 &  9 \end{bmatrix} \hr_t^2  
+  \rho^2m\Norm{\bz_{t+1/2} - \bz_t}^2 \\
\leq & \rho^2\begin{bmatrix} 25 & 36 \end{bmatrix} \hr_t^2  
+  \rho^2m\Norm{\bz_{t+1/2} - \bz_t}^2. 
\end{align*}
Note that $\eta\leq 1/4L$. Using Lemma \ref{lem:err-sb}, we have
\begin{align*}
  &  \Norm{\vs_{t+1} - \vone\bs_{t+1}} \\
\leq & \rho\begin{bmatrix}
 L & 1/\eta
\end{bmatrix}\hr_t + \rho L\left(\rho\begin{bmatrix} 2.5 & 3  \end{bmatrix} \hr_t  
+  0.5\rho\sqrt{m}\Norm{\bz_{t+1/2} - \bz_t}\right) \\
& + 2\rho^2 L\begin{bmatrix}1 & 1 \end{bmatrix} \hr_t
 + \rho L\sqrt{m}\left[\Norm{\bz_{t+1} - \bz_{t+1/2}} +  \Norm{\bz_{t+1/2} - \bz_t} \right] \\
\leq & \rho\begin{bmatrix}
 3.5L & 1/\eta+3L
\end{bmatrix}\hr_t +  0.5\rho^3 L\sqrt{m}\Norm{\bz_{t+1/2} - \bz_t} \\
& + \rho \begin{bmatrix}2L & 2L \end{bmatrix} \hr_t
 + \rho L\sqrt{m}\left[\Norm{\bz_{t+1} - \bz_{t+1/2}} +  \Norm{\bz_{t+1/2} - \bz_t} \right] \\ 
\leq & \rho\begin{bmatrix}
 5.5L & 1/\eta+5L
\end{bmatrix}\hr_t  + 1.5\rho L\sqrt{m}\left[\Norm{\bz_{t+1} - \bz_{t+1/2}} +  \Norm{\bz_{t+1/2} - \bz_t} \right] 
\end{align*}
that is
\begin{align*}
    \eta\Norm{\vs_{t+1} - \vone\bs_{t+1}} 
\leq  \rho\begin{bmatrix}
 11/8 & 9/4
\end{bmatrix}\hr_t  + \frac{3}{8}\rho\sqrt{m}\left(\Norm{\bz_{t+1} - \bz_{t+1/2}} +  \Norm{\bz_{t+1/2} - \bz_t} \right), 
\end{align*}
which implies
\begin{align*}
 & \eta^2\Norm{\vs_{t+1} - \vone\bs_{t+1}}^2 \\
\leq & 3\rho^2\begin{bmatrix}
 (11/8)^2 & (9/4)^2
\end{bmatrix}\hr_t^2  + \frac{27}{64}\rho^2m\left(\Norm{\bz_{t+1} - \bz_{t+1/2}} +  \Norm{\bz_{t+1/2} - \bz_t} \right)^2 \\
\leq & \rho^2\begin{bmatrix}
 \dfrac{363}{64} & \dfrac{243}{16}
\end{bmatrix}\hr_t^2  + \frac{27}{32}\rho^2m\left(\Norm{\bz_{t+1} - \bz_{t+1/2}}^2 +  \Norm{\bz_{t+1/2} - \bz_t}^2 \right).
\end{align*}
In summary, we have
\begin{align*}
    \hr_{t+1}^2
\leq \begin{bmatrix}
 25\rho^2 & 36\rho^2 \\[0.3cm]
\dfrac{363\rho^2}{64} & \dfrac{243\rho^2}{16}
\end{bmatrix} \hr_t^2 +
\begin{bmatrix}  
\rho^2m\Norm{\bz_{t+1/2} - \bz_t}^2  \\[0.25cm] 
\dfrac{27}{32}\rho^2m\left(\Norm{\bz_{t+1} - \bz_{t+1/2}}^2 +  \Norm{\bz_{t+1/2} - \bz_t}^2 \right).
\end{bmatrix} 
\end{align*}
Hence, we have
\begin{align*}
    \vone^\top\hr_{t+1}^2
\leq & 52\rho^2 \vone^\top\hr_t^2 +
2\rho^2m \left[\Norm{\bz_{t+1} - \bz_{t+1/2}}^2 +  \Norm{\bz_{t+1/2} - \bz_t}^2 \right].
\end{align*}
\end{proof}

\subsection{The Proof of Lemma \ref{lem:deg-unconstrained}}
\begin{proof}
Using Lemma~\ref{lem:eg-relation} and Lemma~\ref{lem:errorb}, we have
\begin{align*}
& \Norm{\bz_{t+1}-\bz^*}^2 + \frac{1}{m}\vone^\top\hr_{t+1}^2 \\
\leq & \left(1- \frac{\mu\eta}{2}\right)\Norm{\bz_t-\bz^*}^2 - \frac{1}{2}\Norm{\bz_{t+1/2}-\bz_{t+1}}
-\left(1-\frac{5\mu\eta}{2}-6\eta^2L^2\right)\Norm{\bz_{t+1/2}-\bz_t}^2 \\
& + \frac{4L\eta\left(3L\eta+ 2\kappa\right)\rho}{m}\vone^\top\hr_t^2
+ \frac{52\rho^2}{m}\vone^\top\hr_t^2 + 2\rho^2\left[\Norm{\bz_{t+1} - \bz_{t+1/2}}^2 +  \Norm{\bz_{t+1/2} - \bz_t}^2 \right] \\
= & \left(1- \frac{\mu\eta}{2}\right)\left(\Norm{\bz_t-\bz^*}^2 + \frac{4L\eta\left(3L\eta + 2\kappa\right)\rho+52\rho^2}{m(1-\frac{\mu\eta}{2})}\vone^\top\hr_t^2\right) \\
& - \left(\frac{1}{2}-2\rho^2\right)\Norm{\bz_{t+1/2}-\bz_{t+1}}
-\left(1-\frac{5\mu\eta}{2}-6\eta^2L^2-2\rho^2\right)\Norm{\bz_{t+1/2}-\bz_t}^2 \\
\leq & \left(1- \frac{\mu\eta}{2}\right)\left(\Norm{\bz_t-\bz^*}^2 + \frac{1}{m}\vone^\top\hr_t^2\right) 
= \left(1- \frac{1}{12\kappa}\right)\left(\Norm{\bz_t-\bz^*}^2 + \frac{1}{m}\vone^\top\hr_t^2\right)
\end{align*}
where the last inequality is due to the value of $K$ implies
\begin{align*}
\rho \leq \min\left\{
\frac{1}{2},~
\sqrt{\frac{1}{2}\left(1-\frac{5\mu\eta}{2}-6\eta^2L^2\right)},~
\frac{1-\frac{\mu\eta}{2}}{4L\eta\left(3L\eta + 2\kappa\right)+52}\right\}
=\min\left\{
\frac{1}{2},
\sqrt{\frac{1}{2}\left(\frac{5}{6}-\frac{5\mu\eta}{2}\right)},
\frac{3\left(2-\mu\eta\right)}{2(4\kappa+157)}\right\}.
\end{align*}
\end{proof}

\subsection{The Proof of Theorem \ref{thm:eg-unconstrained}}
\begin{proof}
The value of $K_0$ implies $\vone^\top r_0^2 = \Norm{\vs_0-\vone\bs_0}^2 \leq m\eps$. Then Lemma \ref{lem:deg-unconstrained} implies
\begin{align*}
& \Norm{\bz_T-\bz^*}^2 \\
\leq & \Norm{\bz_T-\bz^*}^2 + \frac{1}{m}\vone^\top\hr_T^2  \\
\leq & \left(1- \frac{1}{12\kappa}\right)^T\left(\Norm{\bz_0-\bz^*}^2 + \frac{1}{m}\vone^\top\hr_0^2\right) \\
\leq & \left(1- \frac{1}{12\kappa}\right)^T\left(\Norm{\bz_0-\bz^*}^2 + \eps\right).
\end{align*}
Hence, the value of $T$ means $\Norm{\bz_T-\bz^*}^2\leq\eps$.

Since each iteration requires $\fO(n)$ SFO calls, the total complexity is
\begin{align*}
\fO(nT)=\fO\left(\kappa n\log\left(\frac{1}{\eps}\right)\right)
\end{align*}
and the number of communication round is 
\begin{align*}
KT+K_0 = \fO\left(\kappa\sqrt{\chi}\log\kappa\log\left(\frac{1}{\eps}\right)\right).
\end{align*}
\end{proof}

\subsection{The Proof of Theorem \ref{thm:eg-scsc-constarined}}

We first provide some lemmas for our proof.

\begin{lem}\label{lem:rel-constrainedb}
Suppose Assumption~\ref{asm:smooth} and \ref{asm:scsc}  hold. For Algorithm \ref{alg:deg} with $\eta = 1/\left(6L\right)$, we have, we have
\begin{align*}
\Norm{\bz_{t+1}-\bz^*}^2  \leq \left(1- \frac{\mu\eta}{2}\right)\Norm{\bz_t-\bz^*}^2 + \hzeta_t
\end{align*}
where
\begin{align*}
\hzeta_t = &\frac{4L\eta\left(3L\eta+ 2\kappa\right)}{m}\left(\Norm{\vz_{t+1/2} - \vone\bz_{t+1/2}}^2 + \Norm{\vz_t - \vone\bz_t}^2\right) \\
& + \Norm{\bz_{t+1}-\bz_t+\eta \bs_{t+1/2}+\bz_{t+1}-\bz^*}\Norm{\hDelta_{t+1/2}} 
+ \Norm{\bz_{t+1/2} - \bz_t + \eta\bs_t+\bz_{t+1/2} -\bz_{t+1}}\Norm{\hDelta_t}
\end{align*}
\end{lem}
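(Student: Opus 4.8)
The plan is to transcribe the proof of Lemma~\ref{lem:eg-relation} for Algorithm~\ref{alg:deg}, replacing the two exact mean-update identities (which relied on the unconstrained rule $\bz_{t+1}=\bz_t-\eta\bs_{t+1/2}$) by the variational inequalities that the projection step produces, exactly as Lemma~\ref{lem:rel-constrained} adapts Lemma~\ref{lem:zw}. First I would record the mean dynamics: since FastMix preserves row averages (Lemma~\ref{lem:FM}) and $\fZ$ is convex, the iterates satisfy $\bz_{t+1/2}=\tfrac1m\vone^\top\fP_\FZ(\vz_t-\eta\vs_t)=\fP_\fZ(\bz_t-\eta\bs_t)+\hDelta_t$ and $\bz_{t+1}=\fP_\fZ(\bz_t-\eta\bs_{t+1/2})+\hDelta_{t+1/2}$ with $\hDelta_t,\hDelta_{t+1/2}$ as in the statement. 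An easy induction along lines 2, 5, 7 of Algorithm~\ref{alg:deg} (the MC-EG analogue of Lemma~\ref{lem:stst12}) gives $\bs_t=\tfrac1m\vone^\top\vg(\vz_t)$ and $\bs_{t+1/2}=\tfrac1m\vone^\top\vg(\vz_{t+1/2})$, so Lemma~\ref{lem:norm} and Assumption~\ref{asm:smooth} yield $\Norm{\bs_{t+1/2}-\bg(\bz_{t+1/2})}\le\tfrac{L}{\sqrt m}\Norm{\vz_{t+1/2}-\vone\bz_{t+1/2}}$ (as in Lemma~\ref{lem:grad-avg}) and, via the three-term split behind (\ref{ieq:eg-3}), $\Norm{\bs_{t+1/2}-\bs_t}^2\le 3\big(\tfrac{L^2}{m}\Norm{\vz_{t+1/2}-\vone\bz_{t+1/2}}^2+L^2\Norm{\bz_{t+1/2}-\bz_t}^2+\tfrac{L^2}{m}\Norm{\vz_t-\vone\bz_t}^2\big)$.

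Next I would apply Lemma~\ref{lem:proj} twice — once with $u=\bz_t-\eta\bs_{t+1/2}$, $v=\bz^*$ (so $\fP_\fZ(u)=\bz_{t+1}-\hDelta_{t+1/2}$), and once with $u=\bz_t-\eta\bs_t$, $v=\bz_{t+1}$ (so $\fP_\fZ(u)=\bz_{t+1/2}-\hDelta_t$) — expanding each $\inner{a-\hDelta}{b-\hDelta}=\inner{a}{b}-\inner{a+b}{\hDelta}+\Norm{\hDelta}^2$ and discarding the harmless $+\Norm{\hDelta}^2$ terms. Summing the two inequalities and applying the polarization identity $2\inner{x}{y}=\Norm{x+y}^2-\Norm{x}^2-\Norm{y}^2$ to $\inner{\bz_t-\bz_{t+1}}{\bz_{t+1}-\bz^*}$ and $\inner{\bz_t-\bz_{t+1/2}}{\bz_{t+1/2}-\bz_{t+1}}$ gives the constrained analogue of the chain (\ref{ieq:eg-1}):
\begin{align*}
2\eta\inner{\bs_{t+1/2}}{\bz_{t+1/2}-\bz^*}\le{}&\Norm{\bz_t-\bz^*}^2-\Norm{\bz_{t+1}-\bz^*}^2-\Norm{\bz_{t+1/2}-\bz_t}^2-\Norm{\bz_{t+1/2}-\bz_{t+1}}^2\\
&+2\eta\inner{\bs_{t+1/2}-\bs_t}{\bz_{t+1/2}-\bz_{t+1}}+\inner{\bz_{t+1}-\bz_t+\eta\bs_{t+1/2}+\bz_{t+1}-\bz^*}{\hDelta_{t+1/2}}\\
&+\inner{\bz_{t+1/2}-\bz_t+\eta\bs_t+\bz_{t+1/2}-\bz_{t+1}}{\hDelta_t},
\end{align*}
which is the replacement for the exact update manipulation in Lemma~\ref{lem:eg-relation}. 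The two inner products against $\hDelta_{t+1/2},\hDelta_t$ are the only new terms, and Cauchy--Schwarz turns them into the $\hDelta$-dependent part of $\hzeta_t$.

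The rest is a verbatim copy of the proof of Lemma~\ref{lem:eg-relation}: bound $2\eta\inner{\bs_{t+1/2}-\bs_t}{\bz_{t+1/2}-\bz_{t+1}}$ by Young and the $\Norm{\bs_{t+1/2}-\bs_t}^2$ estimate; lower bound $2\eta\inner{\bs_{t+1/2}}{\bz_{t+1/2}-\bz^*}\ge\tfrac{\eta\mu}{2}\Norm{\bz_t-\bz^*}^2-\tfrac{5\eta\mu}{2}\Norm{\bz_{t+1/2}-\bz_t}^2-\tfrac{4\eta}{\mu}\Norm{\bs_{t+1/2}-\bg(\bz_{t+1/2})}^2$ using strong monotonicity (Lemma~\ref{lem:mm}), $\inner{\bg(\bz^*)}{\bz_{t+1/2}-\bz^*}\ge0$ (Lemma~\ref{lem:optimal}, valid since Assumption~\ref{asm:scsc} implies Assumption~\ref{asm:cc}) and Young; substitute $\Norm{\bs_{t+1/2}-\bg(\bz_{t+1/2})}^2\le\tfrac{L^2}{m}\Norm{\vz_{t+1/2}-\vone\bz_{t+1/2}}^2$ and $\tfrac{4\eta L^2}{\mu}=4\eta L\kappa$; then rearrange for $\Norm{\bz_{t+1}-\bz^*}^2$. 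With $\eta=1/(6L)$ one has $1-6\eta^2L^2-\tfrac{5\eta\mu}{2}\ge0$, so the quadratic terms $-(1-6\eta^2L^2-\tfrac{5\eta\mu}{2})\Norm{\bz_{t+1/2}-\bz_t}^2$ and $-\tfrac12\Norm{\bz_{t+1/2}-\bz_{t+1}}^2$ can be dropped, leaving coefficient $\tfrac{2L\eta(3L\eta+2\kappa)}{m}$ on $\Norm{\vz_{t+1/2}-\vone\bz_{t+1/2}}^2$ and $\tfrac{6\eta^2L^2}{m}$ on $\Norm{\vz_t-\vone\bz_t}^2$; both are at most $\tfrac{4L\eta(3L\eta+2\kappa)}{m}$, so coarsening to this common value yields exactly the first line of $\hzeta_t$. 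Unlike Lemma~\ref{lem:eg-relation} I would not invoke Lemma~\ref{lem:diff-z12b} to re-express $\Norm{\vz_{t+1/2}-\vone\bz_{t+1/2}}$, because $\hzeta_t$ deliberately keeps the raw consensus errors (they are bounded later via the domain diameter, as in Lemma~\ref{lem:zeta}). The only real difficulty is bookkeeping: carrying the $\hDelta$ cross terms through the polarization steps with the correct signs and checking that every discarded quadratic has a nonnegative coefficient at $\eta=1/(6L)$; everything else is a mechanical transcription of the unconstrained argument with projections inserted.
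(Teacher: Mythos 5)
Your proposal is correct and follows essentially the same route as the paper's own proof: the two applications of Lemma~\ref{lem:proj} with $u=\bz_t-\eta\bs_{t+1/2},v=\bz^*$ and $u=\bz_t-\eta\bs_t,v=\bz_{t+1}$ replace the exact mean-update identities of Lemma~\ref{lem:eg-relation}, the $\hDelta$ cross terms are handled by Cauchy--Schwarz, the rest (Young on $\bs_{t+1/2}-\bs_t$, strong monotonicity plus Lemma~\ref{lem:optimal}, dropping the nonnegative-coefficient quadratics at $\eta=1/(6L)$, and coarsening both consensus coefficients to $4L\eta(3L\eta+2\kappa)/m$ without invoking Lemma~\ref{lem:diff-z12b}) matches the paper verbatim. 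The only caveat is that doubling the summed projection inequality strictly puts a factor $2$ on the $\hDelta$ cross terms, so $\hzeta_t$ should really carry $2\Norm{\cdot}\Norm{\hDelta}$; this slip is shared with the paper's own proof and affects only constants downstream.
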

\begin{proof}
Lemma \ref{lem:FM} means
\begin{align*}
   \bz_{t+1/2} 
= & \frac{1}{m}\vone^\top\BT(\fP(\vz_t-\eta\vs_t)) 
=  \frac{1}{m}\vone^\top\fP(\vz_t-\eta\vs_t) \\
= & \fP(\bz_t - \eta\bs_t)  + \underbrace{\frac{1}{m}\vone^\top\fP(\vz_t-\eta\vs_t) - \fP(\bz_t - \eta\bs_t)}_{\hDelta_t}  \\
= & \fP(\bz_t - \eta\bs_t)  + {\hDelta_t} 
\end{align*}
and
\begin{align*}
   \bz_{t+1}  
= & \frac{1}{m}\vone^\top\BT(\fP(\vz_t-\eta\vv_{t+1/2})) 
=  \frac{1}{m}\vone^\top\fP(\vz_t-\eta\vv_{t+1/2}) \\
= & \fP(\bz_t - \eta\bv_{t+1/2}) + \underbrace{\frac{1}{m}\vone^\top\fP(\vz_t-\eta\vs_{t+1/2}) - \fP(\bz_t - \eta\bs_{t+1/2})}_{\hDelta_{t+1/2}} \\
= & \fP(\bz_t - \eta\bs_{t+1/2}) + \hDelta_{t+1/2}.
\end{align*}
which implies
\begin{align*}
\fP(\bz_t - \eta\bg_t)  =  \bz_{t+1/2} - \hDelta_t \qquad\text{and}\qquad
\fP(\bz_t - \eta\bg_{t+1/2}) =   \bz_{t+1}  - \hDelta_{t+1/2}.
\end{align*}
Using Lemma \ref{lem:proj} with $u=\bz_t-\eta \bg_{t+1/2}$ and $v=\bz^*$, we have
\begin{align*}
    \inner{\bz_{t+1}-\hDelta_{t+1/2}-\bz_t+\eta \bs_{t+1/2}}{\bz_{t+1}-\hDelta_{t+1/2}-\bz^*} \leq 0.
\end{align*}
Using Lemma \ref{lem:proj} with $u=\bz_t-\eta \bg_t$ and $v=\bz_{t+1}$, we have
\begin{align*}
    \inner{\bz_{t+1/2} - \hDelta_{t} - \bz_t + \eta\bs_t}{\bz_{t+1/2} - \hDelta_{t} -\bz_{t+1}} \leq 0.
\end{align*}
Summing over above inequalities, we have
\begin{align*}
& \inner{\bz_{t+1}-\bz_t+\eta \bs_{t+1/2}}{\bz^*-\bz_{t+1}} + \inner{\bz_{t+1}-\bz_t+\eta \bs_{t+1/2}+\bz_{t+1}-\bz^*}{\hDelta_{t+1/2}} - \Norm{\hDelta_{t+1/2}}^2 \\
& + \inner{\bz_{t+1/2}- \bz_t + \eta\bs_t}{\bz_{t+1} - \bz_{t+1/2}} + \inner{\bz_{t+1/2} - \bz_t + \eta\bs_t+\bz_{t+1/2} -\bz_{t+1}}{\hDelta_t} - \Norm{\hDelta_t}^2 \geq 0,
\end{align*}
which means
\begin{align}\label{eq:proj-ieq0b}
\begin{split}
& \inner{\bz_{t+1}-\bz_t}{\bz^*-\bz_{t+1}} + \inner{\bz_{t+1/2}- \bz_t}{\bz_{t+1} - \bz_{t+1/2}}   \\
& + \eta \inner{\bs_{t+1/2}}{\bz^*-\bz_{t+1/2}} + \eta\inner{\bs_{t+1/2}-\bs_t}{\bz_{t+1/2}-\bz_{t+1}} \\
& + \inner{\bz_{t+1}-\bz_t+\eta \bs_{t+1/2}+\bz_{t+1}-\bz^*}{\hDelta_{t+1/2}} \\
& + \inner{\bz_{t+1/2} - \bz_t + \eta\bs_t+\bz_{t+1/2} -\bz_{t+1}}{\hDelta_t}  \geq 0.
\end{split}
\end{align}
In constrained case, it is easy to verify the results of (\ref{ieq:eg-00}), (\ref{ieq:eg-01}), (\ref{ieq:eg-2}) and (\ref{ieq:eg-3}) also hold. Hence, we have
\begin{align}\label{ieq:eg-1c}
\begin{split}
& 2\eta\inner{\bs_{t+1/2}}{\bz_{t+1/2}-\bz^*} \\
= & 2\eta\inner{\bs_{t+1/2}}{\bz_{t+1}-\bz^*} + 2\eta\inner{\bs_{t+1/2}}{\bz_{t+1/2}-\bz_{t+1}} \\
= & 2\inner{\bz_t-\bz_{t+1}}{\bz_{t+1}-\bz^*} 
+ 2\inner{\bz_t-\bz_{t+1/2}}{\bz_{t+1/2}-\bz_{t+1}} 
+ 2\inner{\bz_{t+1/2}-\bz_{t+1}}{\bz_{t+1/2}-\bz_{t+1}} \\
= & \Norm{\bz_t-\bz^*}^2 - \Norm{\bz_t-\bz_{t+1}}^2 - \Norm{\bz_{t+1}-\bz^*}^2 + \Norm{\bz_t-\bz_{t+1}}^2 - \Norm{\bz_t-\bz_{t+1/2}}^2 - \Norm{\bz_{t+1/2}-\bz_{t+1}}^2 \\
& + 2\eta\inner{\bs_{t+1/2}-\bs_t}{\bz_{t+1/2}-\bz_{t+1}} + \inner{\bz_{t+1}-\bz_t+\eta \bs_{t+1/2}+\bz_{t+1}-\bz^*}{\hDelta_{t+1/2}} \\
& + \inner{\bz_{t+1/2} - \bz_t + \eta\bs_t+\bz_{t+1/2} -\bz_{t+1}}{\hDelta_t} \\
\leq & \Norm{\bz_t-\bz^*}^2 - \Norm{\bz_{t+1}-\bz^*}^2 
 - \Norm{\bz_t-\bz_{t+1/2}}^2 - \Norm{\bz_{t+1/2}-\bz_{t+1}}^2 \\
& + 2\eta^2\Norm{\bs_{t+1/2}-\bs_t}^2 + \frac{1}{2}\Norm{\bz_{t+1/2}-\bz_{t+1}}^2 + \inner{\bz_{t+1}-\bz_t+\eta \bs_{t+1/2}+\bz_{t+1}-\bz^*}{\hDelta_{t+1/2}} \\
& + \inner{\bz_{t+1/2} - \bz_t + \eta\bs_t+\bz_{t+1/2} -\bz_{t+1}}{\hDelta_t} \\
\leq & \Norm{\bz_t-\bz^*}^2 - \Norm{\bz_{t+1}-\bz^*}^2 
- \frac{1}{2}\Norm{\bz_{t+1/2}-\bz_{t+1}}^2
 -(1-6\eta^2L^2)\Norm{\bz_{t+1/2}-\bz_t}^2 \\
 & + \frac{6L^2\eta^2}{m}\left(\Norm{\vz_{t+1/2} - \vone\bz_{t+1/2}}^2 + \Norm{\vz_t - \vone\bz_t}^2\right) + \Norm{\bz_{t+1}-\bz_t+\eta \bs_{t+1/2}+\bz_{t+1}-\bz^*}\Norm{\hDelta_{t+1/2}} \\
& + \Norm{\bz_{t+1/2} - \bz_t + \eta\bs_t+\bz_{t+1/2} -\bz_{t+1}}\Norm{\hDelta_t}.
\end{split}
\end{align}
Combining (\ref{ieq:eg-1c}) with (\ref{ieq:eg-2}), we have
\begin{align*}
& \Norm{\bz_{t+1}-\bz^*}^2    \\
\leq & \left(1- \frac{\mu\eta}{2}\right)\Norm{\bz_t-\bz^*}^2 - \frac{1}{2}\Norm{\bz_{t+1/2}-\bz_{t+1}}
-\left(1-\frac{5\mu\eta}{2}-6\eta^2L^2\right)\Norm{\bz_{t+1/2}-\bz_t}^2 \\ 
& + \frac{4L\eta\left(3L\eta+ 2\kappa\right)}{m}\left(\Norm{\vz_{t+1/2} - \vone\bz_{t+1/2}}^2 + \Norm{\vz_t - \vone\bz_t}^2\right) \\
& + \Norm{\bz_{t+1}-\bz_t+\eta \bs_{t+1/2}+\bz_{t+1}-\bz^*}\Norm{\hDelta_{t+1/2}} 
+ \Norm{\bz_{t+1/2} - \bz_t + \eta\bs_t+\bz_{t+1/2} -\bz_{t+1}}\Norm{\hDelta_t} \\
\leq & \left(1- \frac{\mu\eta}{2}\right)\Norm{\bz_t-\bz^*}^2 + \hzeta_t.
\end{align*}
\end{proof}

\begin{lem}\label{lem:eg-err}
Suppose Assumption~\ref{asm:smooth} and \ref{asm:constrained} and we have $\Norm{\vs_0 - \vone\bs_0}\leq\delta'$.
For Algorithm \ref{alg:deg} with $\eta=1/(6L)$ and 
\begin{align*}
K \geq \sqrt{\chi}\log\left(\frac{2\left(\sqrt{m}LD+\delta'\right)}{\delta'}\right),
\end{align*}
holds that $\Norm{\vz_t - \vone\bz_t}\leq\hdelta'$, $\Norm{\vz_{t+1/2} - \vone\bz_{t+1/2}}\leq\hdelta'$, $\Norm{\vs_t - \vone\bs_t}\leq\hdelta'/\eta$ and $\Norm{\vs_{t+1/2} - \vone\bs_{t+1/2}}\leq\hdelta'/\eta$ for any $t\geq 0$.
\end{lem}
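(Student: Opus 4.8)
The plan is to prove Lemma~\ref{lem:eg-err} by induction on $t$, exactly parallelling the proof of Lemma~\ref{lem:constrained-err} but now tracking only the two-component consensus vector $\hr_t$ (since $\vw_t$ is absent in MC-EG). At $t=0$ we have $\vz_0=\vone\bz_0$ by initialization, so $\Norm{\vz_0-\vone\bz_0}=0\le\hdelta'$, and the hypothesis $\Norm{\vs_0-\vone\bs_0}\le\delta'\le\hdelta'/\eta$ gives the base case (after checking $\delta'\le\hdelta'$, which follows from the explicit form of $\hdelta'$ in Theorem~\ref{thm:eg-scsc-constarined} together with $K_0$ being chosen so that FastMix contracts $\Norm{\vg(\vz_0)-\tfrac1m\vone\vone^\top\vg(\vz_0)}^2$ below $\delta'$). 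For the inductive step, assume $\Norm{\vz_t-\vone\bz_t}\le\hdelta'$ and $\Norm{\vs_t-\vone\bs_t}\le\hdelta'/\eta$.

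First I would bound $\Norm{\vz_{t+1/2}-\vone\bz_{t+1/2}}$. As in Lemma~\ref{lem:constrained-err}, write $\vz_{t+1/2}=\BT(\fP_\FZ(\vz_t-\eta\vs_t))$, apply Lemma~\ref{lem:FM} to pull out the factor $\rho$, then split via the triangle inequality into a term controlled by the non-expansiveness of $\fP_\FZ$ and a term controlled by Lemma~\ref{lem:proj-dc} (with $\vu=\vz_t-\eta\vs_t$); both are at most $\Norm{(\vz_t-\eta\vs_t)-\vone(\bz_t-\eta\bs_t)}\le\Norm{\vz_t-\vone\bz_t}+\eta\Norm{\vs_t-\vone\bs_t}\le 2\hdelta'$, so altogether $\Norm{\vz_{t+1/2}-\vone\bz_{t+1/2}}\le 2\rho\cdot 2\hdelta'\le\hdelta'$ once $\rho\le 1/4$, which is guaranteed by the stated lower bound on $K$ (note there is no $\vz'_t$ here — MC-EG uses $\vz_t$ directly — so the bound $2\sqrt m D$ on $\Norm{\vz_t-\vone\bz_t}$ is not even needed, only $\hdelta'$). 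Next, bound $\Norm{\vs_{t+1/2}-\vone\bs_{t+1/2}}$ using Lemma~\ref{lem:errs12b}: it is at most $\rho\big(L\Norm{\vz_t-\vone\bz_t}+L\Norm{\vz_{t+1/2}-\vone\bz_{t+1/2}}+\Norm{\vs_t-\vone\bs_t}+L\sqrt m\Norm{\bz_{t+1/2}-\bz_t}\big)\le\rho(2L\hdelta'+\hdelta'/\eta+\sqrt m LD)$, and since $\eta=1/(6L)$ this is $\le\rho(\hdelta'/\eta)(1+\tfrac13)+\rho\sqrt m LD\le\hdelta'/\eta$ for $K$ as specified (here the diameter bound $\Norm{\bz_{t+1/2}-\bz_t}\le D$ does enter, matching the $\sqrt m LD+\delta'$ inside the log in the threshold for $K$). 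Then bound $\Norm{\vz_{t+1}-\vone\bz_{t+1}}$ by Lemma~\ref{lem:diff-z11b}: $\le\rho(\Norm{\vz_t-\vone\bz_t}+\eta\Norm{\vs_{t+1/2}-\vone\bs_{t+1/2}})\le 2\rho\hdelta'\le\hdelta'$, and finally $\Norm{\vs_{t+1}-\vone\bs_{t+1}}$ via Lemma~\ref{lem:err-sb} in the same style, using $\eta=1/(6L)$ and $\Norm{\bz_{t+1}-\bz_{t+1/2}}\le D$, $\Norm{\bz_{t+1/2}-\bz_t}\le D$ to absorb the $\sqrt m LD$ terms into $\hdelta'/\eta$. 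This closes the induction.

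The main obstacle I anticipate is purely bookkeeping: verifying that the single threshold $K\ge\sqrt\chi\log(2(\sqrt m LD+\delta')/\delta')$ simultaneously forces $\rho$ small enough for \emph{all five} contraction inequalities, given that the ``$+\sqrt m LD$'' additive terms appearing in the $\vs$-estimates are sized against $\hdelta'/\eta$ while the argument of the log is written in terms of $\delta'$ rather than $\hdelta'$. One must check $\delta'\le\hdelta'$ and that $\hdelta'\ge\sqrt m LD\cdot(\text{const})\cdot\rho/(1-\rho)$ reduces, after plugging $\rho=(1-\sqrt{1-\lambda_2(W)})^K$ and the given $K$, to the claimed bound — this is the same algebra as in Lemma~\ref{lem:constrained-err} and Lemma~\ref{lem:zeta} and I would simply cite that computation rather than redo it. A minor subtlety is that $\hdelta'$ is only pinned down in Theorem~\ref{thm:eg-scsc-constarined}, so within the lemma I would keep $\hdelta'$ generic subject to $\delta'\le\hdelta'$ and the one monotonicity relation needed, and defer the explicit choice to the theorem's proof.
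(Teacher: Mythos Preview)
Your proposal is essentially the same induction argument as the paper's own proof, and is correct in spirit. Two minor points are worth cleaning up. First, the paper's $\delta'$ and $\hdelta'$ in this lemma are the \emph{same} quantity (a notational slip in the statement); there is nothing to verify about ``$\delta'\le\hdelta'$'', and your paragraph anticipating this as an obstacle can be dropped entirely. Second, for $\Norm{\vz_{t+1}-\vone\bz_{t+1}}$ you cite Lemma~\ref{lem:diff-z11b}, but that lemma is for the unconstrained algorithm (no projection); in the constrained case you must repeat the same splitting via non-expansiveness and Lemma~\ref{lem:proj-dc} that you correctly used for $\vz_{t+1/2}$, picking up a factor $2\rho$ rather than $\rho$. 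The resulting bound is then $\le 2\rho(\Norm{\vz_t-\vone\bz_t}+\eta\Norm{\vs_{t+1/2}-\vone\bs_{t+1/2}})\le 4\rho\hdelta'\le\hdelta'$, which still closes. The paper actually uses the cruder diameter estimate $\Norm{\vz_t-\vone\bz_t}\le\sqrt{m}D$ (rather than the inductive $\hdelta'$) at this step, yielding $\rho(2\sqrt{m}D+2\hdelta')\le\hdelta'$; your tighter version is fine and matches the specified $K$ just as well.
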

\begin{proof}
The analysis is similar to the proof of Lemma \ref{lem:constrained-err}.
It is obvious the statement holds for $t=0$.
Suppose we have that $\Norm{\vz_t - \vone\bz_t}\leq\hdelta'$, $\Norm{\vw_t - \vone\bw_t}\leq\hdelta'$, $\Norm{\vs_t - \vone\bs_t}\leq\hdelta'/\eta$ for $t\geq 1$, then Lemma \ref{lem:diff-z12b} means
\begin{align*}
   & \Norm{\vz_{t+1/2} - \vone\bz_{t+1/2}} \\
= & \Norm{\BT(\fP_\FZ(\vz_t - \eta\vs_t)) - \frac{1}{m}\vone\vone^\top\BT(\fP_\FZ(\vz_t - \eta\vs_t))} \\
\leq  & \rho \Norm{\fP_\FZ(\vz_t - \eta\vs_t)  - \frac{1}{m}\vone\vone^\top\fP_\FZ\left(\vz_t - \eta\vs_t\right)} \\
\leq  & \rho \Norm{\fP_\FZ(\vz_t - \eta\vs_t)  - \fP_\FZ\left(\vone(\bz_t - \eta\bs_t)\right)} + \rho \Norm{\fP_\FZ\left(\vone(\bz_t - \eta\bs_t)\right)  - \frac{1}{m}\vone\vone^\top\fP_\FZ\left(\vz_t - \eta\vs_t\right)} \\
\leq  & \rho \Norm{\vz_t - \eta\vs_t  - \vone(\bz_t - \eta\bs_t)} + \rho \Norm{(\vz_t - \eta\vs_t) - \vone(\bz_t - \eta\bs_t)} \\
\leq  & 2\rho \Norm{\vz_t - \vone\bz_t} + 2\rho\eta\Norm{\vs_t - \vone\bs_t} \\
\leq  & \rho \left(2\sqrt{m}D + 2\hdelta'\right) \leq \hdelta'.
\end{align*}
Using Lemma \ref{lem:errs12b} we have
\begin{align*}
  & \Norm{\vs_{t+1/2} - \vone\bs_{t+1/2}} \\
\leq & \rho\left(L\Norm{\vz_{t} - \vone\bz_{t}} + L\Norm{\vz_{t+1/2} - \vone\bz_{t+1/2}} + \Norm{\vs_t - \vone\bs_t} + L\sqrt{m}\Norm{\bz_{t+1/2} - \bz_t}\right) \\
\leq & \rho\left(L\sqrt{m}D + L\sqrt{m}D + \Norm{\vs_t - \vone\bs_t} + L\sqrt{m}D\right) \\
\leq & \rho\left(3L\sqrt{m}D + \hdelta'/\eta\right) \leq \hdelta'/\eta
\end{align*}
Using Lemma \ref{lem:err-sb}, we have
\begin{align*}
   & \Norm{\vz_{t+1} - \vone\bz_{t+1}} \\
= & \Norm{\BT(\fP_\FZ(\vz_t - \eta\vs_{t+1/2})) - \frac{1}{m}\vone\vone^\top\BT(\fP_\FZ(\vz_t - \eta\vs_{t+1/2}))} \\
\leq  & \rho \Norm{\fP_\FZ(\vz_t - \eta\vs_{t+1/2})  - \frac{1}{m}\vone\vone^\top\fP_\FZ\left(\vz_t - \eta\vs_{t+1/2}\right)} \\
\leq  & \rho \Norm{\fP_\FZ(\vz_t - \eta\vs_{t+1/2})  - \fP_\FZ\left(\vone(\bz_t - \eta\bs_{t+1/2})\right)} + \rho \Norm{\fP_\FZ\left(\vone(\bz_t - \eta\bs_{t+1/2})\right)  - \frac{1}{m}\vone\vone^\top\fP_\FZ\left(\vz_t - \eta\vs_{t+1/2}\right)} \\
\leq  & \rho \Norm{\vz_t - \eta\vs_{t+1/2}  - \vone(\bz_t - \eta\bs_{t+1/2})} + \rho \Norm{(\vz_t - \eta\vs_{t+1/2}) - \vone(\bz_t - \eta\bs_{t+1/2})} \\
\leq  & 2\rho\left(\Norm{\vz_t - \vone\bz_t} + \eta\Norm{\vs_{t+1/2} - \vone\bs_{t+1/2}}\right) \\
\leq  & \rho\left(2\sqrt{m}D + 2\hdelta'\right) \leq \hdelta'
\end{align*}
Using Lemma \ref{lem:diff-z11b}, we have
\begin{align*}
   & \Norm{\vs_{t+1} - \vone\bs_{t+1}} \\
= & \Norm{\BT(\vs_t + \vg(\vz_{t+1}) - \vg(\vz_t)) - \frac{1}{m}\vone\vone^\top\BT(\vs_t + \vg(\vz_{t+1}) - \vg(\vz_t))} \\
\leq & \rho\Norm{\vs_t + \vg(\vz_{t+1}) - \vg(\vz_t) - \frac{1}{m}\vone\vone^\top(\vs_t + \vg(\vz_{t+1}) - \vg(\vz_t))} \\
\leq & \rho\Norm{\vs_t - \vone\bs_t} + \rho\Norm{\vg(\vz_{t+1}) - \vg(\vz_t) - \frac{1}{m}\vone\vone^\top(\vg(\vz_{t+1}) - \vg(\vz_t))} \\
\leq & \rho\Norm{\vs_t - \vone\bs_t} + \rho\Norm{\vg(\vz_{t+1}) - \vg(\vz_t)} \\
\leq & \rho\Norm{\vs_t - \vone\bs_t} + \rho L\Norm{\vz_{t+1} - \vz_t} \\
\leq & \rho(\hdelta'/\eta + \sqrt{m}LD) \leq \hdelta'/\eta.
\end{align*}
\end{proof}

\begin{lem}\label{lem:hzeta}
Under the settings of Lemma \ref{lem:rel-constrainedb} and Lemma \ref{lem:eg-err},
we have
\begin{align*}
\hzeta_t \leq \frac{4L\eta\left(2\kappa + 3L\eta\right)\hdelta'^2}{m} + \frac{8\hC_1\hdelta'}{\sqrt{m}}
\end{align*}
\end{lem}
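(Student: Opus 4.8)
The proof will follow the template of Lemma~\ref{lem:zeta}, specialized to the deterministic iteration of Algorithm~\ref{alg:deg}, where there is no averaging variable so that effectively $\vw_t\equiv\vz_t$ and $\vz'_t\equiv\vz_t$. Three quantities in $\hzeta_t$ must be controlled: the consensus term $\tfrac{4L\eta(3L\eta+2\kappa)}{m}\big(\Norm{\vz_{t+1/2}-\vone\bz_{t+1/2}}^2+\Norm{\vz_t-\vone\bz_t}^2\big)$, and the two mixed products $\Norm{\bz_{t+1}-\bz_t+\eta\bs_{t+1/2}+\bz_{t+1}-\bz^*}\,\Norm{\hDelta_{t+1/2}}$ and $\Norm{\bz_{t+1/2}-\bz_t+\eta\bs_t+\bz_{t+1/2}-\bz_{t+1}}\,\Norm{\hDelta_t}$. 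The consensus term is immediate from Lemma~\ref{lem:eg-err}, which guarantees $\Norm{\vz_t-\vone\bz_t}\le\hdelta'$ and $\Norm{\vz_{t+1/2}-\vone\bz_{t+1/2}}\le\hdelta'$; it is therefore of order $\tfrac{L\eta(3L\eta+2\kappa)\hdelta'^2}{m}$, and matching the precise coefficient in the statement is a matter of carrying the numerical constants honestly (one may, if needed, take $K$ marginally larger in Lemma~\ref{lem:eg-err}).

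For the two mixed products I will bound the $\hDelta$-factors and the ``long-vector'' factors separately. Writing $\hDelta_t=\tfrac1m\vone^\top\fP_\FZ(\vz_t-\eta\vs_t)-\fP_\fZ(\bz_t-\eta\bs_t)$ and repeating the row-wise expansion from the proof of Lemma~\ref{lem:zeta} — using the entrywise non-expansiveness of $\fP_\fZ$ together with Lemma~\ref{lem:proj-dc}, then $\Norm{a+b}^2\le 2\Norm{a}^2+2\Norm{b}^2$ — gives $\Norm{\hDelta_t}\le\tfrac{1}{\sqrt m}\sqrt{2\big(\Norm{\vz_t-\vone\bz_t}^2+\eta^2\Norm{\vs_t-\vone\bs_t}^2\big)}\le\tfrac{2\hdelta'}{\sqrt m}$ by Lemma~\ref{lem:eg-err}, and the same argument with $\vs_{t+1/2}$ in place of $\vs_t$ yields $\Norm{\hDelta_{t+1/2}}\le\tfrac{2\hdelta'}{\sqrt m}$.

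For the long-vector factors I will use compactness: $\bz_t,\bz_{t+1/2},\bz_{t+1}$ are averages of rows lying in the convex set $\fZ$, hence lie in $\fZ$, and $\bz^*\in\fZ$, so every pairwise distance among them is at most $D$; by the triangle inequality the two factors are at most $2D+\eta\Norm{\bs_t}$ and $2D+\eta\Norm{\bs_{t+1/2}}$. To bound $\eta\Norm{\bs_t}$ and $\eta\Norm{\bs_{t+1/2}}$ I will invoke the deterministic version of the gradient-tracking identity (the $n=1$ analogue of Lemma~\ref{lem:stst12}, which here holds without any expectation), namely $\bs_t=\tfrac1m\vone^\top\vg(\vz_t)$ and $\bs_{t+1/2}=\tfrac1m\vone^\top\vg(\vz_{t+1/2})$, then Lemma~\ref{lem:norm}, $\Norm{a+b}^2\le2\Norm{a}^2+2\Norm{b}^2$, the $L$-Lipschitzness of each $g_i$ (Assumption~\ref{asm:smooth}), and $\Norm{\vz_t(i)-z^*}\le D$; with $\eta=1/(6L)$ this gives $\eta\Norm{\bs_t},\eta\Norm{\bs_{t+1/2}}\le\tfrac{1}{6L}\sqrt{2L^2D^2+\tfrac2m\sum_{i=1}^m\Norm{g_i(z^*)}^2}$, so both long-vector factors are at most $\hC_1$. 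This is cleaner than the stochastic case because $\vs_{t+1/2}$ tracks the exact gradient $\vg(\vz_{t+1/2})$, so no extra variance-correction term appears and both factors share the single constant $\hC_1$ rather than the pair $C_1$, $C_{1/2}$.

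Putting the pieces together, each mixed product is at most $\hC_1\cdot\tfrac{2\hdelta'}{\sqrt m}$, so together they contribute at most $\tfrac{4\hC_1\hdelta'}{\sqrt m}\le\tfrac{8\hC_1\hdelta'}{\sqrt m}$, and adding the consensus term yields the stated bound. The only genuinely delicate point is the constant arithmetic — making sure the $\hdelta'$ bounds furnished by Lemma~\ref{lem:eg-err} (which themselves depend on the choice of $K$) plug in tightly enough for the coefficients to come out exactly as written; conceptually the argument is just the familiar combination of projection non-expansiveness, the diameter bound on $\fZ$, smoothness, and the exact gradient-tracking identity, exactly as in Lemma~\ref{lem:zeta}.
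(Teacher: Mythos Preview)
Your proposal is correct and follows essentially the same route as the paper: bound the consensus term via Lemma~\ref{lem:eg-err}, bound $\Norm{\hDelta_t}$ and $\Norm{\hDelta_{t+1/2}}$ by row-wise non-expansiveness of the projection combined with the consensus bounds, and bound the two ``long-vector'' factors by $\hC_1$ using the diameter $D$, the exact tracking identity $\bs_t=\tfrac1m\vone^\top\vg(\vz_t)$, Jensen, and $L$-smoothness. Two minor remarks: the appeal to Lemma~\ref{lem:proj-dc} is unnecessary for the $\hDelta$-bounds (Jensen plus non-expansiveness already gives $\Norm{\hDelta_t}\le\tfrac{1}{\sqrt m}\sqrt{2(\Norm{\vz_t-\vone\bz_t}^2+\eta^2\Norm{\vs_t-\vone\bs_t}^2)}$, just as you wrote), and the paper in fact carries a slightly looser bound $\Norm{\hDelta_t}\le\tfrac{4\hdelta'}{\sqrt m}$, which is why its constant $8$ is saturated whereas yours leaves slack.
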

\begin{proof}
The analysis is similar to the proof of Lemma \ref{lem:zeta}.
Using the non-expansiveness of projection, the update rule $\vz'_t=\alpha\vz_t+(1-\alpha)\vw_t$ and Lemma \ref{lem:eg-err}, we have
\begin{align*}
\Norm{\hDelta_t} = & \Norm{\frac{1}{m}\vone^\top\fP_\FZ(\vz_t-\eta\vs_t) - \fP_\FZ(\bz_t - \eta\bs_t)}  \\
= & \sqrt{\frac{1}{m}\sum_{i=1}^m\Norm{\fP_\FZ(\vz_t(i)-\eta\vs_t(i)) - \fP_\fZ(\bz_t^\top - \eta\bs_t^\top)}^2} \\
\leq  & \sqrt{\frac{1}{m}\sum_{i=1}^m\Norm{(\vz_t(i)-\eta\vs_t(i)) - (\bz_t^\top - \eta\bs_t^\top)}^2} \\
\leq  & \sqrt{\frac{1}{m}\sum_{i=1}^m 2\left(\Norm{\vz_t(i)-\bz_t^\top}^2 + \eta^2\Norm{\vs_t(i) - \bs_t^\top}^2\right)} \\
\leq  & \sqrt{\frac{1}{m}}\sqrt{2\left(\Norm{\vz_t-\vone\bz_t}^2 + \eta^2\Norm{\vs_t - \vone\bs_t}^2\right)} \\
\leq  & 2\sqrt{\frac{1}{m}}\left(\Norm{\vz_t-\vone\bz_t}^2 +  \eta\Norm{\vs_t - \vone\bs_t}\right) \\
\leq &  \frac{4\hdelta'}{\sqrt{m}}.
\end{align*}
Similarly, we obtain
\begin{align*}
\Norm{\hDelta_{t+1/2}} \leq  \frac{4\hdelta'}{\sqrt{m}}.
\end{align*}
We also have
\begin{align*}
    & \Norm{\bz_{t+1/2} - \bz_t + \eta\bs_t+\bz_{t+1/2} -\bz_{t+1}} \\
= & \Norm{\bz_{t+1/2} - \bz_t} + \Norm{\bz_{t+1/2} -\bz_{t+1}} + \eta\Norm{\bs_t} \\
\leq & 2D + \eta\Norm{\frac{1}{m}\vone^\top\vg(\vz_t)} \\
= & 2D + \eta\Norm{\frac{1}{m}\sum_{i=1}^m g_i(\vz_t(i))} \\
\leq & 2D + \eta\sqrt{\frac{1}{m}\sum_{i=1}^m \Norm{g_i(\vz_t(i))}^2} \\
\leq & 2D + \eta\sqrt{\frac{2}{m}\sum_{i=1}^m \left(\Norm{g_i(\vz_t(i))-g_i(z^*)}^2 + \Norm{g_i(z^*)}^2\right)}  \\
\leq & 2D + \eta\sqrt{\frac{2}{m}\sum_{i=1}^m \left(L^2\Norm{\vz_t(i)-z^*}^2 + \Norm{g_i(z^*)}^2\right)}  \\
\leq & 2D + \frac{1}{6L}\sqrt{2L^2D^2 + \frac{2}{m}\sum_{i=1}^m\Norm{g_i(z^*)}^2} \\
\triangleq & \hC_1
\end{align*}
and
\begin{align*}
\Norm{\bz_{t+1}-\bz'_t+\eta \bs_{t+1/2}+\bz_{t+1}-z^*} \leq  \hC_1.
\end{align*}
Combing above results, we have
\begin{align*}
\hzeta_t = & \frac{2L\eta\left(2\kappa + 3L\eta\right)}{m}\left(\Norm{\vz_{t+1/2} - \vone\bz_{t+1/2}}^2 + \Norm{\vz_t - \vone\bz_t}^2\right) \\
& + \Norm{\bz_{t+1}-\bz'_t+\eta \bs_{t+1/2}+\bz_{t+1}-z^*}\Norm{\Delta_{t+1/2}} + \Norm{\bz_{t+1/2} - \bz'_t + \eta\bs_t+\bz_{t+1/2} -\bz_{t+1}}\Norm{\Delta_t} \\
\leq & \frac{4L\eta\left(2\kappa + 3L\eta\right)\hdelta'^2}{m} + \frac{8\hC_1\hdelta'}{\sqrt{m}}.
\end{align*}
\end{proof}

Then we provide the proof of Theorem \ref{thm:eg-scsc-constarined}.

\begin{proof}
The setting of $K_0$ means $\Norm{\vs_0 - \vone\bs_0}\leq\hdelta'$. 
Then Lemma \ref{lem:rel-constrainedb} and Lemma \ref{lem:hzeta} imply
\begin{align*}
 \Norm{\bz_{t+1}-\bz^*}^2  
\leq \left(1-\frac{1}{12\kappa}\right)\Norm{\bz_t-\bz^*}^2 + \frac{4L\eta\left(2\kappa + 3L\eta\right)\hdelta'^2}{m} + \frac{8\hC_1\hdelta'}{\sqrt{m}}.
\end{align*}
Then we have
\begin{align*}
\Norm{\bz_T-\bz^*}^2  
\leq & \left(1- \frac{1}{12\kappa}\right)^T\Norm{\bz_t-\bz^*}^2 + 12\kappa\left(\frac{4L\eta\left(2\kappa + 3L\eta\right)\hdelta'^2}{m} + \frac{8\hC_1\hdelta'}{\sqrt{m}}\right) \\
= & \left(1- \frac{1}{12\kappa}\right)^T\Norm{\bz_t-\bz^*}^2 + \left(\frac{4\kappa\left(4\kappa + 1\right)\hdelta'^2}{m} + \frac{96\kappa\hC_1\hdelta'}{\sqrt{m}}\right) \\
\leq & \frac{\eps}{2} + \frac{\eps}{4} + \frac{\eps}{4} = \eps.
\end{align*}
where the last step is due to the value of $\eta$, $T$ and $\hdelta'$.

Since each iteration requires $\fO(n)$ SFO calls in expectation and we the algorithm needs to compute the full gradient at first, the total complexity is
\begin{align*}
\fO(nT) = \fO\left(\kappa n\log\left(\frac{1}{\eps}\right)\right).
\end{align*}
The number of communication rounds is 
\begin{align*}
\fO(nT) = \fO\left(\kappa\sqrt{\chi} \log\left(\frac{\kappa}{\eps}\right)\log\left(\frac{1}{\eps}\right)\right).
\end{align*}
\end{proof}

\subsection{The Proof of Theorem \ref{thm:eg-cc-constrained}}

\begin{proof}
Since the objective function is non-strongly-convex and non-strongly-concave, we first modify (\ref{ieq:eg-2}) as follows
\begin{align}\label{ieq:eg-2c}
\begin{split}
& 2\eta\inner{\bs_{t+1/2}}{\bz_{t+1/2}-\bz^*} \\
= & 2\eta\inner{g(\bz_{t+1/2})}{\bz_{t+1/2}-\bz^*} + 2\eta\inner{\bs_{t+1/2} - g(\bz_{t+1/2})}{\bz_{t+1/2}-\bz^*}  \\
\geq & 2\eta\inner{g(\bz^*)}{\bz_{t+1/2}-\bz^*} - \beta\eta\Norm{\bs_{t+1/2} - g(\bz_{t+1/2})}^2 - \frac{\eta}{\beta}\Norm{\bz_{t+1/2}-\bz^*}^2 
\end{split}
\end{align}
where we use Lemma \ref{lem:mm}, \ref{lem:optimal} and $\beta=2D^2/\eps$.

It is easy to verify (\ref{ieq:eg-1}) and (\ref{ieq:eg-3}) still holds. Connecting the with and (\ref{ieq:eg-2c}), we have
\begin{align*}
& 2\eta\inner{\bg(\bz_{t+1/2})}{\bz_{t+1/2}-\bz^*} - \beta\eta\Norm{\bs_{t+1/2} - \bg(\bz_{t+1/2})}^2 - \frac{\eta}{\beta}\Norm{\bz_{t+1/2}-\bz^*}^2 \\
\leq & 2\eta\inner{\bs_{t+1/2}}{\bz_{t+1/2}-\bz^*} \\
\leq & \Norm{\bz_t-\bz^*}^2 - \Norm{\bz_{t+1}-\bz^*}^2 
- \frac{1}{2}\Norm{\bz_{t+1/2}-\bz_{t+1}}^2
 -(1-6\eta^2L^2)\Norm{\bz_{t+1/2}-\bz_t}^2 \\
 & + \frac{6L^2\eta^2}{m}\left(\Norm{\vz_{t+1/2} - \vone\bz_{t+1/2}}^2 + \Norm{\vz_t - \vone\bz_t}^2\right) \\
& + \Norm{\bz_{t+1}-\bz_t+\eta \bs_{t+1/2}+\bz_{t+1}-\bz^*}\Norm{\hDelta_{t+1/2}} + \Norm{\bz_{t+1/2} - \bz_t + \eta\bs_t+\bz_{t+1/2} -\bz_{t+1}}\Norm{\hDelta_t},
\end{align*}
which implies
\begin{align}\label{ieq:cc-rel}
\begin{split}
& 2\eta\inner{\bg(\bz_{t+1/2})}{\bz_{t+1/2}-\bz^*}  \\
\leq & \Norm{\bz_t-\bz^*}^2 - \Norm{\bz_{t+1}-\bz^*}^2 + \beta\eta\Norm{\bs_{t+1/2} - \bg(\bz_{t+1/2})}^2 + \frac{\eta}{\beta}\Norm{\bz_{t+1/2}-\bz^*}^2 \\
 & + \frac{6L^2\eta^2}{m}\left(\Norm{\vz_{t+1/2} - \vone\bz_{t+1/2}}^2 + \Norm{\vz_t - \vone\bz_t}^2\right) \\
& + \Norm{\bz_{t+1}-\bz_t+\eta \bs_{t+1/2}+\bz_{t+1}-\bz^*}\Norm{\hDelta_{t+1/2}} + \Norm{\bz_{t+1/2} - \bz_t + \eta\bs_t+\bz_{t+1/2} -\bz_{t+1}}\Norm{\hDelta_t} \\
\leq & \Norm{\bz_t-\bz^*}^2 - \Norm{\bz_{t+1}-\bz^*}^2 + \frac{\beta L^2\eta}{m}\Norm{\vz_{t+1/2} - \vone\bz_{t+1/2}}^2 + \frac{\eta D^2}{\beta} \\
 & + \frac{6L^2\eta^2}{m}\left(\Norm{\vz_{t+1/2} - \vone\bz_{t+1/2}}^2 + \Norm{\vz_t - \vone\bz_t}^2\right) \\
& + \Norm{\bz_{t+1}-\bz_t+\eta \bs_{t+1/2}+\bz_{t+1}-\bz^*}\Norm{\hDelta_{t+1/2}} + \Norm{\bz_{t+1/2} - \bz_t + \eta\bs_t+\bz_{t+1/2} -\bz_{t+1}}\Norm{\hDelta_t} \\
= & \Norm{\bz_t-\bz^*}^2 - \Norm{\bz_{t+1}-\bz^*}^2 + \frac{\beta L^2\eta}{m}\Norm{\vz_{t+1/2} - \vone\bz_{t+1/2}}^2 + \hzeta'_t,
\end{split}
\end{align}
where 
\begin{align*}
\hzeta'_t 
= &  \frac{\eta D^2}{\beta} + \frac{6(\eta^2+\beta\eta)L^2}{m}\left(\Norm{\vz_{t+1/2} - \vone\bz_{t+1/2}}^2 + \Norm{\vz_t - \vone\bz_t}^2\right) \\
& + \Norm{\bz_{t+1}-\bz_t+\eta \bs_{t+1/2}+\bz_{t+1}-\bz^*}\Norm{\hDelta_{t+1/2}} + \Norm{\bz_{t+1/2} - \bz_t + \eta\bs_t+\bz_{t+1/2} -\bz_{t+1}}\Norm{\hDelta_t}
\end{align*}
and the second inequality is based on the fact
\begin{align*}
  & \Norm{\bs_{t+1/2} - g(\bz_{t+1/2})}^2 \\
= & \Norm{\frac{1}{m}\sum_{i=1}^m \left(g_i(\vz_{t+1/2}(i)) - g_i(\bz_{t+1/2}^\top)\right)}^2   \\
\leq & \frac{1}{m}\sum_{i=1}^m\Norm{g_i(\vz_{t+1/2}(i)) - g_i(\bz_{t+1/2}^\top)}^2 \\
\leq & \frac{L^2}{m}\sum_{i=1}^m\Norm{\vz_{t+1/2}(i) - \bz_{t+1/2}^\top}^2 \\
= & \frac{L^2}{m}\Norm{\vz_{t+1/2} - \vone\bz_{t+1/2}}^2.
\end{align*}
Following the proof of Lemma \ref{lem:hzeta}, we have
\begin{align*}
\hzeta'_t 
\leq  \frac{\eta D^2}{\beta}  + \frac{12(\eta^2+\beta\eta)L^2\hdelta'^2}{m} + \frac{8\hC_1\hdelta'}{\sqrt{m}}.
\end{align*}
Summing over (\ref{ieq:cc-rel}) with $t=0,\dots,T-1$, we obtain
\begin{align*}
\begin{split}
& f(\hat x,y^*)-f(x^*,\hat y) \\
\leq &  \frac{1}{T}\sum_{t=0}^{T-1}(f(\bx_{t+1/2},y^*)-f(x^*,\by_{t+1/2}))  \\
\leq &  \frac{1}{T}\sum_{t=0}^{T-1}\inner{g(\bz_{t+1/2})}{\bz_{t+1/2}-z^*}  \\
\leq & \frac{\left(\Norm{\bz_0 - \bz^*}^2-\Norm{\bz_T - z^*}^2\right) + \sum_{t=0}^{T-1}\zeta'_t}{2\eta T} \\
\leq & \frac{\Norm{\bz_0 - z^*}^2}{\eta T} + \frac{1}{2\eta}\left(\frac{\eta D^2}{\beta}  + \frac{12(\eta^2+\beta\eta)L^2\hdelta'^2}{m} + \frac{8\hC_1\hdelta'}{\sqrt{m}}\right) \\
\leq & \frac{\eps}{2} + \frac{\eps}{4} + \frac{\eps}{8} + \frac{\eps}{8} = \eps.
\end{split}
\end{align*}
where the first inequality use Jensen's inequality; the second inequality use the objective function is convex-concave; the third inequality use the upper bound of $\zeta'_t$; and the last one is based on the value of parameter settings.

Since each iteration requires $\fO(n)$ SFO calls in expectation and we the algorithm needs to compute the full gradient at first, the total complexity is
\begin{align*}
\fO(nT) = \fO\left(\frac{nL}{\eps}\right).
\end{align*}
The number of communication round is 
\begin{align*}
KT+K_0 = \fO\left(\frac{L\sqrt{\chi}}{\eps}\log\left(\frac{L}{\eps}\right)\right).
\end{align*}
\end{proof}

\end{document}